\newcommand\SsquaredActionEven{S_{i}^{2} S_{j} S_{i}^{-2} & \placement{if $A_{ij}$ is even} {}\cdot{} S_{j}^{-1}}
\newcommand\SsquaredActionOdd{S_{i}^{2} S_{j} S_{i}^{-2} & \placement{if $A_{ij}$ is odd} {}\cdot{} S_{j}}
\newcommand\AdditiveRlnsInRootGroups{X_{i}(t) X_{i}(u) {}\cdot{} X_{i}(t+u)^{-1}}
\newcommand\SiSquaredActionOnXj{S_{i}^{2} X_{j}(t) S_{i}^{-2} & {}\cdot{} \Bigl(X_{j}\bigl((-1)^{A_{i j}} t\bigr)\Bigr)^{-1}}
\newcommand\SiXjRelationWhenMisTwo{[S_{i},X_{j}(t)] & \placement{if $m_{ij}={2}$} }
\newcommand\SiXjRelationWhenMisThree{S_{j} S_{i} X_{j}(t) & \placement{if $m_{ij}={3}$} {}\cdot{} \bigl(X_{i}(t) S_{j} S_{i}\bigr)^{-1}}
\newcommand\SiXjRelationWhenMisFour{[S_{i} S_{j} S_{i},X_{j}(t)] & \placement{if $m_{ij}={4}$} }
\newcommand\SiXjRelationWhenMisSix{[S_{i} S_{j} S_{i} S_{j} S_{i},X_{j}(t)] & \placement{if $m_{ij}={6}$} }
\newcommand\ChevRlnWhenMisTwo{[X_{i}(t),X_{j}(u)] &}
\newcommand\ChevRlnWhenMisThreeCloseRoots{[X_{i}(t),S_{i} X_{j}(u) S_{i}^{-1}] &}
\newcommand\ChevRlnWhenMisThreeDistantRoots{[X_{i}(t),X_{j}(u)]& {}\cdot{} S_{i} X_{j}(-t u) S_{i}^{-1}}
\newcommand\ChevRlnWhenMisFourAdjacentShortAndLong{[S_{s} X_{l}(t) S_{s}^{-1},S_{l} X_{s}(u) S_{l}^{-1}]}
\newcommand\ChevRlnWhenMisFourOrthogonalLong{[X_{l}(t),S_{s} X_{l}(u) S_{s}^{-1}]}
\newcommand\ChevRlnWhenMisFourOrthogonalShort{[X_{s}(t),S_{l} X_{s}(u) S_{l}^{-1}]& {}\cdot{} S_{s} X_{l}(2 t u) S_{s}^{-1}}
\newcommand\ChevRlnWhenMisFourDistantShortAndLong{[X_{s}(t),X_{l}(u)]& {}\cdot{} S_{s} X_{l}(-t^{2} u) S_{s}^{-1} {}\cdot{} S_{l} X_{s}(t u) S_{l}^{-1}}
\newcommand\ChevRlnWhenMisSixNearbyLong{[X_{l}(t),S_{l} S_{s} X_{l}(u) S_{s}^{-1} S_{l}^{-1}]}
\newcommand\ChevRlnWhenMisSixAdjacentShortAndLong{[S_{s} S_{l} X_{s}(t) S_{l}^{-1} S_{s}^{-1},S_{l} S_{s} X_{l}(u) S_{s}^{-1} S_{l}^{-1}]}
\newcommand\ChevRlnWhenMisSixOrthogonalShortAndLong{[S_{s} X_{l}(t) S_{s}^{-1},S_{l} X_{s}(u) S_{l}^{-1}]}
\newcommand\ChevRlnWhenMisSixDistantLong{[X_{l}(t),S_{s} X_{l}(u) S_{s}^{-1}] & {}\cdot{} S_{l} S_{s} X_{l}(-t u) S_{s}^{-1} S_{l}^{-1}}
\newcommand\ChevRlnWhenMisSixNearbyShort{[X_{s}(t),S_{s} S_{l} X_{s}(u) S_{l}^{-1} S_{s}^{-1}]& {}\cdot{} S_{s} X_{l}(-3 t u) S_{s}^{-1}}
\newcommand\ChevRlnWhenMisSixDistantShort{[X_{s}(t),S_{l} X_{s}(u) S_{l}^{-1}] & {}\cdot{} S_{l} S_{s} X_{l}(3 t u^{2}) S_{s}^{-1} S_{l}^{-1} {}\cdot{} \\ {}\cdot{} S_{s} X_{l}(3 t^{2} u) & S_{s}^{-1} {}\cdot{} S_{s} S_{l} X_{s}(2 t u) S_{l}^{-1} S_{s}^{-1}}
\newcommand\ChevRlnWhenMisSixDistantShortAndLong{[X_{s}(t),X_{l}(u)] & {}\cdot{} S_{l} S_{s} X_{l}(t^{3} u^{2}) S_{s}^{-1} S_{l}^{-1} {}\cdot{} \\ {}\cdot{} S_{s} X_{l}(-t^{3} u) S_{s}^{-1} {}\cdot{} S_{l} X_{s}(t u) & S_{l}^{-1} {}\cdot{} S_{s} S_{l} X_{s}(-t^{2} u) S_{l}^{-1} S_{s}^{-1}}
\newcommand\TorusActionOne{\htilde_{i}(r) X_{j}(t) \htilde_{i}(r)^{-1} & {}\cdot{} X_{j}\bigl(r^{A_{ij}} t\bigr)^{-1}}
\newcommand\TorusActionTwo{\htilde_{i}(r) \, S_{j} X_{j}(t) S_{j}^{-1} \, \htilde_{i}(r)^{-1} & {}\cdot{} S_{j} X_{j}\bigl(r^{-A_{ij}} t\bigr)^{-1} S_{j}^{-1}}
\newcommand\DefnOfhtilde{\htilde_{i}(r) & :=\stilde_{i}(r) \stilde_{i}(-1)}
\newcommand\DefnOfstilde{\stilde_{i}(r) & :=X_{i}(r) S_{i} X_{i}(1/r) S_{i}^{-1} X_{i}(r)}
\newcommand\WhatCollapse{S_{i} & {}\cdot{} \stilde_{i}(1)^{-1}}
\newcommand\MultInTorus{\htilde_{i}(u) \htilde_{i}(v) {}\cdot{} \htilde_{i}(u v)^{-1}}
\def\mathrlap#1{\mathchoice
{\rlap{$\displaystyle #1$}}%
{\rlap{$\textstyle #1$}}%
{\rlap{$\scriptstyle #1$}}%
{\rlap{$\scriptscriptstyle #1$}}}
\def\T{\mathcal{T}}               
\def\calP{\mathcal{P}}            
\def\rk{\mathop{\rm rk}\nolimits} 
\newcommand{\curlyU}{\mathcal{U}} 
\newcommand{\Runits}{R^*}
\newcommand{\Q}{\mathbb{Q}}     
\newcommand{\Z}{\mathbb{Z}}     
\newcommand{\N}{\mathbb{N}}     
\newcommand{\R}{\mathbb{R}}     
\newcommand{\F}{\mathbb{F}}     
\newcommand{\C}{\mathbb{C}}     
\newcommand{\SL}{{\rm SL}}      
\newcommand{\PGL}{{\rm PGL}}      
\newcommand{\Sp}{{\rm Sp}}      
\newcommand{\sltwo}{\mathfrak{sl}_2}
\newcommand{\slthree}{\mathfrak{sl}_3}
\newcommand{\Add}{\mathfrak{Add}} 
\newcommand{\St}{\mathfrak{St}} 
\newcommand{\StTits}{\mathfrak{St}^{\rm Tits}} 
\newcommand{\PSt}{\mathfrak{PSt}} 
\newcommand{\PStTits}{\mathfrak{PSt}^{\rm Tits}} 
\def\commutes{\!\rightleftarrows}
\newcommand{\g}{\mathfrak{g}}   
\newcommand{\barh}{\bar{h}}
\newcommand{\G}{\mathfrak{G}}   
\newcommand{\Wstar}{W^*}        
\newcommand{\sstar}{s^*}        
\newcommand{\wstar}{w^*}        
\newcommand{\U}{\mathfrak{U}}   
\newcommand{\x}{\mathfrak{x}}   
\newcommand{\odddiagram}{\Delta^{\rm odd}} 
\newcommand{\ZI}{\Z^I}          
\newcommand{\ZIvee}{\Z^{I\vee}}  
\newcommand{\pstar}{p^*}        
\newcommand{\rstar}{r^*}        
\newcommand{\What}{\widehat{W}} 
\newcommand{\what}{\hat{w}} 
\newcommand{\stilde}{\tilde{s}}
\newcommand{\htilde}{\tilde{h}}  
\newcommand{\w}{\omega}
\newcommand{\E}{\mathcal{E}}
\newcommand{\reverse}{{\rm reverse}}
\newcommand{\tensor}{\otimes}
\newcommand{\iso}{\cong}
\newcommand{\freeproduct}{\mathop{*}}
\newcommand{\semidirect}{\rtimes}
\newcommand{\sset}{\subseteq}
\newcommand{\ad}{\mathop{\rm ad}\nolimits}
\newcommand{\Ad}{\mathop{\rm Ad}\nolimits}
\newcommand{\Aut}{\mathop{\rm Aut}\nolimits}
\newcommand{\Hom}{\mathop{\rm Hom}\nolimits}
\newcommand{\gend}[1]{\langle#1\rangle}
\newcommand{\biggend}[1]{\bigl\langle#1\bigr\rangle}
\newcommand{\pairing}[2]{\langle#1,#2\rangle}
\newcommand{\presentation}[2]{\langle#1\mid#2\rangle}
\newcommand{\set}[2]{\{#1\mathrel{|}#2\}}
\newtheorem{theorem}{Theorem}[section]
\newtheorem{lemma}[theorem]{Lemma}
\newtheorem{proposition}[theorem]{Proposition}
\newtheorem{corollary}[theorem]{Corollary}
\theoremstyle{remark}
\newtheorem{remark}[theorem]{Remark}
\newtheorem{example}[theorem]{Example}
\numberwithin{equation}{section}
\numberwithin{table}{section}
\begin{document}

\title{Steinberg groups as amalgams}
\author{Daniel Allcock}
\thanks{Supported by NSF grant DMS-1101566}
\address{Department of Mathematics\\University of Texas, Austin}
\email{allcock@math.utexas.edu}
\urladdr{http://www.math.utexas.edu/\textasciitilde allcock}
\subjclass[2000]{%
Primary: 19C99
; Secondary: 20G44
, 14L15
}
\date{March 29, 2016}

\begin{abstract}
For any root system and any commutative ring we give a relatively
simple presentation of a group related to its Steinberg group $\St$.
This includes the case of infinite root systems used in Kac-Moody
theory, for which the Steinberg group was defined by Tits and
Morita-Rehmann.  In most cases our group equals $\St$, giving a
presentation with many advantages over the usual presentation of
$\St$.  This equality holds for all spherical root systems, all
irreducible affine root systems of rank${}>2$, and all $3$-spherical
root systems.  When the coefficient ring satisfies a minor condition,
the last condition can be relaxed to $2$-sphericity.

Our presentation is defined in terms of the Dynkin diagram rather than
the full root system.  It is concrete, with no implicit coefficients
or signs.  
It makes manifest the
exceptional diagram automorphisms in characteristics $2$ and~$3$, and
their generalizations to Kac-Moody groups.
And it is a Curtis-Tits style presentation: it is the direct
limit of the  groups coming from $1$- and $2$-node
subdiagrams of the Dynkin diagram.  Over non-fields this description
as a direct limit is new and surprising.
Our main application is that many Steinberg and Kac-Moody groups over
finitely-generated rings are finitely presented.
\end{abstract}
\maketitle

\section{Introduction}
\label{sec-introduction}

\noindent
In this paper we give a presentation for a Steinberg-like group, over
any commutative ring, for any root system, finite or not.  For many
root systems, including all finite ones, it is the same as the
Steinberg group~$\St$.  This is the case of interest, for then it
gives a new presentation of~$\St$ and associated Chevalley and Kac-Moody groups.
Our presentation 
\begin{enumerate}
\item
\label{virtue-simple-roots-only}
is defined in terms of the Dynkin diagram rather
than the set of all (real) roots (sections \ref{sec-examples} and~\ref{sec-pre-Steinberg-group});
\item
\label{virtue-concreteness}
is concrete, with no coefficients or signs left implicit;
\item 
\label{virtue-Curtis-Tits}
generalizes the Curtis-Tits
presentation of Chevalley groups to rings other than fields (corollary~\ref{cor-pre-Steinberg-as-direct-limit});
\item
\label{virtue-finite-presentation}
is rewritable as a finite presentation when $R$ is finitely generated as an abelian group
(theorem~\ref{thm-finite-presentation-of-pre-Steinberg-groups});
\item
\label{virtue-finite-presentability-1}
is often rewritable as a finite presentation when $R$ is
merely finitely generated as a ring (theorem~\ref{thm-finite-presentation-of-pre-Steinberg-groups});
\item
\label{virtue-finite-presentability-2}
allows one to prove that many Kac-Moody groups are finitely presented
(theorem~\ref{thm-finite-presentation-of-Kac-Moody-groups}); and
\item
\label{virtue-manifest-symmetry}
makes manifest the exceptional diagram automorphisms that lead to
the Suzuki and Ree groups, and allows one to construct similar
automorphisms of Kac-Moody groups in characteristic $2$ or~$3$
(section~\ref{sec-diagram-automorphisms}).
\end{enumerate}

More precisely, given any generalized Cartan matrix~$A$, in
section~\ref{sec-pre-Steinberg-group} we give two definitions of a
new group functor.  We call it the pre-Steinberg group $\PSt_A$
because it has a natural map to $\St_A$.  This will be obvious from
the first definition, which mimics Tits' definition \cite{Tits} of the
Steinberg group $\St_A$, as refined by Morita-Rehmann
\cite{Morita-Rehmann}.  The difference is that we leave out most of the relations.
If the root system is finite then both
$\PSt_A$ and $\St_A$ coincide with Steinberg's original group functor,
so they coincide with each other too.  Our perspective is that
$\PSt_A(R)$ is interesting if and only if $\PSt_A(R)\to\St_A(R)$ is an
isomorphism, when our second definition of $\PSt_A$ provides a new and
useful presentation of $\St_A$.

We will discuss this second definition after listing some cases in
which $\PSt_A(R)\iso\St_A(R)$.  As just mentioned, case
\ref{item-isomorphism-for-finite-dimensional-type} in the next theorem
is obvious once $\PSt$ is defined.  Cases
\ref{item-isomorphism-for-3-spherical}--\ref{item-isomorphism-for-2-spherical}
are proven in section~\ref{sec-PSt-to-St}.  By considering the list of
affine Dynkin diagrams, one sees that these cases imply case
\ref{item-isomorphism-for-irreducible-affine} except in rank~$3$ when
$R$ has a forbidden $\F_2$ or $\F_3$ quotient.  Proving
\ref{item-isomorphism-for-irreducible-affine} requires removing this
restriction on $R$, for which we refer to
\cite{Allcock-affine-Kac-Moody-groups}.

\begin{theorem}[Coincidence of Steinberg and pre-Steinberg groups]
\label{thm-examples-of-PSt-to-St-being-an-isomorphism}
Suppose $R$ is a commutative ring and $A$ is a generalized Cartan
matrix.  Then the natural map $\PSt_A(R)\to\St_A(R)$ is an isomorphism
in any of the following cases:
\begin{enumerate}
\item
\label{item-isomorphism-for-finite-dimensional-type}
if $A$ is spherical; or
\item
\label{item-isomorphism-for-irreducible-affine}
if $A$ is irreducible affine of rank${}>2$; or
\item
\label{item-isomorphism-for-3-spherical}
if $A$ is $3$-spherical; or
\item
\label{item-isomorphism-for-2-spherical}
if $A$ is $2$-spherical and (if $A$ has a multiple bond) $R$ has no
quotient $\F_2$ and (if $A$ has a triple bond) $R$ has no quotient $\F_3$.
\end{enumerate}
\end{theorem}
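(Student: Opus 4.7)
The goal is to prove that the natural surjection $\PSt_A(R)\to\St_A(R)$ is injective. The first definition of $\PSt_A$ mimics the Tits--Morita--Rehmann presentation of $\St_A$ but omits the Chevalley commutator relations for pairs of real roots not lying in a common rank-$\leq 2$ subdiagram of the Dynkin diagram. Thus the task is to derive these ``higher-rank'' commutator relations inside $\PSt_A(R)$ from the rank-$1$ and rank-$2$ relations already imposed.

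Case (i) requires no argument: when $A$ is spherical, both functors coincide with Steinberg's classical presentation by construction, so they agree tautologically. For case (iii), the $3$-sphericity of $A$ guarantees that every $3$-node subdiagram $J$ is of spherical type, so $\PSt_J(R) = \St_J(R)$ by case (i). Inside each such rank-$3$ subgroup, all Chevalley commutators for pairs of real roots of the rank-$3$ root system are available and can be transported to $\PSt_A(R)$ via the natural map $\PSt_J(R)\to\PSt_A(R)$. I would then argue that every pair of real roots $\alpha,\beta$ of the full root system with $\alpha+\beta$ a root can, after conjugation by a Weyl-group element, be moved inside a common $3$-node parabolic subsystem, reducing the desired commutator identity to the rank-$3$ case just handled. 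The Weyl conjugation is implemented inside $\PSt_A(R)$ by the lifts $\tilde{w}_i$ of the simple reflections, whose braid relations already hold by virtue of the rank-$2$ spherical relations.

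Case (iv) is more delicate because only rank-$2$ subdiagrams are assumed spherical, so the would-be rank-$3$ relations must be reconstructed from rank-$2$ data alone. The obstruction is that certain Chevalley structure constants in the rank-$2$ commutator formula carry factors of $2$ at a multiple bond and factors of $3$ at a triple bond; the hypothesis that $R$ has no quotient $\F_2$ or $\F_3$ (respectively) is precisely what makes these factors invertible and permits a bootstrap from the rank-$2$ relations up to the missing rank-$3$ relations, reducing case (iv) to case (iii). For case (ii), an inspection of the list of irreducible affine Dynkin diagrams shows that every such diagram of rank $>2$ is $3$-spherical except in rank $3$ with a multiple or triple bond, where case (iv) applies up to the forbidden small-field quotients; the residual situation is treated in the companion paper \cite{Allcock-affine-Kac-Moody-groups}.

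I expect the main obstacle to be case (iii): carrying out the explicit case analysis over the rank-$3$ spherical types ($A_3$ and $B_3/C_3$) and verifying that the lifted simple reflections really do implement Weyl-group conjugation at the level of $\PSt_A(R)$ with consistent signs and coefficients. In particular one must check that if $w\in W$ moves $\{\alpha,\beta\}$ into a rank-$3$ spherical parabolic subsystem, the resulting commutator identity in $\PSt_A(R)$ is independent of the choice of $w$ and of its lift --- a check in which the Tits-style relations among the $\tilde{w}_i$ play an essential role.
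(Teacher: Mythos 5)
Your reduction in case \ref{item-isomorphism-for-3-spherical} rests on the claim that every pair of roots whose Chevalley relation is missing from $\PSt_A(R)$ can be $W$-conjugated into a $3$-node spherical parabolic subsystem. This is false, and it fails for exactly the pairs that matter. The relations omitted from $\PSt_A$ are the Chevalley relations of prenilpotent pairs $\{\alpha,\beta\}$ that are \emph{not} classically prenilpotent, i.e.\ for which $(\Q\alpha+\Q\beta)\cap\Phi$ is infinite (e.g.\ $\{\alpha,\alpha+\delta\}$ in an affine subsystem with null root $\delta$). Such a pair generates an infinite dihedral reflection subgroup of $W$ and therefore cannot be moved by any element of $W$ into a spherical standard parabolic of any rank; the pairs that \emph{can} be so moved are precisely the classically prenilpotent ones, whose relations are already imposed in $\PSt_A$ by definition. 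So your argument derives only relations that are already present and never reaches the ones that need proving. (A secondary point: your explanation of the $\F_2$/$\F_3$ hypothesis in case \ref{item-isomorphism-for-2-spherical} as making structure constants invertible is not right either --- ``$R$ has no quotient $\F_2$'' is neither implied by nor implies invertibility of $2$; the hypothesis enters through a ring-quotient computation showing that the simple root groups generate the full positive unipotent group of a $B_2$ or $G_2$ subsystem.)

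What the paper actually does is quite different. For a nilpotent set of roots $\Psi=\{\alpha_1,\dots,\alpha_n\}$ cut out by a minimal gallery $C_0,\dots,C_n$, it proves by induction on $n$ that $\U_{\alpha_n}$ normalizes the subgroup of $\PSt$ generated by $\U_{\alpha_1},\dots,\U_{\alpha_{n-1}}$; hence the product map $\U_{\alpha_1}\times\cdots\times\U_{\alpha_n}\to G_\Psi\sset\PSt$ is onto, and comparison with the embedding $\U_\Psi\hookrightarrow\St$ (theorem~\ref{thm-injection-of-unipotent-subgroups}, via the adjoint representation) forces $G_\Psi\iso\U_\Psi$, so all Chevalley relations of prenilpotent pairs hold in $\PSt$. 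The inductive step pivots on a codimension-$2$ (for \ref{item-isomorphism-for-2-spherical}) or codimension-$3$ (for \ref{item-isomorphism-for-3-spherical}) face of the gallery and on lemmas \ref{lem-unipotent-subgroup-generation-rank-2} and \ref{lem-unipotent-subgroup-generation-rank-3}, which identify small sets of root groups generating $\U_{\Phi^+}$ in rank $2$ and $3$; that is where sphericity and the hypotheses on $R$ are used. You would need to replace your Weyl-conjugation step with an argument of this kind (or with the Abramenko--M\"uhlherr technique it adapts) for the proof to go through.
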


\noindent
(Language: we pass between Cartan matrices and Dynkin diagrams
whenever convenient.  The rank $\rk A$ of $A$ means the number of
nodes of the Dynkin diagram.  $A$ is called spherical if its Weyl
group is finite; this is equivalent to every component of the Dynkin
diagram being one of the classical $ABCDEFG$ diagrams.  $A$ is called
$k$-spherical if every subdiagram with${}\leq k$ nodes is spherical.)

\medskip
Our second ``definition'' of $\PSt_A(R)$ is the following theorem,
giving a presentation for it.  It is a restatement of
theorem~\ref{thm-G-4-is-PSt}, whose proof occupies
sections~\ref{sec-pre-Steinberg-group}--\ref{sec-step-3}.  The proof
relies on an understanding of root stabilizers under a certain
extension of the Weyl group, which appears to be a new ingredient in
Lie theory.  To give the flavor of the result, the full presentation
appears in table~\ref{tab-simply-laced-presentation} if $A$ is
simply-laced without $A_1$ components.  In this case we have
$\PSt_A(R)=\St_A(R)$ by the previous theorem, so we get a new
presentation for $\St_A(R)$.

\def\FOOalign#1#2{#1=\mathrlap{#2}\kern120pt}
\def\FOOtag#1{\rlap{\rm #1}\kern90pt}
\begin{table}
\begin{align*}
\left.
\begin{aligned}
\FOOalign{X_i(t)X_i(u)}{X_i(t+u)}\\
\FOOalign{[S_i^2,X_i(t)]}{1}\\
\FOOalign{S_i}{X_i(1) S_i X_i(1) S_i^{-1} X_i(1)}
\end{aligned}
\right\}\FOOtag{all $i$}
\\
\noalign{\medskip}
\left.
\begin{aligned}
\FOOalign{S_i S_j}{S_j S_i}\\
\FOOalign{[S_i,X_j(t)]}{1}\\
\FOOalign{[X_i(t),X_j(u)]}{1}
\end{aligned}
\right\}
\FOOtag{all unjoined $i\neq j$}
\\
\noalign{\medskip}
\left.
\begin{aligned}
\FOOalign{S_i S_j S_i}{S_j S_i S_j}\\
\FOOalign{S_i^2 S_j S_i^{-2}}{S_j^{-1}}\\
\FOOalign{X_i(t) S_j S_i}{S_j S_i X_j(t)}\\
\FOOalign{S_i^2 X_j(t) S_i^{-2}}{X_j(t)^{-1}}\\
\FOOalign{[X_i(t),S_i X_j(u) S_i^{-1}]}{1}\\
\FOOalign{[X_i(t),X_j(u)]}{S_i X_j(t u) S_i^{-1}}
\end{aligned}
\right\}
\FOOtag{all joined $i\neq j$}
\end{align*}
\caption{Our defining relations for the Steinberg group $\St_A(R)$,
  when $A$ is any simply-laced generalized Cartan matrix, without $A_1$ components, and $R$ is any
  commutative ring.  The generators are $X_i(t)$ and $S_i$
  where $i$ varies over the nodes of the Dynkin diagram and $t$
  over~$R$.}
\label{tab-simply-laced-presentation}
\end{table}

\begin{theorem}[Presentation of pre-Steinberg groups]
\label{thm-definition-of-PSt-as-presentation}
For any commutative ring~$R$ and any generalized Cartan matrix $A$, 
$\PSt_A(R)$ has a presentation with generators $S_i$ and $X_i(t)$, where
$i$ varies over the simple roots and $t$ varies over $R$, and relators
\eqref{eq-def-of-What-Artin-relators}--\eqref{eq-collapse-to-PSt}.
\end{theorem}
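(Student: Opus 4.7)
The plan is to compare the given presentation with the Tits--Morita--Rehmann style definition of $\PSt_A(R)$ introduced in section~\ref{sec-pre-Steinberg-group}, and to exhibit mutually inverse homomorphisms. Write $G$ for the group with generators $S_i,X_i(t)$ and relators \eqref{eq-def-of-What-Artin-relators}--\eqref{eq-collapse-to-PSt}; the goal is to prove $G\iso\PSt_A(R)$.

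\textbf{Easy direction: $G\to\PSt_A(R)$.} In $\PSt_A(R)$ interpret $X_i(t)$ as the root-subgroup generator for the simple root $\alpha_i$ and interpret $S_i$ as the Weyl-group lift of the simple reflection $s_i$ built from the standard product of $X_{\pm\alpha_i}(\pm1)$. Each relation in the theorem's list is a rank-one or rank-two identity among root-subgroup elements, and each one follows from the Tits--Morita--Rehmann defining relations of $\PSt_A(R)$ by a direct (and classical) calculation. This yields a well-defined homomorphism $G\to\PSt_A(R)$, which is surjective because conjugation by the $\What$ part already reaches every real-root subgroup.

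\textbf{Hard direction: $\PSt_A(R)\to G$.} I would construct this inverse by defining an element $X_\alpha(t)\in G$ for every real root $\alpha$, and then verifying the full Tits--Morita--Rehmann relation list. First study the subgroup $\What\sset G$ generated by the $S_i$: the braid-type portion of \eqref{eq-def-of-What-Artin-relators} realizes $\What$ as a Tits-style extension of the Weyl group $W$, so that to each $w\in W$ is attached a lift $\what\in\What$ well-defined up to the explicit central elements $S_i^2$. For $\alpha=w(\alpha_i)$ one then sets $X_\alpha(t):=\what\,X_i(t)\,\what^{-1}$; the relations of the form $X_i(t)S_jS_i=S_jS_iX_j(t)$ and their multiple-bond analogues are engineered exactly so that the braid moves relating two reduced expressions for the same $w$ carry the $X$'s consistently, which makes $X_\alpha(t)$ independent of the chosen expression. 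Finally, the Chevalley commutator formulas $[X_\alpha(t),X_\beta(u)]=\dots$ are verified by conjugating any such pair into a rank-two subsystem, where the formula is built into the presentation.

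\textbf{Main obstacle.} The bulk of the work, and the serious obstacle, is in the hard direction: proving well-definedness of $X_\alpha(t)$ and the commutator formulas over an arbitrary commutative ring, not merely a field. Over a field one could invoke Bruhat-style uniqueness to pin down the elements $X_\alpha(t)$, but here the verification must be purely combinatorial and entirely local to rank-two subdiagrams. The relation \eqref{eq-collapse-to-PSt} is designed precisely to enforce enough extra collapse so that the resulting group is $\PSt_A(R)$ and not something strictly larger. I would expect the bookkeeping to split naturally into cases by the rank-two subdiagram type (unjoined, single, double, triple bond), handled one at a time in the sections \ref{sec-pre-Steinberg-group}--\ref{sec-step-3} devoted to the proof.
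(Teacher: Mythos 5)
Your overall architecture (an easy map $G\to\PSt_A(R)$ checked relation-by-relation, plus a hard converse obtained by manufacturing a root group $\U_\alpha$ inside $G$ for every real root and then verifying the defining relations of $\PSt_A(R)$) is the right shape, and your easy direction and your treatment of the Chevalley relations (conjugate any classically prenilpotent pair into a rank-two subsystem spanned by simple roots) match what the paper does in sections~\ref{sec-pre-Steinberg-group} and~\ref{sec-step-3}. But the mechanism you give for well-definedness of $X_\alpha(t):=\what\,X_i(t)\,\what^{-1}$ is wrong, and this is exactly the technical heart of the matter. Independence of the choice of reduced expression for a \emph{fixed} $w$ is handled by the braid relations, yes; but $X_\alpha(t)$ must be independent of the choice of $w\in W$ (and of $i$) with $w(\alpha_i)=\alpha$, and two such choices differ by an arbitrary element of the $W$-stabilizer of $\alpha_i$. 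That stabilizer is in general a large reflection subgroup, not something visible from braid moves. What is actually needed is an explicit generating set for the $\What$-stabilizer of $\alpha_i$ together with a proof that each generator acts on $\U_i$ in the prescribed way. The paper gets this from Brink's reflection-centralizer theorem, refined in theorem~\ref{thm-root-stabilizers-in-W-star}: the stabilizer is generated by the $S_l^2$, by loops $\pstar_z$ in the odd Dynkin diagram, and by the elements $\rstar_{j k}$ attached to even bonds; the relators \eqref{eq-S-i-X-j-relator-when-m-is-3} transport $\U_i$ along odd edges, and \eqref{eq-S-i-X-j-relator-when-m-is-2}, \eqref{eq-S-i-X-j-relator-when-m-is-4}, \eqref{eq-S-i-X-j-relator-when-m-is-6} are precisely the statements that the $\rstar_{j k}$ centralize $\U_i$. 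Without this input your argument cannot close, even over a field.

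A second, smaller divergence: rather than defining $X_\alpha(t)$ and checking coherence head-on, the paper packages the hard direction as theorem~\ref{thm-G-2-is-free-product-semidirect-What}, presenting $\bigl(\freeproduct_{\alpha\in\Phi}\U_\alpha\bigr)\semidirect\What$ as the amalgam $\bigl(\U_\infty\semidirect\What_\infty\bigr)\freeproduct_{\What_\infty}\What$ via lemma~\ref{lem-general-finite-presentation-principle}, and then matches this amalgam with the batch~0--2 presentation by Tietze transformations (eliminating the non-basepoint generators along a spanning tree of the odd diagram). This reformulation is what converts the well-definedness problem into the root-stabilizer computation above. The remaining steps ($\G_3\iso\PStTits\semidirect\What$, then imposing \eqref{eq-torus-action-1}--\eqref{eq-collapse-to-PSt} and invoking theorem~\ref{thm-alternate-relations-for-Steinberg-group} to recover the Morita--Rehmann relations and collapse the $\What$ factor) are as you anticipate.
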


\noindent
Table~\ref{tab-simply-laced-presentation} 
shows that the presentation is less intimidating than a
list of $26$ relations would suggest.   See section~\ref{sec-examples} for the
$B_2$ and $G_2$ cases.  Each 
relator \eqref{eq-def-of-What-Artin-relators}--\eqref{eq-collapse-to-PSt}
involves at most two distinct subscripts.
This proves the following.

\begin{corollary}[Curtis-Tits presentation for pre-Steinberg groups]
\label{cor-pre-Steinberg-as-direct-limit}
Let $A$ be a generalized Cartan matrix and $R$ a commutative ring.
Consider the groups $\PSt_B(R)$ and the obvious maps between them, as
$B$ varies over the $1\times1$ and $2\times2$ submatrices of $A$
coming from singletons and pairs of nodes of the Dynkin diagram.  
The direct limit of this family of groups equals $\PSt_A(R)$.  \qed
\end{corollary}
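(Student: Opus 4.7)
The plan is to read this off from Theorem~\ref{thm-definition-of-PSt-as-presentation} by checking that the defining relators for $\PSt_A(R)$ live in $1$- and $2$-node subdiagrams. The claim of the corollary, in the sentence just before it, is exactly that each relator \eqref{eq-def-of-What-Artin-relators}--\eqref{eq-collapse-to-PSt} involves at most two distinct subscripts, so the proof is essentially a bookkeeping exercise once this is granted.

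First I would set up the standard fact that a direct limit of groups given by compatible presentations has a presentation obtained by taking the union of the generating sets and the union of the relating sets. Applied to our situation: for each node $i$ of the Dynkin diagram the singleton subdiagram gives a group $\PSt_{\{i\}}(R)$ with generators $S_i$ and $X_i(t)$ (for $t\in R$) and whose relators are precisely those from Theorem~\ref{thm-definition-of-PSt-as-presentation} with a single subscript $i$; for each pair $\{i,j\}$ of nodes, $\PSt_{\{i,j\}}(R)$ has generators $S_k,X_k(t)$ for $k\in\{i,j\}$ and its relators are those from the theorem involving only the subscripts $i$ and $j$. The obvious inclusions $\{i\}\hookrightarrow\{i,j\}$ induce homomorphisms that send each named generator to the generator of the same name, so the family is directed in the appropriate sense.

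The direct limit of this family therefore has a presentation whose generators are $\{S_i,X_i(t)\}$ as $i$ ranges over the nodes of~$A$ and $t$ over $R$, and whose relators are the union, over all $1$- and $2$-element subsets $B$ of the node set, of the relators of $\PSt_B(R)$. Now I would invoke the key observation, already noted in the paragraph preceding the corollary: each relator \eqref{eq-def-of-What-Artin-relators}--\eqref{eq-collapse-to-PSt} of $\PSt_A(R)$ involves at most two distinct subscripts. Consequently every such relator belongs to the relator set of some $\PSt_B(R)$ with $|B|\leq 2$, and conversely every relator of such a $\PSt_B(R)$ is a relator of $\PSt_A(R)$. The two relator sets therefore agree, matching the presentation of $\PSt_A(R)$ from Theorem~\ref{thm-definition-of-PSt-as-presentation}.

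There is no real obstacle here; the only thing to be careful about is the quiet appeal to the fact that colimits in the category of groups are computed at the level of presentations in this way, and the verification that the transition maps really do send generators with the same name to generators with the same name (so that the colimit identifies them rather than producing new relations). Both points are standard, and once they are recorded the corollary follows immediately from Theorem~\ref{thm-definition-of-PSt-as-presentation}.
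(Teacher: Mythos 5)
Your proposal is correct and is essentially the paper's own argument: the corollary is deduced immediately from Theorem~\ref{thm-definition-of-PSt-as-presentation} together with the observation that every relator \eqref{eq-def-of-What-Artin-relators}--\eqref{eq-collapse-to-PSt} involves at most two distinct subscripts, so that the union of the presentations of the $\PSt_B(R)$ over $1$- and $2$-node subdiagrams $B$ reproduces the presentation of $\PSt_A(R)$. The bookkeeping points you flag (that colimits of presented groups are presented by unions of generators and relators, and that the relators of $\PSt_B(R)$ are exactly those of $\PSt_A(R)$ with subscripts in $B$, since they depend only on the entries $A_{ij}$ and $m_{ij}$) are exactly what the paper leaves implicit.
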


In any of the cases in
theorem~\ref{thm-examples-of-PSt-to-St-being-an-isomorphism}, we may
replace $\PSt_A$ by $\St_A$ everywhere in
corollary~\ref{cor-pre-Steinberg-as-direct-limit}, yielding a
Curtis-Tits style presentation for $\St_A$.  This is the source of our
title {\it Steinberg groups as amalgams}.  We learned after writing
this paper that Dennis and Stein \cite[Theorem
  B]{Dennis-Stein-injective-stability} announced corollary~\ref{cor-pre-Steinberg-as-direct-limit} for
finite root systems.  They did not publish a proof, and from their
announcement it appears that their approach was not
via our theorem~\ref{thm-definition-of-PSt-as-presentation}.

\medskip
In the $A_1$, $A_2$, $B_2$ and $G_2$ cases we write out our
presentation of $\PSt_A(R)=\St_A(R)$ explicitly in
section~\ref{sec-examples}.  We do this to make our results as
accessible as possible, and to show in
section~\ref{sec-diagram-automorphisms} that our presentation makes
manifest the exceptional
diagram automorphisms in characteristics $2$ and~$3$.
Namely, the arrow-reversing diagram automorphism of the $B_2$ or $G_2$
Dynkin diagram yields a self-homomorphism of the corresponding
Steinberg group if the coefficient ring~$R$ has characteristic $2$
or~$3$ respectively.   If $R$ is a perfect field then this self-homomorphism is the
famous outer automorphism that leads to the Suzuki and (small) Ree
groups.

Because of the direct limit property
(corollary~\ref{cor-pre-Steinberg-as-direct-limit}), one obtains the
corresponding self-homomorphisms of $F_4$ in characteristic~$2$ with
no more work.  That is, the defining relations for $\St_{F_4}$ are
those for $\St_{B_2}$, two copies of $\St_{A_2}$ and three copies of
$\St_{A_1^2}=\St_{A_1}\times\St_{A_1}$.  The diagram automorphism
transforms the $B_2$ relations as in the previous paragraph and
sends the other relations into each other.  The same argument
applies to many Kac-Moody groups.  By work of H\'ee, this leads to
Kac-Moody-like analogues of the Suzuki and Ree groups, discussed
briefly in section~\ref{sec-diagram-automorphisms}.

\medskip
An application of the theory we have described is that Steinberg
groups and Kac--Moody groups are finitely presented under quite weak
hypotheses on their Dynkin diagrams and coefficient rings.
We state
the Steinberg group result in terms of $\PSt_A(R)$, keeping in mind
that the interesting case is when $\PSt_A(R)$ coincides with
$\St_A(R)$.  See section~\ref{sec-finite-presentations} for the proof.

\begin{theorem}[Finite presentation of pre-Steinberg groups]
\label{thm-finite-presentation-of-pre-Steinberg-groups}
Let $R$ be a commutative ring and $A$ a generalized Cartan matrix.
Then $\PSt_A(R)$ is finitely presented in any of the following cases:
\begin{enumerate}
\item
\label{item-finitely-generated-abelian-group-makes-St-finitely-presented}
if $R$ is finitely generated as an abelian group; or
\item
\label{item-P-S-t-is-f-p-rank-2}
if $A$ is $2$-spherical without $A_1$ components, and $R$ is finitely
generated as a module over a subring generated by finitely many units;
or
\item
\label{item-P-S-t-is-f-p-rank-3}
if $R$ is finitely generated as a ring, and any two nodes of $A$ lie in an irreducible spherical diagram of
rank${}\geq3$.
\end{enumerate}
\end{theorem}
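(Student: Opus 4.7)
My approach is to start from the Curtis--Tits style presentation of corollary~\ref{cor-pre-Steinberg-as-direct-limit}, combined with the explicit relations of theorem~\ref{thm-definition-of-PSt-as-presentation}: $\PSt_A(R)$ admits a presentation with generators $\{S_i\}$ (finitely many) together with $\{X_i(t):i\text{ a node of }A,\ t\in R\}$, with relations drawn from the rank-$\leq 2$ defining relations \eqref{eq-def-of-What-Artin-relators}--\eqref{eq-collapse-to-PSt}. In each case the task is, via Tietze transformations, to cut this down to finitely many generators and finitely many relations. The guiding principle is to find a finite subset $T\subset R$ such that every $X_i(t)$ can be built as a word in $\{X_i(t_k):t_k\in T\}\cup\{S_j\}$, and then to argue that only finitely many of the defining relations survive this reduction.

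For case~\ref{item-finitely-generated-abelian-group-makes-St-finitely-presented}, let $t_1,\ldots,t_n$ generate $R$ as an abelian group. The additivity relation $X_i(s)X_i(s')=X_i(s+s')$ expresses each $X_i(t)$ as a product of $X_i(t_k)^{\pm 1}$, and because the additive group of $R$ is a finitely presented abelian group all additivity relations follow from finitely many. The remaining relations (commutators and $S$-conjugations) have right-hand sides whose $R$-entries depend only on the finite multiplication table of $R$ on $T$, and therefore also collapse to finitely many.

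For case~\ref{item-P-S-t-is-f-p-rank-2}, let $R_0\subseteq R$ be a subring generated by units $u_1,\ldots,u_m$ and let $e_1,\ldots,e_n$ generate $R$ as an $R_0$-module. Since $A$ is $2$-spherical without $A_1$ components, every node $i$ has a joined neighbor $j$, and inside $\PSt_{\{i,j\}}(R)$ there is a torus element, expressible as a word in $S_i$ and $X_i(u^{\pm 1})$ for any unit $u\in R$, whose conjugation action multiplies the argument of $X_j(\cdot)$ by a fixed power of $u$. Adjoining finitely many such torus elements (one per generating unit $u_l$) and the module generators $X_j(e_k)$ lets one scale each $e_k$ to an arbitrary element of $R_0\cdot e_k$, reducing the situation to case~\ref{item-finitely-generated-abelian-group-makes-St-finitely-presented}.

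For case~\ref{item-P-S-t-is-f-p-rank-3}, let $t_1,\ldots,t_n$ generate $R$ as a ring. By hypothesis every pair of nodes $i,j$ lies in an irreducible spherical subdiagram $C$ of rank at least $3$, and inside $\PSt_C(R)=\St_C(R)$ (theorem~\ref{thm-examples-of-PSt-to-St-being-an-isomorphism}\ref{item-isomorphism-for-finite-dimensional-type}) one has Chevalley commutator formulas that express $X_j(st)$ as a word in $X_i(s)$, $X_k(t)$, $S_\ell$ for an appropriate third node $k$; crucially, no invertibility is required. Iterating together with additivity expresses every $X_i(t)$ as a word in $\{X_i(t_k)\}\cup\{S_j\}$, and the relations collapse to finitely many as before. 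The main obstacle common to all three cases is the bookkeeping: one must verify that a specific finite subset of the relations \eqref{eq-def-of-What-Artin-relators}--\eqref{eq-collapse-to-PSt} actually suffices to derive the whole infinite list after Tietze rewrites, which I would handle by induction on the length of each $R$-element as a word in the chosen finite generating set, using the finitely many derived relations to reduce length at each step.
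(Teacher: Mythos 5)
Your proposal has genuine gaps in all three cases, and in cases \ref{item-P-S-t-is-f-p-rank-2} and \ref{item-P-S-t-is-f-p-rank-3} the approach itself would not go through without importing a major external input. The paper does not attempt a direct Tietze reduction of the infinite presentation for ranks $\geq 2$; instead it cites Splitthoff's theorem (finite presentability of the spherical Steinberg groups $\St_B(R)$ for $\rk B\geq 2$ when $R$ is module-finite over a subring generated by finitely many units, and for $\rk B\geq 3$ when $R$ is merely a finitely generated ring), combines it with a separate finite \emph{generation} result for the lower-rank pieces, and then assembles $\PSt_A(R)$ as a direct limit of finitely many finitely presented groups amalgamated over finitely generated ones via corollary~\ref{cor-pre-Steinberg-as-direct-limit}. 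Your sketch for case \ref{item-P-S-t-is-f-p-rank-2} claims to "reduce to case \ref{item-finitely-generated-abelian-group-makes-St-finitely-presented}," but the hypotheses there do not make $R$ finitely generated as an abelian group (consider $R=\Z[x^{\pm1}]$), so that reduction is simply false; what your torus-scaling argument can plausibly deliver is finite \emph{generation}, not finite presentation. Similarly, in case \ref{item-P-S-t-is-f-p-rank-3} the phrase "the relations collapse to finitely many as before" is exactly the hard content: deriving all instances of the Chevalley relations (which contain terms like $t^2u$ and $t^3u^2$, not merely bilinear ones) from finitely many is essentially Splitthoff's theorem, and your length-induction scheme is not a substitute for it.

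Even in case \ref{item-finitely-generated-abelian-group-makes-St-finitely-presented}, where your additivity argument is on the right track, two essential points are missing. First, the relations \eqref{eq-torus-action-1}--\eqref{eq-collapse-to-PSt} are parameterized by $r\in\Runits$, an infinite set; finiteness here requires knowing $\Runits$ is finitely generated (Dirichlet's unit theorem, since $R$ is finitely generated as an abelian group), and even then $\htilde_i(r)$ is not multiplicative in $r$ inside $\PSt_A(R)$ — the defect is central, and the paper must pass to the central quotient by the subgroup generated by the relators \eqref{eq-torus-relators-that-gives-Kac-Moody-group} and invoke lemma~\ref{lem-only-finitely-many-relations-in-maximal-torus} before concluding. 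Second, your claim that the commutator relations "depend only on the finite multiplication table" does not explain how the relation for general $t=\sum n_kt_k$ follows from the generator instances; the paper handles this by observing that the relevant unipotent (Borel) subgroups are polycyclic when $R$ is a finitely generated abelian group. Without these two ingredients the case \ref{item-finitely-generated-abelian-group-makes-St-finitely-presented} argument is incomplete.
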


Many authors have studied the finite presentation of Steinberg groups
and related groups.  Our theorem~\ref{thm-finite-presentation-of-pre-Steinberg-groups} is inspired by work of
Splitthoff \cite{Splitthoff}.  See \cite{Kiralis}, \cite{Zhang} and \cite{Li} for
some additional results.

The Kac--Moody group version of theorem~\ref{thm-finite-presentation-of-pre-Steinberg-groups} concerns the group functors $\G_D$
constructed by Tits in \cite{Tits} (he wrote $\widetilde{\G}_D$).  They
were his motivation for generalizing the Steinberg groups beyond the
case of spherical Dynkin diagrams.  He defined the ``simply
connected'' Kac--Moody groups as certain quotients of Steinberg
groups, and arbitrary Kac--Moody groups are only slightly more
general.  Specifying a Kac--Moody group requires specifying a root
datum $D$, which is slightly more refined information than $D$'s
associated generalized Cartan matrix~$A$.  But the choice of $D$
doesn't affect any of our results.  

Our final theorem shows that a great many Kac--Moody groups over
rings are finitely presented.  This is surprising because one thinks
of Kac--Moody groups over (say) $\R$ as infinite-dimensional Lie groups, so
the same groups over (say) $\Z$ should be some sort of discrete subgroups.
There is no obvious reason why a discrete subgroup of an
infinite-dimensional Lie group should be finitely presented.
See section~\ref{sec-finite-presentations}
for the definition of the Kac--Moody groups, and the proof of the
following theorem.

\begin{theorem}[Finite presentation of Kac--Moody groups]
\label{thm-finite-presentation-of-Kac-Moody-groups}
Suppose $A$ is a generalized Cartan matrix and $R$ is a commutative
ring whose group of units $\Runits$ is finitely generated.  Let $D$ be any
root datum with generalized Cartan matrix $A$.  Then Tits'
Kac--Moody group $\G_D(R)$ is finitely presented if $\St_A(R)$ is.  

In particular, this holds if one of
\ref{item-isomorphism-for-finite-dimensional-type}--\ref{item-isomorphism-for-2-spherical}
from
theorem~\ref{thm-examples-of-PSt-to-St-being-an-isomorphism}
holds and one of
\ref{item-finitely-generated-abelian-group-makes-St-finitely-presented}--\ref{item-P-S-t-is-f-p-rank-3}
from
theorem~\ref{thm-finite-presentation-of-pre-Steinberg-groups}
holds.
\end{theorem}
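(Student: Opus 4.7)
My plan is to enlarge a finite presentation of $\St_A(R)$ to one of $\G_D(R)$ by adjoining finitely many generators for the torus together with finitely many cross-relations. The first step is to recall Tits' construction of $\G_D(R)$: it is generated by $\St_A(R)$ together with the torus $T := \Hom_\Z(\Lambda,\Runits)$, where $\Lambda$ is the finitely generated free abelian lattice attached to the root datum~$D$. Beyond the relations internal to $\St_A(R)$ and the abelian group structure on $T$, the defining relations say that each $\tau\in T$ acts on each simple root subgroup by $\tau X_i(t)\tau^{-1}=X_i(c_i(\tau)\,t)$ for a prescribed character $c_i\colon T\to\Runits$, and that each Steinberg semisimple element $\htilde_i(u)\in\St_A(R)$ coincides with the element of $T$ prescribed by the root datum, for every $u\in\Runits$.

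Since $\Runits$ is finitely generated by hypothesis and $\Lambda$ has finite rank, $T$ is finitely generated abelian and hence finitely presented; I fix finitely many generators $\tau_1,\dots,\tau_m$ and finitely many defining relators. By hypothesis $\St_A(R)$ admits a finite presentation with generators $g_1,\dots,g_n$ (each of the form $X_i(t)$ or $S_i$) and finitely many relators. I would then assemble a candidate finite presentation of $\G_D(R)$ by taking the union of these and adding two families of cross-relators: (a)~for each pair $(\tau_j,g_k)$, the single relation $\tau_j g_k \tau_j^{-1}=w_{j,k}$, where $w_{j,k}$ is a word in the $g_\ell$ expressing the intended image of $g_k$ under $\tau_j$; and (b)~for each simple root $i$ and each element $u$ of a fixed finite generating set of $\Runits$, the single identification $\htilde_i(u)=$ (prescribed element of $T$, written as a word in the $\tau_j$).

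The main obstacle will be verifying that these finite relations imply all of Tits' defining relations. For (a), the prescribed assignment $g_k\mapsto w_{j,k}$ extends to an endomorphism of $\St_A(R)$ only if the Steinberg relators map to identities; a Reidemeister--Schreier style check confirms this, because each Steinberg relator is stable under the intended action $X_i(t)\mapsto X_i(c_i(\tau_j)\,t)$ and $S_i\mapsto\tau_j S_i\tau_j^{-1}$, so once agreement on generators is imposed it propagates to $\tau_j X_i(t)\tau_j^{-1}=X_i(c_i(\tau_j)\,t)$ for every $t\in R$. For (b), both $u\mapsto\htilde_i(u)$ and the prescribed map $\Runits\to T$ are homomorphisms (the former derived inside the rank-one Steinberg group of node $i$, hence already a consequence of the finite relators for $\St_A(R)$; the latter built into the functor $T=\Hom(\Lambda,\Runits)$), so agreement on a finite generating set of $\Runits$ propagates automatically.

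The ``in particular'' clause follows at once: the cited cases of Theorem~\ref{thm-examples-of-PSt-to-St-being-an-isomorphism} give $\PSt_A(R)\iso\St_A(R)$, and the cited cases of Theorem~\ref{thm-finite-presentation-of-pre-Steinberg-groups} then make $\PSt_A(R)$, hence $\St_A(R)$, finitely presented, so the hypothesis of the first half of the theorem applies.
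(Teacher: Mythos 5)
Your overall architecture (a finite presentation of $\St_A(R)$, plus a finitely presented torus, plus finitely many cross-relations) has the same shape as the paper's argument, and your step (a) is sound: the torus acts on $\St_A(R)$ by genuine automorphisms, so imposing the conjugation relations only on a finite generating set propagates to the whole group by the usual semidirect-product argument. The ``in particular'' clause is also handled correctly.

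The gap is in step (b). You assert that $u\mapsto\htilde_i(u)$ is a homomorphism $\Runits\to\St_A(R)$, ``already a consequence of the finite relators for $\St_A(R)$.'' This is false. The elements $\htilde_i(uv)\,\htilde_i(u)^{-1}\,\htilde_i(v)^{-1}$ are central in $\St_A(R)$ (they act trivially on every root group) but are in general nontrivial there; indeed \eqref{eq-torus-relators-that-gives-Kac-Moody-group} is exactly the family of relators one kills to pass from $\St_A(R)$ to $\G_{\!A}(R)$, and in the $A_1$ case over a field they are the Steinberg symbols generating $K_2$, which need not even be finitely generated. Consequently, identifying $\htilde_i(u)$ with the prescribed torus element for $u$ in a finite generating set of $\Runits$ does not automatically force the identification for all $u\in\Runits$: one must still show that the infinitely many relators $\htilde_i(uv)\htilde_i(u)^{-1}\htilde_i(v)^{-1}$ lie in the normal closure of finitely many of them. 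That is precisely the content of lemma~\ref{lem-only-finitely-many-relations-in-maximal-torus} and corollary~\ref{cor-finitely-many-relations-in-maximal-torus}, whose proof uses the action relations \eqref{eq-torus-action-on-simple-root-groups-for-finitely-many-units}--\eqref{eq-torus-action-on-negative-simple-root-groups-for-finitely-many-units} to derive $[\htilde(r_k),\htilde(u)]=\calP_{r_k^2,u}$ and then runs an induction placing every $\htilde(u)$ inside a fixed finitely generated abelian subgroup. Your proof needs this lemma (or an equivalent argument) and currently replaces it with an incorrect appeal to multiplicativity of $\htilde_i$ inside the Steinberg group.
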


\medskip
The paper is organized as follows.  Sections \ref{sec-examples}
and~\ref{sec-diagram-automorphisms} are expository and not essential
for later sections.  Section~\ref{sec-examples} is really a
continuation of the introduction, writing down the essential cases of
our presentation of $\PSt_A(R)$.  These can be understood
independently of the rest of the paper.
Section~\ref{sec-diagram-automorphisms} treats the exceptional diagram
automorphisms: their existence is hardly even an exercise.

Sections \ref{sec-Kac-Moody-algebra}--\ref{sec-Steinberg-group} give
necessary background.  Section~\ref{sec-Kac-Moody-algebra} gives a
little background on the Kac-Moody algebra $\g_A$.
Section~\ref{sec-W-star} is mostly a review of results of Tits about a
certain extension $\Wstar\sset\Aut(\g_A)$ of the Weyl group $W$.  But
we also use a more recent result of Brink \cite{Brink} on Coxeter
groups to describe generators for root stabilizers in $\Wstar$, and how
they act on the corresponding root spaces
(theorem~\ref{thm-root-stabilizers-in-W-star}).
Section~\ref{sec-Steinberg-group} reviews Tits' definition of $\St_A$
and its refinement by Morita-Rehmann.

Sections \ref{sec-pre-Steinberg-group}--\ref{sec-step-3} are the
technical heart of the paper, establishing theorem~\ref{thm-definition-of-PSt-as-presentation}.  In
section~\ref{sec-pre-Steinberg-group} we define $\PSt_A$ and then
establish a presentation for it.  
We
do this by defining a group functor $\G_4$ by a presentation and
proving $\PSt_A\iso\G_4$.  As the notation suggests, this is the last
in a chain of group functors $\G_1,\dots,\G_4$ that give successively
better approximations to $\PSt_A$.  Lemma~\ref{lem-description-of-G-1} and theorems \ref{thm-G-2-is-free-product-semidirect-What},
\ref{thm-G3-is-spherical-Tits-Steinberg-group-semidirect-What}
and~\ref{thm-G-4-is-PSt} give ``intrinsic'' descriptions of $\G_1$, $\G_2$,
$\G_3$ and $\G_4$, the last one being the same as
theorem~\ref{thm-definition-of-PSt-as-presentation} above.  See  section~\ref{sec-examples} for a quick overview of the meanings of these
intermediate groups.  The proof for $\G_1$ is trivial, the proofs
for $\G_2$ and $\G_3$ occupy sections
\ref{sec-proof-of-free-by-semidirect-product-theorem}
and \ref{sec-step-3}, and the proof for $\G_4$ appears in
section~\ref{sec-pre-Steinberg-group}.

Section~\ref{sec-adjoint-representation} reviews work of R\'emy
\cite{Remy} on the adjoint representation of a Kac--Moody group,
regarded as a representation of the corresponding Steinberg group.
The definition of $\St$ is as the direct limit of a family of unipotent
groups, and we use the adjoint representation to show that the natural
maps from these groups to $\St$ are embeddings.  This is necessary for the proof of
theorem~\ref{thm-examples-of-PSt-to-St-being-an-isomorphism}
in section~\ref{sec-PSt-to-St}.  Finally, in
section~\ref{sec-finite-presentations} we discuss finite
presentability of pre-Steinberg groups and Kac--Moody groups.
In particular we prove
theorems \ref{thm-finite-presentation-of-pre-Steinberg-groups}
and~\ref{thm-finite-presentation-of-Kac-Moody-groups}.  The result for
pre-Steinberg groups relies heavily on work of Splitthoff.

The author is very grateful to the Japan Society for the Promotion of
Science and to Kyoto University, for their support and hospitality,
and to Lisa Carbone, for getting him interested in Kac-Moody groups
over~$\Z$.

\section{Examples}
\label{sec-examples}

\noindent
In this section we give our presentation of $\PSt_A(R)=\St_A(R)$ when
$R$ is a commutative ring and $A=A_1$, $A_2$, $B_2$ or $G_2$.  It is
mostly a writing-out of the general construction in
section~\ref{sec-pre-Steinberg-group}.  Because of the direct limit
property of the pre-Steinberg group
(corollary~\ref{cor-pre-Steinberg-as-direct-limit}), understanding these
cases, together with $\PSt_{A_1^2}=\PSt_{A_1}\times\PSt_{A_1}$, is
enough to present $\PSt_A$ whenever $A$ is $2$-spherical.  As usual,
we are mainly interested in the presentation when $\PSt$ and $\St$
coincide.  This happens  in any of the
cases of theorem~\ref{thm-examples-of-PSt-to-St-being-an-isomorphism}.

For generators we take formal symbols $S$, $S'$, $X(t)$ and $X'(t)$
with~$t$ varying over~$R$.  The primed generators should be omitted in
the $A_1$ case.   We divide the relations into batches $0$
through~$4$, with several intermediate groups having useful descriptions.
At the end of the section we give an overview of these descriptions. 
For now we make only brief remarks.
The batch~$0$ relations make the $S$'s generate something like the
Weyl group. The batch~$1$
relations make the $X(t)$'s additive in~$t$.  The batch~$2$
relations describe the interaction between the $S$'s and the $X(t)$'s.
These are the essentially new component of our approach to Steinberg groups.
The batch~$3$ relations are Chevalley relations, describing
commutators of conjugates of the $X(t)$'s by various words in
the $S$'s.  Finally, the batch~$4$ relations are Steinberg's $A_1$-specific
relations, and relations identifying the $S$'s
with the generators of the
``Weyl group'' inside the Steinberg group.

In the presentations 
we write $x\commutes y$ to indicate that $x$ and $y$ commute.
The notation ``(\& primed)'' next to a relation means to also impose
the relation got from it by the typographical substitution
$S\leftrightarrow S'$ and
$X(t)\leftrightarrow X'(t)$.

\def\andprimed{\rlap{\kern120pt(\& primed)}}
\def\andprimedperiod{\rlap{\kern120pt(\& primed).}}
\def\batchAndLHS#1#2#3{\llap{\hbox to #1 pt{Batch #2:\hfil}}\llap{$#3$}}

\begin{example}[$A_1$]
We take generators $S$ and $X(t)$ with $t$ varying over~$R$.
There are no batch~$0$ or batch~$3$ relations.
\begin{align}
\batchAndLHS{150}{1}{X(t) X(u)}
&{}= X(t+u)
\\
\batchAndLHS{150}{2}{S^2}
&{}\commutes X(t)
\\
\batchAndLHS{150}{4}{S}
&{}=\stilde(1)
\\
\label{eq-action-of-h-on-X-A1-example}
\htilde(r)\cdot X(t)\cdot\htilde(r)^{-1}
&{}=
X(r^2t)
\\
\htilde(r)\cdot S X(t) S^{-1}\cdot\htilde(r)^{-1}
&{}=S X(r^{-2}t)S^{-1}
\end{align}
for all $t,u\in R$ and all~$r$ in the unit group $\Runits$ of~$R$, 
where $\stilde(r):=X(r)\cdot S X(1/r) S^{-1}\cdot X(r)$ and
$\htilde(r):=\stilde(r)\stilde(-1)$.
This is essentially Steinberg's original
presentation (the group $G'$ on p.~78 of \cite{Steinberg}), with a slightly different generating set.
\end{example}

\begin{example}[$A_2$]
We take generators $S$, $S'$, $X(t)$ and $X'(t)$ with $t$ varying
over~$R$.
\begin{align}
\batchAndLHS{120}{0}{S S' S}
&{}=S' S S'
\\
S^2\cdot S'\cdot S^{-2}&{}=\andprimed S'^{-1}
\\
\batchAndLHS{120}{1}{X(t)X(u)}
&{}=\andprimed X(t+u)
\\
\batchAndLHS{120}{2}{S^2}
&{}\commutes\andprimed X(t)
\\
S^2\cdot X'(t)\cdot S^{-2}&{}=\andprimed X'(-t)
\\
S S' X(t)&{}=\andprimed X'(t) S S'
\\
\batchAndLHS{120}{3}{[X(t),X'(u)]}
&{}=\andprimed S X'(t u) S^{-1}
\\
X(t)&\commutes\andprimed S X'(u)S^{-1}
\\
\batchAndLHS{120}{4}{S}
&{}=\andprimedperiod X(1)S X(1)S^{-1} X(1)
\end{align}
As before, $t$ and $u$ vary over $R$.  The diagram
automorphism is given by
$S\leftrightarrow S'$ and $X(t)\leftrightarrow X'(t)$.
\end{example}

\begin{example}[$B_2$]
We take generators $S$, $S'$, $X(t)$ and $X'(t)$ with $t$ varying over $R$. 
Unprimed letters correspond to the short simple root and primed
letters to the long one.  
\begin{align}
\batchAndLHS{120}{0}{S S' S S'}
&{}=S' S S' S
\\
S^2&{}\commutes S'
\\
S'^2\cdot S\cdot S'^{-2}&{}= S^{-1}
\\
\batchAndLHS{120}{1}{X(t) X(u)}
&{}=\andprimed X(t+u)
\\
\batchAndLHS{120}{2}{S^2}
&{}\commutes\andprimed X(t)
\\
S^2&{}\commutes X'(t)
\\
S'^2\cdot X(t)\cdot S'^{-2}&{}=X(-t)
\\
S S' S&{}\commutes\andprimed X'(t)
\\
\batchAndLHS{120}{3}{S X'(t) S^{-1}
}&{}\commutes S' X(u) S'^{-1}
\\
\label{eq-orthogonal-long-roots-for-B2-in-example-section}
X'(t)&{}\commutes S X'(u) S^{-1}
\\
\label{eq-orthogonal-short-roots-for-B2-in-example-section}
[X(t),S' X(u) S'^{-1}]&{}= S X'(-2t u) S^{-1}
\\
[X(t),X'(u)]&{}=
S' X(-t u)S'^{-1}
\cdot
S X'(t^2u) S^{-1}
\\
\label{eq-collapse-relation-for-B2-in-example-section}
\batchAndLHS{120}{4}{S}
&{}=\andprimedperiod X(1)S X(1)S^{-1} X(1)
\end{align}
\end{example}

\def\andprimed{\rlap{\kern120pt(\& primed)}}
\def\andprimedperiod{\rlap{\kern120pt(\& primed).}}

\begin{example}[$G_2$]
We take generators $S$, $S'$, $X(t)$ and $X'(t)$ as in the $B_2$ case.  
\begin{align}
\batchAndLHS{120}{0}{S S' S S' S S'}
&{}=S' S S' S S' S
\\
S^2\cdot S'\cdot S^{-2}&{}=\andprimed S'^{-1}
\\
\batchAndLHS{120}{1}{X(t) X(u)}
&{}=\andprimed X(t+u)
\\
\batchAndLHS{120}{2}{S^2}
&{}\commutes\andprimed X(t)
\\
S^2\cdot X'(t)\cdot S^{-2}&{}=\andprimed X'(-t)
\\
S S' S S' S&{}\commutes\andprimed X'(t)
\\
\label{eq-adjacent-long-roots-of-G2-in-examples-section}
\batchAndLHS{120}{3}{X'(t)}
&{}\commutes S'S X'(u) S^{-1}S'^{-1}
\\
S S' X(t)S'^{-1}S^{-1}&{}\commutes S'S X'(u)S^{-1}S'^{-1}
\\
S X'(t) S^{-1}&{}\commutes S' X(u)S'^{-1}
\\
\label{eq-distant-long-roots-of-G2-in-examples-section}
[X'(t),S X'(u) S^{-1}]&
{}=S'S X'(t u)S^{-1}S'^{-1}
\\
\label{eq-adjacent-short-roots-of-G2-in-examples-section}
\llap{$[X(t),S S' X(u)S'^{-1}S^{-1}]$}
&{}=S X'(3t u) S^{-1}
\\
\label{eq-distant-short-roots-of-G2-in-examples-section}
[X(t),S' X(u)S'^{-1}]
&{}=S S' X(-2t u)S'^{-1}S^{-1}
\\
&\phantom{{}={}}\cdot
S X'(-3t^2u) S^{-1}\cdot
\notag\\
&\phantom{{}={}}\cdot
S'S X'(-3t u^2)S^{-1}S'^{-1}
\notag\\
[X(t),X'(u)]
&{}=
S S' X(t^2u)S'^{-1}S^{-1}
\\
&\phantom{{}={}}\cdot
S' X(-t u)S'^{-1}
\notag\\
&\phantom{{}={}}\cdot
S X'(t^3u) S^{-1}
\notag\\
&\phantom{{}={}}\cdot
S'S X'(-t^3u^2) S^{-1}S'^{-1}
\notag\\
\batchAndLHS{120}{4}{S}
&{}=\andprimedperiod X(1)S X(1)S^{-1} X(1)
\end{align}
\end{example}

Now we explain the meaning of the batches.  The group with generators
$S$ and $S'$, modulo the batch~$0$ relations, is what we call $\What$
in section~\ref{sec-pre-Steinberg-group}.  It is an extension of the
Weyl group~$W$, slightly ``more extended'' than a better-known
extension of $W$ introduced by Tits \cite{Tits-Normalisateurs}.  We
write $\Wstar$ for Tits' extension and discuss it in
section~\ref{sec-W-star}.  ``More extended'' means that $\What\to W$
factors through $\Wstar$.  The kernel of $\Wstar\to W$ is an
elementary abelian $2$-group, while the kernel of $\What\to W$ can be
infinite and nilpotent of class~$2$.  These details are not needed for
a general understanding.

The group with generators $X(t)$ and $X'(t)$, modulo the batch~$1$
relations, is what we call $\G_1(R)$ in
section~\ref{sec-pre-Steinberg-group}.  It is just a free product of
copies of the additive group of $R$, one for each simple root.

The group generated by $S$, $S'$ and the $X(t)$ and $X'(t)$, modulo
the relations from batches $0$ through~$2$, is what we call $\G_2(R)$
in section~\ref{sec-pre-Steinberg-group}.  It is isomorphic to
$\bigl(\freeproduct_{\alpha\in\Phi} R\bigr)\semidirect\What$ by
theorem~\ref{thm-G-2-is-free-product-semidirect-What}, where $\Phi$ is
the set of all roots.  In fact this theorem applies to any generalized
Cartan matrix~$A$.  This is the main technical result of the paper,
and the batch~$2$ relations are the main new ingredient in our
treatment of the Steinberg groups.  Furthermore,
theorem~\ref{thm-G-2-is-free-product-semidirect-What} generalizes to
groups with a root group datum in the sense of
\cite{Tits-twin-buildings}\cite{Caprace-Remy}; see remark~\ref{remark-groups-with-a-root-group-datum}.
This should lead to generalizations of our results with such groups in
place of Kac-Moody groups.

The batch~$3$ relations are a few of the Chevalley relations, written
in a manner due to Demazure; see section~\ref{sec-pre-Steinberg-group}
for discussion and references.  No batch~$3$ relations are present in
the $A_1$ case.  In the $A_2$, $B_2$ and $G_2$ cases, adjoining them
yields $\St(R)\semidirect\What$, by
theorem~\ref{thm-G3-is-spherical-Tits-Steinberg-group-semidirect-What}.
For any generalized Cartan matrix $A$, the corresponding presentation is
called $\G_3(R)$ in section~\ref{sec-pre-Steinberg-group}, and
theorem~\ref{thm-G3-is-spherical-Tits-Steinberg-group-semidirect-What}
asserts that it is isomorphic to $\PStTits(R)\semidirect\What$.
Here $\PStTits$ is the ``pre-'' version of Tits' version of the
Steinberg group. See section~\ref{sec-pre-Steinberg-group} for more
details.

Adjoining the batch~$4$ relations yields the group called $\G_4(R)$ in
section~\ref{sec-pre-Steinberg-group}.  In all four examples this
coincides with $\St_A(R)$.  This result is really the concatenation of
theorem~\ref{thm-G-4-is-PSt}, that $\G_4$ equals $\PSt_A$ (for any~$A$), with the
isomorphism $\PSt_A=\St_A$ when $A$ is spherical.

\section{Diagram automorphisms}
\label{sec-diagram-automorphisms}

\noindent
In this section we specialize our presentations of $\St_{B_2}(R)$ and
$\St_{G_2}(R)$ when the ground ring $R$ has characteristic $2$ or~$3$
respectively.  The exceptional diagram automorphisms are then visible.
These results are not needed later in the paper.

\medskip
We begin with the $B_2$ case, so assume $2=0$ in $R$.  Then
$X(t)=X(-t)$ for all $t$.  In particular, the right side of \eqref{eq-collapse-relation-for-B2-in-example-section}
is its own inverse, so $S$ and $S'$ have order~$2$.  The relations
involving $S^2$ or $S'^2$ are therefore trivial and may be omitted.
Also, the right side of \eqref{eq-orthogonal-short-roots-for-B2-in-example-section} is the
identity, so that \eqref{eq-orthogonal-short-roots-for-B2-in-example-section} is the primed version of \eqref{eq-orthogonal-long-roots-for-B2-in-example-section}.
In summary, the defining relations for $\St$ are now the following,
with $t$ and $u$ varying over $R$.
\def\andprimed{\rlap{\kern150pt(\& primed)}}
\def\andprimedperiod{\rlap{\kern150pt(\& primed).}}
\begin{align}
\label{eq-B2-Artin-relations-automorphisms-section}
S S' S S'&{}=S' S S' S
\\
X(t) X(u) &{}=\andprimed X(t+u)
\\
S S' S&{}\commutes\andprimed X'(t)
\\
\label{eq-B2-close-short-and-long-roots-automorphisms-section}
S X'(t) S^{-1}&{}\commutes S' X(u) S'^{-1}
\\
X'(t)&{}\commutes\andprimed S X'(u) S^{-1}
\\
\label{eq-B2-simple-roots-automorphisms-section}
[X(t),X'(u)]&{}=
S' X(-t u)S'^{-1}
\cdot
S X'(t^2u) S^{-1}
\\
\label{eq-B2-collapse-relation-automorphisms-section}
S&{}=\andprimedperiod X(1)S X(1)S^{-1} X(1)
\end{align}

\begin{theorem}
\label{thm-diagram-automorphisms-in-characteristic-2}
Suppose $R$ is a ring of characteristic~$2$.  Then the map
$S\leftrightarrow S'$, $X'(t)\mapsto X(t)\mapsto X'(t^2)$ extends to
an endomorphism $\phi$ of $\St_{B_2}(R)$.  If $R$ is a perfect field
then $\phi$ is an automorphism.
\end{theorem}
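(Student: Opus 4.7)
The plan is to use the presentation \eqref{eq-B2-Artin-relations-automorphisms-section}--\eqref{eq-B2-collapse-relation-automorphisms-section} of $\St_{B_2}(R)$ and verify, relator by relator, that the substitution defining $\phi$ sends each defining relator to a consequence of the defining relators; this exhibits $\phi$ as a well-defined endomorphism. Most of the verification is bookkeeping. The braid relation \eqref{eq-B2-Artin-relations-automorphisms-section} is symmetric under $S\leftrightarrow S'$. The collapse relation \eqref{eq-B2-collapse-relation-automorphisms-section} is interchanged with its primed companion by $\phi$. Additivity $X(t)X(u)=X(t+u)$ maps to $X'(t^2)X'(u^2)=X'(t^2+u^2)$, which follows from primed additivity together with $(t+u)^2=t^2+u^2$ in characteristic~$2$. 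Each remaining relator from batches $2$ and $3$ either is swapped with its primed counterpart by $\phi$, or is sent to the specialization of its primed counterpart at the parameter value~$t^2$, and so is implied by the universally quantified original.

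The one substantive check is the Chevalley relation \eqref{eq-B2-simple-roots-automorphisms-section}. Applying $\phi$ and using that $-1=1$ in $R$, the task reduces to proving
\[
[X'(t^{2}),X(u)] = S\,X'(t^{2}u^{2})\,S^{-1}\cdot S'\,X(t^{2}u)\,S'^{-1}
\]
in $\St_{B_{2}}(R)$. I would do this by inverting \eqref{eq-B2-simple-roots-automorphisms-section}: in characteristic~$2$ each $X(t)$ and $X'(t)$ satisfies $X(t)^{2}=X(2t)=1$, so all factors appearing on the right-hand side of \eqref{eq-B2-simple-roots-automorphisms-section} are involutions, and the inverse of the full right-hand side is obtained by simply reversing the order of the two conjugate-factors. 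This yields $[X'(u),X(t)]=S\,X'(t^{2}u)\,S^{-1}\cdot S'\,X(tu)\,S'^{-1}$; substituting $t\mapsto u$ and $u\mapsto t^{2}$ matches the displayed target.

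For the final claim, compute $\phi^{2}$ directly on the generators: it fixes $S$ and $S'$ and sends $X(t)\mapsto X(t^{2})$ and $X'(t)\mapsto X'(t^{2})$. Thus $\phi^{2}$ is the endomorphism of $\St_{B_{2}}(R)$ induced functorially by the Frobenius $t\mapsto t^{2}$ of~$R$. When $R$ is a perfect field, Frobenius is an automorphism of~$R$, so $\phi^{2}$ is an automorphism of $\St_{B_{2}}(R)$, forcing $\phi$ to be both injective and surjective. The main obstacle is the Chevalley verification, where the characteristic-$2$ hypothesis is genuinely required in order to balance the squaring of parameters on the two sides of the identity; the remaining relators are pure symbol manipulation, and the automorphism statement is a formal consequence of identifying $\phi^{2}$ with the Frobenius endomorphism.
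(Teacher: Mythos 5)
Your proposal is correct and follows essentially the same route as the paper: verify the relators of the simplified characteristic-$2$ presentation one by one, with the only substantive check being the Chevalley relation \eqref{eq-B2-simple-roots-automorphisms-section}, which both you and the paper handle by inverting the original relation (your observation that the conjugate factors are involutions makes the inversion explicit). For the automorphism claim the paper constructs the inverse-Frobenius endomorphism $\psi$ and notes $\psi\circ\phi\circ\phi=\mathrm{id}$, while you identify $\phi^2$ with the Frobenius endomorphism and invoke functoriality; these are the same idea, and yours is marginally cleaner since it avoids re-verifying the relators for $\psi$.
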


\begin{proof}
One must check that each relation
\eqref{eq-B2-Artin-relations-automorphisms-section}--\eqref{eq-B2-collapse-relation-automorphisms-section}
remains true after the substitution $S\leftrightarrow S'$,
$X'(t)\mapsto X(t)\mapsto X'(t^2)$.  It is easy to check that every
relation maps to its primed form (except that some $t$'s and $u$'s are
replaced by their squares).  The relations \eqref{eq-B2-Artin-relations-automorphisms-section}, \eqref{eq-B2-close-short-and-long-roots-automorphisms-section} and
\eqref{eq-B2-simple-roots-automorphisms-section} are their own primed forms.  Only 
\eqref{eq-B2-simple-roots-automorphisms-section} deserves any comment: we must check the identity
$$
[X'(t^2),X(u)]
=
S X'(t^2u^2) S^{-1}\cdot S' X(t^2u)S'^{-1}
$$
in $\St$.  The left side equals $[X(u),X'(t^2)]^{-1}$.  
The identity follows by expanding the commutator using
\eqref{eq-B2-simple-roots-automorphisms-section}.

Now suppose $R$ is a perfect field.  By a similar argument, one can check that there is an
endomorphism $\psi$ of $\St$ that fixes  $S$ and $S'$, and for
each $t\in R$ sends
$X(t)$ to $X(\sqrt{t})$ and $X'(t)$ to $X'(\sqrt{t})$.  (Because $R$
is a perfect field of characteristic~$2$, square roots
exist and are unique, and $t\mapsto\sqrt{t}$ is a field automorphism.)
Since $\psi\circ\phi\circ\phi$ sends each generator to itself, $\phi$
and $\psi$ must
be isomorphisms.
\end{proof}

Now we consider the $G_2$ case, so suppose $3=0$ in~$R$.  The
main simplifications of section~\ref{sec-examples}'s presentation of $\St$
are that the right side of \eqref{eq-adjacent-short-roots-of-G2-in-examples-section} is the identity, so \eqref{eq-adjacent-short-roots-of-G2-in-examples-section} is
the primed version of \eqref{eq-adjacent-long-roots-of-G2-in-examples-section}, and that the last two terms on the
right of \eqref{eq-distant-short-roots-of-G2-in-examples-section} are trivial, so that \eqref{eq-distant-short-roots-of-G2-in-examples-section} is the primed version
of \eqref{eq-distant-long-roots-of-G2-in-examples-section}.  So the
relations simplify to
\def\andprimed{\rlap{\kern120pt(\& primed)}}
\def\andprimedperiod{\rlap{\kern120pt(\& primed).}}
\begin{align}
S S' S S' S S' ={}&S' S S' S S' S
\\
\label{eq-S-squared-action-on-S-prime-G2-automorphisms-section}
S^2\cdot S'\cdot S^{-2}={}&\andprimed S'^{-1}
\\
X(t) X(u) ={}&\andprimed X(t+u)
\\
\label{eq-S-squared-action-on-X-G2-automorphisms-section}
S^2\commutes{}&\andprimed X(t)
\\
\label{eq-S-squared-action-on-X-prime-G2-automorphisms-section}
S^2\cdot X'(t)\cdot S^{-2}={}&\andprimed X'(-t)
\\
\label{eq-word-in-S-commutes-with-X-prime-G2-automorphisms-section}
S S' S S' S\commutes{}&\andprimed X'(t)
\\
\label{eq-adjacent-long-roots-G2-automorphisms-section}
X'(t)\commutes{}&\andprimed S'S X'(u) S^{-1}S'^{-1}
\\
S S' X(t)S'^{-1}S^{-1}\commutes{}& S'S X'(u)S^{-1}S'^{-1}
\\
S X'(t) S^{-1}\commutes{}& S' X(u)S'^{-1}
\\
[X'(t),S X'(u) S^{-1}]={}&\andprimed
S'S X'(t u)S^{-1}S'^{-1}
\\
\label{eq-simple-roots-G2-automorphisms-section}
[X(t),X'(u)]
={}&
S S' X(t^2u)S'^{-1}S^{-1}
\\
&\cdot
S' X(-t u)S'^{-1}
\notag\\
&\cdot
S X'(t^3u) S^{-1}
\notag\\
&\cdot
S'S X'(-t^3u^2) S^{-1}S'^{-1}
\notag\\
S={}&\andprimedperiod X(1)S X(1)S^{-1} X(1)
\end{align}
The following theorem is proven just like the previous one.

\begin{theorem}
\label{thm-diagram-automorphisms-in-characteristic-3}
Suppose $R$ is a ring of characteristic~$3$.  Then the map
$S\leftrightarrow S'$, $X'(t)\mapsto X(t)\mapsto X'(t^3)$ extends to
an endomorphism $\phi$ of $\St_{G_2}(R)$.  If $R$ is a perfect field
then $\phi$ is an automorphism.
\qed
\end{theorem}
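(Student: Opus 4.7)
The plan is to mirror the proof of theorem~\ref{thm-diagram-automorphisms-in-characteristic-2} line by line: check that each defining relation \eqref{eq-S-squared-action-on-S-prime-G2-automorphisms-section}--\eqref{eq-simple-roots-G2-automorphisms-section} (plus the Artin and collapse relations) remains a consequence of the relations after applying the substitution $\phi\colon S\leftrightarrow S'$, $X(t)\mapsto X'(t^3)$, $X'(t)\mapsto X(t)$. The observation that makes almost all of the verifications routine is that $\phi$ is essentially the ``prime'' substitution combined with cubing the argument of each unprimed~$X$. So every relation marked ``(\& primed)'' in section~\ref{sec-examples} either maps to its own primed counterpart (this happens for those whose RHS involves $X'$ but not $X$, since $X'(\cdot)\mapsto X(\cdot)$ without cubing) or maps to its primed counterpart with the arguments of unprimed $X$'s cubed, which is itself a valid instance of the primed relation because cubing is an endomorphism of the additive group of $R$ in characteristic~$3$. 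The Artin braid relation and the two self-primed commutation relations \eqref{eq-adjacent-long-roots-G2-automorphisms-section}ff.\ are preserved trivially, by symmetry of commutation.

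The only nontrivial check is the Chevalley commutator relation \eqref{eq-simple-roots-G2-automorphisms-section}, which under~$\phi$ becomes the claim that $[X'(t^3),X(u)]$ equals
\[
S'SX'(t^{6}u^{3})S^{-1}S'^{-1}\cdot SX'(-t^{3}u^{3})S^{-1}\cdot S'X(t^{3}u)S'^{-1}\cdot SS'X(-t^{3}u^{2})S'^{-1}S^{-1}.
\]
To verify this, I would rewrite the LHS as $[X(u),X'(t^{3})]^{-1}$, expand using \eqref{eq-simple-roots-G2-automorphisms-section} with $(t,u)$ replaced by $(u,t^{3})$, and then invert the resulting four-term product by reversing its order and negating each additive argument (using batch~$1$). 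Commutativity of~$R$ and the identities $(xy)^{3}=x^{3}y^{3}$ and $(-1)^{3}=-1$ then make the two expressions agree term by term. This matching is the characteristic-$3$ coincidence that underlies the existence of the Ree groups, and is the main (essentially only) obstacle in the proof.

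For the automorphism statement, I would argue exactly as in theorem~\ref{thm-diagram-automorphisms-in-characteristic-2}. Assuming $R$ is a perfect field of characteristic~$3$, the map $t\mapsto t^{1/3}$ is a well-defined field automorphism. Define an endomorphism $\psi$ of $\St_{G_2}(R)$ by fixing~$S$ and~$S'$ and sending $X(t)\mapsto X(t^{1/3})$, $X'(t)\mapsto X'(t^{1/3})$; well-definedness follows from the same check as for~$\phi$, since every defining relation becomes an instance of itself under the Frobenius-twisted substitution. Then $\psi\circ\phi\circ\phi$ sends each of the generators $S$, $S'$, $X(t)$, $X'(t)$ to itself, so $\phi^{2}$ has a two-sided inverse and $\phi$ must be an automorphism.
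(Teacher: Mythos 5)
Your proposal is correct and follows exactly the route the paper intends: the paper's proof of this theorem is just ``proven just like the previous one,'' i.e.\ the argument of theorem~\ref{thm-diagram-automorphisms-in-characteristic-2}, and you have filled in the details faithfully, including the one genuinely nontrivial verification (inverting \eqref{eq-simple-roots-G2-automorphisms-section} with $(t,u)\mapsto(u,t^3)$ and matching terms, which works because the last four factors were ordered with this symmetry in mind) and the $\psi\circ\phi\circ\phi=\mathrm{id}$ trick with cube roots for the perfect-field case.
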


The exceptional diagram automorphisms lead to the famous Suzuki and
Ree groups.  If $R$ is the finite field $\F_{q}$ where $q=2^{\rm odd}$, then the Frobenius automorphism of $R$ (namely squaring) is
the square of a field automorphism $\xi$.  Writing $\xi$ also for
the induced automorphism of $\St_{B_2}(R)$, the Suzuki group is
defined as the subgroup where $\xi$ agrees with $\phi$.  The same
construction with $F_4$ in place of $B_2$ yields the large Ree groups,
and in characteristic~$3$ with $G_2$ yields the small Ree groups.
These groups are ``like'' groups of Lie type in that they admit root
group data in the sense of \cite{Tits-twin-buildings} or \cite{Caprace-Remy}, but they are not algebraic groups.

H\'ee generalized this.  He showed in \cite{Hee} that when a group with a
root group datum  admits two automorphisms that
permute the simple roots' root groups, and satisfy some other natural
conditions, then the subgroup where they coincide also admits a root
group datum.  Furthermore, the Weyl group for the subgroup may be
computed in a simple way from the Weyl group for the containing group.
For example, over $\F_q$ with $q=2^{\rm odd}$, the Kac-Moody group
\begin{equation*}
\begin{tikzpicture}[line width=1pt, scale=1]
\draw (-2,0) -- (-1.5,0);
\draw[dashed] (-1.5,0) -- (-.5,0);
\draw (-.5,0)--(2,0);
\draw (2,.04) -- (3,.04);
\draw (2,-.04) -- (3,-.04);
\draw (3,0)--(5.5,0);
\draw[dashed] (5.5,0)--(6.5,0);
\draw (6.5,0) -- (7,0);
\fill (-2,0) circle (.1);
\fill (0,0) circle (.1);
\fill (1,0) circle (.1);
\fill (2,0) circle (.1);
\fill (3,0) circle (.1);
\fill (4,0) circle (.1);
\fill (5,0) circle (.1);
\fill (7,0) circle (.1);
\draw (2.42,.16) -- (2.58,0) -- (2.42,-.16);
\end{tikzpicture}
\end{equation*}
contains a Kac-Moody-like analogue of the  Suzuki groups.  By H\'ee's
theorem,  its Weyl group is 
$$
\begin{tikzpicture}[line width=1pt, scale=1]
\draw (-2,0) -- (.5,0);
\draw[dashed] (.5,0) -- (1.5,0);
\draw (1.5,0)--(2,0);
\fill (-2,0) circle (.1);
\fill (-1,0) circle (.1);
\fill (0,0) circle (.1);
\fill (2,0) circle (.1);
\node at (-1.5,.2) {8};
\end{tikzpicture}
$$ In \cite{Hee-announcement}, H\'ee constructs diagram automorphisms
in a different way than we do, and discusses the case ``$G_4$'' in some
detail.

\section{The Kac-Moody algebra}
\label{sec-Kac-Moody-algebra}

\newcommand{\mylabel}[2]{\leavevmode\llap{#1\ }{#2}\par}%
\setbox0\hbox{$W{=}W(M)$\ }

\noindent
In this section we begin the technical part of the paper, by recalling
the Kac-Moody algebra and some notation from \cite{Tits}.  All group
actions are on the left.  We will use the following general notation.

\begingroup
\leftskip=\wd0
\parindent=0pt
\mylabel{$\pairing{\,}{}$}{a bilinear pairing}
\mylabel{$\gend{\ldots}$}{a group generated by the elements enclosed}
\mylabel{$\presentation{\ldots}{\ldots}$}{a group presentation}
\mylabel{$[x,y]$}{$x y x^{-1}y^{-1}$ if $x$ and $y$ are group elements}
\mylabel{$\freeproduct$}{free product of groups (possibly with amalgamation)}
\endgroup

\smallskip
\noindent
The Steinberg group is built from a
generalized Cartan matrix $A$:

\begingroup
\leftskip=\wd0
\parindent=0pt
\mylabel{$I$}{an index set (the nodes of Dynkin diagram)}
\mylabel{$i,j$}{will always indicate elements of $I$}
\mylabel{$A{=}(A_{i j})$}{a generalized Cartan matrix: an
  integer matrix
satisfying $A_{i i}=2$, $A_{i j}\leq0$ if $i\neq j$, and $A_{i j}=0\iff A_{j i}=0$}
\mylabel{$m_{i j}$}{numerical edge labels of the Dynkin diagram:
$m_{i j}=2$, $3$, $4$, $6$ or $\infty$
according to whether $A_{i j}A_{j i}=0$,
$1$, $2$, $3$ or${}\geq4$, except that
$m_{i i}=1$.}
\mylabel{$W$}{the Coxeter group
$\presentation{s_{i\in I}}{\hbox{$(s_is_j)^{m_{i j}}=1$ if $m_{i j}\neq\infty$}}$}
\mylabel{$\ZI$}{the free abelian group with basis $\alpha_{i\in I}$ (the simple roots).  $W$
acts 
on $\ZI$ by $s_i(\alpha_j)=\alpha_j-A_{i j}\alpha_i$.  This action is faithful by the
theory of the Tits cone \cite[V\S4.4]{Bourbaki}.}
\mylabel{$\Phi$}{the set of (real) roots: all $w\alpha_i$ with $w\in W$ and $i\in I$}
\endgroup

\medskip\noindent The Kac-Moody algebra $\g=\g_A$ associated to $A$ means the
complex Lie algebra with generators $e_{i\in I}$, $f_{i\in I}$,
$\barh_{i\in I}$ and defining relations
\begin{gather*}
[\barh_i,e_j]=A_{i j}e_j,
\quad
[\barh_i,f_i]=-A_{i j}f_j,
\quad
[\barh_i,\barh_j]=0,
\quad
[e_i,f_i]=-\barh_i,
\\
\hbox{for $i\neq j$:}
\quad
[e_i,f_j]=0,
\quad
(\ad e_i)^{1-A_{i j}}(e_j)=(\ad f_i)^{1-A_{i j}}(f_j)=0.
\end{gather*}
(Note: $(\ad x)(y)$ means $[x,y]$.  Also, Tits' generators differ from
Kac' generators \cite{Kac-IDLA} by a sign on $f_i$.)  For any $i$ the linear
span of $e_i$, $f_i$ and $\barh_i$ is isomorphic to $\sltwo\C$, via
\begin{equation}
\label{eq-standard-basis-for-sl2}
e_i=\begin{pmatrix}0&1\\0&0\end{pmatrix}
\qquad
f_i=\begin{pmatrix}0&0\\-1&0\end{pmatrix}
\qquad
\barh_i=\begin{pmatrix}1&0\\0&-1\end{pmatrix}.
\end{equation}
We equip $\g$ with a grading by $\ZI$, with $\barh_i\in \g_0$, $e_i\in \g_{\alpha_i}$ and
$f_i\in \g_{-\alpha_i}$.  For $\alpha\in\Z^I$ we refer to $\g_\alpha$ as its root space, and abbreviate $\g_{\alpha_i}$ to 
$\g_i$.   We follow Tits \cite{Tits} in
saying ``root'' 
for ``real root'' (meaning an element of~$\Phi$).
Imaginary roots play no role in this paper.

\section{The extension $\Wstar\sset\Aut\g$ of the Weyl group}
\label{sec-W-star}

\noindent
The Weyl group $W$ does not necessarily act on $\g$, but a certain
extension of it called $\Wstar$ does.  In this section we review its basic properties.  The
results through theorem~\ref{thm-Artin-relations-in-W-star} are due to
Tits.  The last result is new: it describes the root stabilizers in
$\Wstar$.  The proof relies on Brink's study of reflection centralizers
in Coxeter groups \cite{Brink}, in the form given in \cite{Allcock-centralizers}.

It is standard \cite[lemma~3.5]{Kac-IDLA} that $\ad e_i$ and $\ad f_i$ are
locally nilpotent on $\g$, so their exponentials are automorphisms  of $\g$.
Furthermore, 
\begin{equation}
\label{eq-definition-of-s-star}
\begin{split}
(\exp\ad e_i)(\exp\ad f_i)&(\exp\ad e_i)
\\{}=
&(\exp\ad f_i)(\exp\ad e_i)(\exp\ad f_i).
\end{split}
\end{equation}
We write $\sstar_i$ for this element of $\Aut \g$ and $W^*$ for
$\gend{\sstar_{i\in I}}\sset\Aut \g$.  One shows
\cite[lemma~3.8]{Kac-IDLA} that $\sstar_i(\g_\alpha)=\g_{s_i(\alpha)}$ for all
$\alpha\in\ZI$.  This defines a $\Wstar$-action on $\ZI$, with $\sstar_i$
acting as $s_i$.  Since $W$ acts faithfully on $\ZI$ this yields a
homomorphism $\Wstar\to W$.  Using $\Wstar$, the general theory
\cite[prop.~5.1]{Kac-IDLA} shows that $\g_\alpha$ is $1$-dimensional for any
$\alpha\in\Phi$.

Let $\ZIvee$ be the free abelian group with basis the formal symbols
$\alpha_{i\in I}^\vee$ and define a bilinear pairing $\ZIvee\times\ZI\to\Z$ by
$\pairing{\alpha_i^\vee}{\alpha_j}=A_{i j}$.  
We define an action
of $W$ on $\ZIvee$ by $s_i(\alpha_j^\vee)=\alpha_j^\vee-A_{j i}\alpha_i^\vee$.  One
can check that this action satisfies
$\pairing{w\alpha^\vee}{w \beta}=\pairing{\alpha^\vee}{\beta}$.  
There is a homomorphism $\Ad:\ZIvee\to\Aut \g$, with
$\Ad(\alpha^\vee)$ acting on $\g_\beta$ by $(-1)^{\pairing{\alpha^\vee}{\beta}}$, where
$\beta\in\ZI$.  The proof of the next lemma is easy and standard.

\begin{lemma}
\label{lem-map-from-ZIvee-to-scalars-is-W-star-equivariant}
$\Ad:\ZIvee\to\Aut \g$ is $\Wstar$-equivariant in the sense that
$\wstar\cdot\Ad(\alpha^\vee)\cdot{\wstar}^{-1}=\Ad(w \alpha^\vee)$, where
$\alpha^\vee\in\ZIvee$  and $w$ is the image in $W$ of $\wstar\in\Wstar$.
\qed
\end{lemma}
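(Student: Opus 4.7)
The plan is a double reduction to generators followed by a short eigenvalue computation on root spaces. First, fix $\wstar\in\Wstar$ with image $w\in W$. As $\alpha^\vee$ ranges over $\ZIvee$, the map $\alpha^\vee\mapsto\wstar\cdot\Ad(\alpha^\vee)\cdot(\wstar)^{-1}$ is a homomorphism $\ZIvee\to\Aut\g$ (conjugation by a fixed element is a homomorphism of $\Aut\g$, and $\Ad$ is a homomorphism), and likewise $\alpha^\vee\mapsto\Ad(w\alpha^\vee)$ is a homomorphism (since $w$ acts linearly on $\ZIvee$ and $\Ad$ is a homomorphism). So it suffices to verify the identity on each generator $\alpha^\vee=\alpha_j^\vee$. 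A parallel observation in the $\wstar$-variable shows that once the identity is known for $\wstar=\sstar_i$, it propagates to arbitrary products: if it holds for $\wstar_1,\wstar_2$ with images $w_1,w_2$, then
\[
\wstar_1\wstar_2\cdot\Ad(\alpha^\vee)\cdot(\wstar_1\wstar_2)^{-1}
=\wstar_1\cdot\Ad(w_2\alpha^\vee)\cdot\wstar_1^{-1}
=\Ad(w_1 w_2\alpha^\vee),
\]
using that $\Wstar\to W$ is a homomorphism.

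Thus everything reduces to the single identity $\sstar_i\cdot\Ad(\alpha_j^\vee)\cdot(\sstar_i)^{-1}=\Ad(s_i\alpha_j^\vee)$ in $\Aut\g$. Both sides preserve the root grading and act by a scalar on each $\g_\beta$, so it suffices to match scalars. The right side scales $\g_\beta$ by $(-1)^{\pairing{s_i\alpha_j^\vee}{\beta}}$ by definition of $\Ad$. For the left side, recall from the previous section that $\sstar_i$ carries $\g_\gamma$ to $\g_{s_i\gamma}$; hence the composite first sends $\g_\beta$ to $\g_{s_i\beta}$ via $(\sstar_i)^{-1}$ (using $s_i^2=1$), scales by $(-1)^{\pairing{\alpha_j^\vee}{s_i\beta}}$, then returns to $\g_\beta$, with eigenvalue $(-1)^{\pairing{\alpha_j^\vee}{s_i\beta}}$.

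Equality of the two scalars is immediate from the $W$-invariance $\pairing{w\alpha^\vee}{w\beta}=\pairing{\alpha^\vee}{\beta}$ recorded just before the lemma, applied with $w=s_i$. There is no real obstacle here; the whole argument is bookkeeping, and the only input beyond definitions is the $W$-invariance of the pairing. The step most prone to a slip is keeping track of which side of the conjugation acts first on a given $\g_\beta$, which is why I would write out the computation in the order $\g_\beta\to\g_{s_i\beta}\to\g_{s_i\beta}\to\g_\beta$ to make the scalar visible.
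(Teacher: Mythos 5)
Your proof is correct. The paper offers no argument here --- it declares the lemma ``easy and standard'' and ends with \qed --- and your verification (reduce to the generators $\alpha_j^\vee$ and $\sstar_i$, compare the scalars by which both sides act on each graded piece $\g_\beta$ using $\sstar_i(\g_\beta)=\g_{s_i\beta}$, and invoke the $W$-invariance of the pairing recorded just before the lemma) is exactly the standard computation being alluded to.
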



\begin{samepage}
\begin{lemma}
\label{lem-relations-on-s-stars}
The following identities hold in $\Aut\g$.
\begin{enumerate}
\item
\label{item-action-of-s-star-squared}
$\sstar_i{}^2=\Ad(\alpha_i^\vee)$.
\item
\label{item-W-conjugacy-action-on-s-star-squared}
$\sstar_i({\sstar_j})^2\sstar_i{}^{-1}=(\sstar_j)^2(\sstar_i)^{-2A_{j i}}$.
\end{enumerate}
\end{lemma}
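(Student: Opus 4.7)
The plan is to prove part \ref{item-action-of-s-star-squared} directly by integrating the $\ad$-action of the $\sltwo$-subalgebra $\gend{e_i,f_i,\barh_i}\sset\g$ to an $\SL_2(\C)$-action on finite-dimensional invariant subspaces, and then to deduce part \ref{item-W-conjugacy-action-on-s-star-squared} as a formal consequence of \ref{item-action-of-s-star-squared} together with lemma~\ref{lem-map-from-ZIvee-to-scalars-is-W-star-equivariant}.

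For \ref{item-action-of-s-star-squared}, the starting point is that $\ad e_i$ and $\ad f_i$ are locally nilpotent on $\g$, so every element of $\g$ lies in a finite-dimensional submodule for the $\ad$-action of $\gend{e_i,f_i,\barh_i}$. On such a submodule the $\ad$-representation integrates to an algebraic $\SL_2(\C)$-representation. Under the identification \eqref{eq-standard-basis-for-sl2}, the three factors in \eqref{eq-definition-of-s-star} correspond to the unipotent $\SL_2(\C)$-matrices $\bigl(\begin{smallmatrix}1&1\\0&1\end{smallmatrix}\bigr)$, $\bigl(\begin{smallmatrix}1&0\\-1&1\end{smallmatrix}\bigr)$ and $\bigl(\begin{smallmatrix}1&1\\0&1\end{smallmatrix}\bigr)$, whose product is $w:=\bigl(\begin{smallmatrix}0&1\\-1&0\end{smallmatrix}\bigr)$. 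Hence $\sstar_i{}^2$ is the image of $w^2=-I$, which acts on any weight-$n$ vector of an $\SL_2(\C)$-representation by $(-1)^n$. Since the $\barh_i$-weight on $\g_\alpha$ is $\pairing{\alpha_i^\vee}{\alpha}$ (immediate by induction from $[\barh_i,e_j]=A_{ij}e_j$), the element $\sstar_i{}^2$ acts on $\g_\alpha$ by $(-1)^{\pairing{\alpha_i^\vee}{\alpha}}$, which is exactly the defining action of $\Ad(\alpha_i^\vee)$.

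For \ref{item-W-conjugacy-action-on-s-star-squared}, conjugate the identity $(\sstar_j)^2=\Ad(\alpha_j^\vee)$ from \ref{item-action-of-s-star-squared} by $\sstar_i$ and apply lemma~\ref{lem-map-from-ZIvee-to-scalars-is-W-star-equivariant} to obtain
\[
\sstar_i(\sstar_j)^2\sstar_i{}^{-1}=\Ad(s_i\alpha_j^\vee)=\Ad(\alpha_j^\vee-A_{ji}\alpha_i^\vee).
\]
Since $\Ad:\ZIvee\to\Aut\g$ is a homomorphism and $\Ad(\alpha_i^\vee)=\sstar_i{}^2$ by \ref{item-action-of-s-star-squared}, the right-hand side rewrites as $(\sstar_j)^2(\sstar_i)^{-2A_{ji}}$, as required.

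The only non-formal step is the integration of the $\ad$-representation of $\gend{e_i,f_i,\barh_i}$ to $\SL_2(\C)$ on each finite-dimensional invariant subspace; this is standard once local nilpotence is in hand, so no serious difficulty is expected. The place that deserves a careful check is the sign bookkeeping, specifically that Tits' convention $f_i=-E_{21}$ in \eqref{eq-standard-basis-for-sl2} produces $w$ (rather than $-w$) and hence the correct sign in the formula $\sstar_i{}^2=\Ad(\alpha_i^\vee)$.
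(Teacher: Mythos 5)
Your proof is correct and follows essentially the same route as the paper's own (sketched) argument: part \ref{item-action-of-s-star-squared} by identifying $\sstar_i{}^2$ with $-I\in\SL_2\C$ via \eqref{eq-standard-basis-for-sl2} and reading off its action on weight spaces, and part \ref{item-W-conjugacy-action-on-s-star-squared} as a formal consequence of \ref{item-action-of-s-star-squared}, lemma~\ref{lem-map-from-ZIvee-to-scalars-is-W-star-equivariant}, and the formula for $s_i(\alpha_j^\vee)$. You simply supply more of the detail (the integration to $\SL_2(\C)$ and the sign check on the unipotent factors) that the paper leaves implicit.
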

\end{samepage}

\begin{proof}[Proof sketch]
\ref{item-action-of-s-star-squared} 
Identifying the span of $e_i,f_i,\barh_i$ with
$\sltwo\C$ as in \eqref{eq-standard-basis-for-sl2}
identifies $\sstar_i{}^2$ with
$\bigl(\begin{smallmatrix}-1&0\\0&-1\end{smallmatrix}\bigr)\in\SL_2\C$.  
  One uses the representation theory of $\SL_2\C$ to see how this acts
  on $\g$'s weight spaces.

\ref{item-W-conjugacy-action-on-s-star-squared} uses
\ref{item-action-of-s-star-squared} to identify ${\sstar_j}^2$ with
$\Ad(\alpha_j^\vee)$, then
lemma~\ref{lem-map-from-ZIvee-to-scalars-is-W-star-equivariant} to identify
$\sstar_i\Ad(\alpha_j^\vee){\sstar_i}^{-1}$ with $\Ad(s_i(\alpha_j^\vee))$,
then the formula defining $s_i(\alpha_j^\vee)$, and finally
\ref{item-action-of-s-star-squared} again to convert back to ${\sstar_i}^2$ and ${\sstar_j}^2$.
\end{proof}

To understand the relations satisfied by the $\sstar_i$
it will be useful to have
a characterization of them in terms of the choice of $e_i$
(together with the grading on $\g$).  
This is part of Tits' ``trijection'' \cite[\S1.1]{Tits-on-structure-constants}.
In the notation of the following
lemma, $\sstar_i$ is $\sstar_{e_i}$ (or equally well~$\sstar_{\!f_i}$).

\begin{lemma}
\label{lem-characterization-of-s-star-in-terms-of-e}
If $\alpha\in\Phi$ and $e\in \g_\alpha-\{0\}$ then there exists a unique $f\in
\g_{-\alpha}$ such that
$$
\sstar_e:=(\exp\ad e)(\exp\ad f)(\exp\ad e)
$$ exchanges $\g_{\pm\alpha}$.  Furthermore, $s_e^*$ coincides with
$s_f^*$ and exchanges $e$ and $f$.  Finally,
if $\phi\in\Aut
\g$ permutes the $\g_{\beta\in\Phi}$ then
$\phi\sstar_e\phi^{-1}=\sstar_{\phi(e)}$.
\qed
\end{lemma}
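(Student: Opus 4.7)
The plan is to reduce every claim to an explicit computation inside the copy of $\sltwo\C$ spanned by $e$, $\g_{-\alpha}$, and $[e,\g_{-\alpha}]$. Because $\alpha$ is a real root, $\g_{\pm\alpha}$ are each one-dimensional and $[\g_\alpha,\g_{-\alpha}]$ is a one-dimensional subspace of $\g_0$, so I first fix a nonzero $f_0\in\g_{-\alpha}$ scaled so that, with $h:=-[e,f_0]$, the triple $(e,f_0,h)$ realizes the standard basis \eqref{eq-standard-basis-for-sl2} of $\sltwo\C$ in Tits' convention. Any candidate $f\in\g_{-\alpha}$ is then $\lambda f_0$ for a unique $\lambda\in\C$, so the task of identifying $f$ becomes the task of pinning down~$\lambda$.

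For existence and uniqueness I would compute $\sstar_e(e)$ directly for the ansatz $f=\lambda f_0$, using that the $\ad(\lambda f_0)$ and $\ad(e)$ series terminate after two steps on the adjoint $\sltwo$-representation. The resulting expression is a polynomial in $\lambda$ with coefficients on the basis $\{e,h,f_0\}$; requiring that it lie in $\g_{-\alpha}$ eliminates the $e$- and $h$-components, and I expect these two vanishing conditions to force $\lambda=1$ uniquely. Once $\lambda=1$ is in hand, $\sstar_e$ coincides with the image of $\bigl(\begin{smallmatrix}0&1\\-1&0\end{smallmatrix}\bigr)\in\SL_2\C$ under the integrated adjoint $\sltwo$-action on $\g$, which immediately gives both that $\sstar_e$ exchanges $\g_{\pm\alpha}$ and that it swaps $e$ with~$f$.

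The equality $\sstar_e=\sstar_f$ then reduces to the classical $\SL_2$ braid identity $\exp(e)\exp(f)\exp(e)=\exp(f)\exp(e)\exp(f)$, transported to $\Aut\g$ via the integrated adjoint action; alternatively, the right-hand expression already has the form prescribed by the definition of $\sstar_f$, so the uniqueness step finishes the job. For the conjugation identity, I would apply the naturality formula $\phi\circ\exp(\ad x)\circ\phi^{-1}=\exp(\ad\phi(x))$ termwise to rewrite $\phi\sstar_e\phi^{-1}$ as $(\exp\ad\phi(e))(\exp\ad\phi(f))(\exp\ad\phi(e))$. Since $\phi$ is a Lie algebra automorphism permuting the real root spaces, it carries the $\sltwo$-triple $(e,f,h)$ to an $\sltwo$-triple based at $\phi(e)\in\g_{\sigma(\alpha)}$, forcing $\phi(f)\in\g_{-\sigma(\alpha)}$; uniqueness then identifies the conjugate with $\sstar_{\phi(e)}$.

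I expect the main obstacle to be the bookkeeping in the $\sltwo$ calculation for uniqueness: one must keep Tits' sign convention $[e,f_0]=-h$ straight (it differs from Kac's) so that the two scalar equations extracted from the $e$- and $h$-components have $\lambda=1$ as their \emph{only} common solution. Everything else is a formal consequence of either a $2\times 2$ matrix identity or the naturality of $\exp\ad$.
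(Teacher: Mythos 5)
Your proposal is correct. Note that the paper does not actually prove this lemma: it is stated with a reference to Tits' ``trijection'' in \cite[\S1.1]{Tits-on-structure-constants}, so your explicit $\sltwo$ computation is supplying the argument the paper delegates to the literature, and it is the standard one. The key reduction is sound: since $e$ and any candidate $f=\lambda f_0$ lie in the subalgebra $(\sltwo)_\alpha=\g_\alpha\oplus[\g_\alpha,\g_{-\alpha}]\oplus\g_{-\alpha}$, the operators $\ad e$ and $\ad f$ preserve it, so the condition of exchanging $\g_{\pm\alpha}$ is read off from the restriction there; the matrix $\exp(e)\exp(\lambda f_0)\exp(e)=\bigl(\begin{smallmatrix}1-\lambda&2-\lambda\\-\lambda&1-\lambda\end{smallmatrix}\bigr)$ conjugates $e$ to $(1-\lambda)^2e+\lambda^2f_0-(1-\lambda)\lambda h$, whose $e$-component alone already forces $\lambda=1$, and $\lambda=1$ gives $\bigl(\begin{smallmatrix}0&1\\-1&0\end{smallmatrix}\bigr)$, which swaps $e$ and $f$. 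Two points deserve slightly more care than your sketch gives them. First, for $\sstar_e=\sstar_f$ on \emph{all} of $\g$ (not just on the $\sltwo$ copy) you do need the braid identity transported through the integrated action, which in turn uses that $\g$ is an integrable $(\sltwo)_\alpha$-module; your ``alternatively'' via uniqueness only identifies $\sstar_f$ with $(\exp\ad f)(\exp\ad e)(\exp\ad f)$ and does not by itself equate that with $\sstar_e$ as automorphisms of $\g$. Second, the step ``forcing $\phi(f)\in\g_{-\sigma(\alpha)}$'' needs the observation that $\sigma(-\alpha)=-\sigma(\alpha)$: this follows because $\phi(h)=-[\phi(e),\phi(f)]\in\g_{\sigma(\alpha)+\sigma(-\alpha)}$ has $\phi(e)$ as an $\ad$-eigenvector with eigenvalue $2$, which is impossible unless $\sigma(\alpha)+\sigma(-\alpha)=0$. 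Both are routine, and with them your argument is complete.
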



\begin{lemma}
\label{lem-s-star-action-on-e-i}
\leavevmode\hbox{}
\begin{enumerate}
\item
\label{item-root-moving-m=3}
If $m_{i j}=3$ then $\sstar_j\sstar_i(e_j)=e_i$.  
\item
\label{item-root-moving-m=2-4-or-6}
If $m_{i j}=2$, $4$ or $6$
then $e_j$ is fixed by
$\sstar_i$,
$\sstar_i\sstar_j\sstar_i$ or 
$\sstar_i\sstar_j\sstar_i\sstar_j\sstar_i$ respectively.
\end{enumerate}
\end{lemma}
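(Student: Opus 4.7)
The plan is to verify each case by working inside the rank-$2$ Kac--Moody subalgebra $\g_{A'}\sset\g$ generated by $e_i,f_i,\barh_i,e_j,f_j,\barh_j$, where $A'$ is the $2\times2$ principal submatrix of $A$; this $\g_{A'}$ is the finite-dimensional simple Lie algebra of type $A_1\oplus A_1$, $A_2$, $B_2$, or $G_2$ according as $m_{ij}=2,3,4$, or $6$. In each case the element $\sstar_i\cdots(e_j)$ at issue lies in a $1$-dimensional root space $\g_\beta$ (the one determined by applying the image $s_i\cdots\in W$ to $\alpha_j$), so only a scalar remains to be pinned down.

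For $m_{ij}=2$ I would observe that the Serre relation forces $[e_i,e_j]=0$; the defining relation $[e_j,f_i]=0$ (for $i\neq j$) gives $[f_i,e_j]=0$; and $[\barh_i,e_j]=A_{ij}e_j=0$. Thus $\ad e_i$ and $\ad f_i$ both annihilate $e_j$, so all three factors in $\sstar_i$ fix it.

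For $m_{ij}=3$ my approach is to check directly that $V_j:=\C e_j\oplus\C[e_i,e_j]$ is an $\sltwo$-submodule under the triple $\{e_i,f_i,\barh_i\}$: $\barh_i$ has weights $\mp1$ on the two summands, $\ad e_i$ sends $e_j\mapsto[e_i,e_j]\mapsto0$ (the second equality by Serre), and $\ad f_i$ sends $[e_i,e_j]\mapsto-e_j$ and $e_j\mapsto 0$ (using Jacobi together with $[e_i,f_i]=-\barh_i$ and $[f_i,e_j]=0$). Multiplying out the three exponentials in $\sstar_i$ on this standard $2$-dimensional representation then yields $\sstar_i(e_j)=[e_i,e_j]$. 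The symmetric argument gives $\sstar_j(e_i)=[e_j,e_i]$, and applying $\sstar_j$ once more uses lemma~\ref{lem-relations-on-s-stars}\ref{item-action-of-s-star-squared}: $\sstar_j^{\,2}=\Ad(\alpha_j^\vee)$ acts on $\g_{\alpha_i}$ as $(-1)^{A_{ji}}=-1$, so $\sstar_j([e_j,e_i])=-e_i$. Combining via $[e_i,e_j]=-[e_j,e_i]$ gives $\sstar_j\sstar_i(e_j)=e_i$.

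The cases $m_{ij}=4$ and $6$ would proceed by the same recipe, but now the $\sltwo$-string through $e_j$ under $\{e_i,f_i,\barh_i\}$ is $(1-A_{ij})$-dimensional, spanned by $(\ad e_i)^k(e_j)$ for $0\leq k\leq-A_{ij}$, and forms the corresponding irreducible $\sltwo$-module; likewise for the $\{e_j,f_j,\barh_j\}$-string through $e_j$. Tracking $e_j$ through the alternating composition $\sstar_i\sstar_j\sstar_i$ or $\sstar_i\sstar_j\sstar_i\sstar_j\sstar_i$ then reduces to a short sequence of $\sltwo$-module computations, each of which carries one vector in a known string to a specified vector in the next. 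The main obstacle I anticipate is bookkeeping the signs in iterated brackets such as $(\ad e_i)^a(\ad e_j)^b(e_j)$, especially in $G_2$ where strings of length $4$ occur; the cleanest remedy is to fix a Chevalley basis of $\g_{A'}$ normalized so that \eqref{eq-standard-basis-for-sl2} holds for both triples, whereupon $\sstar_i$ and $\sstar_j$ become explicit elements of $\Aut\g_{A'}$ and the composition on $e_j$ can be checked by a direct matrix calculation.
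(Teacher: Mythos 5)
Your $m_{ij}=2$ argument is exactly the paper's, and your $m_{ij}=3$ computation is a correct, fully detailed version of what the paper dismisses as ``direct calculation in $\slthree\C$'': the module $\C e_j\oplus\C[e_i,e_j]$ is indeed a copy of the standard representation under the conventions of \eqref{eq-standard-basis-for-sl2}, giving $\sstar_i(e_j)=[e_i,e_j]$, and your use of $\sstar_j^{\,2}=\Ad(\alpha_j^\vee)$ to finish is clean. Restricting to the rank-$2$ subalgebra is legitimate, since $\ad e_i,\ad f_i,\ad e_j,\ad f_j$ preserve it and the restrictions of $\sstar_i,\sstar_j$ are the corresponding elements of $\Aut\g_{A'}$.

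The weakness is that for $m_{ij}=4$ and $6$ --- which is where essentially all of the content and all of the paper's effort lies --- you give only a plan, and you yourself flag the sign bookkeeping as the obstacle without resolving it. Your fallback (an explicit Chevalley basis of the $10$- or $14$-dimensional algebra and a matrix computation) would certainly work, but as written the proof of these cases is not there. It is worth knowing the device the paper uses to avoid the brute-force computation: by lemma~\ref{lem-characterization-of-s-star-in-terms-of-e}, $\sstar_i\sstar_j\sstar_i=\bigl(\sstar_i\sstar_{e_j}\sstar_i^{-1}\bigr)\sstar_i^{\,2}=\sstar_{\sstar_i(e_j)}\circ\Ad(\alpha_i^\vee)$, so one never needs to know $\sstar_i(e_j)$ explicitly --- only how $\sstar_{\sstar_i(e_j)}$ acts on $\g_{\alpha_j}$, which is read off from the length of the root string of $s_i(\alpha_j)$ through $\alpha_j$ (length $1$: it acts trivially; length $3$: the module is a copy of the adjoint representation and the action is by $-1$), with the residual sign $(-1)^{A_{ij}}$ supplied by $\Ad(\alpha_i^\vee)$. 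The same trick handles $G_2$ after one application of lemma~\ref{lem-relations-on-s-stars}\ref{item-W-conjugacy-action-on-s-star-squared}. If you want to keep your matrix-computation route, you should actually exhibit the Chevalley basis and carry out the check; otherwise I would adopt the conjugation-plus-root-string argument, which reduces each case to a one-line weight computation.
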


\begin{proof}
\ref{item-root-moving-m=3}
follows from direct calculation in $\slthree\C$.
%
%
In the $m_{i j}=2$ case of \ref{item-root-moving-m=2-4-or-6} we have
$(\ad e_i)(e_j)=(\ad f_i)(e_j)=0$, and $\sstar_i(e_j)=e_j$ follows
immediately.
The remaining cases involve careful
tracking of signs. We will write $(\sltwo\C)_i$ for the span of $e_i$,
$f_i$, $\barh_i$.

If $m_{i j}=4$ then $\{A_{i j},A_{j i}\}=\{-1,-2\}$ 
and $\alpha_i$ and $\alpha_j$ are simple roots for a $B_2$ root system.
Using lemma~\ref{lem-characterization-of-s-star-in-terms-of-e},
\begin{align}
\sstar_i\sstar_j\sstar_i(e_j)
&{}=
\sstar_i\sstar_{e_j}{\sstar_i}^{-1}{\sstar_i}^2(e_j)\notag\\
&{}=
\sstar_{\sstar_i(e_j)}\bigl((\Ad\alpha_i^\vee)(e_j)\bigr)\notag\\
&{}=
\label{eq-foo-1}
(-1)^{A_{i j}}\sstar_{\sstar_i(e_j)}(e_j).
\end{align}
Suppose first that $A_{i j}=-2$.  Then $\alpha_i$ is the short simple root, $\alpha_j$
the long one, and $s_i(\alpha_j)$ is a long root
orthogonal to $\alpha_j$.  We have
\begin{align*}
\sstar_{\sstar_i(e_j)}
={}&
\bigl(\exp\ad\sstar_i(e_j)\bigr)\bigl(\exp\ad\sstar_i(f_j)\bigr)\bigl(\exp\ad\sstar_i(e_j)\bigr)\\
\in{}&
\exp\ad\bigl(\sstar_i\bigl((\sltwo\C)_j\bigr)\bigr).
\end{align*}
Now, $\sstar_i\bigl((\sltwo\C)_j\bigr)$ annihilates $\g_j$ because its root string through
$\alpha_j$ has length~$1$.  So $\sstar_{\sstar_i(e_j)}$ fixes $e_j$ and
\eqref{eq-foo-1} becomes
$$
\sstar_i\sstar_j\sstar_i(e_j)=(-1)^{A_{i j}}e_j=(-1)^{-2}e_j=e_j.
$$ On the other hand, if $A_{i j}=-1$ then $\alpha_j$ and $s_i(\alpha_j)$ are
orthogonal short roots.  Now the root string through $\alpha_j$ for
$\sstar_i\bigl((\sltwo\C)_j\bigr)$ has length~$3$, so the
$\sstar_i\bigl((\sltwo\C)_j\bigr)$-module generated by $e_j$ is a copy of the
adjoint representation.  In particular,
$\sstar_{\sstar_i(e_j)}=\sstar_i\sstar_j s^{*-1}_i$ acts on $\g_j$
by the same scalar as on the Cartan subalgebra $\sstar_i(\C \barh_j)$
of $\sstar_i\bigl((\sltwo\C)_j\bigr)$.  This is the same scalar by which
$\sstar_j$ acts on $\C \barh_j$, which is~$-1$.  So
$\sstar_{\sstar_i(e_j)}$ negates $e_j$ and \eqref{eq-foo-1} reads
$$
\sstar_i\sstar_j\sstar_i(e_j)=(-1)^{A_{i j}}(-e_j)=(-1)^{-1}(-e_j)=e_j.
$$

Now suppose $m_{i j}=6$, so that $\{A_{i j},A_{j i}\}=\{-1,-3\}$,
$\alpha_i$ and $\alpha_j$ are simple roots for a $G_2$ root system, and
$s_is_j(\alpha_i)\perp \alpha_j$.
Then
\begin{align*}
\sstar_i\sstar_j\sstar_i\sstar_j\sstar_i(e_j)
&{}=
\bigl(\sstar_i\sstar_j\sstar_{e_i}{\sstar_j}^{-1}{\sstar_i}^{-1}\bigr)\sstar_i\sstar_j\sstar_j\sstar_i(e_j)\\
&{}=
\sstar_{\sstar_i\sstar_j(e_i)}\circ\bigl(\sstar_i{\sstar_j}^2{\sstar_i}^{-1}\bigr)\circ{\sstar_i}^2(e_j)\\
&{}=
\sstar_{\sstar_i\sstar_j(e_i)}\circ{\sstar_j}^2{\sstar_i}^{-2A_{j i}}\circ{\sstar_i}^2(e_j)\\
&{}=
\sstar_{\sstar_i\sstar_j(e_i)}\circ{\sstar_j}^2{\sstar_i}^{4\,\rm{or}\,8}(e_j)\\
&{}=
\sstar_{\sstar_i\sstar_j(e_i)}(e_j).
\end{align*}
The root string through $\alpha_j$ for
$\sstar_i\sstar_j\bigl((\sltwo\C)_i\bigr)$ has length~$1$,
so arguing as in the $B_2$ case shows that
$\sstar_{\sstar_i\sstar_j(e_i)}$ fixes $e_j$.
\end{proof}

\begin{theorem}[Tits {\cite[\S4.6]{Tits-Normalisateurs}}]
\label{thm-Artin-relations-in-W-star}
The $\sstar_i$ satisfy the Artin relations of~$M$.  That is, if
$m_{i j}\neq\infty$ then
$\sstar_i\sstar_j\cdots=\sstar_j\sstar_i\cdots$, where there are
$m_{i j}$ factors on each side, alternately $\sstar_i$ and $\sstar_j$.
\end{theorem}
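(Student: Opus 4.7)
The plan is to derive each Artin relation as a direct consequence of Lemmas~\ref{lem-characterization-of-s-star-in-terms-of-e} and~\ref{lem-s-star-action-on-e-i}. Note first that by Lemma~\ref{lem-characterization-of-s-star-in-terms-of-e} we have $\sstar_i = \sstar_{e_i}$, and for any $\phi \in \Aut\g$ that permutes the root spaces $\g_{\beta\in\Phi}$, $\phi\sstar_{e_j}\phi^{-1} = \sstar_{\phi(e_j)}$. In particular, since each $\sstar_k$ permutes root spaces, so does any product of them. The strategy is: for each admissible value $m = m_{ij} \in \{2,3,4,6\}$, choose an alternating word $u$ in $\sstar_i, \sstar_j$ of length $m-1$ such that $u(e_j) \in \{e_i, e_j\}$; conjugating $\sstar_j = \sstar_{e_j}$ by $u$ then yields an identity that rearranges into the desired braid relation.

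Explicitly: when $m = 2$, take $u = \sstar_i$, so $u(e_j) = e_j$ by Lemma~\ref{lem-s-star-action-on-e-i}\ref{item-root-moving-m=2-4-or-6}, whence $\sstar_i\sstar_j\sstar_i^{-1} = \sstar_{e_j} = \sstar_j$. When $m = 4$, take $u = \sstar_i\sstar_j\sstar_i$; again $u(e_j) = e_j$, and $u\sstar_j u^{-1} = \sstar_j$ rearranges to $\sstar_i\sstar_j\sstar_i\sstar_j = \sstar_j\sstar_i\sstar_j\sstar_i$. The case $m = 6$ is identical with $u = \sstar_i\sstar_j\sstar_i\sstar_j\sstar_i$. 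When $m = 3$, take $u = \sstar_j\sstar_i$; by Lemma~\ref{lem-s-star-action-on-e-i}\ref{item-root-moving-m=3}, $u(e_j) = e_i$, so $u\sstar_j u^{-1} = \sstar_{e_i} = \sstar_i$, yielding $\sstar_j\sstar_i\sstar_j = \sstar_i\sstar_j\sstar_i$.

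Since Lemmas~\ref{lem-characterization-of-s-star-in-terms-of-e} and~\ref{lem-s-star-action-on-e-i} are already in hand, no serious obstacle remains: the entire proof is a bookkeeping exercise that converts a statement about the action of an alternating word on $e_j$ into a conjugacy statement about $\sstar_j$. The genuinely nontrivial content, namely the sign-tracking in the $m = 4$ and $m = 6$ cases, has already been absorbed into Lemma~\ref{lem-s-star-action-on-e-i}; here we only have to read off the consequences.
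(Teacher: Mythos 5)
Your proposal is correct and is essentially the paper's own argument: the paper also derives each braid relation by conjugating $\sstar_j=\sstar_{e_j}$ by the appropriate alternating word and invoking $\phi\sstar_e\phi^{-1}=\sstar_{\phi(e)}$ from Lemma~\ref{lem-characterization-of-s-star-in-terms-of-e} together with the action computed in Lemma~\ref{lem-s-star-action-on-e-i}. The only difference is cosmetic — the paper writes out the $m_{ij}=3$ case and states that ``the other cases are the same,'' whereas you spell out all four.
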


\begin{proof}
For $m_{i j}=3$ we start with $e_j=\sstar_i\sstar_j(e_i)$ from
lemma~\ref{lem-s-star-action-on-e-i}\ref{item-root-moving-m=3}.  Using lemma~\ref{lem-characterization-of-s-star-in-terms-of-e} yields
$$
\sstar_j
=
\sstar_{e_j}
=
\sstar_{\sstar_i\sstar_j(e_i)}
=
\sstar_i\sstar_j\sstar_{e_i}{\sstar_j}^{-1}{\sstar_i}^{-1}
=
\sstar_i\sstar_j\sstar_i{\sstar_j}^{-1}{\sstar_i}^{-1}.
$$
The other cases are the same.  
\end{proof}

We will need to understand the $\Wstar$-stabilizer of a simple root
$\alpha_i$ and how it acts on $\g_i$.  The first step is to 
quote from \cite{Allcock-centralizers} a refinement of a theorem of
Brink \cite{Brink} on reflection centralizers in Coxeter
groups.  Then we will ``lift'' this result to $\Wstar$ by keeping
track of signs.  

Both theorems refer to the ``odd Dynkin diagram'' $\odddiagram$,
which means the graph with vertex set $I$ where vertices $i$ and $j$
are joined just if $m_{i j}=3$.  
For $\gamma$ an edge-path in $\odddiagram$, with $i_0,\dots,i_n$ the
vertices along it, we define
\begin{equation}
\label{eq-def-of-p-gamma}
p_\gamma:=(s_{i_{n-1}}s_{i_n})(s_{i_{n-2}}s_{i_{n-1}})\cdots
(s_{i_1}s_{i_2})(s_{i_0}s_{i_1}).
\end{equation}
(If $\gamma$ has length~$0$ then we set $p_\gamma=1$.)  For $i\in I$ we write
$\odddiagram_i$ for its component of $\odddiagram$.

\begin{theorem}[{\cite[cor.~8]{Allcock-centralizers}}]
\label{thm-root-stabilizers-in-W}
Suppose $i\in I$, 
$Z$ is a set of closed edge-paths based at $i$ that generate
$\pi_1(\odddiagram_i,i)$, and $\delta_j$ is an edge-path in $\odddiagram_i$ from
$i$ to $j$, for each vertex $j$ of $\odddiagram_i$.  For each such $j$
and each $k\in I$ with $m_{j k}$ finite and even, define
\begin{equation}
\label{eq-def-of-r-gamma-k}
r_{j k}:=p_{\delta_j}^{-1}
\cdot
\left\{
\begin{matrix}
s_k\\
s_k s_j s_k\\
s_k s_j s_k s_j s_k
\end{matrix}
\right\}
\cdot
p_{\delta_j}
\end{equation}
according to whether $m_{j k}=2$, $4$ or~$6$.  Then the $W$-stabilizer
of the simple root $\alpha_i$ is generated by the $r_{j k}$ and the $p_{z\in Z}$.
\qed
\end{theorem}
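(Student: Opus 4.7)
The plan is to deduce the theorem from the description of the reflection centralizer $Z_W(s_i)$ in $W$, which is Brink's theorem \cite{Brink} in the refined form given in \cite{Allcock-centralizers}. The key observation is that the stabilizer of $\alpha_i$ in $W$ sits as an index-$2$ subgroup of $Z_W(s_i)$: an element $w\in W$ commutes with $s_i$ precisely when $w\alpha_i\in\{\pm\alpha_i\}$, and the sign defines a homomorphism $Z_W(s_i)\to\Z/2\Z$ whose kernel is this stabilizer and whose nontrivial coset is represented by $s_i$ itself.

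Next I would verify that each proposed generator lies in the stabilizer of $\alpha_i$. A short induction on path length, using $s_is_j(\alpha_i)=\alpha_j$ whenever $m_{ij}=3$, gives $p_\gamma(\alpha_{i_0})=\alpha_{i_n}$ for any edge-path $\gamma=(i_0,\dots,i_n)$ in $\odddiagram$; specialising to closed paths $z\in Z$ based at $i$ yields $p_z(\alpha_i)=\alpha_i$. For $r_{jk}$ one reduces, via $p_{\delta_j}(\alpha_i)=\alpha_j$, to checking that the middle factor ($s_k$, $s_ks_js_k$, or $s_ks_js_ks_js_k$) fixes $\alpha_j$. This is immediate for $m_{jk}=2$, and for $m_{jk}=4$ or $6$ it is a short dihedral calculation inside $\gend{s_j,s_k}$ using $s_i(\alpha_j)=\alpha_j-A_{ij}\alpha_i$, with a case split on which of $j,k$ is the short root.

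To see that these generators suffice, I would invoke Brink's theorem in the form of \cite{Allcock-centralizers}: it asserts that $Z_W(s_i)$ is generated by $s_i$ together with the $p_z$ and $r_{jk}$. Since the latter all lie in the stabilizer of $\alpha_i$, and $s_i$ maps to the nontrivial element of $Z_W(s_i)$ modulo that stabilizer, any word in these generators representing a stabilizer element must use $s_i$ an even number of times. Commuting $s_i$'s past the other generators (using that $s_i$ is central in $Z_W(s_i)$ by definition) and collapsing them via $s_i^2=1$ shows that the stabilizer is in fact generated by the $p_z$ and $r_{jk}$ alone.

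The main substantive input is Brink's theorem itself; everything else is bookkeeping. Within that bookkeeping, the only genuine computation is the dihedral check that the middle factor of $r_{jk}$ fixes $\alpha_j$, which is routine but requires three cases ($m_{jk}=2,4,6$) and, in the $m_{jk}=4$ and $6$ cases, a further subcase for each choice of long versus short simple root.
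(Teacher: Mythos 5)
Your argument is correct, but note that the paper does not prove this statement at all: it is quoted verbatim as Corollary~8 of \cite{Allcock-centralizers} and stamped \emph{qed}, the substantive content having been established in that reference. What you have written is essentially the derivation of that corollary from the main theorem of \cite{Allcock-centralizers} (Brink's description of the reflection centralizer, refined with explicit generators). Your reduction is sound: $w s_i w^{-1}=s_{w\alpha_i}$ gives $Z_W(s_i)=\{w: w\alpha_i=\pm\alpha_i\}$, the sign is a homomorphism onto $\Z/2$ split by the central involution $s_i$, and once one knows that the $p_z$ and $r_{j k}$ fix $\alpha_i$ (your induction $p_\gamma(\alpha_{i_0})=\alpha_{i_n}$ via $s_is_j(\alpha_i)=\alpha_j$ when $m_{i j}=3$, plus the dihedral check that the middle factor of $r_{j k}$ is the reflection in a root orthogonal to $\alpha_j$) the even-occurrence argument correctly identifies the kernel with $\gend{p_z,r_{j k}}$. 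The one thing to flag is that your entire proof rests on the external input that $Z_W(s_i)$ is generated by $s_i$ together with these specific elements $p_z$ and $r_{j k}$; that is exactly the nontrivial refinement of Brink's theorem proved in \cite{Allcock-centralizers}, and it is not rederived here, so your proof is a legitimate reduction rather than an independent argument --- which is also all the paper itself offers.
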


It is easy to see that
the $r_{j k}$ and $p_z$ stabilize $\alpha_i$.  In fact
this is the ``image under $\Wstar\to W$'' of the corresponding
part of the next theorem.

\begin{theorem}
\label{thm-root-stabilizers-in-W-star}
Suppose $i$, $Z$ and the $\delta_j$ are as in
theorem~\ref{thm-root-stabilizers-in-W}.  Define $\pstar_\gamma$ and
$\rstar_{j k}$ by attaching $*$'s to the $s$'s, $p$'s and $r$'s in
\eqref{eq-def-of-p-gamma} and \eqref{eq-def-of-r-gamma-k}.  Then the
$\pstar_{z\in Z}$ and $\rstar_{j k}$ fix $e_i$, and together with the
$s^{*2}_{l\in I}$ they generate the $\Wstar$-stabilizer of $\alpha_i$.
{\rm(}By
lemma~\ref{lem-relations-on-s-stars}\ref{item-action-of-s-star-squared},
${\sstar_l}^2$ acts on $e_i$ by $(-1)^{A_{l i}}${\rm)}.
\end{theorem}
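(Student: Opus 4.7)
The plan is to reduce the problem to Brink's downstairs result (Theorem~\ref{thm-root-stabilizers-in-W}) by pulling it back along the surjection $\Wstar \twoheadrightarrow W$. Two things must be checked: (i) the kernel of $\Wstar\to W$ is generated (not merely normally generated) by the $\sstar_l{}^2$, and (ii) the specific literal lifts $\pstar_z$ and $\rstar_{j k}$ of Brink's generators actually fix $e_i$ on the nose, not merely stabilize $\alpha_i$.

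For (i), Theorem~\ref{thm-Artin-relations-in-W-star} tells us the Artin relations already hold in $\Wstar$, so $W$ is obtained from $\Wstar$ by imposing $\sstar_l{}^2=1$; hence $\ker(\Wstar\to W)$ is the normal closure of $\{\sstar_l{}^2:l\in I\}$. The subgroup $K:=\langle\sstar_l{}^2:l\in I\rangle$ is an elementary abelian $2$-group, because Lemma~\ref{lem-relations-on-s-stars}\ref{item-action-of-s-star-squared} identifies each $\sstar_l{}^2$ with $\Ad(\alpha_l^\vee)$ and $\Ad$ is a homomorphism from the abelian group $\ZIvee$ whose image is $2$-torsion. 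Now Lemma~\ref{lem-relations-on-s-stars}\ref{item-W-conjugacy-action-on-s-star-squared} gives $\sstar_i\sstar_j{}^2\sstar_i^{-1}=\sstar_j{}^2\cdot\sstar_i^{-2A_{j i}}$, and the order-$2$ fact forces $\sstar_i^{-2A_{j i}}$ to equal $1$ or $\sstar_i{}^2$ according to the parity of $A_{j i}$. Thus $K$ is $\Wstar$-invariant and therefore equals the kernel.

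For (ii), the key auxiliary claim is that for any edge-path $\gamma$ in $\odddiagram$ from $i_0$ to $i_n$, the element $\pstar_\gamma$ sends $e_{i_0}$ to $e_{i_n}$. I would prove this by induction on the length of $\gamma$: the base case is Lemma~\ref{lem-s-star-action-on-e-i}\ref{item-root-moving-m=3} read as $\sstar_{i_0}\sstar_{i_1}(e_{i_0})=e_{i_1}$, and the inductive step peels off the terminal edge using $\pstar_\gamma=(\sstar_{i_{n-1}}\sstar_{i_n})\cdot\pstar_{\gamma'}$. Specializing to a loop $z$ based at $i$ yields $\pstar_z(e_i)=e_i$. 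For $\rstar_{j k}$, the inner word $\sstar_k$, $\sstar_k\sstar_j\sstar_k$, or $\sstar_k\sstar_j\sstar_k\sstar_j\sstar_k$ is exactly the word Lemma~\ref{lem-s-star-action-on-e-i}\ref{item-root-moving-m=2-4-or-6} shows fixes $e_j$, while the auxiliary claim says $\pstar_{\delta_j}$ carries $e_i$ to $e_j$; the conjugation therefore fixes $e_i$.

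Putting it together: given any $\wstar\in\Wstar$ stabilizing $\alpha_i$, its image in $W$ stabilizes $\alpha_i$ and hence by Brink is a product of $r_{j k}$'s and $p_z$'s; multiplying $\wstar$ by the corresponding starred product puts the result into the kernel, which is $K$. I expect the main subtlety to lie in (ii): not the individual sign checks (already packaged into Lemma~\ref{lem-s-star-action-on-e-i}) but the ``naturality'' statement that the specific literal $*$-lifts of Brink's formulas are the ones that fix $e_i$ exactly. An arbitrary lift of a Brink generator could act on $e_i$ by $-1$, and it is this fix-on-the-nose refinement, beyond Brink's theorem itself, that makes these particular generators usable in the Steinberg-group arguments that follow.
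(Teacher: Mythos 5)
Your proposal is correct and follows essentially the same route as the paper's proof: identify $\ker(\Wstar\to W)$ with the subgroup (not just normal closure) generated by the $\sstar_l{}^2$ via lemma~\ref{lem-relations-on-s-stars}\ref{item-W-conjugacy-action-on-s-star-squared}, invoke theorem~\ref{thm-root-stabilizers-in-W} for generation modulo the kernel, and verify $\pstar_{\delta_j}(e_i)=e_j$ by iterating lemma~\ref{lem-s-star-action-on-e-i}\ref{item-root-moving-m=3} so that lemma~\ref{lem-s-star-action-on-e-i}\ref{item-root-moving-m=2-4-or-6} handles the conjugated middle word of $\rstar_{j k}$. The only addition is your (harmless, correct) observation that the kernel is elementary abelian via $\Ad$, which the paper states elsewhere but does not need at this point.
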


\begin{proof}
The $\Wstar$-stabilizer of $\alpha_i$ is generated by $\ker(\Wstar\to W)$
and any set of elements of $\Wstar$ whose projections to $W$ generate
the $W$-stabilizer of $\alpha_i$.  
Now, the ${\sstar_i}^2$ normally generate the kernel because of the
Artin relations.  Lemma~\ref{lem-relations-on-s-stars}\ref{item-W-conjugacy-action-on-s-star-squared} shows that the subgroup they
generate is normal, hence equal to this kernel.  Since
the $\pstar$'s and $\rstar$'s project to the $p$'s and $r$'s of
theorem~\ref{thm-root-stabilizers-in-W}, our generation claim follows
from that theorem.  To see that the $\pstar_z$'s fix $e_i$, apply
lemma~\ref{lem-s-star-action-on-e-i}\ref{item-root-moving-m=3}
repeatedly.  The same argument proves $\pstar_{\delta_j}(e_i)=e_j$.  Then
using
lemma~\ref{lem-s-star-action-on-e-i}\ref{item-root-moving-m=2-4-or-6}
shows that $e_j$ is fixed by $\sstar_k$, $\sstar_k\sstar_j\sstar_k$ or
$\sstar_k\sstar_j\sstar_k\sstar_j\sstar_k$ according to whether
$m_{j k}$ is $2$, $4$ or~$6$.  Applying ${\pstar_{\delta_j}}^{-1}$ sends
$e_j$ back to $e_i$, proving $\rstar_{j k}(e_i)=e_i$.
\end{proof}

\section{The Steinberg group $\St$}
\label{sec-Steinberg-group}

\noindent
In this section we give an overview of the Steinberg group $\St_A$, as
defined by Tits \cite{Tits} and refined by Morita--Rehmann
\cite{Morita-Rehmann}.  The purpose is to be able to compare the
pre-Steinberg group $\PSt_A$ (see the next section) with $\St_A$.   For
example,  theorem~\ref{thm-examples-of-PSt-to-St-being-an-isomorphism} gives many cases in which the natural map
$\PSt_A(R)\to\St_A(R)$ is an isomorphism.

The Morita--Rehmann definition is got from Tits' definition by imposing
some additional relations.  These are also due to Tits, but he imposed
them only later in his construction, when defining Kac-Moody groups in
terms of $\St_A$.  In the few places where we need to distinguish
between the definitions, we will write $\StTits_A$ for Tits' version
and $\St_A$ for the Morita-Rehmann version.  In the rest of this
section we will regard $A$ as fixed and omit it from the subscripts.

\smallskip
$\Add$ denotes the additive group, regarded as a group scheme over
$\Z$.  That is, it is the functor assigning to each commutative ring
$R$ its underlying
abelian group.  
The Lie algebra of $\Add$ is
canonically isomorphic to $\Z$.

For each $\alpha\in\Phi$, $\g_\alpha\cap\Wstar\bigl(\{e_{i\in I}\}\bigr)$
consists of either one vector or two antipodal vectors.  This is
\cite[3.3.2]{Tits} and its following paragraph, which relies on
\cite[\S13.31]{Tits-SBNP}.  Alternately, it follows from our
theorem~\ref{thm-root-stabilizers-in-W-star}.  We write $\g_{\alpha,\Z}$
for the $\Z$-span in $\g_\alpha$ of this element or antipodal pair,
and $E_\alpha$ for the set of its generators (a set of size~$2$).  The symbol $e$ will always
indicate an element of some $E_\alpha$.
We
define $\U_\alpha$ as the group scheme over $\Z$ which is isomorphic to
$\Add$ and has Lie algebra $\g_{\alpha,\Z}$.  That is, $\U_\alpha$ is
the functor assigning to each commutative ring $R$ the abelian group
$\g_{\alpha,\Z}\tensor R\iso R$.  For $i\in I$ we abbreviate
$\U_{\pm\alpha_i}$ to $\U_{\pm i}$.

If $\alpha\in\Phi$ and $e\in E_\alpha$ then we define $\x_e$ as the isomorphism $\Add\to\U_\alpha$
whose corresponding Lie algebra isomorphism identifies $1\in\Z$ with
$e\in \g_{\alpha,\Z}$.  For fixed $R$ this amounts to
$$
\x_e(t):=e\tensor t\in\g_{\alpha,\Z}\tensor R=\U_\alpha.
$$
If $R=\R$ or $\C$ then one may think of $\x_e(t)$ as $\exp(t e)$.
For $i\in I$ we abbreviate $\x_{e_i}$ to $\x_i$ and $\x_{f_i}$ to $\x_{-i}$.

Tits calls a set of roots $\Psi\sset\Phi$ prenilpotent if
some chamber in the open Tits cone lies on the positive side of all
their mirrors and some other chamber lies on the negative side
of all of them.  
(Equivalently, some element of
$W$ sends $\Psi$ into the set of positive roots and some other element
of $W$ sends $\Psi$ into the set of negative roots.)
It follows that $\Psi$ is finite.  If $\Psi$ is also
closed under addition then it is called nilpotent.  In this case
$\g_\Psi:=\oplus_{\alpha\in\Psi}\,\g_\alpha$ is a nilpotent Lie algebra
\cite[p.\ 547]{Tits}.

\begin{lemma}[Tits {\cite[sec.~3.4]{Tits}}]
\label{lem-existence-of-unipotent-groups}
If $\Psi\sset\Phi$ is a nilpotent set of roots, then there is a unique
unipotent group scheme $\U_\Psi$ over $\Z$ with the properties
\begin{enumerate}
\item
$\U_\Psi$ contains all the $\U_{\alpha\in\Psi}$;
\item
$\U_\Psi(\C)$ has Lie algebra $\g_\Psi$;
\item
\label{item-isomorphism-of-underlying-schemes}
For any ordering on $\Psi$, the product morphism
$\prod_{\alpha\in\Psi}\U_\alpha\to\U_\Psi$ is an isomorphism of the underlying
schemes.
\qed
\end{enumerate}
\end{lemma}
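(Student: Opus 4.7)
The plan is to build $\U_\Psi$ explicitly from the integral form of the nilpotent Lie algebra $\g_\Psi$, then read off the three properties together with uniqueness. First I would form the lattice $\g_{\Psi,\Z}:=\bigoplus_{\alpha\in\Psi}\g_{\alpha,\Z}\sset\g_\Psi$ and verify it is closed under the bracket: for $\alpha,\beta\in\Psi$ with $\alpha+\beta\in\Phi$ (hence in $\Psi$, by nilpotence) one needs $[\g_{\alpha,\Z},\g_{\beta,\Z}]\sset\g_{\alpha+\beta,\Z}$. Such a pair $\alpha,\beta$ generates a rank-$2$ subsystem of finite type ($A_1\times A_1$, $A_2$, $B_2$, or $G_2$), so bracket integrality reduces to classical Chevalley theory, provided the generators $e\in E_\alpha$ are normalized compatibly across all rank-$2$ subsystems. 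Theorem~\ref{thm-root-stabilizers-in-W-star} and lemma~\ref{lem-characterization-of-s-star-in-terms-of-e} supply exactly this compatibility, since the $E_\alpha$ are $\Wstar$-orbits of the elements $\{e_i,f_i\}$ and transform coherently under $\Wstar$.

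Second, I would construct $\U_\Psi$ inductively. Prenilpotence gives a $w\in W$ sending $\Psi$ into the positive roots, so a height function is available on $\Psi$; choose an ordering $\alpha_1,\dots,\alpha_n$ refining it and set $\Psi_{\geq k}:=\{\alpha_k,\dots,\alpha_n\}$. Each $\Psi_{\geq k}$ is again nilpotent. Define $\U_{\Psi_{\geq k}}$ as the semidirect extension of $\U_{\Psi_{\geq k+1}}$ by $\U_{\alpha_k}$, with the action specified by the generalized Chevalley commutator formula: for $\alpha,\beta\in\Psi$ and choices $e\in E_\alpha$, $e'\in E_\beta$,
\begin{equation*}
[\x_e(t),\x_{e'}(u)]=\prod_{\gamma=i\alpha+j\beta\in\Phi,\,i,j>0}\x_{e''_{i,j}}\bigl(c_{i,j}\,t^i u^j\bigr),
\end{equation*}
with integer coefficients $c_{i,j}$ and prescribed generators $e''_{i,j}\in E_{i\alpha+j\beta}$, the product taken in order of increasing height. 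Iterating from $\U_{\Psi_{\geq n}}=\U_{\alpha_n}$ produces the desired group scheme $\U_\Psi$ together with an explicit scheme isomorphism $\prod_k\U_{\alpha_k}\to\U_\Psi$ realized by ordered multiplication, yielding property~(iii) for this ordering. Property~(i) is built in; property~(ii) follows over $\C$ because the group law has been defined via Baker--Campbell--Hausdorff applied to~$\g_\Psi$.

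Property~(iii) for an arbitrary ordering follows by repeatedly applying the commutator formula to swap adjacent factors: each swap inserts correction terms of strictly greater height, so the rewriting terminates, and the resulting change of variables is polynomial, hence a scheme isomorphism. Uniqueness drops out of the same argument: any group scheme satisfying (i)--(iii) has its multiplication determined by the commutator relations among the $\U_\alpha$, and those are determined over $\C$ by condition~(ii) (via BCH), hence determined over~$\Z$ because polynomial identities with integer coefficients that hold over $\C$ hold over~$\Z$.

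The main obstacle is the integrality and internal consistency of the constants $c_{i,j}$ and the generators $e''_{i,j}$ in the commutator formula. For any single rank-$2$ finite subsystem this is classical, but one must fix signs and scalings for all roots in $\Psi$ simultaneously, so that the formulas remain compatible whenever a given $\gamma\in\Psi$ lies in several different rank-$2$ subsystems generated by pairs from~$\Psi$. This is precisely where the $\Wstar$-equivariant characterization of the $E_\alpha$ pays off: sign coherence is enforced globally by the action of $\Wstar$ on~$\g$, which reduces every commutator calculation to a rank-$2$ one inside the image of $\Wstar\cdot\{e_i,f_i\}$, where everything is known to be integral.
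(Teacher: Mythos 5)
The paper gives no proof of this lemma: it is imported verbatim from \cite[sec.~3.4]{Tits}, where $\U_\Psi$ is constructed inside the (completed) $\Z$-form $\curlyU$ of the enveloping algebra, so that the group law, its associativity, and the ordered-product normal form all come for free from the PBW-type basis of ordered divided-power monomials. Your route --- iterated semidirect extensions glued by explicit commutator formulas --- is genuinely different, and it has two gaps. The first is your claim that any pair $\alpha,\beta\in\Psi$ ``generates a rank-$2$ subsystem of finite type.'' A pair of roots in a nilpotent set is prenilpotent but need not be classically prenilpotent: $(\Q\alpha+\Q\beta)\cap\Phi$ can be infinite even though $\theta(\alpha,\beta)$ is finite, and keeping these two notions apart is the entire point of the paper's distinction between $\PSt$ and $\St$. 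So the existence and integrality of the constants $c_{i,j}$ (and even the bracket closure $[\g_{\alpha,\Z},\g_{\beta,\Z}]\sset\g_{\alpha+\beta,\Z}$) cannot be reduced to classical finite-type $A_2$, $B_2$, $G_2$ computations; for arbitrary real roots it rests on Tits' integrality results for $\curlyU$, namely its stability under the divided powers $(\ad e)^n/n!$ quoted in section~\ref{sec-adjoint-representation}. The $\Wstar$-equivariance of the sets $E_\alpha$ gives sign coherence but not integrality.

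The second and more serious gap is that the inductive step is asserted rather than proved. To form the semidirect extension of $\U_{\Psi_{\geq k+1}}$ by $\U_{\alpha_k}$ you must exhibit a homomorphism $\U_{\alpha_k}(R)\to\Aut\bigl(\U_{\Psi_{\geq k+1}}(R)\bigr)$: each putative conjugation, prescribed only on the generators $\x_{e'}(u)$ by the commutator formula, must extend to an automorphism of the group already built, and the assignment must be additive in $t$. This is a web of polynomial identities among the structure constants --- the same identities needed to make your reordering argument for \ref{item-isomorphism-of-underlying-schemes} terminate consistently --- and it is precisely the hard content of the lemma; the enveloping-algebra construction exists to make it automatic. (You also invoke Baker--Campbell--Hausdorff to verify the Lie algebra of $\U_\Psi(\C)$, but BCH is not how you defined the group law, so that step needs its own argument.) Your uniqueness argument, by contrast, is sound: conditions (i)--(iii) force the coordinate ring to be a polynomial ring over $\Z$, the comultiplication is determined by its base change to $\C$ because the coordinate ring is torsion-free, and over $\C$ a unipotent group is determined by its Lie algebra.
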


Tits' version $\StTits$ of the Steinberg group functor is defined as
follows.  For each prenilpotent pair $\alpha,\beta$ of roots,
$\theta(\alpha,\beta)$ is defined as $(\N \alpha+\N \beta)\cap\Phi$
where $\N=\{0,1,2,\dots\}$.  Consider the groups
$\U_{\theta(\alpha,\beta)}$ with $\{\alpha,\beta\}$ varying over all
prenilpotent pairs.  If $\gamma\in\theta(\alpha,\beta)$
then there is a natural injection
$\U_\gamma\to\U_{\theta(\alpha,\beta)}$, yielding a diagram of
inclusions of group functors.  $\StTits$ is defined as the
direct limit of this diagram.  Every automorphism of $\g$ that
permutes the subgroups $\g_{\alpha,\Z}$ induces an automorphism of the
diagram of inclusions of group functors, hence an automorphism of
$\StTits$.  In particular, $\Wstar$ acts on~$\StTits$.

As Tits points out, a helpful but less canonical way to think about
$\StTits(R)$ is to begin with the free product
$\freeproduct_{\alpha\in\Phi}\U_\alpha(R)$ and impose relations of the form
\begin{equation}
\label{eq-Chevalley-relation-general-form}
[\x_{e_\alpha}(t),\x_{e_\beta}(u)]
=
\prod_{\gamma=m\alpha+n\beta}
\x_{e_\gamma}\bigl(C_{\alpha\beta\gamma}t^m u^n\bigr)
\end{equation}
for each prenilpotent pair $\alpha,\beta\in\Phi$.  Here
$\gamma=m\alpha+n\beta$ runs over
$\theta(\alpha,\beta)-\{\alpha,\beta\}$, so in particular $m$ and $n$
are positive integers.
Also, $e_\alpha$, $e_\beta$
and the various $e_\gamma$ lie in $E_\alpha$, $E_\beta$ and the various $E_\gamma$, and must be chosen before
the relation can be written down explicitly.  The 
$C_{\alpha\beta\gamma}$ are integers that 
depend the position of $\gamma$ relative to
$\alpha$ and $\beta$, the choices of $e_\alpha$, $e_\beta$ and
the $e_\gamma$, and the ordering of the product; cf.\ (3) of
\cite{Tits}.  Usually \eqref{eq-Chevalley-relation-general-form} is called ``the Chevalley relation of
$\alpha$ and~$\beta$''.  It is
really a family of relations parameterized by $t$ and~$u$, and
(strictly speaking) not defined without the various choices being fixed.

\smallskip
Unfortunately, Tits' version of the Steinberg group is different from
Steinberg's original group when the Dynkin diagram has $A_1$
components.  Therefore we follow 
Morita-Rehmann \cite{Morita-Rehmann} in  defining the Steinberg group
functor $\St$.  That is, we 
impose the additional relations
\eqref{eq-Morita-Rehmann-relations-abstract-form}, which correspond to
the relations
(B$'$) in
\cite{Steinberg} or \cite{Morita-Rehmann}.
These relations make the
``maximal torus'' and ``Weyl group'' act on the root groups
$\U_\alpha$ in the expected manner.
If $A$ is $2$-spherical without
$A_1$ components then
the
Morita-Rehmann relations already hold in $\StTits$ and this part of
the construction can be skipped,  by \cite[(a$_4$), p. 550]{Tits}.

The relators involve the following elements of
$\StTits$.
If $\alpha\in\Phi$ and $e\in E_\alpha$  then recall from
lemma~\ref{lem-characterization-of-s-star-in-terms-of-e} that  there
is a distinguished  $f\in E_{-\alpha}$.  As the
notation suggests, if $e=e_i$ then $f=f_i$.  For any $r\in \Runits$ we
define
\begin{align}
\label{eq-defn-of-s-tilde-e}
\stilde_e(r){}&:=\x_e(r)\x_{f}(1/r)\x_e(r)\\
\label{eq-defn-of-h-tilde-e}
\htilde_e(r){}&:=\stilde_e(r)\stilde_e(-1)
\end{align}
We abbreviate special cases in the usual way:
$\htilde_{\pm i}(r)$ for $\htilde_{e_i}(r)$ and $\htilde_{f_i}(r)$,
$\stilde_{\pm i}(r)$ for $\stilde_{e_i}(r)$ and $\stilde_{f_i}(r)$,
$\stilde_{\pm i}$ for $\stilde_{\pm i}(1)$,
and $\stilde_e$ for $\stilde_e(1)$.
It is useful to note several immediate consequences of the definitions: 
$\stilde_e(-r)=\stilde_e(r)^{-1}$, $\htilde_e(1)=1$, and
\begin{equation}
\label{eq-htilde-htilde-inverse-equals-stilde-stilde-inverse}
\stilde_e(r)\stilde_e(r')^{-1}
=
\htilde_e(r)\htilde_e(r')^{-1}
.
\end{equation}
Conceptually, the relations we will impose on $\StTits$ to get $\St$
force the conjugation maps of the various $\stilde_e(r)$'s
to be the same as certain automorphisms of
$\StTits$.  So we will describe these
automorphisms and then state the relations.

Recall from
lemma~\ref{lem-map-from-ZIvee-to-scalars-is-W-star-equivariant} and
its preceding remarks that $\ZIvee$ is the free abelian group
generated by formal symbols $\alpha_{i\in I}^\vee$. Also, the bilinear
pairing $\ZIvee\times\ZI\to\Z$ given by
$\pairing{\alpha_i^\vee}{\alpha_j}=A_{i j}$ is $W$-invariant.  We
defined a map $\Ad:\ZIvee\to\Aut\g$, which we generalize to
$\Ad:(\Runits\tensor\ZIvee)\to\Aut\bigl(\freeproduct_{\alpha\in\Phi}\,\U_\alpha\bigr)$
as follows.  For any $\alpha^\vee\in\ZIvee$, $r\in \Runits$ and
$\beta\in\Phi$, $\Ad(r\tensor\alpha^\vee)$ acts on $\U_\beta\iso R$ by
multiplication by $r^{\pairing{\alpha^\vee}{\beta}}\in \Runits$.  One
recovers the original $\Ad$ by taking $r=-1$.

The Chevalley relations have a homogeneity property, namely that
$\Ad(r\tensor\alpha^\vee)$ permutes them.  This is most visible when
they are stated in the form \eqref{eq-Chevalley-relation-general-form}.  Therefore the action $\Ad$ of
$\Runits\tensor\ZIvee$ on $\freeproduct_{\alpha\in\Phi}\,\U_\alpha$
descends to an action on $\StTits(R)$.  

It is standard that there is a
$W$-equivariant bijection $\alpha\mapsto\alpha^\vee$ from the roots
$\Phi\sset\Z^I$ to their corresponding coroots in $\ZIvee$.  As the
notation suggests, the coroots corresponding to the simple roots
$\alpha_i$ are our basis $\alpha^\vee_i$ for $\ZIvee$.  By
$W$-equivariance this determines the bijection uniquely.  For
$\alpha\in\Phi$ and $r\in \Runits$ we define
$h_\alpha(r)\in\Aut\StTits(R)$
as $\Ad(r\tensor\alpha^\vee)$.  As usual, we abbreviate $h_{\alpha_i}(r)$
to~$h_i(r)$.

We define the Steinberg group functor $\St$ as follows. Informally,
$\St(R)$ is the quotient of $\StTits(R)$ got by forcing every
$\stilde_e(r)$ to act on every $\U_\beta(R)$ by
$h_\alpha(r)\circ\sstar_e$, where $\alpha$ is the root with $e\in E_\alpha$.  Formally, it is the quotient by the
subgroup normally generated by the elements
\begin{equation}
\label{eq-Morita-Rehmann-relations-abstract-form}
\stilde_e(r)\, u\,\stilde_e(r)^{-1} \cdot\Bigl(\bigl(h_\alpha(r)\circ
s_e^*\bigr)(u)\Bigr)^{-1}
\end{equation}
as $\alpha,\beta$ vary over $\Phi$, $e$ over $E_\alpha$, $r$ over
$\Runits$, and $u$ over $\U_\beta(R)$.  This set of relators is
visibly $\Wstar$-invariant, so $\Wstar$ acts on $\St$.

\begin{remark}
Because $\stilde_e(r)=\htilde_e(r)\stilde_e$,
an equivalent way to impose the relations \eqref{eq-Morita-Rehmann-relations-abstract-form} is by quotienting
by the subgroup of $\StTits(R)$ normally generated by all
\begin{align}
\label{eq-Morita-Rehmann-relations-just-the-s-1-tildes}
\stilde_e\, u\,\stilde_e^{-1} &{}\cdot s_e^*(u)^{-1}
\\
\label{eq-Morita-Rehmann-relations-just-the-h-tildes-e}
\htilde_e(r)\, u\,\htilde_e(r)^{-1}&{}\cdot \bigl(h_\alpha(r)(u)\bigr)^{-1}.
\end{align}
\end{remark}

\begin{remark}
Our relations differ slightly from the relations (B$'$) of Morita--Rehmann
\cite{Morita-Rehmann}, because 
we 
follow Tits' convention for the presentation of
$\g$ while they follow Kac' convention 
(see section~\ref{sec-Kac-Moody-algebra}).  Our relations
also differ from Tits' relations in the definition of his
Kac--Moody group functor \cite[sec.\ 3.6]{Tits}, even taking into
account that our
$\htilde_i(r)$ corresponds to his $r^{h_i}$.  This is because R\'emy
observed \cite[8.3.3]{Remy} that Tits' relator (6), namely
$\stilde_i(r)^{-1}\cdot \stilde_i\cdot r^{h_i}$, is in error.  R\'emy
fixed it by replacing the first $r$ by $1/r$.  Our repair, by
exchanging the last two terms, is equivalent.
\end{remark}


\begin{theorem}[Alternative defining relations for $\St$]
\label{thm-alternate-relations-for-Steinberg-group}
The kernel of the natural map
$\StTits(R)\to\St(R)$ is
the smallest normal subgroup containing the elements
\begin{align}
\label{eq-alternative-Steinberg-relations-h-action-on-simple-root-groups}
\htilde_i(r)\,\x_j(t)\,\htilde_i(r)^{-1}
&{}\cdot \x_j(r^{A_{i j}}t)^{-1}
\\
\label{eq-alternative-Steinberg-relations-h-action-on-negative-simple-root-groups}
\htilde_i(r)\ \stilde_j \x_j(t)\stilde_j^{-1}\ \htilde_i(r)^{-1}
&{}\cdot 
\bigl(\stilde_j\,\x_j(r^{-A_{i j}}t)\,\stilde_j^{-1}\bigr)^{-1} \\
\label{eq-alternative-Steinberg-relations-s-action-on-everything}
\stilde_i\, u\, \stilde_i^{-1}
&{}\cdot\sstar_i(u)^{-1}
\end{align}
for all $i,j\in I$, $r\in\Runits$, $t\in R$ and $u\in\U_\beta$ where
$\beta$ may be any root.  
Furthermore, the identities
\begin{align}
\label{eq-s-tilde-action-on-h-tildes}
\stilde_i\,\htilde_j(r)\,\stilde_i^{-1}
&{}=
\htilde_i\bigl(r^{A_{j i}}\bigr)^{-1}\,\htilde_j(r)
\\
\label{eq-commutators-of-h-tildes}
[\htilde_i(r),\htilde_j(r')]
&{}=\htilde_j\bigl(r^{A_{i j}}r'\bigr)
\,
\htilde_j\bigl(r^{A_{i j}}\bigr)^{-1}
\,
\htilde_j(r')^{-1}
\end{align}
hold in $\St(R)$, for all
$i,j\in I$, $r,r'\in\Runits$.
\end{theorem}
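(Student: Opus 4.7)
The plan is to prove the kernel statement and the two identities separately, starting from the Morita-Rehmann presentation \eqref{eq-Morita-Rehmann-relations-just-the-s-1-tildes}--\eqref{eq-Morita-Rehmann-relations-just-the-h-tildes-e} and propagating relations from simple roots to all roots by means of the $\Wstar$-action.

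For the kernel identification I would prove two inclusions. The easy direction is that each of \eqref{eq-alternative-Steinberg-relations-h-action-on-simple-root-groups}, \eqref{eq-alternative-Steinberg-relations-h-action-on-negative-simple-root-groups}, and \eqref{eq-alternative-Steinberg-relations-s-action-on-everything} is a special case of the Morita-Rehmann relators: \eqref{eq-alternative-Steinberg-relations-s-action-on-everything} is \eqref{eq-Morita-Rehmann-relations-just-the-s-1-tildes} at $e=e_i$, while \eqref{eq-alternative-Steinberg-relations-h-action-on-simple-root-groups} and \eqref{eq-alternative-Steinberg-relations-h-action-on-negative-simple-root-groups} are instances of \eqref{eq-Morita-Rehmann-relations-just-the-h-tildes-e} at $e=e_i$ with $u\in\U_{\pm j}(R)$, using that $h_{\alpha_i}(r)$ multiplies $\U_\beta$ by $r^{\pairing{\alpha_i^\vee}{\beta}}$. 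The reverse inclusion is the substantive direction: working modulo \eqref{eq-alternative-Steinberg-relations-h-action-on-simple-root-groups}--\eqref{eq-alternative-Steinberg-relations-s-action-on-everything}, I must recover the full family \eqref{eq-Morita-Rehmann-relations-just-the-s-1-tildes}--\eqref{eq-Morita-Rehmann-relations-just-the-h-tildes-e}. Given $(e,\beta)$ with $e\in E_\alpha$ and $\alpha=w\alpha_j$ for some $w\in W$, choose a reduced expression $w=s_{i_1}\cdots s_{i_k}$ and form $\what:=\stilde_{i_1}\cdots\stilde_{i_k}\in\StTits(R)$, adjusting the sign of $e$ if necessary so that $\wstar(e_j)=e$. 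Iterating \eqref{eq-alternative-Steinberg-relations-s-action-on-everything} gives $\what X\what^{-1}=\wstar(X)$ for every $X$ in any root group, so $\what\stilde_j(r)\what^{-1}=\stilde_e(r)$ and conjugation by $\what$ carries $\U_{w^{-1}\beta}(R)$ to $\U_\beta(R)$. Combined with $\Wstar$-equivariance of $h_\alpha$ (the extension of lemma~\ref{lem-map-from-ZIvee-to-scalars-is-W-star-equivariant} to $\Runits\tensor\ZIvee$) and of $\sstar_e$ (lemma~\ref{lem-characterization-of-s-star-in-terms-of-e}), the Morita-Rehmann relator at $(e,\beta)$ conjugates by $\what^{-1}$ to the one at $(e_j,w^{-1}\beta)$. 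This reduces matters to $e=e_i$, for which $\stilde_e(r)=\htilde_e(r)\stilde_e$ splits \eqref{eq-Morita-Rehmann-relations-abstract-form} into the $\stilde$-part (which is \eqref{eq-alternative-Steinberg-relations-s-action-on-everything} verbatim) and the $\htilde$-part; a second $\Wstar$-reduction on $\beta$ brings $\beta$ to a simple root $\pm\alpha_j$, giving \eqref{eq-alternative-Steinberg-relations-h-action-on-simple-root-groups} or \eqref{eq-alternative-Steinberg-relations-h-action-on-negative-simple-root-groups}.

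For the identities I would compute directly in $\St(R)$. To establish \eqref{eq-commutators-of-h-tildes}, observe that by \eqref{eq-alternative-Steinberg-relations-h-action-on-simple-root-groups}, $\htilde_i(r)$ conjugates $\x_j(t)$ to $\x_j(r^{A_{ij}}t)$, and by \eqref{eq-alternative-Steinberg-relations-h-action-on-negative-simple-root-groups} (noting that $\stilde_j\x_j(u)\stilde_j^{-1}$ generates $\U_{-j}(R)$ as $u$ ranges) it conjugates $\x_{-j}(u)$ to $\x_{-j}(r^{-A_{ij}}u)$. Substituting into $\stilde_j(r')=\x_j(r')\x_{-j}(1/r')\x_j(r')$ yields $\htilde_i(r)\stilde_j(r')\htilde_i(r)^{-1}=\stilde_j(r^{A_{ij}}r')$, and with $r'=1$ we get $\htilde_i(r)\stilde_j\htilde_i(r)^{-1}=\stilde_j(r^{A_{ij}})$. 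Dividing and applying \eqref{eq-htilde-htilde-inverse-equals-stilde-stilde-inverse} gives \eqref{eq-commutators-of-h-tildes}. For \eqref{eq-s-tilde-action-on-h-tildes}, the same method (using \eqref{eq-alternative-Steinberg-relations-s-action-on-everything} in place of the $\htilde$-relators) shows $\stilde_i\stilde_j(r)\stilde_i^{-1}=\stilde_{\sstar_i(e_j)}(r)$ and hence $\stilde_i\htilde_j(r)\stilde_i^{-1}=\htilde_{\sstar_i(e_j)}(r)$. It remains to identify this with $\htilde_i(r^{A_{ji}})^{-1}\htilde_j(r)$, which reflects the fact that the coroot of $s_i\alpha_j$ is $s_i\alpha_j^\vee=\alpha_j^\vee-A_{ji}\alpha_i^\vee$, and which I would verify by reducing, via the rank-$2$ subsystem spanned by $\alpha_i$ and $\alpha_j$, to a calculation of Chevalley generators inside the embedded $\sltwo$'s.

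The main obstacle I foresee is this last step: upgrading the evident equality $\htilde_{\sstar_i(e_j)}(r)$ and $\htilde_i(r^{A_{ji}})^{-1}\htilde_j(r)$ act identically on each $\U_\beta$ by the scalar $r^{\pairing{s_i\alpha_j^\vee}{\beta}}$ to an equality of group elements in $\St(R)$, rather than merely an equality of induced automorphisms of $\StTits(R)$. Doing this requires a careful unwinding of \eqref{eq-defn-of-s-tilde-e}--\eqref{eq-defn-of-h-tilde-e} inside the finite rank-$2$ root subsystem, where the defining relations of $\St$ suffice to convert coroot identities into identities among the $\htilde$'s. Sign-tracking in the $B_2$ and $G_2$ cases is the delicate part, and proceeds exactly as in the sign-tracking of lemma~\ref{lem-s-star-action-on-e-i}.
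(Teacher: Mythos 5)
Your easy direction, your reduction of the general Morita--Rehmann relator to the case of a simple root $e=e_i$ by conjugating with $\stilde_{i_1}\cdots\stilde_{i_k}$ (the paper runs the same argument in the other direction, propagating from simple $e$ outward), and your derivation of \eqref{eq-commutators-of-h-tildes} all match the paper. But there are two genuine gaps, and they turn out to be two faces of the same missing computation.

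First, the ``second $\Wstar$-reduction on $\beta$'' does not work. After reducing to simple $e=e_i$, the $\htilde$-part of the relator asserts that $\htilde_i(r)$ acts on $\U_\beta$ by $r^{\pairing{\alpha_i^\vee}{\beta}}$ for \emph{arbitrary} $\beta$. If you conjugate by a word in the $\stilde_j$'s to move $\beta$ to a simple root, you simultaneously move the torus element: since $\htilde_i(r)=\stilde_i(r)\stilde_i(-1)$ is a product of root-group elements, its conjugate is $\htilde_{e'}(r)$ where $e'$ is the image of $e_i$ under the corresponding element of $\Wstar$, and $e'$ generally lies in $E_\gamma$ for a non-simple $\gamma$. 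The relators \eqref{eq-alternative-Steinberg-relations-h-action-on-simple-root-groups}--\eqref{eq-alternative-Steinberg-relations-h-action-on-negative-simple-root-groups} say nothing about such elements, so the reduction does not close up. The paper instead fixes $i$ and inducts over $e\in E_\beta$ via the $\Wstar$-orbit; the engine is the identity $\stilde_j\,\htilde_i(r)\,\stilde_j^{-1}\equiv\htilde_j\bigl(r^{A_{ij}}\bigr)^{-1}\htilde_i(r)$ (a rearrangement of \eqref{eq-s-tilde-action-on-h-tildes}), which rewrites the conjugated torus element as a product of \emph{simple-root} $\htilde$'s so that the inductive hypothesis applies to each factor, followed by an exponent bookkeeping step. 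That computation is the bulk of the paper's proof and is absent from yours.

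Second, your route to \eqref{eq-s-tilde-action-on-h-tildes} --- identify $\stilde_i\htilde_j(r)\stilde_i^{-1}$ with $\htilde_{\sstar_i(e_j)}(r)$ and then with $\htilde_i(r^{A_{ji}})^{-1}\htilde_j(r)$ by comparing actions on the $\U_\beta$ --- founders on precisely the obstacle you flag: elements of $\St(R)$ inducing the same automorphism of every $\U_\beta$ need not be equal, since $\St(R)$ has a center in general (of $K_2$ type), and no rank-$2$ sign-tracking circumvents this; a matrix verification in a Chevalley group only proves the identity in the quotient. The paper's derivation is elementary and purely formal: write $\stilde_j(r')\equiv\x_j(r')\cdot\stilde_j\x_j(1/r')\stilde_j^{-1}\cdot\x_j(r')$ using \eqref{eq-alternative-Steinberg-relations-s-action-on-everything}, conjugate each factor by $\htilde_i(r)$ via \eqref{eq-alternative-Steinberg-relations-h-action-on-simple-root-groups}--\eqref{eq-alternative-Steinberg-relations-h-action-on-negative-simple-root-groups} to get $\htilde_i(r)\stilde_j(r')\htilde_i(r)^{-1}\equiv\stilde_j(r^{A_{ij}}r')$, then set $r'=1$ and rearrange using \eqref{eq-htilde-htilde-inverse-equals-stilde-stilde-inverse}. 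Supplying this derivation would simultaneously repair the first gap.
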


\begin{remark}[Applicability to $\PSt$]
\label{remark-do-not-need-St-Tits-relations}
The proof below does not use the relations defining $\StTits$.  So 
it shows that the subgroup of
$\freeproduct_{\alpha\in\Phi}\,\U_\alpha(R)$ normally generated by the
relators \eqref{eq-Morita-Rehmann-relations-abstract-form} is the same as the one normally generated by
\eqref{eq-alternative-Steinberg-relations-h-action-on-simple-root-groups}--\eqref{eq-alternative-Steinberg-relations-s-action-on-everything}, and that \eqref{eq-s-tilde-action-on-h-tildes}--\eqref{eq-commutators-of-h-tildes} hold in the quotient.
This is useful because we will use the same relations when defining
the pre-Steinberg group $\PSt$ in the next section.
\end{remark}

\begin{proof}
We begin by showing that \eqref{eq-alternative-Steinberg-relations-h-action-on-simple-root-groups}--\eqref{eq-alternative-Steinberg-relations-s-action-on-everything} are trivial in $\St(R)$.
First, \eqref{eq-alternative-Steinberg-relations-s-action-on-everything} is
got from \eqref{eq-Morita-Rehmann-relations-abstract-form} by taking $e=e_i$ and $r=1$.
Next, recall the definition of $\htilde_i(r)$ as $\stilde_i(r)\stilde_i(-1)$
in \eqref{eq-defn-of-h-tilde-e}, and that the defining relations \eqref{eq-Morita-Rehmann-relations-abstract-form} for $\St(R)$ say
how $\stilde_i(r)$ acts on every $\U_\beta$.  So
$\htilde_i(r)$ acts on every $\U_\beta$ as 
\begin{align*}
h_i(r)\circ\sstar_i\circ h_i(-1)\circ\sstar_i
&{}=
h_i(r)\circ h_i(-1)\circ(\sstar_i)^{2}
\\
&{}=
h_i(r)\circ h_i(-1)\circ h_i(-1)
=
h_i(r).
\end{align*}
Taking $\beta=\alpha_j$ gives \eqref{eq-alternative-Steinberg-relations-h-action-on-simple-root-groups}.  For \eqref{eq-alternative-Steinberg-relations-h-action-on-negative-simple-root-groups}, take
$\beta=-\alpha_j$ and use the fact that $\stilde_j$ swaps
$\U_{\pm\alpha_j}$ (since it acts as $\sstar_j$).  This finishes the
proof that \eqref{eq-alternative-Steinberg-relations-h-action-on-simple-root-groups}--\eqref{eq-alternative-Steinberg-relations-s-action-on-everything} are trivial in $\St(R)$.

Now we write $N$ for the smallest normal subgroup of $\StTits(R)$
containing
\eqref{eq-alternative-Steinberg-relations-h-action-on-simple-root-groups}--\eqref{eq-alternative-Steinberg-relations-s-action-on-everything}
and $\equiv$ for equality modulo~$N$.  We will show that
\eqref{eq-s-tilde-action-on-h-tildes}--\eqref{eq-commutators-of-h-tildes}
hold modulo~$N$ and that the relators
\eqref{eq-Morita-Rehmann-relations-just-the-s-1-tildes}--\eqref{eq-Morita-Rehmann-relations-just-the-h-tildes-e}
are trivial modulo~$N$.  We will use relator
\eqref{eq-alternative-Steinberg-relations-s-action-on-everything}
without explicit mention: modulo~$N$, each $\stilde_i$ acts on every $\U_\beta$ as
$\sstar_i$.

First we establish \eqref{eq-s-tilde-action-on-h-tildes}--\eqref{eq-commutators-of-h-tildes}.  Starting from  the definition of
$\stilde_j(r')$, we have
$$
\stilde_j(r')
=
\x_j(r')\x_{-j}(1/r')\x_j(r')
\equiv
\x_j(r')\cdot\stilde_j\x_j(1/r')\stilde_j^{-1}\cdot\x_j(r').
$$
Now the  relators
\eqref{eq-alternative-Steinberg-relations-h-action-on-simple-root-groups}--\eqref{eq-alternative-Steinberg-relations-h-action-on-negative-simple-root-groups} give
\begin{equation}
\label{eq-h-tilde-r-acting-on-stilde-r-prime}
\htilde_i(r)\,\stilde_j(r')\,\htilde_i(r)^{-1}
\equiv
\stilde_j(r^{A_{i j}}r')
\end{equation}
Taking $r'=1$, left-multiplying by  $\htilde_i(r)^{-1}$,
right-multiplying by $\stilde_j^{-1}$, and then inverting both
sides and using \eqref{eq-htilde-htilde-inverse-equals-stilde-stilde-inverse}, gives
\begin{align}
\notag
\stilde_j\,\htilde_i(r)\,\stilde_j^{-1}
&{}\equiv
\stilde_j(1)\,\stilde_j(r^{A_{i j}})^{-1}\htilde_i(r)
\\
\label{eq-s-tilde-j-acting-on-h-tilde-i-of-r}
&{}\equiv
\htilde_j\bigl(r^{A_{i j}}\bigr)^{-1}\,\htilde_i(r).
\end{align}
Exchanging $i$ and $j$ establishes \eqref{eq-s-tilde-action-on-h-tildes}.
Also,    \eqref{eq-h-tilde-r-acting-on-stilde-r-prime},
\eqref{eq-defn-of-h-tilde-e} and \eqref{eq-htilde-htilde-inverse-equals-stilde-stilde-inverse} show that
$$
\htilde_i(r)\htilde_j(r')\htilde_i(r)^{-1}
\equiv\stilde_j\bigl(r^{A_{i j}}r'\bigr)\,\stilde_j\bigl(r^{A_{i j}}\bigr)^{-1}
=\htilde_j\bigl(r^{A_{i j}}r'\bigr)\,\htilde_j\bigl(r^{A_{i j}}\bigr)^{-1}
$$  Right-multiplication by $\htilde_j(r')^{-1}$ gives \eqref{eq-commutators-of-h-tildes}.

Now we will prove
\eqref{eq-Morita-Rehmann-relations-just-the-h-tildes-e} for all $e_i$.
That is: modulo
$N$, $\htilde_i(r)$ acts on every $\U_\beta$ by $h_i(r)$.  To prove this, write $E$ for $\cup_{\beta\in\Phi}\,E_\beta$ and
consider for any $e\in E$ the following condition:
\begin{equation}
\label{eq-to-establish-torus-action-on-root-groups}
\htilde_i(r)\x_e(t)\htilde_i(r)^{-1}
\equiv
\x_e\bigl(r^{\pairing{\alpha_i^\vee}{\beta}}t\bigr)
\ 
\hbox{for all $i\in I$, $r\in\Runits$ and $t\in R$,}
\end{equation}
where $\beta$ is the root with $e\in E_\beta$.  The set of $e\in E$
satisfying this condition is closed under negation, because
$\x_{-e}(t)=\x_e(-t)$.  For every $j\in U$, it contains $e_j\in E_{\alpha_j}$ and
$f_j\in E_{-\alpha_j}$ by relations \eqref{eq-alternative-Steinberg-relations-h-action-on-simple-root-groups}--\eqref{eq-alternative-Steinberg-relations-h-action-on-negative-simple-root-groups}.  The next
paragraph shows that it is closed under the action of $\Wstar$.
Therefore all $e\in E$ satisfy
\eqref{eq-to-establish-torus-action-on-root-groups}, establishing
\eqref{eq-Morita-Rehmann-relations-just-the-h-tildes-e} for all $e=e_i$.

Here is the calculation that if $e\in E$ satisfies
\eqref{eq-to-establish-torus-action-on-root-groups}, and $j$ is any
element of $I$, then $\sstar_j(e)$ also satisfies
\eqref{eq-to-establish-torus-action-on-root-groups}.  
We must establish it  for all $i$, so fix some $i\in I$.
We have
\begin{align*}
&\htilde_i(r)\x_{\sstar_j(e)}(t)\htilde_i(r)^{-1}
\\
&{}=
\htilde_i(r)\,\x_{{\sstar_j}^{-1}\circ h_j(-1)(e)}(t)\,\htilde_i(r)^{-1}
&\llap{by $(\sstar_j)^2=h_j(-1)$}
\\
&{}\equiv
\htilde_i(r)\,\stilde_j^{-1}\,\x_{e}\bigl((-1)^{\pairing{\alpha_j^\vee}{\beta}}t\bigr)\,\stilde_j\,\htilde_i(r)^{-1}
\\
&=
\stilde_j^{-1}\bigl(\stilde_j\htilde_i(r)\stilde_j^{-1}\bigr)
\,\x_{e}\bigl((-1)^{\pairing{\alpha_j^\vee}{\beta}}t\bigr)
\bigl(\stilde_j\htilde_i(r)^{-1}\stilde_j^{-1}\bigr)\stilde_j\\
&\equiv
\stilde_j^{-1}\bigl(\htilde_j(r^{A_{i j}})^{-1}\htilde_i(r)\bigr)
\x_{e}\bigl((-1)^{\pairing{\alpha_j^\vee}{\beta}}t\bigr)
\bigl(\htilde_i(r)^{-1}\htilde_j(r^{A_{i j}})\bigr)\stilde_j
\kern40pt
&
\llap{by \eqref{eq-s-tilde-j-acting-on-h-tilde-i-of-r}}
\\
&{}\equiv
\stilde_j^{-1}\,\x_{e}\Bigl(
(-1)^{\pairing{\alpha_j^\vee}{\beta}}
r^{\pairing{\alpha_i^\vee}{\beta}}
r^{-A_{i j}\pairing{\alpha_j^\vee}{\beta}}
t
\Bigr)\,\stilde_j
&\llap{by \eqref{eq-to-establish-torus-action-on-root-groups} for $e$}
\\
&{}\equiv
\x_{{\sstar_j}^{-1}(e)}\Bigl(
(-1)^{\pairing{\alpha_j^\vee}{\beta}}
r^{\pairing{\alpha_i^\vee}{\beta}}
r^{-A_{i j}\pairing{\alpha_j^\vee}{\beta}}
t
\Bigr)
\\
&{}=
\x_{\sstar_j\circ h_j(-1)(e)}\Bigl(
(-1)^{\pairing{\alpha_j^\vee}{\beta}}
r^{\pairing{\alpha_i^\vee}{\beta}}
r^{-A_{i j}\pairing{\alpha_j^\vee}{\beta}}
t
\Bigr)
\\
&{}=
\x_{\sstar_j(e)}\Bigl(
r^{\pairing{\alpha_i^\vee}{\beta}}
r^{-A_{i j}\pairing{\alpha_j^\vee}{\beta}}
t
\Bigr).
\end{align*}
The statement of \eqref{eq-to-establish-torus-action-on-root-groups} for $\sstar_j(e)$ has a similar form.
Deducing it  amounts to showing
$\pairing{\alpha_i^\vee-A_{i j}\alpha_j^\vee}{\beta}=\pairing{\alpha_i^\vee}{s_j(\beta)}$.
This follows from
$s_j(\beta)=\beta-\pairing{\alpha_j^\vee}{\beta}\alpha_j$,
finishing the proof of
\eqref{eq-to-establish-torus-action-on-root-groups} for all $e\in E$.

For $e$ equal to any $\pm e_i$, we were given
\eqref{eq-Morita-Rehmann-relations-just-the-s-1-tildes} and we have
proven \eqref{eq-Morita-Rehmann-relations-just-the-h-tildes-e}.  The
same results for all $e$ follow by $\Wstar$ symmetry.  More precisely,
we claim that for all $j\in I$: if
\eqref{eq-Morita-Rehmann-relations-just-the-s-1-tildes} and
\eqref{eq-Morita-Rehmann-relations-just-the-h-tildes-e} hold for some
$e\in E$ then they hold for $\sstar_j(e)$ too.  We give the details
for \eqref{eq-Morita-Rehmann-relations-just-the-h-tildes-e}, and the
argument is the same for
\eqref{eq-Morita-Rehmann-relations-just-the-s-1-tildes}.  Suppose
$r\in\Runits$ and $u\in\cup_{\beta\in\Phi}\,\U_\beta$.  Then the left
and right ``sides'' of the known relation
\eqref{eq-Morita-Rehmann-relations-just-the-h-tildes-e} for $e$ lie in
$\cup_{\beta\in\Phi}\,\U_\beta$, so conjugating the left by
$\stilde_j$ has the same result as applying $\sstar_j$ to the right.
That is,
\begin{align*}
\stilde_j\,\stilde_e u\stilde_e^{-1}\,\stilde_j^{-1}
&{}\equiv
\sstar_j\circ\sstar_e(u)
\\
(\stilde_j\stilde_e\stilde_j^{-1})
\,
(\stilde_j u\stilde_j^{-1})
\,
(\stilde_j\stilde_e^{-1}\stilde_j^{-1})
&{}\equiv
\sstar_j\circ\sstar_e\circ{\sstar_j}^{-1}\circ\sstar_j (u)
\\
\sstar_{\sstar_j(e)}\,\sstar_j(u)\,(\sstar_{\sstar_j(e)})^{-1}
&{}\equiv
\sstar_{\sstar_j(e)}\bigl(\sstar_j (u))
\end{align*}
As $u$ varies over all of $\cup_{\beta\in\Phi}\,\U_\beta$, so does 
$\sstar_j(u)$.  This verifies relation \eqref{eq-Morita-Rehmann-relations-just-the-h-tildes-e} for
$\sstar_j(e)$. 
\end{proof}

\section{The pre-Steinberg group $\PSt$}
\label{sec-pre-Steinberg-group}

\noindent
In this section we define the pre-Steinberg group functor $\PSt_A$ in
the same way as $\St_A$, but omitting some of its Chevalley relations.
So it has a natural map to $\St_A$.  Then we will write down
another group functor as a concrete presentation, and show in
theorem~\ref{thm-G-4-is-PSt} that it equals $\PSt_A$.  Since
$\PSt_A\to\St_A$ is often an isomorphism (theorem~\ref{thm-examples-of-PSt-to-St-being-an-isomorphism}),
this often gives a new presentation for $\St_A$.  As discussed in the
introduction, it is simpler and more explicit than previous
presentations, and special cases of it appear in table~\ref{tab-simply-laced-presentation} and
section~\ref{sec-examples}.  In the rest of this section we suppress
the subscript~$A$.

We call two roots $\alpha,\beta$ classically
prenilpotent if 
$(\Q\alpha+\Q\beta)\cap\Phi$ is finite and $\alpha+\beta\neq0$.  
Then they are
prenilpotent, and lie in some $A_1$, $A_1^2$, $A_2$, $B_2$ or $G_2$ root
system.  We define the pre-Steinberg group functor $\PSt$ exactly as
we did the Steinberg functor $\St$
(section~\ref{sec-Steinberg-group}), except that when imposing the
Chevalley relations we only vary $\alpha,\beta$ over the classically
prenilpotent pairs rather than all prenilpotent pairs.  We still
impose the relations \eqref{eq-Morita-Rehmann-relations-abstract-form} of Morita-Rehmann, or equivalently
\eqref{eq-Morita-Rehmann-relations-just-the-s-1-tildes}--\eqref{eq-Morita-Rehmann-relations-just-the-h-tildes-e} or
\eqref{eq-alternative-Steinberg-relations-h-action-on-simple-root-groups}--\eqref{eq-alternative-Steinberg-relations-s-action-on-everything}.  (See remark~\ref{remark-do-not-need-St-Tits-relations} for why theorem~\ref{thm-alternate-relations-for-Steinberg-group} applies
with $\PSt$ in place of $\St$.)  Just as for $\St$, $\Wstar$ acts on
$\PSt$ because it permutes the defining relators.

There is an obvious natural map $\PSt\to\St$, got by imposing the
remaining Chevalley relations, coming from prenilpotent pairs that are not
classically prenilpotent.  If $\Phi$ is finite then every prenilpotent
pair is classically prenilpotent, so $\PSt\to\St$ is an
isomorphism.  

\medskip
The rest of this section is devoted to writing down a presentation for
$\PSt$.  We start by defining an analogue $\What$ of the Weyl group.
It is the quotient of the free
group on formal symbols $S_{i\in I}$ by the subgroup normally
generated by the words
\begingroup \newcommand{\placement}[1]{\rlap{\kern110pt#1}}
\begin{align}
\label{eq-def-of-What-Artin-relators}
\bigl(S_i S_j\cdots{}\bigr)
&\placement{if $m_{i j}\neq\infty$}
{}\cdot
\bigl(S_j S_i\cdots{}\bigr)^{-1}
\\
\label{eq-def-of-What-commutators-of-squares-even-case}
\SsquaredActionEven
\\
\label{eq-def-of-What-commutators-of-squares-odd-case}
\SsquaredActionOdd
\end{align}
where $i,j$ vary over $I$, and \eqref{eq-def-of-What-Artin-relators} has $m_{i j}$ terms inside each
pair of parentheses, alternating between $S_i$ and $S_j$.  These are
called the Artin relators,  for
example $S_i S_j S_i\cdot(S_j S_i S_j)^{-1}$ if $m_{i j}=3$.    

\begin{remark}
We chose these defining relations so that $\What$ would have four
properties.  First, it maps naturally to $\Wstar$, so that it acts on
$\g$ and $\freeproduct_{\alpha\in\Phi}\U_{\alpha}$.  Second, the
kernel of $\What\to W$ is generated (not just normally) by
the~$S_i^2$.  This plays a key role in the proof of
theorem~\ref{thm-G-2-is-free-product-semidirect-What} below.  Third,
each relation involves just two subscripts, which is needed for the
Curtis-Tits property of $\PSt$
(corollary~\ref{cor-pre-Steinberg-as-direct-limit}).  And fourth,
the   $\stilde_i\in\St$,  defined in \eqref{eq-definition-of-s-tilde}, satisfy
the same relations.  (Formally: $S_i\to\stilde_i$ extends to a
homomorphism $\What\to\St$.)  The first two properties are established
in the next lemma, the third is obvious, and the fourth is part of
theorem~\ref{thm-G-4-is-PSt}.
\end{remark}

\begin{lemma}[Basic properties of $\What$]
\label{lem-basic-properties-of-What}
\leavevmode\hbox{}
\begin{enumerate}
\item
\label{item-What-surjects-to-Wstar}
$S_i\mapsto\sstar_i$ defines a surjection $\What\to\Wstar$.  
\item
\label{item-conjugates-of-squares-in-What}
$S_j S_i^2 S_j^{-1}=S_i^2$ resp.\ $S_j^2 S_i^2$ if
  $A_{i j}$ is even resp.\ odd.
\item
\label{item-squares-generate-kernel-of-What-to-W}
The $S_i^2$ generate
the kernel of the composition $\What\to\Wstar\to W$.
\end{enumerate}
\end{lemma}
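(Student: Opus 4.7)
The plan is that (i) and (ii) follow essentially directly from previous results and the form of the defining relators, while (iii) requires a short argument using the standard collapse of an Artin-style presentation onto its Coxeter quotient.

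For (i), the task is to check that each defining relator of $\What$ maps to the identity in $\Wstar$. The Artin relators \eqref{eq-def-of-What-Artin-relators} die by Tits' theorem~\ref{thm-Artin-relations-in-W-star}. For the square-commutator relators \eqref{eq-def-of-What-commutators-of-squares-even-case}--\eqref{eq-def-of-What-commutators-of-squares-odd-case}, lemma~\ref{lem-relations-on-s-stars}\ref{item-W-conjugacy-action-on-s-star-squared} (after interchanging the roles of $i$ and $j$) gives $\sstar_j(\sstar_i)^2\sstar_j^{-1}=(\sstar_i)^2(\sstar_j)^{-2A_{ij}}$; since $\sstar_j^2=\Ad(\alpha_j^\vee)$ has order at most $2$ in $\Aut\g$, the factor $(\sstar_j^2)^{-A_{ij}}$ is trivial when $A_{ij}$ is even and equals $\sstar_j^2$ when $A_{ij}$ is odd, matching each relator (and using that $\sstar_i^2,\sstar_j^2\in\Ad(\ZIvee)$ commute). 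Surjectivity of $\What\to\Wstar$ is immediate because $\Wstar=\gend{\sstar_{i\in I}}$.

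Part (ii) is nothing but a transposition of terms in the defining relators \eqref{eq-def-of-What-commutators-of-squares-even-case} and \eqref{eq-def-of-What-commutators-of-squares-odd-case}.

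For (iii), let $N':=\gend{S_i^2:i\in I}$ be the subgroup (not just normal subgroup) generated by the squares, and $N:=\ker(\What\to W)$. Since $s_i^2=1$ in $W$, we have $N'\subseteq N$, and I must establish the reverse inclusion. First, $N'$ is normal in $\What$: it suffices to check that each conjugate $S_j^{\pm1}S_i^2 S_j^{\mp1}$ lies in $N'$, and by (ii) every such conjugate equals either $S_i^2$ or a product of two squares (solving $S_j S_i^2 S_j^{-1}=S_j^2 S_i^2$ for $S_j^{-1}S_i^2 S_j$ in the odd case gives $S_j^{-2}S_i^2$, still in $N'$). Next, in the quotient $\What/N'$ the images of the $S_i$ satisfy the Artin relations \eqref{eq-def-of-What-Artin-relators} together with $S_i^2=1$; the standard derivation of the Coxeter relations from these (for example, for $m_{ij}=3$ multiplying the Artin relation by itself and using $S_i^2=S_j^2=1$ gives $(S_iS_j)^3=1$, and the analogous cancellation works for general $m_{ij}$) shows that all Coxeter relators $(S_iS_j)^{m_{ij}}=1$ hold in $\What/N'$. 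Hence $s_i\mapsto[S_i]$ defines a homomorphism $W\to\What/N'$ that is a two-sided inverse to the tautological surjection $\What/N'\to W$, so $\What/N'\iso W$ and therefore $N'=N$.

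The main point to watch is the normality of $N'$, which is precisely what the defining relators \eqref{eq-def-of-What-commutators-of-squares-even-case}--\eqref{eq-def-of-What-commutators-of-squares-odd-case} were designed to provide; once that is in hand, the rest is bookkeeping and the universal property of the Coxeter group $W$.
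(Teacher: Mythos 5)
Your proposal is correct and follows essentially the same route as the paper's proof: part \ref{item-What-surjects-to-Wstar} via theorem~\ref{thm-Artin-relations-in-W-star} and lemma~\ref{lem-relations-on-s-stars}\ref{item-W-conjugacy-action-on-s-star-squared} together with ${\sstar_j}^4=1$ and the commutativity of the ${\sstar_i}^2$, part \ref{item-conjugates-of-squares-in-What} by rearranging the defining relators, and part \ref{item-squares-generate-kernel-of-What-to-W} by noting that \ref{item-conjugates-of-squares-in-What} makes the subgroup generated by the $S_i^2$ normal and that the Artin presentation collapses to the Coxeter presentation once the squares are killed. (One cosmetic slip: in the odd case $S_j^{-1}S_i^2S_j$ works out to $S_i^2S_j^2$ rather than $S_j^{-2}S_i^2$, since the squares need not commute in $\What$; but either element lies in the subgroup generated by the squares, so the normality argument is unaffected.)
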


\begin{proof}
We saw in theorem~\ref{thm-Artin-relations-in-W-star} that the $\sstar_i$
satisfy the Artin
relations.  Rewriting lemma~\ref{lem-relations-on-s-stars}\ref{item-W-conjugacy-action-on-s-star-squared}'s relation in $\Wstar$ 
with $i$ and $j$ reversed gives
$$
\sstar_j(\sstar_i)^2\sstar_j{}^{-1}=(\sstar_i)^2(\sstar_j)^{-2A_{i j}}.
$$
Multiplying on the left by $\sstar_j{}^{-1}$ and on the right by
$(\sstar_i)^{-2}$, then inverting, gives
$$
(\sstar_i)^2\sstar_j(\sstar_i)^{-2}
=
(\sstar_i)^2
(\sstar_j)^{2A_{i j}}
(\sstar_i)^{-2}
\sstar_j
=
(\sstar_j)^{1+2A_{i j}}
$$ In the second step we used the fact that $\sstar_i{}^2$ and
$\sstar_j{}^2$ commute.  Using $\sstar_j{}^4=1$, the right side is
$\sstar_j$ if $A_{i j}$ is even and $\sstar_j{}^{-1}$ if $A_{i j}$ is
odd.  This shows that $S_i\mapsto\sstar_i$ sends the relators
\eqref{eq-def-of-What-commutators-of-squares-even-case}--\eqref{eq-def-of-What-commutators-of-squares-odd-case}
to the trivial element of $\Wstar$, proving
\ref{item-What-surjects-to-Wstar}.

One can manipulate \eqref{eq-def-of-What-commutators-of-squares-even-case}--\eqref{eq-def-of-What-commutators-of-squares-odd-case} in a similar way, yielding 
\ref{item-conjugates-of-squares-in-What}.  It follows immediately
that the subgroup generated by the $S_i^2$ is normal.  Because of the
Artin relations, this is
the kernel of $\What\to W$.  So we have proven \ref{item-squares-generate-kernel-of-What-to-W}.
\end{proof}

\begin{remark}
Though we don't need them, the following relations  in $\What$
show that $\What$ is ``not much larger'' than $\Wstar$.
First \eqref{eq-def-of-What-commutators-of-squares-even-case}--\eqref{eq-def-of-What-commutators-of-squares-odd-case} imply the centrality of every $S_i^4$.
Second, if some $A_{i j}$
is odd then \eqref{eq-def-of-What-commutators-of-squares-odd-case} shows that $S_j^{\pm4}$
are conjugate; since both are central they must be equal, so
$S_j^8=1$.  
Third, the relation obtained at the end of the proof implies
$[S_j^2,S_i^2]=1$ or $S_j^4$, according to whether $A_{i j}$ is even or odd.
In particular, these commutators are central.  Finally, we can use
this twice:
$$
\left\{
\begin{matrix}
1&\hbox{if $A_{i j}$ is even}\\
S_j^4&\hbox{if $A_{i j}$ is \rlap{odd}\phantom{even}}
\end{matrix}
\right\}
=[S_j^2,S_i^2]
=[S_i^2,S_j^2]^{-1}
=
\left\{
\begin{matrix}
1&\hbox{if $A_{j i}$ is even}\\
S_i^{-4}&\hbox{if $A_{j i}$ is \rlap{odd}\phantom{even}}
\end{matrix}
\right\}
$$ In particular, if both $A_{i j}$ and
$A_{j i}$ are odd then $S_i^4$ and $S_j^4$ are equal.
If $A_{i j}$ is even while $A_{j i}$ is odd then we get $S_i^4=1$.
\end{remark}

\medskip
Now we begin our presentation in earnest.  Ultimately, $\PSt(R)$ will
have generators $S_i$ and $X_i(t)$, with $i$ varying over~$I$ and $t$
varying over~$R$, and relators \eqref{eq-def-of-What-Artin-relators}--\eqref{eq-collapse-to-PSt}.  

We first define a group functor $\G_1$ by declaring that
$\G_1(R)$ is the quotient of the free group on the formal symbols
$X_i(t)$, by the subgroup normally generated by the relators
\begin{equation}
\label{eq-additive-relators-in-root-groups}
\AdditiveRlnsInRootGroups
\end{equation}
for all $i\in I$ and $t,u\in R$.  The following description of $\G_1$
is obvious.

\begin{lemma}
\label{lem-description-of-G-1}
$\G_1\iso\freeproduct_{i\in
  I}\U_i$, via the correspondence $X_i(t)\leftrightarrow\x_i(t)$.
\qed
\end{lemma}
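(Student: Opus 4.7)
The plan is to invoke the universal property of the free product on both sides, so the verification is purely formal.

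First I would define the forward map $\Phi\colon\G_1(R)\to\freeproduct_{i\in I}\U_i(R)$ by sending each generator $X_i(t)$ of the free group on the symbols $X_{i\in I}(t\in R)$ to $\x_i(t)\in\U_i(R)\sset\freeproduct_{i\in I}\U_i(R)$. Since $\U_i(R)\iso R$ as an additive group via $\x_i(t)\leftrightarrow t$, the defining relators \eqref{eq-additive-relators-in-root-groups} of $\G_1(R)$ are sent to the identity, so $\Phi$ descends to $\G_1(R)$.

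Conversely, for each $i\in I$, the map $\x_i(t)\mapsto X_i(t)$ is a well-defined homomorphism $\U_i(R)\to\G_1(R)$, because the relations \eqref{eq-additive-relators-in-root-groups} ensure that $t\mapsto X_i(t)$ is an additive homomorphism $R\to\G_1(R)$ and $\U_i(R)\iso R$ additively. The universal property of the free product then assembles these into a single homomorphism $\Psi\colon\freeproduct_{i\in I}\U_i(R)\to\G_1(R)$.

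To finish, I would check that $\Phi$ and $\Psi$ are mutually inverse. Both compositions fix the natural generating sets ($X_i(t)$ on one side, $\x_i(t)$ on the other), so they are the identity on the respective groups. There is no real obstacle here: the content is just that the presentation of $\G_1$ by generators $X_i(t)$ and relators \eqref{eq-additive-relators-in-root-groups} is, term for term, a presentation of the free product of the additive groups $\U_i(R)\iso R$, which is exactly the universal property of the free product applied to the obvious presentations of its factors.
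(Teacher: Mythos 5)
Your proof is correct: the relators \eqref{eq-additive-relators-in-root-groups} are exactly the multiplication table of the additive group $(R,+)\iso\U_i(R)$, so the presentation of $\G_1$ is term-for-term the standard presentation of the free product, and your two-sided universal-property check is the canonical way to verify this. The paper offers no proof at all (it declares the description of $\G_1$ ``obvious''), so your argument is simply the expected formalization of the same observation.
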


Next we define a group functor $\G_2$ as a certain quotient of the free
product $\G_1\freeproduct\What$.  Namely, $\G_2(R)$
is the quotient of $\G_1(R)\freeproduct\What$ by the subgroup normally
generated by the following relators, with $i$ and $j$ varying over $I$
and $t$ over $R$.
\begin{align}
\label{eq-S-i-squared-action-on-X-j}
\SiSquaredActionOnXj
\\
\label{eq-S-i-X-j-relator-when-m-is-2}
\SiXjRelationWhenMisTwo
\\
\label{eq-S-i-X-j-relator-when-m-is-3}
\SiXjRelationWhenMisThree
\\
\label{eq-S-i-X-j-relator-when-m-is-4}
\SiXjRelationWhenMisFour
\\
\label{eq-S-i-X-j-relator-when-m-is-6}
\SiXjRelationWhenMisSix
\end{align}
\endgroup The next theorem is the key step in our development; see
section~\ref{sec-proof-of-free-by-semidirect-product-theorem} for the
proof.  Although it is not at all obvious, we have presented
$\bigl(\freeproduct_{\alpha\in\Phi}\,\U_\alpha\bigr)\semidirect\What$. Therefore we
``have'' the root groups $\U_\alpha$ for all $\alpha$, not just
simple~$\alpha$.  This sets us up for imposing the Chevalley relations
in the next step.

\begin{theorem}
\label{thm-G-2-is-free-product-semidirect-What}
$\G_2$ is the semidirect product of
$\freeproduct_{\alpha\in\Phi}\,\U_{\alpha}$ by $\What$, 
where $\What$ acts on the
free product via its homomorphism to $\Wstar$ and
$\Wstar$'s action on $\freeproduct_{\alpha\in\Phi}\U_{\alpha}$ is induced by its action on
$\oplus_{\alpha\in\Phi}\,\g_{\alpha,\Z}$. 
\end{theorem}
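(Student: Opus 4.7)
The plan is to construct mutually inverse homomorphisms between $\G_2$ and $H:=\bigl(\freeproduct_{\alpha\in\Phi}\U_\alpha\bigr)\semidirect\What$, where $\What$ acts on the free product through the composition $\What\to\Wstar\to\Aut\bigl(\freeproduct_{\alpha\in\Phi}\U_\alpha\bigr)$ afforded by lemma~\ref{lem-basic-properties-of-What}\ref{item-What-surjects-to-Wstar} and $\Wstar$'s permutation of the $\g_{\alpha,\Z}$.

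\emph{Forward map.} First I would define $\phi:\G_2\to H$ on generators by $S_i\mapsto S_i\in\What$ and $X_i(t)\mapsto\x_i(t)\in\U_{\alpha_i}$, and check that each defining relator of $\G_2$ vanishes in $H$. The additive relators \eqref{eq-additive-relators-in-root-groups} are immediate from $\U_i\iso\Add$. Relation \eqref{eq-S-i-squared-action-on-X-j} is the assertion that $\sstar_i{}^2$ acts on $\g_{\alpha_j,\Z}$ as $(-1)^{A_{ij}}$, which is exactly lemma~\ref{lem-relations-on-s-stars}\ref{item-action-of-s-star-squared}. The remaining relators \eqref{eq-S-i-X-j-relator-when-m-is-2}--\eqref{eq-S-i-X-j-relator-when-m-is-6} correspond, according to the value of $m_{ij}\in\{2,3,4,6\}$, to the formulas of lemma~\ref{lem-s-star-action-on-e-i} for the action of $\sstar_i$, $\sstar_j\sstar_i$, $\sstar_i\sstar_j\sstar_i$ and $\sstar_i\sstar_j\sstar_i\sstar_j\sstar_i$ on $e_j$; they are visibly designed this way.

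\emph{Reverse map.} Building $\psi:H\to\G_2$ requires producing, for every $\alpha\in\Phi$, an embedded copy of $\U_\alpha$ inside $\G_2$. For each $W$-orbit in $\Phi$ I would pick a simple representative $\alpha_{i(\alpha)}$ and, for each $\alpha$ in that orbit, a word $w_\alpha\in\What$ whose image in $W$ carries $\alpha_{i(\alpha)}$ to $\alpha$ (with $w_{\alpha_i}=1$ whenever $\alpha$ is already simple); then set $\x_\alpha(t):=w_\alpha X_{i(\alpha)}(t)w_\alpha^{-1}\in\G_2$. Combined with the inclusion $\What\sset\G_2$, the universal property of the free product will produce $\psi$, provided the semidirect-product compatibility
\begin{equation}
\label{eq-proposal-key-identity}
w\cdot\x_\alpha(t)\cdot w^{-1}=\x_{\bar w(\alpha)}(\epsilon\,t)\qquad\hbox{in $\G_2$}
\end{equation}
holds for all $w\in\What$ and $\alpha\in\Phi$, where $\bar w$ is the image of $w$ in $W$ and $\epsilon=\pm1$ is the scalar by which $w$'s image in $\Wstar$ acts on the chosen generator of $\g_{\alpha,\Z}$. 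Transporting by the $w_\alpha$'s reduces \eqref{eq-proposal-key-identity} to its stabilizer case: for every $w$ in the preimage in $\What$ of the $W$-stabilizer of $\alpha_i$, show that $wX_i(t)w^{-1}=X_i(\epsilon\,t)$ in $\G_2$, with $\epsilon$ predicted by $\Wstar$.

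\emph{Main obstacle.} The stabilizer verification is where the work lies. By lemma~\ref{lem-basic-properties-of-What}\ref{item-squares-generate-kernel-of-What-to-W} the preimage of the $W$-stabilizer of $\alpha_i$ in $\What$ is generated by the squares $S_l^2$ together with lifts of any generating set for the $W$-stabilizer itself; theorem~\ref{thm-root-stabilizers-in-W} supplies such a set $\{p_z,r_{jk}\}$, and theorem~\ref{thm-root-stabilizers-in-W-star} records how the lifts $\pstar_z,\rstar_{jk}$ and $(\sstar_l)^2$ act on $e_i$. For the squares, \eqref{eq-S-i-squared-action-on-X-j} gives $S_l^2X_i(t)S_l^{-2}=X_i((-1)^{A_{li}}t)$, matching the sign from theorem~\ref{thm-root-stabilizers-in-W-star}. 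The substantive step is to show, by induction along edge-paths in $\odddiagram$, that the $\What$-lifts of the $p_z$ and of the $r_{jk}$ act trivially on $X_i(t)$. This induction is powered precisely by \eqref{eq-S-i-X-j-relator-when-m-is-2}--\eqref{eq-S-i-X-j-relator-when-m-is-6}, which are the $\G_2$-level shadows of lemma~\ref{lem-s-star-action-on-e-i}; the delicate sign-bookkeeping along paths and around cycles in $\odddiagram$ is what makes this, rather than either map construction, the heart of the proof. Once \eqref{eq-proposal-key-identity} is established, both $\phi\circ\psi$ and $\psi\circ\phi$ act as the identity on generators, so they are inverse isomorphisms.
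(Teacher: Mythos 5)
Your strategy is sound and, once the sketched verifications are written out, it does prove the theorem; but it is organized quite differently from the paper's argument, and the comparison is instructive. The paper works one component $\Omega$ of $\odddiagram$ at a time, introduces two normal subgroups $M\supseteq N$ of $\bigl(\freeproduct_{j\in\Omega}\U_j\bigr)\freeproduct\What$ --- $M$ generated by all the defining relators of $\G_2(\Omega)$, and $N$ by a much smaller subset (the $S_i^2$-relators for a base root $\infty$ only, one conjugation relator per spanning-tree path, one per generator of $\pi_1(\odddiagram_\infty)$, one per even bond) --- proves $M=N$, eliminates the non-base generators by Tietze transformations, and then recognizes the resulting presentation as $\bigl(\U_\infty\semidirect\What_\infty\bigr)\freeproduct_{\What_\infty}\What$ via lemma~\ref{lem-general-finite-presentation-principle}. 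Your two-maps argument shares the essential input, namely theorem~\ref{thm-root-stabilizers-in-W-star} together with lemma~\ref{lem-basic-properties-of-What}\ref{item-squares-generate-kernel-of-What-to-W}, which give generators for the $\What$-stabilizer of $\alpha_i$ and their action on $e_i$. But it sidesteps the hardest part of the paper's proof, the inclusion $M\sset N$ (lemmas \ref{lem-N-contains-the-B-i-j} and \ref{lem-N-contains-reversals-of-length-1-paths}, with their sign bookkeeping); that inclusion is needed only because the paper is simultaneously showing that the reduced relator set $N$ suffices. In your argument $\psi$ is checked against \emph{all} of $\G_2$'s relators, so only the easy chaining of \eqref{eq-S-i-X-j-relator-when-m-is-2}--\eqref{eq-S-i-X-j-relator-when-m-is-6} along paths (the analogue of the easy direction, lemma~\ref{lem-M-contains-N}) is required. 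In particular the ``delicate sign-bookkeeping around cycles'' you anticipate largely evaporates: the $P_z$ and $R_{\delta_j,k}$ fix $X_i(t)$ by direct concatenation of relators, and the only signs occur for the $S_l^2$, where \eqref{eq-S-i-squared-action-on-X-j} is imposed for every pair $(l,i)$ and matches lemma~\ref{lem-relations-on-s-stars}\ref{item-action-of-s-star-squared} on the nose.

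Two small points need repair. First, the normalization ``$w_\alpha=1$ whenever $\alpha$ is simple'' is inconsistent with ``$w_\alpha$ carries $\alpha_{i(\alpha)}$ to $\alpha$'' whenever a $W$-orbit contains more than one simple root, i.e.\ whenever a component of $\odddiagram$ has more than one vertex. You should instead take $w_{\alpha_j}=P_{\delta_j}$ for a path $\delta_j$ in $\odddiagram$ from the orbit representative to $j$; then the check $\psi\circ\phi=\mathrm{id}$ on the generator $X_j(t)$ is no longer vacuous but becomes the identity $P_{\delta_j}X_{i(\alpha_j)}(t)P_{\delta_j}^{-1}=X_j(t)$, obtained by chaining \eqref{eq-S-i-X-j-relator-when-m-is-3}, and by theorem~\ref{thm-root-stabilizers-in-W-star} this choice also makes the chosen generator of $\g_{\alpha_j,\Z}$ equal to $e_j$ with no stray sign. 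Second, you should state explicitly that the set of $w$ in the stabilizer preimage satisfying $wX_i(t)w^{-1}=X_i(\epsilon_w t)$ for all $t$ is a subgroup of $\What$ (because $w\mapsto\epsilon_w$ is a homomorphism to $\{\pm1\}$); that is what licenses verifying the compatibility only on the generating set supplied by lemma~\ref{lem-basic-properties-of-What}\ref{item-squares-generate-kernel-of-What-to-W} and theorem~\ref{thm-root-stabilizers-in-W-star}.
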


\begin{remark}[Groups with a root group datum]
\label{remark-groups-with-a-root-group-datum}
A Kac-Moody group over a field is an example of a group $G$ with a ``root group
datum''.   This means: a generating set of subgroups
$\U_{\alpha}$ parameterized by the roots $\alpha$ of a root system, permuted by
(some extension $\widetilde{W}$ of) the Weyl group $W$ of that root
system, and satisfying some additional hypotheses.  See
\cite{Tits-twin-buildings} or \cite{Caprace-Remy} for details.
Examples include the
Suzuki and Ree groups and isotropic forms of algebraic groups (or
Kac--Moody groups) over fields.  In many of these cases, some of
the root groups are non-commutative.  
The heart of the proof of theorem~\ref{thm-G-2-is-free-product-semidirect-What} is our understanding of root stabilizers in
$\Wstar$ (theorem~\ref{thm-root-stabilizers-in-W-star}), which would still apply in this more general
setting.  
So there should be an analogous presentation of 
$\bigl(\freeproduct_{\alpha\in\Phi}\,\U_{\alpha}\bigr)\semidirect\widetilde{W}$.
The main change would be to 
replace \eqref{eq-additive-relators-in-root-groups} by defining relations for
$\U_i$, and interpret the parameter $t$ of $X_i(t)$ as
varying over some fixed copy of $\U_i$, rather than over $R$.
Since  $G$ is a quotient of $\bigl(\freeproduct_{\alpha\in\Phi}\,\U_{\alpha}\bigr)\semidirect\widetilde{W}$, analogues of the rest of this section presumably yield 
a presentation of $G$. 
\end{remark}

Next we adjoin Chevalley relations corresponding to finite edges in
the Dynkin diagram.  That is, we define $\G_3(R)$ as the quotient of
$\G_2(R)$ by the subgroup normally generated by the relators
\eqref{eq-Chevalley-relator-m=2}--\eqref{eq-Chevalley-relators-m=6-distant-short-and-long}
below, for all $t,u\in R$.  
These are particular cases of the standard
Chevalley relators, written in a form due to Demazure (see  remark~\ref{remark-Demazure-form-of-relations}
below).  

\smallskip
\noindent
When $i,j\in I$ with $m_{i j}=2$, 
\begin{align}
\label{eq-Chevalley-relator-m=2}
\ChevRlnWhenMisTwo
\\
\noalign{\noindent When $i,j\in I$ with $m_{i j}=3$,}
\label{eq-Chevalley-relators-m=3-close-roots}
\ChevRlnWhenMisThreeCloseRoots
\\
\label{eq-Chevalley-relators-m=3-distant-roots}
\ChevRlnWhenMisThreeDistantRoots
\\
\noalign{\noindent When $s,l\in I$, $m_{s l}=4$ and $s$ is the shorter root of the
$B_2$,}
\label{eq-Chevalley-relators-m=4-adjacent-short-and-long}
\ChevRlnWhenMisFourAdjacentShortAndLong
\\
\label{eq-Chevalley-relators-m=4-orthogonal-long}
\ChevRlnWhenMisFourOrthogonalLong
\\
\label{eq-Chevalley-relators-m=4-orthogonal-short}
\ChevRlnWhenMisFourOrthogonalShort
\\
\label{eq-Chevalley-relators-m=4-distant-short-and-long}
\ChevRlnWhenMisFourDistantShortAndLong
\\
\noalign{\noindent When $s,l\in I$, $m_{s l}=6$ and $s$ is the shorter root of
the $G_2$,}
\label{eq-Chevalley-relators-m=6-nearby-long}
\ChevRlnWhenMisSixNearbyLong
\\
\label{eq-Chevalley-relators-m=6-adjacent-short-and-long}
\rlap{$\ChevRlnWhenMisSixAdjacentShortAndLong$}\kern128pt
\\
\label{eq-Chevalley-relators-m=6-orthogonal-short-and-long}
\ChevRlnWhenMisSixOrthogonalShortAndLong
\\
\label{eq-Chevalley-relators-m=6-distant-long}
\ChevRlnWhenMisSixDistantLong
\\
\label{eq-Chevalley-relators-m=6-nearby-short}
\ChevRlnWhenMisSixNearbyShort
\\
\label{eq-Chevalley-relators-m=6-distant-short}
\ChevRlnWhenMisSixDistantShort
\notag
\\
\label{eq-Chevalley-relators-m=6-distant-short-and-long}
\ChevRlnWhenMisSixDistantShortAndLong
\notag
\end{align}

\begin{remark}[Asymmetry in the $A_2$ relators]
The relators \eqref{eq-Chevalley-relators-m=3-close-roots}--\eqref{eq-Chevalley-relators-m=3-distant-roots}
are not symmetric in $i$ and $j$.  Since $m_{j i}=3$
whenever $m_{i j}=3$, we are using both these relators
and the ones got from them by exchanging $i$ and $j$.
\end{remark}

\begin{remark}[Demazure's form of the Chevalley relations]
\label{remark-Demazure-form-of-relations}
Our relators are written in a form due to Demazure
 (Props. 3.2.1, 3.3.1 and 3.4.1 in
\cite[Exp.\ XXIII]{Demazure-Grothendieck}).  
They
appear more complicated than the more usual one (for
example \cite[thm.\ 5.2.2]{Carter}), but have two important advantages.  First, there are
no implicit signs to worry about, and second, the presentation refers
only to the Dynkin diagram, rather than the full root system.  

One can convert \eqref{eq-Chevalley-relator-m=2}--\eqref{eq-Chevalley-relators-m=6-distant-short-and-long}
to a more standard form by working out which root groups contain the
terms on the ``right hand sides'' of the relators.  For example, the
term $S_l X_s(t u)S_l^{-1}$ of
\eqref{eq-Chevalley-relators-m=6-distant-short-and-long} lies in
$S_l\,\U_s S_l^{-1}=\U_{\alpha_s+\alpha_l}$ because reflection in $\alpha_l$ sends $\alpha_s$ to
$\alpha_s+\alpha_l$.  Applying the same reasoning to the other terms,
\eqref{eq-Chevalley-relators-m=6-distant-short-and-long} equals
$[X_s(t),X_l(u)]$ times a particular element of
$\U_{3\alpha_s+2\alpha_l}
\cdot
\U_{2\alpha_s+\alpha_l}
\cdot
\U_{\alpha_s+\alpha_l}
\cdot
\U_{2\alpha_s+\alpha_l}
$.
The advantages of Demazure's form of the relators come from the fact that no
identifications of these root groups with~$R$ is required.  
We simply use the already-fixed identifications of the simple root
groups with $R$, and transfer them to these other root groups by
conjugation by $S_s$ and~$S_l$.
\end{remark}

\begin{remark}[Diagram automorphisms in characteristics $2$ and~$3$]
Some of the relators can be written in simpler but less-symmetric
ways.  For example,
\eqref{eq-Chevalley-relators-m=4-adjacent-short-and-long} is the
Chevalley relator for the roots $s_s(\alpha_l)$ and $s_l(\alpha_s)$ of
$B_2$,
which make angle~$\pi/4$.  As we will see in the proof of
theorem~\ref{thm-G3-is-spherical-Tits-Steinberg-group-semidirect-What},
one could replace this pair of roots by any other pair of roots in the
span of $\alpha_s,\alpha_l$ that make this angle.  So for example one
could replace
\eqref{eq-Chevalley-relators-m=4-adjacent-short-and-long} by the
simpler relator $[S_s X_l(t)S_s^{-1},X_s(u)]$.  We prefer
\eqref{eq-Chevalley-relators-m=4-adjacent-short-and-long} because it
maps to itself under the exceptional diagram automorphism in
characteristic~$2$;  see section~\ref{sec-diagram-automorphisms} for
details.  Similar considerations informed our choice of relators
\eqref{eq-Chevalley-relators-m=6-adjacent-short-and-long}--\eqref{eq-Chevalley-relators-m=6-orthogonal-short-and-long},
and the ordering of the last four terms of \eqref{eq-Chevalley-relators-m=6-distant-short-and-long}.
\end{remark}

\begin{remark}[Redundant relations]
In practice most of the relators coming from absent and single bonds
in the Dynkin diagram, i.e., 
\eqref{eq-Chevalley-relator-m=2}--\eqref{eq-Chevalley-relators-m=3-distant-roots},
can be omitted.  Usually this reduces the size of the presentation
greatly.  See propositions \ref{prop-omitting-m-i-j=2-relations} and~\ref{prop-omitting-m-i-j=3-relations}.
\end{remark}

In section~\ref{sec-step-3} we prove the following
more conceptual description of~$\G_3$.  To be able to state it we
use the temporary notation $\PStTits$ for the group functor defined in
the same way as $\StTits$ (see section~\ref{sec-Steinberg-group}), but only using classically prenilpotent
pairs rather than all prenilpotent pairs.  So $\PStTits$ is related to~$\StTits$ in the same way that $\PSt$ is related to~$\St$.
$\What$ acts on $\PStTits$ for
the same reason it acts on $\StTits$.

\begin{theorem}
\label{thm-G3-is-spherical-Tits-Steinberg-group-semidirect-What}
The group functor $\PStTits\semidirect\What$ coincides with
$\G_3$.
More precisely, under the identification
$\G_2\iso\bigl(\freeproduct_{\alpha\in\Phi}\,\U_{\alpha}\big)\semidirect\What$ of
theorem~\ref{thm-G-2-is-free-product-semidirect-What}, the kernels of $\G_2\to\G_3$ and
$\bigl(\freeproduct_{\alpha\in\Phi}\,\U_{\alpha}\big)\semidirect\What\to\PStTits\semidirect\What$
coincide.  
\end{theorem}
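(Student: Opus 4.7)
My plan is to work inside the semidirect-product description $\G_2\iso F\semidirect\What$ from theorem~\ref{thm-G-2-is-free-product-semidirect-What}, where $F:=\freeproduct_{\alpha\in\Phi}\U_\alpha$, and to compare two normal subgroups of $\G_2$ which both sit in $F$ and are $\What$-invariant. Call them $N_1$ and $N_2$: $N_1$ is normally generated in $\G_2$ by the relators \eqref{eq-Chevalley-relator-m=2}--\eqref{eq-Chevalley-relators-m=6-distant-short-and-long}, while $N_2$ is the kernel of the natural map $F\to \PStTits$, viewed inside $\G_2$; equivalently $N_2$ is normally generated in $\G_2$ by the Chevalley relators for all classically prenilpotent pairs of roots. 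Since both are $\What$-invariant normal subgroups of $F$, establishing $N_1=N_2$ proves the theorem.

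For $N_1\subseteq N_2$: the isomorphism of theorem~\ref{thm-G-2-is-free-product-semidirect-What} sends $S_i\mapsto\sstar_i$ and $X_i(t)\mapsto\x_i(t)$, so any word $S_{i_1}\cdots S_{i_k}X_j(t)S_{i_k}^{-1}\cdots S_{i_1}^{-1}$ becomes $\x_{\sstar_{i_1}\cdots\sstar_{i_k}(e_j)}(t)\in\U_\gamma$ with $\gamma=s_{i_1}\cdots s_{i_k}(\alpha_j)$ a real root lying in the rank-2 subsystem $\Phi_{ij}:=(\Q\alpha_i+\Q\alpha_j)\cap\Phi$. Under this translation, each relator \eqref{eq-Chevalley-relator-m=2}--\eqref{eq-Chevalley-relators-m=6-distant-short-and-long} becomes Demazure's form of a Chevalley commutator relator for a specific pair of roots in $\Phi_{ij}$. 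Since each edge in our list has $m_{ij}$ finite, $\Phi_{ij}$ is a finite root system of type $A_1\times A_1$, $A_2$, $B_2$ or $G_2$, so each such pair (with $\alpha+\beta\neq0$) is classically prenilpotent, and the relator lies in $N_2$.

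For $N_2\subseteq N_1$: given a classically prenilpotent pair $\{\alpha,\beta\}$, let $\Phi':=(\Q\alpha+\Q\beta)\cap\Phi$ be the finite rank-$\leq 2$ real subsystem it generates, and let $C(\alpha,\beta)\in F$ be the associated Chevalley relator. I will use two facts. First, every such $\Phi'$ is $W$-conjugate to some $\Phi_{ij}$ with $i,j\in I$; this is a structural property of classically prenilpotent pairs in Kac-Moody root systems, sitting behind Tits' setup in~\cite{Tits}. Second, Chevalley relators transform $\What$-equivariantly: if $\what\in\What$ projects to $w\in W$, then $\what\cdot C(\alpha,\beta)\cdot\what^{-1}$ equals $C(w\alpha,w\beta)$ up to the sign ambiguity in choosing representatives in $E_{w\alpha}$, $E_{w\beta}$ and in the $E_{w\gamma}$'s appearing in the expansion. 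Lifting $w$ to $\What$ via lemma~\ref{lem-basic-properties-of-What}\ref{item-What-surjects-to-Wstar} and using $\What$-invariance of $N_1$, we reduce to the case $\{\alpha,\beta\}\subseteq\Phi_{ij}$. It then remains a finite, type-by-type check: in each rank-2 type, the listed relators \eqref{eq-Chevalley-relator-m=2}--\eqref{eq-Chevalley-relators-m=6-distant-short-and-long} indexed by $\{i,j\}$, together with conjugation by lifts to $\What$ of the Weyl group of $\Phi_{ij}$, account for the Chevalley relator of every classically prenilpotent pair contained in $\Phi_{ij}$. The list was chosen so that each $W$-orbit of such pairs inside each rank-2 type has at least one listed representative.

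The main obstacle is the finite verification in the previous paragraph, most delicate for $B_2$ and $G_2$, where for each $(\alpha,\beta)$-pair in $\Phi_{ij}$ one must match the Demazure-form Chevalley relator with a $\What$-conjugate of a listed relator while carefully tracking the sign ambiguity $E_\gamma=\pm\{e_\gamma\}$ inherited from the choices $\sstar_w(e_j)\in E_{w(\alpha_j)}$. A secondary step needing care is the $W$-conjugacy of finite rank-$\leq 2$ real subsystems to some $\Phi_{ij}$, which ultimately rests on the Tits cone geometry underlying classical prenilpotence.
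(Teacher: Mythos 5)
Your proposal is correct and follows essentially the same route as the paper: the paper's proof likewise shows first that the listed relators are (Demazure-form) Chevalley relators of classically prenilpotent pairs inside rank-$2$ simple-root subsystems, and then that every classically prenilpotent pair is carried by some $w\in W$ (constructed via the Tits cone, exactly the step you flag) onto a pair in a simple-root subsystem whose configuration is represented in the list, so that its Chevalley relators are $\what^{-1}$-conjugates of listed ones. The two inclusions $N_1\subseteq N_2$ and $N_2\subseteq N_1$ in your formulation correspond precisely to the two halves of the paper's argument, including the exhaustive configuration check in types $B_2$ and $G_2$.
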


\medskip
Finally, we define $\G_4$ as the quotient of $\G_3$ by the smallest
normal subgroup containing the relators
\begin{align}
\label{eq-torus-action-1}
\TorusActionOne
\\
\label{eq-torus-action-2}
\TorusActionTwo
\\
\label{eq-collapse-to-PSt}
\WhatCollapse
\end{align}
where $r$ varies over $\Runits$, $t$ over $R$ and $i,j$ over $I$.  
We are using the definitions
\begin{align}
\label{eq-definition-of-s-tilde}
\DefnOfstilde
\\
\label{eq-definition-of-h-tilde}
\DefnOfhtilde.
\end{align}
Note that this definition of $\stilde_i(r)$ is compatible with the one
in section~\ref{sec-Steinberg-group}, because $X_i(r)\in\G_3$ corresponds to
$\x_{e_i}(r)\in\PStTits$ under the isomorphism of lemma~\ref{lem-description-of-G-1}, while
$S_i X_i(1/r) S_i^{-1}$ corresponds to
$\sstar_i\bigl(\x_{e_i}(1/r)\bigr)=\x_{f_i}(1/r)$.  As before, we will
abbreviate $\stilde_i(1)$ to $\stilde_i$.

The following theorem is the main result of this section and a
restatement of theorem~\ref{thm-definition-of-PSt-as-presentation}
from the introduction.

\begin{theorem}[Presentation of the pre-Steinberg group $\PSt$]
\label{thm-G-4-is-PSt}  The group functor
$\PSt$ coincides with $\G_4$.  In particular, for any commutative ring $R$,
$\PSt(R)$ has a presentation with generators $S_i$ and $X_i(t)$ for
$i\in I$ and $t\in R$, and relators \eqref{eq-def-of-What-Artin-relators}--\eqref{eq-collapse-to-PSt}.
\end{theorem}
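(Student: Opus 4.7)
The plan is to leverage theorem~\ref{thm-G3-is-spherical-Tits-Steinberg-group-semidirect-What}, which identifies $\G_3$ with $\PStTits\semidirect\What$, and show that the three remaining batches of relators \eqref{eq-torus-action-1}, \eqref{eq-torus-action-2}, and \eqref{eq-collapse-to-PSt} imposed to form $\G_4$ collapse the semidirect product onto its $\PStTits$-factor and cut out $\PSt$ via the relations catalogued in theorem~\ref{thm-alternate-relations-for-Steinberg-group}.

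First I transport $\stilde_i(r)$ and $\htilde_i(r)$ of \eqref{eq-definition-of-s-tilde}--\eqref{eq-definition-of-h-tilde} through the isomorphism $\G_3\iso\PStTits\semidirect\What$. Writing elements as pairs $(g,w)$, we have $X_i(t)\leftrightarrow(\x_{e_i}(t),1)$ and $S_i\leftrightarrow(1,S_i)$, with $\What$ acting on $\PStTits$ via its image $\Wstar$. Since $\sstar_i$ exchanges $e_i$ and $f_i$ (lemma~\ref{lem-characterization-of-s-star-in-terms-of-e}), $S_iX_i(1/r)S_i^{-1}\leftrightarrow(\x_{f_i}(1/r),1)$, and hence $\stilde_i(r)\leftrightarrow(\stilde_{e_i}(r),1)$ in the notation of \eqref{eq-defn-of-s-tilde-e}, and $\htilde_i(r)\leftrightarrow(\htilde_{e_i}(r),1)$.

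Next consider the effect of \eqref{eq-collapse-to-PSt}. In the semidirect product it reads $(1,S_i)=(\stilde_{e_i},1)$, identifying each generator $S_i$ of $\What$ with $\stilde_{e_i}\in\PStTits$; in particular $\G_4$ is generated by the image of $\PStTits$. Moreover, the semidirect-product identity $(1,S_i)(g,1)(1,S_i)^{-1}=(\sstar_i(g),1)$ descends in $\G_4$ to $\stilde_{e_i}\,g\,\stilde_{e_i}^{-1}=\sstar_i(g)$ for every $g\in\PStTits$, which is relation \eqref{eq-alternative-Steinberg-relations-s-action-on-everything} for $e=e_i$. Translating \eqref{eq-torus-action-1} and \eqref{eq-torus-action-2} through the isomorphism and using the identification $S_j=\stilde_j$ where needed then yields exactly the relators \eqref{eq-alternative-Steinberg-relations-h-action-on-simple-root-groups} and \eqref{eq-alternative-Steinberg-relations-h-action-on-negative-simple-root-groups}.

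To finish, the $\Wstar$-propagation argument from the closing part of the proof of theorem~\ref{thm-alternate-relations-for-Steinberg-group} extends \eqref{eq-alternative-Steinberg-relations-s-action-on-everything} from $e=e_i$ to all $e\in\cup_{\alpha\in\Phi}E_\alpha$, so by that theorem (applicable to $\PStTits$ by remark~\ref{remark-do-not-need-St-Tits-relations}) the image is exactly $\PSt$. The inverse map $\PSt\to\G_4$ is defined by $\x_{e_i}(t)\mapsto X_i(t)$ and $\stilde_{e_i}\mapsto S_i$; checking that all defining relations of $\PSt$ hold in $\G_4$ is routine given the correspondences above. The main obstacle is the propagation step: one must verify that forcing \eqref{eq-alternative-Steinberg-relations-s-action-on-everything} only for $e=e_i$ suffices to force it for every~$e$, an inductive argument on $\Wstar$-length using the already-established compatibility between the $\What$-action and conjugation by the~$\stilde_{e_i}$ inside $\PStTits$.
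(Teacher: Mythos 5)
Your reduction of $\G_4$ to a quotient of $\PStTits$ is sound and follows the same route as the paper (via theorem~\ref{thm-G3-is-spherical-Tits-Steinberg-group-semidirect-What}, theorem~\ref{thm-alternate-relations-for-Steinberg-group} and remark~\ref{remark-do-not-need-St-Tits-relations}), but as written it proves only half the theorem. What you actually establish is that the kernel of $\PStTits\to\G_4$ \emph{contains} the normal closure of \eqref{eq-alternative-Steinberg-relations-h-action-on-simple-root-groups}--\eqref{eq-alternative-Steinberg-relations-s-action-on-everything}, hence that there is a surjection $\PSt\to\G_4$. The sentence ``the image is exactly $\PSt$'' does not follow from theorem~\ref{thm-alternate-relations-for-Steinberg-group}: that theorem computes the kernel of $\PStTits\to\PSt$, not of $\PStTits\to\G_4$. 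For injectivity you must construct the map in the other direction, $\G_4\to\PSt$, by sending $S_i\mapsto\stilde_{e_i}$ and $X_i(t)\mapsto\x_{e_i}(t)$ and checking that \emph{every} relator \eqref{eq-def-of-What-Artin-relators}--\eqref{eq-torus-action-2} dies in $\PSt$ under this substitution. Your closing paragraph instead describes a map $\PSt\to\G_4$ (the direction you already have) and calls the verification routine; it is not. The Artin relators \eqref{eq-def-of-What-Artin-relators} require the nontrivial fact, due to Tits, that the $\stilde_i$ satisfy the Artin relations in $\PSt$; the relators \eqref{eq-def-of-What-commutators-of-squares-even-case}--\eqref{eq-torus-action-2} go through because they involve the $S_i$ only via their conjugation action on products of root-group elements, and $\stilde_i$ acts as $\sstar_i$ in $\PSt$ by the Morita--Rehmann relations. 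This is precisely why $\What$ was defined with exactly those relations and no others.

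Separately, the step you single out as ``the main obstacle'' --- propagating \eqref{eq-alternative-Steinberg-relations-s-action-on-everything} from $e=e_i$ to arbitrary $e\in E_\alpha$ --- requires no work at all: theorem~\ref{thm-alternate-relations-for-Steinberg-group} already asserts that imposing \eqref{eq-alternative-Steinberg-relations-h-action-on-simple-root-groups}--\eqref{eq-alternative-Steinberg-relations-s-action-on-everything} for simple roots alone normally generates the full set of Morita--Rehmann relators, so the propagation is built into the result you are citing. The genuine remaining work is the well-definedness of $\G_4\to\PSt$ described above.
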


\begin{proof}
By definition, $\G_4$ is the quotient of $\G_3$ by the relations
\eqref{eq-torus-action-1}--\eqref{eq-collapse-to-PSt}.  Because $S_i$
acts on each $\U_\beta$ by $\sstar_i$ (theorem~\ref{thm-G-2-is-free-product-semidirect-What}), imposing
\eqref{eq-collapse-to-PSt} forces $\stilde_i$ to also act this way.
We consider the intermediate group $\G_{3.5}$, of fleeting interest,
got from $\G_3$ by imposing
\eqref{eq-torus-action-1}--\eqref{eq-torus-action-2} and the relations
that $\stilde_i$ acts on every $\U_\beta$ as $\sstar_i$ does.  In
other words, we are imposing on
$\PStTits\sset\PStTits\semidirect\What=\G_3$ the relations
\eqref{eq-alternative-Steinberg-relations-h-action-on-simple-root-groups}--\eqref{eq-alternative-Steinberg-relations-s-action-on-everything}.
Theorem~\ref{thm-alternate-relations-for-Steinberg-group} 
and remark~\ref{remark-do-not-need-St-Tits-relations} show
that this reduces $\G_3$ to $\PSt\semidirect\What$.

So $\G_4$ is the quotient of $\G_{3.5}=\PSt\semidirect\What$ by the relations
$S_i=\stilde_i$.  We use Tietze transformations to eliminate the $S_i$'s
from the presentation, in favor of the $\stilde_i$'s.  So $\G_4$ is
the quotient of $\PSt$ by the subgroup normally generated by the words
got by replacing $S_i$ by $\stilde_i$ in each of the relators
\eqref{eq-def-of-What-Artin-relators}--\eqref{eq-torus-action-2}.  All
of these relators are already trivial in $\PSt$, so $\G_4=\PSt$.

In more detail, \eqref{eq-def-of-What-Artin-relators} requires the
$\stilde_i$ to satisfy the Artin relations, which they do in $\PSt$ by
\cite[(d) on p.~551]{Tits}.  The remaining relations
\eqref{eq-def-of-What-commutators-of-squares-even-case}--\eqref{eq-torus-action-2}
involve the $S_i$'s only by their conjugacy action. For example
\eqref{eq-Chevalley-relators-m=6-nearby-long} says that $X_l(t)$
commutes with the conjugate of $X_l(u)$ by a certain word in $S_s$ and
$S_l$.  Since $S_i$ acts as $\sstar_i$ by
theorem~\ref{thm-G-2-is-free-product-semidirect-What} and $\stilde_i$
acts the same way by the definition of $\PSt$, these relations still
hold after replacing each $S_i$ by the corresponding $\stilde_i$.
(When defining $\What$ we were careful not to impose any relations on
the $S_i$'s except those which are also satisfied by the $\stilde_i$'s.)
\end{proof}

\begin{remark}[Redundant relators]
In most cases of interest, $A$ is $2$-spherical without $A_1$
components.  Then one can forget the relators \eqref{eq-torus-action-1}--\eqref{eq-torus-action-2} because
they follow from previous relations.  More specifically,
suppose $m_{i j}$ is $3$, $4$ or~$6$.  Then the relators
\eqref{eq-torus-action-1}--\eqref{eq-torus-action-2} are already trivial in $\G_3$.  The same holds if
$i=j$ and there exists some $k\in I$ with $m_{i k}\in\{3,4,6\}$.  See
\cite[p.~550, (a$_4$)]{Tits} for details.
\end{remark}

\begin{remark}[More redundant relators]
One need only impose the relators \eqref{eq-collapse-to-PSt} for a
single $i$ in each component $\Omega$ of the ``odd Dynkin diagram''
$\odddiagram$ considered in section~\ref{sec-W-star}.  This is because if
$m_{i j}=3$ then $S_i S_j$ conjugates $S_i$ to $S_j$ and $X_i(t)$ to
$X_j(t)$.  This uses relators \eqref{eq-def-of-What-Artin-relators} and \eqref{eq-S-i-X-j-relator-when-m-is-3}.
\end{remark}

\begin{remark}[Precautions against typographical errors]
\label{rem-precautions}
  We found explicit matrices for our generators, in standard
representations of the $A_1^2$, $A_2$, $B_2$ and $G_2$ Chevalley
groups over $\Z[r^{\pm1},t,u]$.  Then
we checked on the computer that they satisfy the defining relations
\eqref{eq-def-of-What-Artin-relators}--\eqref{eq-collapse-to-PSt}.
For \eqref{eq-def-of-What-commutators-of-squares-even-case}--\eqref{eq-collapse-to-PSt}
we only typed in the relations once, for both
typesetting and this check.
\end{remark}

\section{The isomorphism $\G_2\iso\bigl(\freeproduct_{\alpha\in\Phi}\U_{\alpha}\bigr)\semidirect\What$}
\label{sec-proof-of-free-by-semidirect-product-theorem}

\noindent
In this section we will suppress the dependence of group functors on
the base ring $R$, always meaning the group of points over $R$.  Our
goal is to prove
theorem~\ref{thm-G-2-is-free-product-semidirect-What}, namely that the
group $\G_2$ with generators $S_i$ and $X_i(t)$, $i\in I$ and $t\in
R$, 
modulo the  subgroup normally generated by the relators
\eqref{eq-def-of-What-Artin-relators}--\eqref{eq-S-i-X-j-relator-when-m-is-6},
is $\bigl(\freeproduct_{\alpha\in\Phi}\U_{\alpha}\bigr)\semidirect\What$.  The
genesis of the theorem is the following elementary principle.  It
seems unlikely to be new, but I have not seen it before.

\begin{lemma}
\label{lem-general-finite-presentation-principle}
Suppose $G=\bigl(\freeproduct_{\alpha\in\Phi}\,U_{\alpha}\bigr)\semidirect H$, where
$\Phi$ is some index set, the $U_{\alpha}$'s are groups isomorphic to each other, and $H$ is a group
whose action on the free product permutes the displayed
factors transitively.  
Then $G\iso
\bigl(U_\infty\semidirect H_\infty\bigr)*_{H_\infty}H$, where $\infty$
is some element of $\Phi$ and $H_\infty$ is its $H$-stabilizer.
\end{lemma}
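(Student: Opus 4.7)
The plan is to construct mutually inverse homomorphisms between $G$ and $A := (U_\infty \semidirect H_\infty) \freeproduct_{H_\infty} H$. Fix, for each $\alpha \in \Phi$, an element $h_\alpha \in H$ with $h_\alpha \cdot \infty = \alpha$, taking $h_\infty = 1$; this is possible by transitivity of the $H$-action. The key observation is that for any $h \in H$ and $\alpha \in \Phi$, the element $h_{h\alpha}^{-1} h h_\alpha$ stabilizes $\infty$ and hence lies in $H_\infty$.

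To construct $\phi : A \to G$, I would note that the inclusions $U_\infty \hookrightarrow \freeproduct_{\beta\in\Phi} U_\beta \hookrightarrow G$ and $H_\infty \hookrightarrow H \hookrightarrow G$ are compatible with the semidirect product structure, since $H_\infty$ preserves the factor $U_\infty$ and acts on it in the prescribed way. Together with the inclusion $H \hookrightarrow G$, this gives two homomorphisms into $G$ that agree on the common subgroup $H_\infty$, so the universal property of the amalgamated free product yields $\phi$.

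The reverse map $\psi : G \to A$ is the main content. Define $\psi|_H$ as the inclusion $H \hookrightarrow A$. For each $\alpha \in \Phi$ and $u \in U_\alpha$, set $u' := h_\alpha^{-1} \cdot u \in U_\infty$ and define $\psi_\alpha(u) := h_\alpha u' h_\alpha^{-1} \in A$. By the universal property of the semidirect product $G = (\freeproduct_{\beta\in\Phi} U_\beta) \semidirect H$, the maps $\psi_\alpha$ and $\psi|_H$ glue to a homomorphism precisely when the equivariance $h\, \psi_\alpha(u)\, h^{-1} = \psi_{h\alpha}(h \cdot u)$ holds in $A$ for all $h \in H$, $\alpha \in \Phi$, $u \in U_\alpha$. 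Writing $k := h_{h\alpha}^{-1} h h_\alpha \in H_\infty$, the right-hand side unwinds to $h_{h\alpha} (k \cdot u') h_{h\alpha}^{-1}$; the amalgamation along $H_\infty$ lets us rewrite $k \cdot u'$ as $k u' k^{-1}$ inside the $U_\infty \semidirect H_\infty$ factor of $A$, and substituting gives $h h_\alpha u' h_\alpha^{-1} h^{-1} = h\, \psi_\alpha(u)\, h^{-1}$, as required.

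Finally, $\phi\circ\psi$ and $\psi\circ\phi$ act as the identity on the obvious generating sets by direct inspection (for instance, $\psi(\phi(u')) = \psi(u') = h_\infty u' h_\infty^{-1} = u'$ for $u' \in U_\infty$). The main obstacle is the equivariance check for $\psi$: its essential content is that the amalgamation along $H_\infty$ is exactly enough to convert the $H$-action on non-basepoint factors into a conjugation action inside $A$, and without the amalgamation there would be no mechanism to do so.
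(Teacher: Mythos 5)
Your proof is correct and takes essentially the same route as the paper's: choose representatives $h_\alpha$ carrying $\infty$ to $\alpha$ (i.e.\ coset representatives of $H_\infty$ in $H$), send each $U_\alpha$ into the amalgam by conjugating back to $U_\infty$, use $H$-equivariance to extend over the semidirect product, and note the result is inverse to the obvious map $\bigl(U_\infty\semidirect H_\infty\bigr)*_{H_\infty}H\to G$. The only difference is that you write out the equivariance computation that the paper dismisses as obvious.
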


\begin{proof}
The idea is that $U_\infty\semidirect H_\infty\mapsto(U_\infty\semidirect H_\infty)*_{H_\infty} H$ is a sort of
free-product analogue of inducing a representation from $H_\infty$ to
$H$.  We suppress the subscript $\infty$ from $U_\infty$.
Take a set $Z$ of left coset representatives for $H_\infty$ in $H$, and for
$u\in U$ and $z\in Z$ define $u_z:=z u z^{-1}\in G$.  The $u_z$ for
fixed $z$ form the free factor $z U z^{-1}=U_{z(\infty)}$ of $\bigl(\freeproduct_{\alpha\in\Phi}
U_{\alpha}\bigr)\sset G$.  Assuming $U\neq1$, every displayed free factor occurs exactly once
this way, since $H$'s action on $\Phi$ is the same as on $H_\infty$'s left
cosets.  So the maps $u_z\mapsto z u z^{-1}\in(U\semidirect H_\infty)*_{H_\infty}H$ define a
homomorphism $\bigl(\freeproduct_{\alpha\in\Phi}U_{\alpha}\bigr)\to(U\semidirect H_\infty)*_{H_\infty}H$.
This homomorphism is obviously $H$-equivariant, so it extends to a
homomorphism $G\to(U\semidirect H_\infty)*_{H_\infty}H$.  It is easy to see that this
is inverse to the obvious homomorphism $(U\semidirect H_\infty)*_{H_\infty}H\to
G$.
\end{proof}

Now we begin proving
theorem~\ref{thm-G-2-is-free-product-semidirect-What} by reducing it
to lemma~\ref{lem-free-product-model-for-one-component-of-odd-diagram}
below, which is an analogue of
theorem~\ref{thm-G-2-is-free-product-semidirect-What} for a single
component of the ``odd Dynkin diagram'' $\odddiagram$ introduced in
section~\ref{sec-W-star}.
It is well-known that two generators $s_i$, $s_j$ of
$W$ ($i,j\in I$) are conjugate in $W$ if and only if $i$ and $j$ lie
in the same component of $\odddiagram$.  (If $m_{ij}=3$ then $s_i s_j
s_i=s_j s_i s_j$ implies the conjugacy of $s_i$ and $s_j$, while
distinct components of $\odddiagram$ correspond to different elements
of the abelianization of $W$.)

Let $\Omega$ be one of these
components, and write $\Phi(\Omega)\sset\Phi$ for the roots whose
reflections are conjugate to some (hence any) $s_{i\in\Omega}$.
Because $\Phi(\Omega)$ is a $W$-invariant subset of $\Phi$, we may
form the group
$\bigl(\freeproduct_{\alpha\in\Phi(\Omega)}\,\U_{\alpha}\bigr)\semidirect\What$
just as we did
$\bigl(\freeproduct_{\alpha\in\Phi}\,\U_{\alpha}\bigr)\semidirect\What$.
We will write $\G_{2,\Omega}$ for the group having generators $S_i$,
with $i\in I$, and $X_i(t)$, with $i\in\Omega$ and $t\in R$, 
modulo the subgroup normally generated by the relators
\eqref{eq-def-of-What-Artin-relators}--\eqref{eq-def-of-What-commutators-of-squares-odd-case},
and those relators
\eqref{eq-additive-relators-in-root-groups}--\eqref{eq-S-i-X-j-relator-when-m-is-6}
with $i\in\Omega$.  Note that \eqref{eq-S-i-X-j-relator-when-m-is-3}
is relevant only if $m_{i j}=3$, in which case $i\in\Omega$ if and only
if
$j\in\Omega$, so the relator makes sense.  
{\bf Caution:} the
subscripts on $S$ vary over all of $I$ while those on $X$ vary only
over $\Omega\sset I$.

\begin{lemma}
\label{lem-free-product-model-for-one-component-of-odd-diagram}
For any component $\Omega$ of $\odddiagram$,
$$\G_{2,\Omega}\iso\Bigl(\,\freeproduct_{\alpha\in\Phi(\Omega)}\,\U_{\alpha}\Bigr)\semidirect\What.$$
\end{lemma}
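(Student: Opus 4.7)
Fix $i\in\Omega$ and set $T:=\bigl(\freeproduct_{\alpha\in\Phi(\Omega)}\U_\alpha\bigr)\semidirect\What$. The plan is to construct mutually inverse homomorphisms $\phi\colon\G_2(\Omega)\to T$ and $\psi\colon T\to\G_2(\Omega)$. Building $\phi$ is routine: send $S_j\mapsto S_j$ and $X_j(t)\mapsto\x_{e_j}(t)$. Verifying that each defining relator of $\G_2(\Omega)$ becomes trivial in $T$ amounts to checking that the $\sstar$'s satisfy the corresponding identities on the $e_j$'s; the relators \eqref{eq-S-i-X-j-relator-when-m-is-2}--\eqref{eq-S-i-X-j-relator-when-m-is-6} encode exactly the four cases $m_{ij}=2,3,4,6$ of lemma~\ref{lem-s-star-action-on-e-i}, and \eqref{eq-S-i-squared-action-on-X-j} comes from lemma~\ref{lem-relations-on-s-stars}\ref{item-action-of-s-star-squared}.

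For $\psi$, I would first observe that $\What$ acts transitively on $\Phi(\Omega)$ (because any two simple reflections $s_{j},s_{k}$ with $j,k\in\Omega$ are conjugate in $W$), and apply lemma~\ref{lem-general-finite-presentation-principle} to identify $T$ with $\bigl(\U_{\alpha_i}\semidirect H\bigr)*_{H}\What$, where $H$ is the $\What$-stabilizer of $\alpha_i$. Theorem~\ref{thm-root-stabilizers-in-W-star} then supplies an explicit generating set for $H$, namely the $\What$-words $P_z$, $R_{jk}$ and $S_l^2$ obtained by substituting $S$ for $s$ in formulas \eqref{eq-def-of-p-gamma} and \eqref{eq-def-of-r-gamma-k}. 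This turns the amalgamated product into a group with generators $S_j$ ($j\in I$) and $X_i(t)$ ($t\in R$), the Artin/square relators \eqref{eq-def-of-What-Artin-relators}--\eqref{eq-def-of-What-commutators-of-squares-odd-case}, the additive relator \eqref{eq-additive-relators-in-root-groups} at $i$, and three families of conjugation relators encoding the $H$-action on $\U_{\alpha_i}$: namely $[P_z,X_i(t)]=1$, $[R_{jk},X_i(t)]=1$, and $S_l^2 X_i(t) S_l^{-2}=X_i\bigl((-1)^{A_{li}}t\bigr)$. Defining $\psi$ to send each $S_j$ and $X_i(t)$ to its namesake in $\G_2(\Omega)$ therefore reduces to checking these three families inside $\G_2(\Omega)$.

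The third family is immediate from \eqref{eq-S-i-squared-action-on-X-j} at $j=i$. The other two rest on the following translation principle, which I prove by induction on path length using \eqref{eq-S-i-X-j-relator-when-m-is-3}: for any edge-path $\delta$ in $\odddiagram$ from $i$ to some $j\in\Omega$, one has $P_\delta X_i(t) P_\delta^{-1}=X_j(t)$ in $\G_2(\Omega)$. Granting this, $[R_{jk},X_i(t)]=1$ reduces, after conjugating by $P_{\delta_j}$, to showing that an appropriate word in $S_j$ and $S_k$ commutes with $X_j(t)$: this word is $S_k$, $S_k S_j S_k$, or $S_k S_j S_k S_j S_k$ according to whether $m_{jk}$ is $2$, $4$, or $6$, which is precisely the content of \eqref{eq-S-i-X-j-relator-when-m-is-2}, \eqref{eq-S-i-X-j-relator-when-m-is-4} and \eqref{eq-S-i-X-j-relator-when-m-is-6}. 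The relation $[P_z,X_i(t)]=1$ for a closed loop $z$ at $i$ is the translation principle itself. Finally, $\phi\circ\psi$ is the identity on the generating set $\{S_j,X_i(t)\}$ of $T$, and $\psi\circ\phi$ is the identity on $\G_2(\Omega)$ because in $T$ we have $\x_{e_j}(t)=\pstar_{\delta_j}(\x_{e_i}(t))=P_{\delta_j}\x_{e_i}(t)P_{\delta_j}^{-1}$, which $\psi$ sends to $P_{\delta_j} X_i(t) P_{\delta_j}^{-1}=X_j(t)$ by the translation principle.

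I expect the main obstacle to lie in the bookkeeping for the $[R_{jk},X_i(t)]=1$ step: matching the $m_{jk}$-dependent middle of the $\rstar_{jk}$ formula against the case analysis of lemma~\ref{lem-s-star-action-on-e-i}\ref{item-root-moving-m=2-4-or-6}, and ensuring the translation principle combines correctly with the outer conjugation by $P_{\delta_j}$.
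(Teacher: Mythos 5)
Your argument is correct, and it rests on the same two pillars as the paper's proof---lemma~\ref{lem-general-finite-presentation-principle} and theorem~\ref{thm-root-stabilizers-in-W-star}---but it is organized differently in a way worth noting. The paper never leaves the free product $F=\bigl(\freeproduct_{j\in\Omega}\U_j\bigr)\freeproduct\What$: it proves the combinatorial equality $M=N$ of two normal subgroups and then Tietze-eliminates the $X_{j\neq\infty}(t)$. The hard half of that equality ($M\sset N$) forces the sign-chasing computations of lemmas \ref{lem-N-contains-the-B-i-j} and \ref{lem-N-contains-reversals-of-length-1-paths}, which derive $B_{ik}$ for all $k\in\Omega$ from $B_{i\infty}$ alone and handle reversed edges. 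You instead route everything through the concrete target $T$: your well-definedness check for $\phi$ replaces those computations by an appeal to the already-established action of $\Wstar$ on the $e_j$'s (lemma~\ref{lem-s-star-action-in-terms-of-e-i} is not a label, I mean lemma~\ref{lem-s-star-action-on-e-i} and lemma~\ref{lem-relations-on-s-stars}), while your $\psi$-direction is exactly the paper's easy inclusion $N\sset M$ (your ``translation principle'' is lemma~\ref{lem-G-2-description-compositions-of-paths}). What you pay for this is the burden of justifying the presentation of $\bigl(\U_{\alpha_i}\semidirect H\bigr)*_H\What$ by generators and relations (combining presentations across the pushout and discarding the unknown relators of $H$ as consequences of the $\What$-relators---true, but it should be said), and one citation needs repair: theorem~\ref{thm-root-stabilizers-in-W-star} gives generators for the $\Wstar$-stabilizer, whereas you need generators for the $\What$-stabilizer $H$; for that you must also invoke lemma~\ref{lem-basic-properties-of-What}\ref{item-squares-generate-kernel-of-What-to-W}, that the $S_l^2$ generate $\ker(\What\to W)$, exactly as the paper does in its final paragraph.
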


\begin{proof}[Proof of theorem~\ref{thm-G-2-is-free-product-semidirect-What}, given
    lemma~\ref{lem-free-product-model-for-one-component-of-odd-diagram}]
An examination of the presentation of $\G_2$ reveals
that the $X$'s corresponding to different components of $\odddiagram$
don't interact.  Precisely: $\G_2$ is the amalgamated free product
of the $\G_{2,\Omega}$'s, where $\Omega$ varies over the components of
$\odddiagram$ and  the amalgamation is that the copies
of $\What$ in the $\G_{2,\Omega}$'s are identified in the obvious way.  
Lemma~\ref{lem-free-product-model-for-one-component-of-odd-diagram} shows that
$\G_{2,\Omega}=\bigl(\freeproduct_{\alpha\in\Phi(\Omega)}\,\U_{\alpha}\bigr)\semidirect\What$
for each $\Omega$.  Taking their free product, amalgamated along their copies of $\What$,
obviously yields 
$\bigl(\freeproduct_{\alpha\in\Phi}\,\U_{\alpha}\bigr)\semidirect\What$.
\end{proof}

The rest of the section is devoted to proving
lemma~\ref{lem-free-product-model-for-one-component-of-odd-diagram}.
So we fix a component $\Omega$ of $\odddiagram$ and phrase our problem
in terms of the free product
$F:=\bigl(\freeproduct_{j\in\Omega}\,\U_j\bigr)\freeproduct\What$.
This is the group with generators $S_{i\in I}$ and
$X_{j\in\Omega}(t)$, whose relations are
\eqref{eq-def-of-What-Artin-relators}--\eqref{eq-def-of-What-commutators-of-squares-odd-case}
and those cases of \eqref{eq-additive-relators-in-root-groups} with
$i\in\Omega$.  The heart of the proof of
lemma~\ref{lem-free-product-model-for-one-component-of-odd-diagram} is
to define normal subgroups $M,N$ of $F$ and show they are equal.  $M$
turns out to be normally generated by the relators from
\eqref{eq-S-i-squared-action-on-X-j}--\eqref{eq-S-i-X-j-relator-when-m-is-6}
for which $i\in\Omega$.  Given this, $\G_{2,\Omega}=F/M$ by definition.
The other group $F/N$ has a presentation like the one in
lemma~\ref{lem-general-finite-presentation-principle}.  But it
requires some preparation even to define, so we begin with an informal
overview.

Start with the presentation of $\G_{2,\Omega}$, and distinguish some
point $\infty$ of $\Omega$ and a spanning tree $T$ for $\Omega$.  We
will use the relators \eqref{eq-S-i-X-j-relator-when-m-is-3} coming
from the edges of $T$ to rewrite the $X_{j\in\Omega-\{\infty\}}(t)$ in
terms of $X_\infty(t)$, and then eliminate the
$X_{j\in\Omega-\{\infty\}}(t)$ from the presentation.  This ``uses up''
those relators and makes the other relators messier because each
$X_{j\neq\infty}(t)$ must be replaced by a word in $X_\infty(t)$ and
elements of $\What$.  We studied the $\Wstar$-stabilizer of $\alpha_\infty$
in theorem~\ref{thm-root-stabilizers-in-W-star}, and how it acts on
$\g_\infty$, hence on $\U_\infty$.  It turns out that the remaining
relations in $\G_{2,\Omega}$ are exactly the relations that the
$\What$-stabilizer $\What_\infty$ of $\alpha_\infty$ acts on $\U_\infty$
via $\What_\infty\to\What\to\Wstar\sset\Aut\g$.  That is, $\G_{2,\Omega}\iso\bigl(\U_\infty\semidirect\What_\infty\bigr)\freeproduct_{\What_\infty}\What$.
Then lemma~\ref{lem-general-finite-presentation-principle}  identifies
this with
$\bigl(\freeproduct_{\alpha\in\Phi(\Omega)}\,\U_{\alpha}\bigr)\semidirect\What$.

Now we proceed to the formal proof, beginning by defining some
elements of $F$.  
For $\gamma$ an edge-path in $\Omega$, with
$i_0,\dots,i_n$ the vertices along it, define $\alpha(\gamma)=i_0$ and
$\w(\gamma)=i_n$ as its initial and final endpoints, and define $P_\gamma$ by
\eqref{eq-def-of-p-gamma} with $S$'s in place of $s$'s.  For $k\in I$
evenly joined to the end of $\gamma$ (i.e., $m_{k\w(\gamma)}$ finite and even),
define
$$
R_{\gamma,k}=
P_\gamma^{-1}\cdot\left\{
\begin{matrix}
S_k\\
S_k S_{\w(\gamma)}S_k\\
S_k S_{\w(\gamma)}S_k S_{\w(\gamma)}S_k
\end{matrix}
\right\}\cdot
P_\gamma
$$
according to whether $m_{k\w(\gamma)}=2$, $4$ or $6$.  ($R_{\gamma,k}$ is
got from \eqref{eq-def-of-r-gamma-k} by replacing $s$'s and $p$'s by $S$'s and $P$'s, and
$j$ by $\w(\gamma)$.)  Next, for $t\in R$ we define
\begin{align*}
C_\gamma(t)
&{}:=
P_\gamma X_{\alpha(\gamma)}(t)\cdot\Bigl(X_{\w(\gamma)}(t)P_\gamma\Bigr)^{-1}
\\
\noalign{\noindent and for $k\in I$ evenly joined to $\w(\gamma)$ we define}
D_{\gamma,k}(t)
&{}:=[R_{\gamma,k},X_{\alpha(\gamma)}(t)].
\\
\noalign{\noindent For ease of reference we will also give the name}
B_{i j}(t)
&{}:=
S_i^2X_j(t)S_i^{-2}\cdot X_j\bigl((-1)^{A_{i j}}t\bigr)^{-1}
\end{align*}
to the word \eqref{eq-S-i-squared-action-on-X-j}, where $i\in I$ and $j\in\Omega$.  We will
suppress the dependence of the $X_j$, $B_{i j}$, $C_\gamma$ and $D_{\gamma,k}$ on $t$
except where it plays a role.

The following formally-meaningless intuition may help the reader;
lemma~\ref{lem-G-2-description-compositions-of-paths} below
gives it some support.
The
relation $C_\gamma=1$ declares that the path $\gamma$ conjugates the
$X$ ``at'' the beginning of $\gamma$ to the $X$ ``at'' the end.  And
the relation $D_{\gamma,k}=1$ declares that the $X$ ``at'' the
beginning of $\gamma$ commutes with a certain word that corresponds to
going along $\gamma$, going around some sort of ``loop based at the endpoint
of~$\gamma$'', and then retracing
$\gamma$.

Our first normal subgroup $M$ of $F$ is defined as the subgroup
normally generated by all the $B_{i j}$, the $C_\gamma$ for all
$\gamma$ of length~$1$, and the $D_{\gamma,k}$ for all $\gamma$ of
length~$0$.  Unwinding the definitions shows that these elements of
$F$ are exactly the ones we used in defining $\G_{2,\Omega}$.  For
example, if $\gamma$ is the length~$1$ path from one vertex $j$ of
$\Omega$ to an adjacent vertex $i$ then $P_\gamma=S_j S_i$ and
$C_\gamma$ is the word \eqref{eq-S-i-X-j-relator-when-m-is-3}.  And if
$i\in\Omega$ is evenly joined to $j\in I$ then we take $\gamma$ to be
the zero-length path at $i$, and $D_{\gamma,j}$ turns out to be the
relator \eqref{eq-S-i-X-j-relator-when-m-is-2},
\eqref{eq-S-i-X-j-relator-when-m-is-4} or
\eqref{eq-S-i-X-j-relator-when-m-is-6}.  Which one of these applies depends on
$m_{i j}\in\{2,4,6\}$.  So $F/M\iso\G_{2,\Omega}$.

Before defining the other normal subgroup
$N$ we explain how to work with the $C$'s and $D$'s by thinking in
terms of paths rather than complicated words.

\begin{lemma}
\label{lem-G-2-description-compositions-of-paths}
Suppose $\gamma_1$ and $\gamma_2$ are paths in $\Omega$ with
$\w(\gamma_1)=\alpha(\gamma_2)$, and let $\gamma$ be the path which traverses $\gamma_1$
and then $\gamma_2$.
\begin{enumerate}
\item
\label{item-G-2-description-composition-of-C-and-C}
Any normal subgroup of $F$ containing two of $C_{\gamma_1}$,
$C_{\gamma_2}$ and $C_\gamma$ contains the third.
\item
\label{item-G-2-description-composition-of-C-and-D}
Suppose $k\in I$ is evenly joined to $\w(\gamma_2)$.  Then any normal subgroup
of $F$ containing $C_{\gamma_1}$ and one of $D_{\gamma_2,k}$ and
$D_{\gamma,k}$ contains the other as well.
\end{enumerate}
\end{lemma}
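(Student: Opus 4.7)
The plan is to derive two clean identities in $F$ and then read off both parts as direct corollaries. First I observe the path-concatenation formula $P_\gamma = P_{\gamma_2}\,P_{\gamma_1}$. Writing out the vertex sequences $i_0,\dots,i_k$ for $\gamma_1$ and $i_k,\dots,i_n$ for $\gamma_2$, the product \eqref{eq-def-of-p-gamma} for $\gamma$ is the concatenation of the factors for $\gamma_2$ on the left with those for $\gamma_1$ on the right, with no overlap. The identities below all depend on this and on $\w(\gamma_1)=\alpha(\gamma_2)$.

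For \ref{item-G-2-description-composition-of-C-and-C}, substituting $P_\gamma = P_{\gamma_2}P_{\gamma_1}$ into the definition of $C_\gamma$ and inserting $X_{\w(\gamma_1)}(t)^{-1}X_{\alpha(\gamma_2)}(t)=1$ in the middle yields
\begin{equation*}
C_\gamma \;=\; \bigl(P_{\gamma_2}\,C_{\gamma_1}\,P_{\gamma_2}^{-1}\bigr)\cdot C_{\gamma_2}.
\end{equation*}
Since the first factor is a conjugate of $C_{\gamma_1}$, and hence lies in the normal closure of $C_{\gamma_1}$, this one equation immediately shows that any normal subgroup of $F$ containing any two of $C_{\gamma_1}$, $C_{\gamma_2}$, $C_\gamma$ contains the third.

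For \ref{item-G-2-description-composition-of-C-and-D}, the same factorization of $P_\gamma$, together with the fact that the word in braces in the definition of $R_{\gamma,k}$ depends only on $k$ and $\w(\gamma)=\w(\gamma_2)$, yields $R_{\gamma,k}=P_{\gamma_1}^{-1}\,R_{\gamma_2,k}\,P_{\gamma_1}$. Conjugating by $P_{\gamma_1}$ therefore gives
\begin{equation*}
P_{\gamma_1}\,D_{\gamma,k}\,P_{\gamma_1}^{-1}
\;=\; \bigl[\,R_{\gamma_2,k},\;P_{\gamma_1}X_{\alpha(\gamma_1)}P_{\gamma_1}^{-1}\,\bigr].
\end{equation*}
Solving the definition of $C_{\gamma_1}$ for $P_{\gamma_1}X_{\alpha(\gamma_1)}P_{\gamma_1}^{-1} = C_{\gamma_1}\,X_{\w(\gamma_1)}$ and expanding with the standard identity $[a,bc]=[a,b]\cdot b[a,c]b^{-1}$ produces
\begin{equation*}
P_{\gamma_1}\,D_{\gamma,k}\,P_{\gamma_1}^{-1}
\;=\;
\bigl[R_{\gamma_2,k},\,C_{\gamma_1}\bigr]\cdot C_{\gamma_1}\,D_{\gamma_2,k}\,C_{\gamma_1}^{-1}.
\end{equation*}
Modulo any normal subgroup $N$ containing $C_{\gamma_1}$, the first factor is trivial and the second is a conjugate of $D_{\gamma_2,k}$, so $D_{\gamma,k}\in N$ if and only if $D_{\gamma_2,k}\in N$, proving \ref{item-G-2-description-composition-of-C-and-D}.

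The only nonobvious step is recognizing $P_\gamma = P_{\gamma_2}P_{\gamma_1}$; everything else is a routine manipulation of conjugates and commutators, and no part of the argument should present a genuine obstacle.
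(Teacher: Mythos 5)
Your proof is correct and follows essentially the same route as the paper: both rest on the factorization $P_\gamma=P_{\gamma_2}P_{\gamma_1}$ and the identities $C_\gamma=(P_{\gamma_2}C_{\gamma_1}P_{\gamma_2}^{-1})C_{\gamma_2}$ and $P_{\gamma_1}D_{\gamma,k}P_{\gamma_1}^{-1}=(R_{\gamma_2,k}C_{\gamma_1}R_{\gamma_2,k}^{-1})D_{\gamma_2,k}C_{\gamma_1}^{-1}$, which is exactly what the paper records. Your version just spells out the commutator expansion that the paper leaves as an ``unravels to a tautology.''
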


\begin{proof}
Both identities 
\begin{align*}
C_\gamma
&{}=
\bigl(P_{\gamma_2}C_{\gamma_1}P_{\gamma_2}^{-1}\bigr)C_{\gamma_2}
\\
D_{\gamma,k}
&{}=
P_{\gamma_1}^{-1}\Bigl(\bigl(R_{\gamma_2,k}C_{\gamma_1}R_{\gamma_2,k}^{-1}\bigr)D_{\gamma_2,k}C_{\gamma_1}^{-1}\Bigr)P_{\gamma_1}
\end{align*}
unravel to tautologies, using $P_\gamma=P_{\gamma_2}P_{\gamma_1}$.
These imply \ref{item-G-2-description-composition-of-C-and-C} and \ref{item-G-2-description-composition-of-C-and-D} respectively.
\end{proof}

To define $N$ we refer to the base vertex $\infty$ and spanning tree
$T$ that we introduced above.  For each $j\in\Omega$ we take $\delta_j$ to
be the backtracking-free path in $T$ from $\infty$ to $j$.  For each
edge of $\Omega$ not in $T$, choose an orientation of it, and define
$\E$ as the corresponding set of paths of length~$1$.  For
$\gamma\in\E$ we write $z(\gamma)$ for the corresponding loop in $\Omega$
based at $\infty$.  That is, $z(\gamma)$ is $\delta_{\alpha(\gamma)}$ followed by $\gamma$
followed by the reverse of $\delta_{\w(\gamma)}$.  We define $Z$ as
$\set{z(\gamma)}{\gamma\in\E}$, which is a free basis for
the fundamental group $\pi_1(\Omega,\infty)$.  We define $N$ as the subgroup of $F$
normally generated by all $B_{i\infty}$ with $i\in I$, all $C_{z\in
  Z}$, the $C_{\delta_j}$ with $j\in\Omega$, and all $D_{\delta_j,k}$ where
$j\in\Omega$ and $k\in I$ are evenly joined.  We will show $M=N$; one
direction is easy:

\begin{lemma}
\label{lem-M-contains-N}
$M$ contains $N$.
\end{lemma}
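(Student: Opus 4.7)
The plan is to verify that each of the four types of generators of $N$ lies in $M$, exploiting the composition identities of lemma~\ref{lem-G-2-description-compositions-of-paths}. The generators of $N$ are: (i) $B_{i\infty}$ for $i\in I$, (ii) $C_{\delta_j}$ for $j\in\Omega$, (iii) $C_z$ for $z\in Z$, and (iv) $D_{\delta_j,k}$ where $j\in\Omega$ and $k\in I$ is evenly joined to~$j$. Type (i) is immediate: every $B_{i\infty}$ is already among the generators of~$M$ (since $\infty\in\Omega$).

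Next I would handle the $C$-type generators (ii) and (iii) uniformly by proving the stronger statement that $C_\gamma\in M$ for \emph{every} edge-path $\gamma$ in $\Omega$. The path $\gamma$ decomposes as a concatenation of length-$1$ paths (each an edge of $\odddiagram$, possibly traversed in either direction), and $M$ contains $C_{\gamma'}$ for every length-$1$ path $\gamma'$ by definition. An easy induction on the length of $\gamma$, applying lemma~\ref{lem-G-2-description-compositions-of-paths}\ref{item-G-2-description-composition-of-C-and-C} at each step, then gives $C_\gamma\in M$. In particular $C_{\delta_j}\in M$ for every $j\in\Omega$, and $C_z\in M$ for every $z\in Z$ (since each $z$ is a loop in $\Omega$ based at~$\infty$).

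Finally, for the $D$-type generators (iv), I would apply lemma~\ref{lem-G-2-description-compositions-of-paths}\ref{item-G-2-description-composition-of-C-and-D} with $\gamma_1:=\delta_j$ and $\gamma_2$ the length-$0$ path at~$j$, so that the concatenation $\gamma$ equals $\delta_j$. The hypothesis $C_{\gamma_1}\in M$ is the previous paragraph, and $D_{\gamma_2,k}\in M$ is among the defining generators of $M$ (it is the length-$0$ generator at $j$, with $k$ evenly joined to $j=\w(\gamma_2)$). Lemma~\ref{lem-G-2-description-compositions-of-paths}\ref{item-G-2-description-composition-of-C-and-D} then delivers $D_{\delta_j,k}\in M$.

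The argument presents no real obstacle; the bookkeeping is entirely contained in lemma~\ref{lem-G-2-description-compositions-of-paths}, whose purpose is exactly to reduce relators associated to arbitrary paths to relators associated to paths of length $0$ and~$1$. The only point demanding a moment of care is verifying that $\alpha(\gamma_2)\in\Omega$ in each application (so that the $X$-variables appearing are among those of the presentation of $\G_2(\Omega)$), which holds because $\Omega$ is a union of connected components of $\odddiagram$ and every path used above stays in $\Omega$.
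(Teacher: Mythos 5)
Your proof is correct and follows essentially the same route as the paper's: the $B_{i\infty}$ are already generators of $M$, the $C_{\delta_j}$ and $C_z$ follow by iterating lemma~\ref{lem-G-2-description-compositions-of-paths}\ref{item-G-2-description-composition-of-C-and-C} over the length-$1$ pieces, and the $D_{\delta_j,k}$ follow from lemma~\ref{lem-G-2-description-compositions-of-paths}\ref{item-G-2-description-composition-of-C-and-D} applied to $\delta_j$ and the length-$0$ path at $j$. No gaps.
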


\begin{proof}
Since $M$ contains $C_\gamma$ for every length~$1$ path $\gamma$, repeated
applications of lemma~\ref{lem-G-2-description-compositions-of-paths}\ref{item-G-2-description-composition-of-C-and-C} show that it contains the
$C_{\delta_i}$ and $C_{z\in Z}$.  
Since $M$ contains $D_{\gamma,k}$ for every $\gamma$ of length~$0$, part \ref{item-G-2-description-composition-of-C-and-D} of the
same lemma
shows that $M$ also contains the $D_{\delta_j,k}$.  Since $M$ contains all
the $B_{i j}$, not just the
$B_{i\infty}$, the proof is complete.
\end{proof}

Now we set about proving the reverse inclusion.  For convenience we
use $\equiv$ to mean ``equal modulo $N$''.  We must show that each
generator of $M$ is${}\equiv1$.

\begin{lemma}
\label{lem-N-contains-length-1-directed-paths-in-T}
$C_\gamma\equiv1$ for every length~$1$ subpath $\gamma$ of every $\delta_j$.
\end{lemma}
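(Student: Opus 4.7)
The plan is to induct on the length of the path $\delta_j$ from $\infty$ to $j$ in the spanning tree $T$. The base case $j=\infty$ is vacuous since $\delta_\infty$ has no length-$1$ subpaths. For the inductive step, fix $j \neq \infty$, let $j'$ be the vertex adjacent to $j$ along $\delta_j$ (the parent of $j$ when $T$ is rooted at $\infty$), and write $\delta_j = \delta_{j'} \cdot e_j$, where $e_j$ is the final edge. The length-$1$ subpaths of $\delta_j$ are precisely those of $\delta_{j'}$ together with $e_j$, so by the inductive hypothesis it suffices to show $C_{e_j}\equiv 1$.

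The key point is that $C_{\delta_j}$ lies in $N$ by construction, and if we can also establish $C_{\delta_{j'}}\equiv 1$, then Lemma~\ref{lem-G-2-description-compositions-of-paths}\ref{item-G-2-description-composition-of-C-and-C} applied to the decomposition $\delta_j = \delta_{j'} \cdot e_j$ immediately yields $C_{e_j}\equiv 1$. To obtain $C_{\delta_{j'}}\equiv 1$, write $\delta_{j'}$ as the concatenation of its edges $e_1 \cdots e_{n-1}$; by the inductive hypothesis each $C_{e_i}\equiv 1$, and Lemma~\ref{lem-G-2-description-compositions-of-paths}\ref{item-G-2-description-composition-of-C-and-C} combines them (peeling off one edge at a time from the head of the path) into $C_{\delta_{j'}}\equiv 1$.

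I do not anticipate any real obstacle: the argument is pure bookkeeping on the tree $T$, driven entirely by Lemma~\ref{lem-G-2-description-compositions-of-paths}\ref{item-G-2-description-composition-of-C-and-C} together with the fact that $N$ was defined to contain $C_{\delta_j}$ for every $j \in \Omega$. The essential content is just that, in a tree, one can ``subtract'' a known shorter root-path from a known longer root-path in order to isolate a single edge.
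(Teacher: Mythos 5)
Your argument is correct and is essentially the paper's proof: the whole content is that $\delta_{\alpha(\gamma)}$ followed by the edge $\gamma$ equals $\delta_{\w(\gamma)}$, and since both $C_{\delta_{\alpha(\gamma)}}$ and $C_{\delta_{\w(\gamma)}}$ lie in $N$ by definition, lemma~\ref{lem-G-2-description-compositions-of-paths}\ref{item-G-2-description-composition-of-C-and-C} isolates $C_\gamma$. The induction (and the step re-deriving $C_{\delta_{j'}}\equiv1$ from the edge relations) is harmless but unnecessary, since every $C_{\delta_j}$ is already a defining generator of $N$.
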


\begin{proof}
This follows from
lemma~\ref{lem-G-2-description-compositions-of-paths}\ref{item-G-2-description-composition-of-C-and-C}
because $\delta_{\alpha(\gamma)}$ followed by $\gamma$ is $\delta_{\w(\gamma)}$.
\end{proof}

\begin{lemma}
\label{lem-N-contains-the-B-i-j}
$B_{i k}\equiv1$ for all $i\in I$ and $k\in\Omega$.
\end{lemma}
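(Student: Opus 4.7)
The plan is to reduce the claim to the already-established case $k=\infty$ by conjugating along the tree path $\delta_k$, exploiting the relation $C_{\delta_k}\equiv 1$, and then to pin down a sign by pushing the computation through the faithful action of $\Wstar$ on $\g$.

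First, since $C_{\delta_k}(t)$ is a generator of $N$, writing out its definition yields $P_{\delta_k}X_\infty(t)P_{\delta_k}^{-1}\equiv X_k(t)$. Substituting this into
\[
B_{ik}(t)=S_i^2 X_k(t) S_i^{-2}\cdot X_k\!\bigl((-1)^{A_{ik}}t\bigr)^{-1}
\]
and conjugating both sides by $P_{\delta_k}^{-1}$, the desired relation $B_{ik}(t)\equiv 1$ becomes
\[
w\,X_\infty(t)\,w^{-1}\equiv X_\infty\!\bigl((-1)^{A_{ik}}t\bigr),\qquad w:=P_{\delta_k}^{-1}S_i^2\,P_{\delta_k}\in\What\sset F.
\]

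Second, since $w$ projects to the identity in $W$, lemma~\ref{lem-basic-properties-of-What}\ref{item-squares-generate-kernel-of-What-to-W} lets me express $w$ as a product $w=\prod_{j=1}^{N}S_{l_j}^{2\epsilon_j}$ with $l_j\in I$ and $\epsilon_j\in\{\pm1\}$; concretely this is done by pushing $S_i^2$ past the letters of $P_{\delta_k}$ one at a time using lemma~\ref{lem-basic-properties-of-What}\ref{item-conjugates-of-squares-in-What}. Because $B_{l\infty}$ lies in $N$ for every $l\in I$, one has $S_l^2\,X_\infty(t)\,S_l^{-2}\equiv X_\infty((-1)^{A_{l\infty}}t)$, and iterating this identity along the decomposition of $w$ gives
\[
w\,X_\infty(t)\,w^{-1}\equiv X_\infty(\sigma t),\qquad \sigma:=(-1)^{\sum_j\epsilon_j A_{l_j\infty}}.
\]

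Finally I would identify $\sigma$ with $(-1)^{A_{ik}}$ by computing the scalar by which $w$ acts on $\g_\infty$ via $\What\to\Wstar\sset\Aut\g$ in two independent ways. On one hand, each $S_l^2$ acts on $\g_\infty$ by $\Ad(\alpha_l^\vee)$, i.e.\ by $(-1)^{A_{l\infty}}$ (lemma~\ref{lem-relations-on-s-stars}\ref{item-action-of-s-star-squared}), so the same iteration computes the action of the image of $w$ as the scalar $\sigma$. On the other hand, lemma~\ref{lem-map-from-ZIvee-to-scalars-is-W-star-equivariant} identifies this image with $\Ad\bigl(p_{\delta_k}^{-1}\alpha_i^\vee\bigr)$, acting on $\g_\infty$ by $(-1)^{\langle \alpha_i^\vee,\,p_{\delta_k}(\alpha_\infty)\rangle}$; the final paragraph of the proof of theorem~\ref{thm-root-stabilizers-in-W-star} already records that $p_{\delta_k}$ sends $\alpha_\infty$ to $\alpha_k$, so this scalar is $(-1)^{A_{ik}}$. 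Thus $\sigma=(-1)^{A_{ik}}$, finishing the proof.

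The main obstacle is the last step: the sign $\sigma$ obtained from iterating $B_{l\infty}$-moves a priori depends on the chosen product decomposition of $w$, whereas the target $(-1)^{A_{ik}}$ is intrinsic to the pair $(i,k)$. The representation $\g$ serves as a bookkeeping device that resolves this coincidence: both expressions must equal the scalar by which the image of $w$ in $\Aut\g$ acts on the one-dimensional root space $\g_\infty$.
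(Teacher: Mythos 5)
Your proof is correct, but it follows a genuinely different route from the paper's. The paper proves the statement by induction along the tree: it establishes the claim that if $C_\gamma\equiv1$ for a length-one path $\gamma$ and $B_{i\alpha(\gamma)}\equiv1$ for all $i$, then $B_{i\w(\gamma)}\equiv1$ for all $i$, and the sign $(-1)^{A_{ik}}$ emerges from an explicit four-step hand computation in which $S_i^2$ is pushed through $S_jS_k$ via lemma~\ref{lem-basic-properties-of-What}\ref{item-conjugates-of-squares-in-What} and the resulting signs $(-1)^{A_{ij}}$, $(-1)^{A_{ik}}$ are tracked term by term. You instead conjugate all the way back to the base vertex in one step using $C_{\delta_k}\equiv1$ (a generator of $N$, so no appeal to lemma~\ref{lem-N-contains-length-1-directed-paths-in-T} is needed), reduce to showing that $w=P_{\delta_k}^{-1}S_i^2P_{\delta_k}$ conjugates $X_\infty(t)$ to $X_\infty((-1)^{A_{ik}}t)$, and then determine the sign globally by comparing two computations of the scalar by which the image of $w$ acts on the one-dimensional root space $\g_\infty$ — using $\sstar_l{}^2=\Ad(\alpha_l^\vee)$, the equivariance of $\Ad$ (lemma~\ref{lem-map-from-ZIvee-to-scalars-is-W-star-equivariant}), the $W$-invariance of the pairing, and $p_{\delta_k}(\alpha_\infty)=\alpha_k$. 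This is sound: the decomposition $w=\prod_jS_{l_j}^{2\epsilon_j}$ is an identity in $\What\sset F$, so both scalar computations apply to the same automorphism of $\g$ and must agree, which is exactly what forces $\sigma=(-1)^{A_{ik}}$ independently of the chosen decomposition. The trade-off: the paper's argument is entirely internal to $F/N$ and needs no appeal to the adjoint action, at the cost of delicate explicit sign bookkeeping; yours delegates the sign verification to the representation on $\g$, which is less error-prone and makes transparent why the answer is intrinsic to the pair $(i,k)$, at the cost of invoking the $\Wstar$-machinery of section~\ref{sec-W-star} at a point where the paper keeps things combinatorial.
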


\begin{proof}
We claim: if $\gamma$ is a length~$1$ path in $\Omega$, such that 
$C_{\gamma}\equiv1$ and $B_{i\alpha(\gamma)}\equiv1$ for every $i\in I$, then also
$B_{i\w(\gamma)}\equiv1$ for every $i\in I$.  Assuming this, we use the
fact that $B_{i\infty}\equiv1$ for all $i\in I$ and also $C_\gamma\equiv1$ for every
length~$1$ subpath $\gamma$ of every $\delta_k$
(lemma~\ref{lem-N-contains-length-1-directed-paths-in-T}).  Since
every $k\in\Omega$ is the end of chain of such $\gamma$'s starting
at $\infty$, the lemma follows by induction.

So now we prove the claim, writing $i$ for some element of $I$ and $j$
and $k$ for the initial and final endpoints of $\gamma$.  We use
$C_\gamma\equiv1$, i.e., $S_j S_k X_j(t)\equiv X_k(t)S_j S_k$, to get
\begin{align}
S_i^2&X_k(t)S_i^{-2}
\notag\\
&{}\equiv
S_i^2S_j S_k X_j(t)S_k^{-1}S_j^{-1}S_i^{-2}
\notag
\\
\label{eq-foo-2}
&{}=
S_j S_k\Bigl[(S_k^{-1}S_j^{-1}S_i^2S_j S_k)X_j(t)(S_k^{-1}S_j^{-1}S_i^{-2}S_j S_k)\Bigr]S_k^{-1}S_j^{-1}
\end{align}
We rewrite the relation from lemma~\ref{lem-basic-properties-of-What}\ref{item-conjugates-of-squares-in-What} as $S_j^{-1}S_i^2
S_j=S_j^{(-1)^{A_{i j}}-1}S_i^2$. Then we use it and its analogues with
subscripts permuted 
to simplify the first parenthesized term in \eqref{eq-foo-2}.  We also
use $A_{j k}=-1$, which holds since $j$ and $k$ are joined.  
The result is
$$
S_k^{-1} S_j^{-1} S_i^2 S_j S_k
=
S_k^{1-(-1)^{A_{i j}}} S_j^{(-1)^{A_{i j}}-1} S_k^{-1+(-1)^{A_{i k}}} S_i^2
$$
Note that each exponent is $0$ or~$\pm2$.

The bracketed term in \eqref{eq-foo-2} is the conjugate of $X_j(t)$ by
this.  We work this out  in four steps, using  our assumed relations $B_{i j}\equiv
B_{j j}\equiv B_{k j}\equiv1$. 
Conjugation by $S_i^2$ changes
$X_j(t)$ to $X_j\bigl((-1)^{A_{i j}}t\bigr)$.  
Because $A_{k j}=-1$, 
conjugating $X_j\bigl((-1)^{A_{i j}}t\bigr)$  by
$S_k^{(-1)^{A_{i k}}-1}$ sends it to
\begin{center}
\begin{tabular}{ll}
itself&if $A_{i k}$ is even, because $(-1)^{A_{i k}}-1=0$\\
$X_j\bigl(-(-1)^{A_{i j}}t\bigr)$&if $A_{i k}$ is \phantom{even}\llap{odd}, because $(-1)^{A_{i k}}-1=-2$.
\end{tabular}
\end{center}
We write this as
$X_j\bigl((-1)^{A_{i k}}(-1)^{A_{i j}}t\bigr)$.
In the third step we conjugate by an even power of
$S_j$, which does nothing.  
The fourth step is like the second, and introduces a second factor $(-1)^{A_{i j}}$.
The net result is that the bracketed term of \eqref{eq-foo-2}
equals $X_j\bigl((-1)^{A_{i k}}t\bigr)$ modulo~$N$.

Plugging this into \eqref{eq-foo-2} and then using the conjugacy relation $C_\gamma\equiv1$ between
$X_j$ and $X_k$ yields
$$
S_i^2 X_k(t) S_i^{-2}
\equiv
S_j S_k
X_j\bigl((-1)^{A_{i k}}t\bigr)
S_k^{-1}S_j^{-1}
\equiv
X_k\bigl((-1)^{A_{i k}}t\bigr).
$$
We have established the desired relation $B_{i k}\equiv1$.
\end{proof}

\begin{lemma}
\label{lem-N-contains-reversals-of-length-1-paths}
Suppose $\gamma$ is a length~$1$ path in $\Omega$ with
$C_\gamma\equiv1$.  Then  $C_{\reverse(\gamma)}\equiv1$ also.
\end{lemma}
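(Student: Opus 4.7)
The approach I would take is to unpack both $C_\gamma\equiv 1$ and $C_{\reverse(\gamma)}\equiv 1$ as explicit conjugation relations, and then deduce the second from the first by direct computation in the $S_i$'s. Since $\gamma$ is an edge in $\Omega$, it runs between two nodes $i,j$ with $m_{ij}=3$, forcing $A_{ij}=A_{ji}=-1$ (both odd). Writing $\alpha(\gamma)=j$ and $\omega(\gamma)=i$, $C_\gamma\equiv 1$ reads $S_j S_i X_j(t) S_i^{-1} S_j^{-1}\equiv X_i(t)$, while $C_{\reverse(\gamma)}\equiv 1$ is $S_i S_j X_i(t) S_j^{-1} S_i^{-1}\equiv X_j(t)$.

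My first step would be to substitute the hypothesis into the occurrence of $X_i(t)$ in the target; the target then collapses to the claim that $S_i S_j^2 S_i$ centralizes $X_j(t)$ modulo $N$. To handle this element I would rewrite $S_i S_j^2 S_i=(S_i S_j^2 S_i^{-1})\cdot S_i^2=S_i^2 S_j^2 S_i^2$, invoking the odd-$A_{ji}$ case of lemma~\ref{lem-basic-properties-of-What}\ref{item-conjugates-of-squares-in-What}, which applies because $A_{ji}=-1$.

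With that simplification the conjugation of $X_j(t)$ by $S_i^2 S_j^2 S_i^2$ unfolds in three applications of the $B$-relations, which hold modulo $N$ by the immediately preceding lemma~\ref{lem-N-contains-the-B-i-j}. Conjugation by the innermost $S_i^2$ applies $B_{ij}$ to send $X_j(t)\mapsto X_j((-1)^{A_{ij}}t)=X_j(-t)$; conjugation by $S_j^2$ applies $B_{jj}$ and, since $A_{jj}=2$ is even, changes nothing; conjugation by the outermost $S_i^2$ applies $B_{ij}$ again and introduces a second factor of $-1$. The two sign flips cancel, returning $X_j(t)$, as required.

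The main obstacle---really the only delicate point---is the bookkeeping around parities: one must correctly match the ``odd'' case of relator \eqref{eq-def-of-What-commutators-of-squares-odd-case} to the rewriting of $S_i S_j^2 S_i^{-1}$ (valid because $m_{ij}=3$ forces $A_{ij}$ and $A_{ji}$ both to be odd), and then verify that the two $B_{ij}$ sign contributions do indeed land in a way that cancels while the intermediate $B_{jj}$ conjugation is silent. Apart from this bookkeeping the argument is mechanical, and it reflects the fact that the defining relators of $\What$ were chosen precisely to mirror the $\Wstar$-action on $\g_{\alpha,\Z}$.
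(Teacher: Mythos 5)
Your proposal is correct and follows essentially the same route as the paper: both reduce the claim to showing that $S_{\w(\gamma)}S_{\alpha(\gamma)}^2S_{\w(\gamma)}$ centralizes $X_{\alpha(\gamma)}(t)$ modulo $N$, rewrite that word as a product of squares via lemma~\ref{lem-basic-properties-of-What}\ref{item-conjugates-of-squares-in-What} (odd case, since $m_{ij}=3$ forces $A_{ij}=A_{ji}=-1$), and then cancel the two sign flips coming from lemma~\ref{lem-N-contains-the-B-i-j}. The only difference is presentational: the paper carries out the computation as a chain of equalities on $S_kS_jX_k(t)$ rather than isolating the centralizing claim first.
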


\begin{proof}
Suppose $\gamma$ goes from $j$ to $k$.
We begin with our assumed relation $C_\gamma\equiv1$, i.e., $S_j S_k X_j(t)\equiv
X_k(t)S_j S_k$, rearrange and apply 
the 
relation
from lemma~\ref{lem-basic-properties-of-What}\ref{item-conjugates-of-squares-in-What} with $A_{j k}={}$odd.
\begin{align*}
X_k(t)
&{}\equiv
S_j S_k X_j(t)S_k^{-1}S_j^{-1}
\\
S_k S_j X_k(t)
&{}\equiv
\bigl(S_k S_j^2S_k^{-1}\bigr)S_k^2X_j(t)S_k^{-1}S_j^{-1}
\\
&{}=
\bigl(S_k^2S_j^2)S_k^2 X_j(t)S_k^{-1}S_j^{-1}.
\\
\noalign{\noindent
Then we use lemma~\ref{lem-N-contains-the-B-i-j}'s $B_{j j}\equiv B_{k j}\equiv1$ 
 with $A_{k j}={}$odd:}
&{}\equiv
S_k^2 S_j^2 X_j(-t) S_k^2 S_k^{-1}S_j^{-1}\\
&{}\equiv
S_k^2 X_j(-t)  S_j^2 \cdot S_k^2 S_k^{-1}S_j^{-1}\\
&{}\equiv
X_j(t) S_k^2 S_j^2\cdot S_k^2 S_k^{-1}S_j^{-1}\\
&{}=
X_j(t) S_k S_j^2 S_k^{-1}\cdot S_k^2 S_k^{-1}S_j^{-1}\\
&{}=
X_j(t) S_k S_j
\end{align*}
We have shown $C_{\reverse(\gamma)}\equiv1$, as desired.
\end{proof}

\begin{lemma}
\label{lem-N-equals-M}
$M=N$.  In particular, $\G_{2,\Omega}$ is the quotient of
$F=\bigl(\freeproduct_{j\in\Omega}\,\U_j\bigr)\freeproduct\What$ by $N$.
\end{lemma}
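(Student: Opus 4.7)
The plan is to complete the reverse inclusion $M\sset N$, since the other direction was already established in lemma~\ref{lem-M-contains-N}. Since $M$ is normally generated by the $B_{i j}$, the $C_\gamma$ for length~$1$ paths~$\gamma$, and the $D_{\gamma,k}$ for length~$0$ paths~$\gamma$, it suffices to show that each of these lies in~$N$. The $B_{i j}$ case is handled directly by lemma~\ref{lem-N-contains-the-B-i-j}, so only the $C$'s and $D$'s remain.

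For the length~$1$ paths $\gamma$, I would split into cases according to whether $\gamma$ is an edge of the spanning tree $T$ or of~$\E$. If $\gamma$ is a length~$1$ subpath of some $\delta_j$ (so it is in $T$, oriented away from $\infty$), then $C_\gamma\equiv1$ by lemma~\ref{lem-N-contains-length-1-directed-paths-in-T}. If $\gamma$ is the reverse of such a subpath, lemma~\ref{lem-N-contains-reversals-of-length-1-paths} gives $C_\gamma\equiv1$. This handles every edge of $T$. For an edge $\gamma\in\E$, I would use lemma~\ref{lem-G-2-description-compositions-of-paths}\ref{item-G-2-description-composition-of-C-and-C} to factor $C_{z(\gamma)}$ along the decomposition $z(\gamma)=\delta_{\alpha(\gamma)}\cdot\gamma\cdot\reverse(\delta_{\w(\gamma)})$: repeated application of that lemma, combined with $C_{z(\gamma)}\equiv1$ (a defining generator of~$N$) and the facts $C_{\delta_{\alpha(\gamma)}}\equiv1$ (proved by composing length~$1$ subpaths along $\delta_{\alpha(\gamma)}$) and $C_{\reverse(\delta_{\w(\gamma)})}\equiv1$ (proved the same way using lemma~\ref{lem-N-contains-reversals-of-length-1-paths} on each length~$1$ piece), forces $C_\gamma\equiv1$.

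For the length~$0$ relators $D_{\gamma,k}$ at a vertex $j\in\Omega$ with $k\in I$ evenly joined to $j$, I would apply lemma~\ref{lem-G-2-description-compositions-of-paths}\ref{item-G-2-description-composition-of-C-and-D} with $\gamma_1=\delta_j$ and $\gamma_2$ equal to the length~$0$ path at $j$, so that the composite $\gamma$ is again $\delta_j$. We have $C_{\delta_j}\equiv1$ from the previous paragraph, and $D_{\delta_j,k}\equiv1$ is one of the defining generators of~$N$, so the lemma yields $D_{\gamma_2,k}\equiv1$, which is exactly the desired relation. Together with the $B_{i j}$ case and the length~$1$ $C_\gamma$ case, this shows every normal generator of $M$ lies in $N$, so $M\sset N$ and hence $M=N$. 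The final assertion that $\G_2(\Omega)=F/N$ then follows from the earlier observation that $\G_2(\Omega)=F/M$ by definition.

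I do not anticipate any serious obstacle: the real work was done in lemmas \ref{lem-G-2-description-compositions-of-paths}--\ref{lem-N-contains-reversals-of-length-1-paths}. The only thing requiring care is the bookkeeping for edges not in $T$, where one must correctly orient $z(\gamma)$ and apply the composition lemma repeatedly, ensuring at each step that two of the three relevant $C$'s are already known to be $\equiv1$ so the third falls out.
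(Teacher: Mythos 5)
Your proof is correct and follows the paper's argument essentially step for step: the same three lemmas in the same roles, the same decomposition of $z(\gamma)$ for non-tree edges, and the same application of lemma~\ref{lem-G-2-description-compositions-of-paths} to handle the $D_{\gamma,k}$. The one point left implicit is that the generators of $M$ include $C_\gamma$ for \emph{both} orientations of each non-tree edge, while $\E$ contains only one; the reversed orientation requires one further application of lemma~\ref{lem-N-contains-reversals-of-length-1-paths}, exactly as the paper does.
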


\begin{proof}
We showed $N\sset M$ in lemma~\ref{lem-M-contains-N}.  To show the
reverse inclusion, recall that $M$ is normally generated by 
all $B_{i j}$, the $C_\gamma$ for all $\gamma$ of length~$1$, and the
$D_{\gamma,k}$ for all $\gamma$ of length~$0$.  
We must show that each of these is${}\equiv1$.  
We showed $B_{i j}\equiv1$ in 
lemma~\ref{lem-N-contains-the-B-i-j}.

Next we show that $C_\gamma\equiv1$ for every length~$1$ path $\gamma$ in $T$.
If $\gamma$ is part of one of the paths $\delta_j$ in $T$ based at
$\infty$, then $C_\gamma\equiv1$ by lemma~\ref{lem-N-contains-length-1-directed-paths-in-T}, and then 
$C_{\reverse(\gamma)}\equiv1$ by
lemma~\ref{lem-N-contains-reversals-of-length-1-paths}.

Lemma~\ref{lem-G-2-description-compositions-of-paths}\ref{item-G-2-description-composition-of-C-and-C}
now shows $C_\gamma\equiv1$ for every path $\gamma$ in $T$.

Next we show $C_\gamma\equiv1$ for every
length~$1$ path $\gamma$ not in $T$.
Recall that we chose a set $\E$ of length~$1$ paths, one traversing
each edge of $\Omega$ not in $T$.  For $\gamma\in\E$ we wrote $z(\gamma)$ for
the corresponding loop in $\Omega$ based at $\infty$, 
namely $\delta_{\alpha(\gamma)}$ followed by $\gamma$ followed by
$\reverse(\delta_{\w(\gamma)})$.  Recall that  $N$ contains
$C_{z(\gamma)}$ by definition, and contains $C_{\delta_{\alpha(\gamma)}}$ and
$C_{\reverse(\delta_{\w(\gamma)})}$ by the
previous paragraph. So a double application of
lemma~\ref{lem-G-2-description-compositions-of-paths}\ref{item-G-2-description-composition-of-C-and-C}
proves $C_\gamma\in N$.  And another use of
lemma~\ref{lem-N-contains-reversals-of-length-1-paths} shows that $N$
also contains $C_{\reverse(\gamma)}$.  
This finishes the proof that $C_\gamma\equiv1$ for all length~$1$
paths $\gamma$ in~$\Omega$

It remains only to show $D_{\gamma,k}\equiv1$ for every length~$0$ path
$\gamma$ in $\Omega$ and each $k\in I$ joined evenly to the unique point
of $\gamma$, say $j$.  Since $N$ contains $C_{\delta_j}$ and $D_{\delta_j,k}$ by
definition, and $\delta_j$ followed by $\gamma$ is trivially equal to $\delta_j$,
lemma~\ref{lem-G-2-description-compositions-of-paths}\ref{item-G-2-description-composition-of-C-and-D}
shows that $N$ contains $D_{\gamma,k}$ also.
\end{proof}

We now review the general form of the description $F/N$ of
$\G_{2,\Omega}$ that we have just established.  
The
generators are the $S_{i\in I}$ and the $X_{j\in\Omega}(t)$ with $t\in R$.  The
relations are the addition rules defining the $\U_j$, the
relations on the $S_i$'s defining $\What$, and the $B_{i\infty}$, 
$C_{z\in Z}$,  $C_{\delta_j}$ and $D_{\delta_j,k}$ where $i$ varies over
$I$, $j$ over $\Omega$, and $k\in I$
is evenly joined to $j$.  The relations $B_{i\infty}\equiv1$
say that $S_i^2$
centralizes or inverts every $X_\infty(t)$.  Each relation $C_{z}\equiv1$ says
that a certain word in $\What$ conjugates every $X_\infty(t)$ to
itself.  The relations $D_{\delta_j,k}\equiv1$ say that certain other words
in $\What$ also commute with every $X_\infty(t)$.  Finally, for each
$j$, the relations $C_{\delta_j}\equiv1$ express the $X_j(t)$ as
conjugates of the $X_\infty(t)$ by still more words in $\What$.
The obvious way to simplify the presentation is to use this last batch of
relations to eliminate the $X_{j\neq\infty}(t)$ from the
presentation.  We make this precise in the following lemma.

\begin{lemma}
\label{lem-structure-of-F-infty-mod-N-infty}
Define $F_\infty=\U_\infty\freeproduct\What$ and let $N_\infty$ be the
subgroup normally generated by the $B_{i\infty}$ ($i\in I$), the
$C_z$ ($z\in Z$), and the $D_{\delta_j,k}$ ($j\in\Omega$ and
$k\in I$ evenly joined).  Then the natural map $F_\infty/N_\infty\to
F/N$ is an isomorphism.
\end{lemma}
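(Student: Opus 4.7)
The plan is to construct an explicit two-sided inverse to the natural map $F_\infty/N_\infty\to F/N$. This natural map is induced by the obvious inclusion $F_\infty=\U_\infty\freeproduct\What\hookrightarrow F=\bigl(\freeproduct_{j\in\Omega}\U_j\bigr)\freeproduct\What$, and it descends to the quotients because every normal generator of $N_\infty$ is also one of the normal generators of $N$.

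To build the inverse, I would first define a homomorphism $\tilde\phi\colon F\to F_\infty/N_\infty$ on generators by $S_i\mapsto S_i$ for each $i\in I$, $X_\infty(t)\mapsto X_\infty(t)$, and $X_j(t)\mapsto P_{\delta_j}X_\infty(t)P_{\delta_j}^{-1}$ for $j\in\Omega\setminus\{\infty\}$. Since $F$ is a free product, well-definedness needs only the additive relations in each $\U_j$ and the relations of $\What$: the latter are sent to themselves, and the additive relation $X_j(t)X_j(u)=X_j(t+u)$ becomes $P_{\delta_j}X_\infty(t)X_\infty(u)P_{\delta_j}^{-1}=P_{\delta_j}X_\infty(t+u)P_{\delta_j}^{-1}$, which is exactly the image of $X_j(t+u)$.

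Next, I would check that $\tilde\phi$ sends every normal generator of $N$ into $N_\infty$. The elements $B_{i\infty}$, $C_z$ (for $z\in Z$), and $D_{\delta_j,k}=[R_{\delta_j,k},X_\infty(t)]$ involve only $S_i$'s and $X_\infty$'s, so $\tilde\phi$ fixes each of them, and each lies in $N_\infty$ by definition. The only nontrivial case is $C_{\delta_j}=P_{\delta_j}X_\infty(t)\cdot(X_j(t)P_{\delta_j})^{-1}$, which by construction maps to $P_{\delta_j}X_\infty(t)P_{\delta_j}^{-1}\cdot P_{\delta_j}X_\infty(t)^{-1}P_{\delta_j}^{-1}=1$; this is precisely the ``elimination'' of the $X_{j\neq\infty}(t)$ generators foreshadowed in the informal overview before the lemma.

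Therefore $\tilde\phi$ descends to a homomorphism $\phi\colon F/N\to F_\infty/N_\infty$. It remains to verify that $\phi$ and the natural map are mutually inverse. The composition $F_\infty/N_\infty\to F/N\to F_\infty/N_\infty$ acts as the identity on each generator, since $\tilde\phi$ fixes $S_i$ and $X_\infty(t)$ and $\delta_\infty$ is trivial. The other composition $F/N\to F_\infty/N_\infty\to F/N$ fixes $S_i$ and $X_\infty(t)$, and sends $X_j(t)$ to $P_{\delta_j}X_\infty(t)P_{\delta_j}^{-1}$, which equals $X_j(t)$ in $F/N$ by the relation $C_{\delta_j}\equiv 1$ modulo $N$ (established already in the proof of lemma~\ref{lem-N-equals-M}). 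No step is technically hard; the only content is choosing the correct conjugating words $P_{\delta_j}$, after which everything reduces to manipulations internal to the free product.
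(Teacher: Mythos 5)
Your proposal is correct and is essentially the paper's argument in different clothing: the paper performs the same substitution $X_j(t)\rightsquigarrow P_{\delta_j}X_\infty(t)P_{\delta_j}^{-1}$ via Tietze transformations (using $C_{\delta_j}\equiv1$ to eliminate the generators $X_{j\neq\infty}(t)$ and observing that the leftover additive relators become $P_{\delta_j}$-conjugates of those for $\U_\infty$), while you package the same computation as an explicit two-sided inverse homomorphism. All the verifications you list match what the Tietze argument requires, so there is no gap.
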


\begin{proof}
We begin with the presentation $F/N$ from the previous paragraph and
apply Tietze transformations.  The relation $C_{\delta_j}(t)\equiv1$
reads:
$$
X_j(t)\equiv P_{\delta_j}\,X_\infty(t)\,P_{\delta_j}^{-1}
$$
For $j=\infty$ this is the trivial relation $X_\infty(t)=X_\infty(t)$,
which we may discard.  For $j\neq\infty$ we use it to
replace $X_j(t)$ by $P_{\delta_j}\,X_\infty(t)\,P_{\delta_j}^{-1}$ everywhere else in the
presentation, and then discard $X_j(t)$ from the generators and
$C_{\delta_j}(t)$ from the relators. 

The only other occurrences of $X_{j\neq\infty}(t)$ in the presentation
are in the relators defining $\U_j$.  After the replacement of the
previous paragraph, these relations read
$$
P_{\delta_j}X_\infty(t)P_{\delta_j}^{-1}\cdot P_{\delta_j}
X_\infty(u)P_{\delta_j}^{-1}
\equiv
P_{\delta_j}X_\infty(t+u)P_{\delta_j}^{-1}.
$$
These relations can be discarded because they are the $P_{\delta_j}$-conjugates  
of the relations 
$X_\infty(t)
X_\infty(u)
\equiv
X_\infty(t+u)$.  What remains is the presentation $F_\infty/N_\infty$.
\end{proof}

\begin{proof}[Proof of lemma~\ref{lem-free-product-model-for-one-component-of-odd-diagram}]
The previous 
lemma shows $\G_{2,\Omega}\iso F_\infty/N_\infty$.  
So $\G_{2,\Omega}$ is the quotient of
$\U_\infty*\What$ by relations asserting that certain elements of
$\What$ act on $\U_\infty$ by certain automorphisms.  The relations
$B_{i\infty}=1$ make $S_i^2$ act on $\U_\infty$ by
$(-1)^{A_{i\infty}}$.  The relations $C_z=D_{\delta_j,k}=1$
make the words $P_z$ and $R_{\delta_j,k}$ centralize $\U_\infty$.  

By
lemma~\ref{lem-basic-properties-of-What}\ref{item-squares-generate-kernel-of-What-to-W},
the $S_i^2$ generate the kernel of $\What\to W$.  By
theorem~\ref{thm-root-stabilizers-in-W-star}, the images of the $P_z$
and $R_{\delta_j,k}$ in $W$ generate the $W$-stabilizer of the simple
root $\infty\in I$.  Therefore the $S_i^2$, $P_z$ and $R_{\delta_j,k}$
generate the $\What$-stabilizer $\What_\infty$ of $\infty$.  Their
actions on $\U_\infty$ are the same as the ones given by the
homomorphism $\What\to\Wstar$, by
theorem~\ref{thm-root-stabilizers-in-W-star}. Therefore
$\G_{2,\Omega}=\bigl(\U_\infty\semidirect\What_\infty)\freeproduct_{\What_\infty}\What$.
And lemma~\ref{lem-general-finite-presentation-principle} identifies
this with
$\bigl(\freeproduct_{\alpha\in\Phi(\Omega)}\U_{\alpha}\bigr)\semidirect\What$,
as desired.
\end{proof}

\section{The isomorphism $\G_3\iso\PStTits\semidirect\What$}
\label{sec-step-3}

\noindent
We have two goals in this section.  The first is to 
start from
theorem~\ref{thm-G-2-is-free-product-semidirect-What},
that
$\G_2\iso\bigl(\freeproduct_{\alpha\in\Phi}\,\U_\alpha\bigr)\semidirect\What$,
and 
prove
theorem~\ref{thm-G3-is-spherical-Tits-Steinberg-group-semidirect-What},
that $\G_3\iso\PStTits\semidirect\What$.
The second is to explain how one may discard many of the Chevalley
relations, for example for $E_{n\geq6}$ one can get away with imposing
the relations for a single unjoined pair of nodes of the Dynkin
diagram, and for a single joined pair.  The latter material is not
necessary for our main results.

\begin{proof}[Proof of theorem~\ref{thm-G3-is-spherical-Tits-Steinberg-group-semidirect-What}]
First we show that the relators \eqref{eq-Chevalley-relator-m=2}--\eqref{eq-Chevalley-relators-m=6-distant-short-and-long}, regarded as
elements of
$\G_2\iso\bigl(\freeproduct_{\alpha\in\Phi}\,\U_\alpha\bigr)\semidirect\What$,
become trivial in $\StTits\semidirect\What$.  Then we will show that
they normally generate the whole kernel of
$\G_2\to\StTits\semidirect\What$.

If $\alpha,\beta$ are a prenilpotent pair of roots with $\theta(\alpha,\beta)=\{\alpha,\beta\}$,
then the Chevalley relation for $\alpha$ and $\beta$ is
$[\U_\alpha,\U_\beta]=1$.  This shows that relators \eqref{eq-Chevalley-relator-m=2},
\eqref{eq-Chevalley-relators-m=3-close-roots}, \eqref{eq-Chevalley-relators-m=4-adjacent-short-and-long}, \eqref{eq-Chevalley-relators-m=4-orthogonal-long}, \eqref{eq-Chevalley-relators-m=6-nearby-long}, \eqref{eq-Chevalley-relators-m=6-adjacent-short-and-long} and \eqref{eq-Chevalley-relators-m=6-orthogonal-short-and-long} become
trivial in $\StTits\semidirect\What$.  Careful calculation verifies
that the remaining relators are equivalent to those given by Demazure
in \cite[ch.~XXIII]{Demazure-Grothendieck}.  Here are some remarks on the
correspondence between his notations and ours.  In the $A_2$ case (his
proposition 3.2.1), his $\alpha$ and $\beta$ correspond to our
$\alpha_j$ and $\alpha_i$, his $X_\alpha$ and $X_\beta$ to our $e_j$
and $e_i$, his $X_{-\alpha}$ and $X_{-\beta}$ to our $-f_j$ and
$-f_i$, and his $p_\alpha(t)$ and $p_\beta(t)$ to our $X_j(t)$ and
$X_i(t)$.  His $w_\alpha$ and $w_\beta$ are not the same as our $S_j$
and $S_i$ (which are not even elements of
$\freeproduct_{\gamma\in\Phi}\,\U_\gamma$), but their actions on 
the $\U_\gamma$'s
are the same, so his
$p_{\alpha+\beta}(t):=w_\beta\,p_\alpha(t)\,w_\beta^{-1}$ corresponds
to our $S_i X_j(t) S_i^{-1}$.  One can
now check that our \eqref{eq-Chevalley-relators-m=3-distant-roots} is equivalent to  his
3.2.1(iii).

In the $B_2$ case (his proposition 3.3.1), his $\alpha$ and $\beta$
correspond to our $\alpha_s$ and $\alpha_l$, his $X_\alpha$ and
$X_\beta$ to our $e_s$ and $e_l$, his $X_{-\alpha}$ and $X_{-\beta}$
to our $-f_s$ and $-f_l$, and his $p_\alpha(t)$ and $p_\beta(t)$ to
our $X_s(t)$ and $X_l(t)$.  His $w_\alpha$ and $w_\beta$ correspond to
our $S_s$ and $S_l$ in the same sense as above.  It follows that his
$p_{\alpha+\beta}(t)$ and $p_{2\alpha+\beta}(t)$ correspond to our
$S_l X_s(t) S_l^{-1}$ and $S_s X_l(t) S_s^{-1}$.  Then our \eqref{eq-Chevalley-relators-m=4-orthogonal-short}
and \eqref{eq-Chevalley-relators-m=4-distant-short-and-long} are equivalent to his 3.3.1.  The $G_2$ case  is the same
(his
proposition 3.4.1),
except that his $p_{\alpha+\beta}(t)$, $p_{2\alpha+\beta}(t)$,
$p_{3\alpha+\beta}(t)$ and $p_{3\alpha+2\beta}(t)$ correspond to our
$S_l X_s(t) S_l^{-1}$, $S_s S_l X_s(t) S_l^{-1} S_s^{-1}$, $S_s
X_l(-t) S_s^{-1}$ and $S_l S_s X_l(-t) S_s^{-1} S_l^{-1}$.  Then our
\eqref{eq-Chevalley-relators-m=6-distant-long}--\eqref{eq-Chevalley-relators-m=6-distant-short-and-long}
are among the relations in his 3.4.1(iii).  

As a check (indeed a second proof that our relations are the Chevalley
relations) we constructed our elements of the various root groups in
explicit representations of the Chevalley groups $\SL_2\times\SL_2$,
$\SL_3$, $\Sp_4$ and $G_2$ over $R=\Z[t,u]$, faithful on the unipotent
subgroups of their Borel subgroups.  As mentioned in
remark~\ref{rem-precautions}, we used a computer to check
that our relators map to the identity.  By functoriality, the same
holds with $R$ replaced by any ring.  In addition to our relations,
the root groups satisfy the Chevalley relations, by construction.  By
the isomorphism $
\U_{\theta(\alpha,\beta)}\iso\prod_{\gamma\in\theta(\alpha,\beta)}\U_\gamma
$ of underlying schemes
(lemma~\ref{lem-existence-of-unipotent-groups}), the only relations
having the form of the Chevalley relations that can hold are the
Chevalley relations themselves.  So our relations are among them.

It remains to prove that the Chevalley relators of any classically
prenilpotent pair $\alpha',\beta'\in\Phi$ become trivial in $\G_3$.
By classical prenilpotency, $\Phi_0':=(\Q \alpha'+\Q
\beta')\cap\Phi$ is an $A_1$, $A_1^2$, $A_2$, $B_2$ or $G_2$ root
system.
In the $A_1$ case we have $\alpha'=\beta'$ and the Chevalley relations
amount to the commutativity of $\U_{\alpha'}$.  This follows from
$\U_{\alpha'}\iso R$.  So we consider the other cases.
There exists $w\in W$ sending $\Phi_0'$ to the root system
$\Phi_0\sset\Phi$ generated by some pair of simple roots.  (Choose
simple roots for $\Phi_0'$. Then choose a chamber in the  Tits
cone which has two of its facets lying in the mirrors of those roots,
and which lies on the positive sides of these mirrors.  Choose $w$ to
send this chamber to the standard one.)

We choose a pair of roots $\alpha,\beta\in\Phi_0$ as follows.  First,
they should have the same relative configuration as $\alpha',\beta'$
have. (That is, they should have the same short/long root status, and make the
same angle.)  And second, their Chevalley relators should appear among
\eqref{eq-Chevalley-relator-m=2}--\eqref{eq-Chevalley-relators-m=6-distant-short-and-long}.
Such $\alpha,\beta$ can always be chosen.  For example, in the $G_2$
case,
\eqref{eq-Chevalley-relators-m=6-nearby-long}--\eqref{eq-Chevalley-relators-m=6-distant-short-and-long}
are respectively the Chevalley relations for two long roots with angle
$\pi/3$, a short and a long root with angle $\pi/6$, two orthogonal
roots, two long roots with angle $2\pi/3$, two short roots with angle
$\pi/3$, two short roots with angle $2\pi/3$, and a short and a long
root with angle $5\pi/6$.  The other cases are similarly exhaustive.
By refining the choice of $w$, we may suppose that it sends
$\{\alpha',\beta'\}$ to $\{\alpha,\beta\}$.  Now choose
$\what\in\What$ lying over $w$.  The Chevalley relators for
$\alpha',\beta'$ are the $\what^{-1}$-conjugates of the Chevalley
relators for $\alpha,\beta$.  Since the latter become trivial in
$\G_3$, so do the former.
\end{proof}

The proof of
theorem~\ref{thm-G3-is-spherical-Tits-Steinberg-group-semidirect-What}
exploited the $\What$-action on
$\freeproduct_{\alpha\in\Phi}\U_{\alpha}$ to obtain the
Chevalley relators for all classically prenilpotent pairs from those listed explicitly in
\eqref{eq-Chevalley-relator-m=2}--\eqref{eq-Chevalley-relators-m=6-distant-short-and-long}.
One can further exploit this idea to omit many of the relators coming
from the cases $m_{i j}=2$ or~$3$.  Our method derives from the notion
of an ordered pair of simple roots being associate to another pair,
due to Brink-Howlett \cite{BH-normalizers} and Borcherds
\cite{Borcherds-normalizers}.  But we need very little of their
machinery, so we will argue directly.  There does not seem to be any
similar simplification possible if $m_{i j}=4$ or~$6$.

\begin{proposition}
\label{prop-omitting-m-i-j=2-relations}
Suppose $i,j,k\in I$ form an $A_1A_2$ diagram, with $j$ and $k$
joined.  Then imposing the relation $[\U_i,\U_j]=1$ on 
$\G_2\iso\bigl(\freeproduct_{\alpha\in\Phi}\U_\alpha\bigr)\semidirect\What$
also
  imposes $[\U_i,\U_k]=1$.  More formally, the normal closure of the
    relators \eqref{eq-Chevalley-relator-m=2} in
$\G_2$ 
    contains the relators got from them by replacing $j$ by $k$.
\end{proposition}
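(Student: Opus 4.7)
The plan is to realise the desired relator $[X_i(t),X_k(u)]$ as a $\What$-conjugate of the given relator $[X_i(t),X_j(u)]$, using that the element $s_k s_j \in W$ sends $\alpha_k$ to $\alpha_j$ while fixing $\alpha_i$.

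First I would record the relevant actions on root vectors. Since $m_{jk}=3$, lemma~\ref{lem-s-star-action-on-e-i}\ref{item-root-moving-m=3} (with its $i,j$ specialised to our $j,k$) gives $\sstar_k\sstar_j(e_k)=e_j$. Since $m_{ij}=m_{ik}=2$, lemma~\ref{lem-s-star-action-on-e-i}\ref{item-root-moving-m=2-4-or-6} shows that both $\sstar_j$ and $\sstar_k$ fix $e_i$, whence $\sstar_k\sstar_j(e_i)=e_i$. Under the identification of theorem~\ref{thm-G-2-is-free-product-semidirect-What}, the generator $S_\ell\in\What$ acts on $\freeproduct_{\alpha\in\Phi}\U_\alpha$ through $\sstar_\ell\in\Wstar$, so the two displayed facts translate into the $\G_2$-identities
$$
(S_k S_j)\,X_k(u)\,(S_k S_j)^{-1}=X_j(u),\qquad (S_k S_j)\,X_i(t)\,(S_k S_j)^{-1}=X_i(t).
$$

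Now conjugate the given relator \eqref{eq-Chevalley-relator-m=2} for the pair $(i,j)$ by $(S_k S_j)^{-1}$. Term by term, $X_i(t)$ is fixed and $X_j(u)$ is carried to $X_k(u)$, so $[X_i(t),X_j(u)]$ becomes $[X_i(t),X_k(u)]$. This is exactly the relator obtained from \eqref{eq-Chevalley-relator-m=2} by the substitution $j\leftrightarrow k$, and it lies in the normal closure of the original, as required.

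There is no real obstacle: once one sees that $s_k s_j$ is the Weyl element that moves $\alpha_k$ to $\alpha_j$ without disturbing $\alpha_i$, everything reduces to lifting this observation to $\Wstar$ via lemma~\ref{lem-s-star-action-on-e-i} and reading off the consequence through theorem~\ref{thm-G-2-is-free-product-semidirect-What}.
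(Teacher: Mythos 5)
Your proof is correct and takes essentially the same route as the paper: conjugate the given relators by a lift to $\What$ of a Weyl element that fixes $\alpha_i$ and interchanges the roles of $\alpha_j$ and $\alpha_k$. You are merely more explicit than the paper, which says only ``some element of $\gend{s_j,s_k}$ sends $\alpha_j$ to $\alpha_k$ and fixes $\alpha_i$; choose any lift,'' whereas you pin down $S_kS_j$ and verify its exact action on $e_i$ and $e_k$ via lemma~\ref{lem-s-star-action-on-e-i} (a level of precision that is not strictly needed, since the setwise statements $\U_i\mapsto\U_i$, $\U_j\mapsto\U_k$ already suffice to permute the family of relators).
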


\begin{proof}
Some element of the copy of $W(A_2)$ generated by $s_j$ and $s_k$
sends $\alpha_j$ to $\alpha_k$, and of course it fixes $\alpha_i$.
Choose any lift of it to $\What$.  Conjugation by it in $\G_2$ fixes
$\U_i$ and sends $\U_j$ to $\U_k$.  So it sends the relators \eqref{eq-Chevalley-relator-m=2}
to the relators got from them by replacing $j$ by~$k$.
\end{proof}

The lemma shows that imposing on $\G_2$ the relations
\eqref{eq-Chevalley-relator-m=2} for a few well-chosen unordered pairs
$\{i,j\}$ in $I$ with $m_{i j}=2$ automatically imposes the
corresponding relations for all such pairs.  As examples, for
spherical Dynkin diagrams it suffices to impose these relations for
$$
\begin{aligned}
&\hbox{$3$ such pairs (that is, all of them) for $D_4$;}\\
&\hbox{$2$ such pairs for $B_{n\geq4}$, $C_{n\geq4}$ or $D_{n\geq5}$;}\\
&\hbox{$1$ such pair for $A_{n\geq3}$, $B_3$, $C_3$, $E_n$ or $F_4$.}
\end{aligned}
$$

\begin{proposition}
\label{prop-omitting-m-i-j=3-relations}
Suppose $i,j,k\in I$ form an $A_3$ diagram, with $i$ and~$k$
unjoined.  Then the normal closure of the relators \eqref{eq-Chevalley-relators-m=3-close-roots}--\eqref{eq-Chevalley-relators-m=3-distant-roots}
in
$\G_2\iso\bigl(\freeproduct_{\alpha\in\Phi}\U_\alpha\bigr)\semidirect\What$
contains the relators got from them by replacing $i$ and~$j$ by $j$
and~$k$ respectively.
\end{proposition}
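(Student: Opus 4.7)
The plan is to adapt the argument of Proposition~\ref{prop-omitting-m-i-j=2-relations}: I search for an element of $\What$ whose conjugation in $\G_2$ carries the $(i,j)$-relators \eqref{eq-Chevalley-relators-m=3-close-roots}--\eqref{eq-Chevalley-relators-m=3-distant-roots} to their $(j,k)$-versions word for word. Since $m_{ij}=m_{jk}=3$ and $m_{ik}=2$, a short calculation shows that $w:=s_is_js_k\in W$ satisfies $w(\alpha_i)=\alpha_j$ and $w(\alpha_j)=\alpha_k$: apply $s_k\alpha_i=\alpha_i$ (from $m_{ik}=2$) followed by the $A_2$ identity $s_is_j\alpha_i=\alpha_j$; and symmetrically in the other order for $\alpha_j$ using $s_js_k\alpha_j=\alpha_k$ and $s_i\alpha_k=\alpha_k$. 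I would then lift $w$ to $\what:=S_iS_jS_k\in\What$.

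The next step is to upgrade this $W$-level computation to the level of Chevalley generators. Lemma~\ref{lem-s-star-action-on-e-i}\ref{item-root-moving-m=3} gives $\sstar_a\sstar_b(e_a)=e_b$ whenever $m_{ab}=3$, while Lemma~\ref{lem-s-star-action-on-e-i}\ref{item-root-moving-m=2-4-or-6} gives $\sstar_a(e_b)=e_b$ when $m_{ab}=2$. Applying these in the same pattern as before yields $\sstar_i\sstar_j\sstar_k(e_i)=e_j$ and $\sstar_i\sstar_j\sstar_k(e_j)=e_k$, with no intervening signs. Translated through Theorem~\ref{thm-G-2-is-free-product-semidirect-What}'s isomorphism $\G_2\iso(\freeproduct_{\alpha\in\Phi}\U_{\alpha})\semidirect\What$, these read $\what X_i(t)\what^{-1}=X_j(t)$ and $\what X_j(u)\what^{-1}=X_k(u)$. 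Moreover, Lemma~\ref{lem-characterization-of-s-star-in-terms-of-e} gives $\what\sstar_i\what^{-1}=\sstar_{w(e_i)}=\sstar_j$ in $\Wstar$, so inside $\freeproduct_{\alpha\in\Phi}\U_{\alpha}$ the ``inner blocks'' $S_iX_j(tu)S_i^{-1}$ and $S_iX_j(u)S_i^{-1}$ appearing in \eqref{eq-Chevalley-relators-m=3-close-roots}--\eqref{eq-Chevalley-relators-m=3-distant-roots} get $\what$-conjugated to $S_jX_k(tu)S_j^{-1}$ and $S_jX_k(u)S_j^{-1}$ respectively.

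Assembling these substitutions, the $\what$-conjugate of each $(i,j)$-relator is precisely the relator obtained from it by the typographical replacement $i\mapsto j$, $j\mapsto k$. Since $\what$-conjugation preserves the normal closure of the original relators, the $(j,k)$-relators lie in that closure, proving the claim.

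The main place where care will be needed is the sign bookkeeping in the $\Wstar$ calculation: the hypothesis $m_{ik}=2$ is essential, because it is what forces $\sstar_k(e_i)=e_i$ and $\sstar_i(e_k)=e_k$ and thus prevents the $\sstar^2$-sign factors of Lemma~\ref{lem-relations-on-s-stars}\ref{item-action-of-s-star-squared} from appearing. A different choice of lift (e.g.\ $S_kS_jS_i$, or inverses) would still work but could introduce harmless sign reparametrizations $t\leftrightarrow-t$, $u\leftrightarrow-u$ that leave the normal closure invariant.
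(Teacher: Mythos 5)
Your proof is correct and follows the paper's approach: conjugate by a lift to $\What$ of a Weyl group element carrying $(\alpha_i,\alpha_j)$ to $(\alpha_j,\alpha_k)$, exactly as in Proposition~\ref{prop-omitting-m-i-j=2-relations}. Your element $s_is_js_k$ is in fact the same element of $W(A_3)$ that the paper describes as a product of two fundamental elements, and your explicit check via Lemma~\ref{lem-s-star-action-on-e-i} that the lift $S_iS_jS_k$ sends $e_i\mapsto e_j$ and $e_j\mapsto e_k$ with no signs (so the conjugated relators coincide term-by-term with the $(j,k)$-relators) makes precise the sign bookkeeping that the paper's terser ``the argument is the same'' leaves implicit.
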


\begin{proof}
The argument is the same as for proposition~\ref{prop-omitting-m-i-j=2-relations}, using an element of
$W(A_3)$ that sends $\alpha_i$ and $\alpha_j$ to $\alpha_j$ and
$\alpha_k$.  An example of such an element is the ``fundamental
element'' (or ``long word'') of
$\gend{s_i,s_j}$, followed by the fundamental element of
$\gend{s_i,s_j,s_k}$.  The first transformation sends $\alpha_i$ and
$\alpha_j$ to $-\alpha_j$ and $-\alpha_i$.  The second sends
$\alpha_i$, $\alpha_j$ and $\alpha_k$ to $-\alpha_k$, $-\alpha_j$ and
$-\alpha_i$. 
\end{proof}

Similarly to the $m_{i j}=2$ case,
imposing on $\G_2$ the
relations \eqref{eq-Chevalley-relators-m=3-close-roots}--\eqref{eq-Chevalley-relators-m=3-distant-roots} for some well-chosen ordered pairs $(i,j)$ in $I$
with $m_{i j}=3$ automatically imposes the corresponding relations for
all such pairs.  For spherical diagrams, it suffices to impose these relations for
$$
\begin{aligned}
&\hbox{$4$ such pairs (that is, all of them) for $F_4$;}\\
&\hbox{$2$ such pairs for $A_{n\geq2}$, $B_{n\geq3}$ or $C_{n\geq3}$;}\\
&\hbox{$1$ such pair for $D_{n\geq4}$ or $E_n$.}
\end{aligned}
$$

\section{The adjoint representation}
\label{sec-adjoint-representation}

\noindent
A priori, it is conceivable that for some commutative ring $R\neq0$
and some generalized Cartan matrix $A$, the Steinberg group $\St_A(R)$
might collapse to the trivial group.  That this doesn't happen follows
from work of Tits \cite[\S4]{Tits} and R\'emy \cite[Ch.~9]{Remy} on
the ``adjoint representation'' of $\St_A$.  We will improve their
results slightly by proving that the unipotent group scheme $\U_\Psi$ embeds in the Steinberg
group functor $\St_A$, for any nilpotent set of roots $\Psi$.  We need
this result in the next section, in our proof that
$\PSt_A(R)\to\St_A(R)$ is often an isomorphism.

Recall that lemma~\ref{lem-existence-of-unipotent-groups} associates to $\Psi$ a
unipotent group scheme $\U_\Psi$ over $\Z$.  Furthermore, there are natural
homomorphisms $\U_{\gamma}\to\U_\Psi$ for all $\gamma\in\Psi$, and the
product map $\prod_{\gamma\in\Psi}\U_\gamma\to\U_\Psi$ is an
isomorphism of the underlying schemes, for any ordering of the factors.

Also in section~\ref{sec-Steinberg-group}, we defined Tits' Steinberg functor $\StTits_A$ as the direct limit
of the group schemes $\U_\gamma$ and $\U_\Psi$, where $\gamma$ varies
over $\Phi$, and $\Psi$ varies over the nilpotent subsets of $\Phi$ of
the form $\Psi=\theta(\alpha,\beta)$, with $\alpha,\beta$ a
prenilpotent pair of roots.  
Composing with $\StTits_A\to\St_A$, we have natural maps
$\U_\Psi\to\St_A$ for such $\Psi$.
A special case of the following theorem
is that these maps are embeddings.
We would like to say that the same holds for
$\Psi$ an arbitrary nilpotent set of roots.  But ``the same holds''
doesn't quite have meaning, because the definition of $\St_A$ doesn't provide
a natural map $\U_\Psi\to\St_A$ for general~$\Psi$.  So we phrase the result as follows.

\begin{theorem}[Injection of unipotent subgroups into $\St_A$]
\label{thm-injection-of-unipotent-subgroups}
Suppose $A$ is a generalized Cartan matrix and $\Psi$ is a nilpotent
set of roots.  Then there is a unique homomorphism 
$\U_\Psi\to\St_A$ whose restriction to each $\U_{\alpha\in\Psi}$ is
the natural map to $\St_A$, and it is an embedding.
\end{theorem}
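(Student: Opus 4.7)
The statement breaks into three parts---uniqueness, existence of the homomorphism, and its injectivity.

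First I would dispatch uniqueness: the root subgroups $\U_{\alpha\in\Psi}$ generate $\U_\Psi$, so any homomorphism extending the given maps is determined. For existence, fix an ordering $\alpha_1,\dots,\alpha_n$ of $\Psi$. By lemma~\ref{lem-existence-of-unipotent-groups}\ref{item-isomorphism-of-underlying-schemes}, every element of $\U_\Psi(R)$ has a unique expression $u_1\cdots u_n$ with $u_i\in\U_{\alpha_i}(R)$, and the group law of $\U_\Psi$ is determined by the Chevalley commutator formulas for the pairs $(\alpha_i,\alpha_j)$. Any two roots in the nilpotent set $\Psi$ form a prenilpotent pair, and $\theta(\alpha_i,\alpha_j)\sset\Psi$ since $\Psi$ is closed under addition. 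The Chevalley relations for such pairs are built into the construction of $\StTits_A$, and therefore hold in the quotient $\St_A$. Hence the naive product map $u_1\cdots u_n\mapsto u_1\cdots u_n$, where the target is read in $\St_A(R)$ via the given maps $\U_{\alpha_i}\to\St_A$, is a group homomorphism $\U_\Psi(R)\to\St_A(R)$, manifestly functorial in~$R$.

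The substantive step is injectivity, for which the plan is to use the adjoint representation treated in section~\ref{sec-adjoint-representation}: an action of $\St_A(R)$ on a $\Z$-form of $\g$ tensored with $R$, in which $\x_\alpha(t)$ acts as $\exp(t\,\ad e_\alpha)$. Because $\Psi$ is prenilpotent, I would pick a linear functional $\ell$ on $\ZI$ with $\ell\geq 1$ on $\Psi$, and reorder so that $\ell(\alpha_1)\leq\cdots\leq\ell(\alpha_n)$. Suppose $u=\x_{\alpha_1}(t_1)\cdots\x_{\alpha_n}(t_n)$ lies in the kernel of our homomorphism. Choose a root $\gamma$ of $\ell$-value sufficiently negative that every $\gamma+\alpha_{i_1}+\cdots+\alpha_{i_k}$ with $1\leq k\leq n$ lies in $\Phi$. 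Then
\begin{equation*}
\Ad(u)(e_\gamma)=e_\gamma+\sum_i t_i\,[e_{\alpha_i},e_\gamma]+\epsilon,
\end{equation*}
where $\epsilon$ is a sum of contributions in weight spaces of $\ell$-shift ${}\geq 2$ from~$\gamma$. Since $\Ad(u)=1$, and since the distinct $\alpha_i$'s put the first-order terms in pairwise distinct weight spaces disjoint from the supports of $\epsilon$, one can strip off $\epsilon$ by induction on $\ell$-shift and conclude $t_i\,[e_{\alpha_i},e_\gamma]=0$ in $\g_{\gamma+\alpha_i,\Z}\tensor R$. Since $[e_{\alpha_i},e_\gamma]$ is a $\Z$-generator of this $1$-dimensional root space, $t_i=0$.

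The main obstacle is making this weight-triangular extraction work over an arbitrary commutative ring $R$, which may have torsion or zero divisors: one must choose $\gamma$ so that all intermediate weights truly lie in $\Phi\cup\{0\}$ and so that the iterated brackets $[e_{\alpha_{i_1}},[e_{\alpha_{i_2}},\ldots,[e_{\alpha_{i_k}},e_\gamma]\ldots]]$ really generate the corresponding integral root spaces. This is precisely what Tits' and R\'emy's setup of the adjoint representation over $\Z$ is designed to ensure, and invoking it as a black box is what lets the inductive bookkeeping close up cleanly.
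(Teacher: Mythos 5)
Your uniqueness and existence arguments coincide with the paper's and are unproblematic. The injectivity argument is where you take a genuinely different route, and as written it has two gaps, one of which is fatal. First, the auxiliary root $\gamma$ you posit generally does not exist: you need every partial sum $\gamma+\alpha_{i_1}+\cdots+\alpha_{i_k}$ to lie in $\Phi$ (or $\Phi\cup\{0\}$), but already in the $A_1$ case, with $\Psi=\{\alpha_i\}$, there is no root $\gamma$ with $\gamma+\alpha_i\in\Phi$; and for $\Psi$ the positive roots of $B_2$ the longest partial sums leave the root system entirely, while in a general Kac--Moody system they typically land on imaginary roots, whose weight spaces in $\curlyU$ have no preferred rank-one $\Z$-structure to read coefficients from. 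Second, and more seriously, even when $\gamma+\alpha_i$ is a real root the first-order bracket is $[e_{\alpha_i},e_\gamma]=\pm(p+1)\,e'$ with $e'\in E_{\gamma+\alpha_i}$ and $p\geq0$ the length of the relevant root string: for two short roots of $B_2$ summing to a long root one gets $\pm2e'$, and in $G_2$ one gets $\pm3e'$. So $[e_{\alpha_i},e_\gamma]$ need not generate $\g_{\gamma+\alpha_i,\Z}$, and $t_i\,[e_{\alpha_i},e_\gamma]=0$ does not force $t_i=0$ when $R$ has $2$- or $3$-torsion --- precisely the rings the theorem must cover. The Tits--R\'emy integral form does not repair this: it guarantees that $\curlyU$ is a free $\Z$-module stable under divided powers, not that these first-order structure constants are units, so it cannot be invoked as a black box here.

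The paper avoids both problems by inducting on $|\Psi|$ rather than testing all of $\Psi$ against a single $e_\gamma$. A gallery argument shows that, after applying an element of $\Wstar$, one may assume exactly one element of $\Psi$ is positive and moreover simple, say $\alpha_i$; one then applies a kernel element to the single test vector $f_i$ and looks only at the component of the output graded by $0\in\ZI$. The roots of $\Psi-\{\alpha_i\}$ are all negative, so they contribute nothing in degree $0$, while $\x_i(t)$ contributes $-t\barh_i$ because $(\ad e_i)(f_i)=-\barh_i$ has coefficient exactly $\pm1$ and $\barh_i$ is a member of a $\Z$-basis of $\curlyU$ (this is where the choice of simply connected root datum matters). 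If you want to salvage your one-shot strategy, you would need to produce a test vector and a weight filtration for which every relevant first-order coefficient is a unit of $\Z$, and I do not see how to arrange that uniformly; the paper's inductive reduction is what replaces it.
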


Uniqueness is trivial, by the isomorphism of
underlying schemes $\U_\Psi\iso\prod_{\alpha\in\Psi}\U_\alpha$.
Existence is easy: every pair of roots in $\Psi$ is prenilpotent,
their Chevalley relations hold in $\St$, and these relations suffice
to define $\U_\Psi$ as a quotient of
$\freeproduct_{\beta\in\Psi}\,\U_\beta$.  So we must show that
that this homomorphism is an embedding.  Our proof below relies on a linear representation
of $\St_A$, functorial in $R$, called the adjoint representation.  Its
essential properties are developed in \cite[Ch.~9]{Remy}, relying on a
$\Z$-form of the universal enveloping algebra of $\g$ introduced by
Tits \cite[\S4]{Tits}.

\medskip
Following Tits and R\'emy we will indicate all ground rings other than
$\Z$ explicitly, in particular writing $\g_\C$ for the Kac--Moody
algebra~$\g$.  We write $\curlyU_\C$ for its universal enveloping
algebra.  Recall from section~\ref{sec-Steinberg-group} that for each root
$\alpha\in\Phi$ we distinguished a subgroup $\g_{\alpha,\Z}\iso\Z$ of
$\g_{\alpha,\C}$ and the set $E_\alpha$ consisting of the two
generators for $\g_{\alpha,\Z}$.

Generalizing work of Kostant \cite{Kostant} and Garland \cite{Garland}, Tits defined
an integral form of $\curlyU_\C$, meaning a subring $\curlyU$ with the
property that the natural map $\curlyU\tensor\C\to\curlyU_\C$ is an
isomorphism.  It is the subring generated by the divided powers
$e_i^{n}/n!$ and $f_i^n/n!$, as $i$ varies over $I$, together with the
``binomial coefficients''
$\binom{h}{n}:=h(h-1)\cdots(h-n+1)/n!$ where $h$ varies over the
$\Z$-submodule of $\g_{0,\C}$ with basis $\barh_i$.

\begin{remark}[The role of the root datum]
Although it isn't strictly  necessary, we mention that lurking
behind the scenes is a choice of root datum.  It is the one which
R\'emy calls simply connected \cite[\S7.1.2]{Remy} and Tits calls ``simply
connected in the strong sense'' \cite[remark\ 3.7(c)]{Tits}.  A choice
of root datum is necessary to define $\curlyU$, hence the adjoint
representation, and the choice does matter.  For example, $\SL_2$ and
$\PGL_2$ have the same Cartan matrix, but different root data.  Their
adjoint representations are distinct in characteristic~$2$, when we
compare them by regarding both as representations of $\SL_2$ via the
central isogeny $\SL_2\to\PGL_2$.  Similarly, they provide distinct
representations of $\St_{A_1}$.
For us the essential fact is that each  $\barh_i$
generates a $\Z$-module summand of $\curlyU$, as explained in the next
paragraph.  As an example of what could go wrong, using the root
datum for $\PGL_2$ would lead to $\barh_i/2\in\curlyU$ and spoil the
proof of theorem~\ref{thm-injection-of-unipotent-subgroups} in characteristic~$2$.
\end{remark}

In the sense Tits used, an integral form of a $\C$-algebra need not be
free as a $\Z$-module.  For example, $\Q$ is a $\Z$-form of $\C$ since
$\Q\tensor_\Z\C\to\C$ is an isomorphism.  But $\curlyU$ is free as a $\Z$-module.
To see this, one uses the following ingredients from
\cite[sec.~4.4]{Tits}. First, the $\Z^I$-grading makes it easy to see that
\begin{align*}
\curlyU_+&{}:=\biggend{\set{e_i^n/n!}{\hbox{$i\in I$ and
      $n\geq0$}}}\\
\llap{and\quad}
\curlyU_-&{}:=\biggend{\set{f_i^n/n!}{\hbox{$i\in I$ and
      $n\geq0$}}}
\end{align*} 
are free as $\Z$-modules, and that
$\{e_{i\in I}\}$ and $\{f_{i\in I}\}$ extend to bases of them.
Second, the universal enveloping algebra
$\curlyU_{0,\C}$ of the Cartan algebra $\g_{0,\C}$ 
is a polynomial ring.  This makes it easy to see
that
$$
\textstyle
\curlyU_0:=\biggend{\set{\binom{h}{n}}{\hbox{$h\in\oplus_{i}\,\Z\barh_i$
      and $n\geq0$}}}
$$ is free as a $\Z$-module.  Indeed, Prop.~2 of
\cite[VIII.12.4]{Bourbaki-7-and-8} extends $\{\barh_{i\in I}\}$ to a
$\Z$-basis for $\curlyU_0$.  Finally,
$\curlyU_-\tensor\curlyU_0\tensor\curlyU_+\to\curlyU$ is an
isomorphism by \cite[Prop.~2]{Tits}.  One can obtain a
$\Z$-basis for $\curlyU$ by tensoring together members of bases for
$\curlyU_-$, $\curlyU_0$ and $\curlyU_+$.

A key property of $\curlyU$ is  its stability under $(\ad
e_i)^n/n!$ and $(\ad f_i)^n/n!$ for all~$n\geq0$ (see
\cite[eqn.\ (12)]{Tits}).  The local nilpotence of $\ad e_i$ and $\ad
f_i$ on $\g_\C$ implies their local nilpotence on $\curlyU_\C$. As
exponentials of locally nilpotent derivations, $\exp\ad e_i$ and
$\exp\ad f_i$ are automorphisms of $\curlyU_\C$.  Since they preserve
its subring $\curlyU$,
they are automorphisms of it.   
Since the generators $\sstar_i$ for
$W^*$ are defined in terms of them by \eqref{eq-definition-of-s-star}, $\Wstar$  also
acts on $\curlyU$.  

Because $\curlyU$ is free as a $\Z$-module, $\curlyU_R:=\curlyU\tensor
R$ is free as an $R$-mod\-ule.  It is the $R$-module underlying the
adjoint representation of $\St_A(R)$ in theorem~\ref{thm-adjoint-representation} below, which we
will now develop.  For each root $\alpha$ we define an exponential map
$\exp:\U_\alpha(R)\to\Aut(\curlyU_R)$ as follows.  Recall that
$\U_\alpha(R)$ was defined as $\g_{\alpha,\Z}\tensor R$.  If $x$ is an
element of this, then we choose  $e\in E_\alpha$
and define $t\in R$ by $x=t e$.  Then we define $\exp(x)$ to be the
$R$-module endomorphism of $\curlyU_R$ given by $\sum_{n=0}^\infty
t^n(\ad e)^n/n!$.
The apparent dependence on the choice of $e$ is no
dependence at all, because if one makes the other choice $-e$ then
one must also replace $t$ by $-t$.
As shown in \cite[\S9.4]{Remy}, $\exp(x)$ is an $R$-algebra automorphism of
$\curlyU_R$, not merely an $R$-module endomorphism.  

\begin{theorem}[Adjoint representation]
\label{thm-adjoint-representation}
For any commutative ring~$R$, there exists a homomorphism
$\Ad:\St_A(R)\to\Aut \curlyU_R$, functorial in~$R$ and 
characterized by the following
property.  For every root $\alpha$
the exponential map $\exp:\U_\alpha(R)\to\Aut\curlyU_R$ factors as the
natural map $\U_\alpha(R)\to\St_A(R)$ followed by $\Ad$.
\end{theorem}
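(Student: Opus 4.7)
The plan is to produce $\Ad$ by the universal property of the free product and then verify that every defining relation of $\St_A(R)$ maps to the identity in $\Aut\curlyU_R$. First, for each $\alpha\in\Phi$, the map $\exp:\U_\alpha(R)\to\Aut\curlyU_R$ is a group homomorphism: choosing $e\in E_\alpha$ and identifying $\U_\alpha(R)$ with $R$, the operators $(\ad e)^n$ all commute with each other, so $\exp(te)\exp(ue)=\exp((t+u)e)$ as $R$-algebra automorphisms of $\curlyU_R$. By the universal property of free products we obtain a homomorphism $\widetilde{\Ad}:\freeproduct_{\alpha\in\Phi}\U_\alpha(R)\to\Aut\curlyU_R$ whose restriction to each $\U_\alpha(R)$ is $\exp$.

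Next I would show that $\widetilde{\Ad}$ descends to $\StTits_A(R)$. For each prenilpotent pair $\alpha,\beta$, the Lie subalgebra $\g_{\theta(\alpha,\beta),\C}\sset\g_\C$ is nilpotent, so Baker-Campbell-Hausdorff converts products of exponentials of its elements into single exponentials. The Chevalley commutator formula is exactly the translation of this into a formula relating $\exp(\x_{e_\alpha}(t))\exp(\x_{e_\beta}(u))\exp(-\x_{e_\alpha}(t))\exp(-\x_{e_\beta}(u))$ to the product of the $\exp(\x_{e_\gamma}(C_{\alpha\beta\gamma}t^m u^n))$. Tits' integrality result for $\curlyU$ ensures that the structure constants $C_{\alpha\beta\gamma}$ occurring here are integers, and are the very ones appearing in \eqref{eq-Chevalley-relation-general-form}; hence the relation holds in $\Aut\curlyU_R$ for any $R$. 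So $\widetilde{\Ad}$ descends to $\StTits_A(R)\to\Aut\curlyU_R$.

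To descend further to $\St_A(R)$, I would verify the Morita-Rehmann relators \eqref{eq-Morita-Rehmann-relations-abstract-form} go to~$1$. For any automorphism $\phi$ of $\curlyU_R$ and any $x\in\g_{\beta,\Z}\tensor R$ with $\phi(x)$ still in some root space of~$\g$, one has the tautology $\phi\cdot\exp(x)\cdot\phi^{-1}=\exp(\phi(x))$. Taking $\phi=\widetilde{\Ad}(\stilde_e(r))=\exp(\ad re)\exp(\ad r^{-1}f)\exp(\ad re)$, it therefore suffices to check that on each root space $\g_{\beta,\Z}\tensor R$ this $\phi$ coincides with $h_\alpha(r)\circ\sstar_e$. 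For $r=1$ this is the definition of $\sstar_e$ in \eqref{eq-definition-of-s-star} and lemma~\ref{lem-characterization-of-s-star-in-terms-of-e}; for general $r$ one does the elementary rank-$1$ calculation, identifying the $\sltwo$-triple spanned by $e,f,[e,f]$ with \eqref{eq-standard-basis-for-sl2} and computing that the $\stilde_e(r)$-conjugate of $\stilde_e(1)^{-1}$ acts on $\g_\beta$ by the scalar $r^{\pairing{\alpha^\vee}{\beta}}$, which is by definition~$h_\alpha(r)$. Uniqueness is immediate because $\St_A(R)$ is generated by the images of the $\U_\alpha(R)$'s, and functoriality in $R$ is built into the construction since every step is base-changed from~$\Z$.

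The main obstacle is the verification of the Chevalley relations in step two: one must know not only that the BCH-derived identity holds in $\Aut\curlyU_\C$, but that its structure constants are integers and agree with those implicit in \eqref{eq-Chevalley-relation-general-form}. This is precisely the content of Tits' construction of the integral form~$\curlyU$ \cite[\S4]{Tits} and R\'emy's development of the adjoint representation \cite[Ch.~9]{Remy}, and in a streamlined write-up one would cite those works directly rather than reprove the commutator formula.
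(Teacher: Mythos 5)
Your proposal is correct in outline, but it takes a genuinely different route from the paper for the final descent step. The paper's proof is essentially a citation: R\'emy \cite[9.5.2--9.5.3]{Remy} constructs $\Ad$ as a representation of Tits' Kac--Moody group $\tilde{\mathcal{G}}_D(R)$, which is a quotient of $\St_A(R)$; since the Morita--Rehmann relators \eqref{eq-Morita-Rehmann-relations-abstract-form} are among the relations R\'emy imposes, his representation pulls back to $\St_A(R)$, and the only remaining work is bookkeeping --- matching the relations \eqref{eq-alternative-Steinberg-relations-h-action-on-simple-root-groups}--\eqref{eq-alternative-Steinberg-relations-s-action-on-everything} with R\'emy's four defining relations for $\tilde{\mathcal{G}}_D(R)$. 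You instead re-verify the Morita--Rehmann relators directly, via the conjugation identity $\phi\exp(x)\phi^{-1}=\exp(\phi(x))$ for algebra automorphisms $\phi$ preserving the integral root spaces, plus a rank-one computation. That is legitimate: the computation takes place inside the $\alpha$-root string through $\beta$, an integral form of a finite-dimensional $\sltwo$-module on whose weight-$n$ space $\htilde_e(r)$ acts by $r^n$, which is exactly $h_\alpha(r)$. Your route buys independence from the Kac--Moody group and from convention-matching with R\'emy, at the cost of redoing an $\sltwo$ calculation he has already done; both routes outsource the genuinely hard content --- that the Chevalley relations hold in $\Aut\curlyU_R$ over an arbitrary ring, i.e.\ the integrality of Tits' form $\curlyU$ and of the divided-power exponentials --- to Tits and R\'emy, as you acknowledge. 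One small slip in wording: $\htilde_e(r)$ is the \emph{product} $\stilde_e(r)\,\stilde_e(1)^{-1}$, not the $\stilde_e(r)$-conjugate of $\stilde_e(1)^{-1}$.
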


\begin{proof}
This is from 9.5.2--9.5.3 of R\'emy \cite{Remy}.  We remark that he
used Tits' version of the Steinberg functor (what we call $\StTits_A$)
rather than the Morita--Rehmann version (what we call $\St_A$).  But
his theorem 9.5.2 states that $\Ad$ is a representation of Tits'
Kac--Moody group $\tilde{\mathcal{G}}_D(R)$.  Since the extra
relations in the Morita--Rehmann version of the Steinberg group are
among those defining $\tilde{\mathcal{G}}_D(R)$, we may regard $\Ad$
as a representation of $\St_A(R)$.  

A few comments are required to identify our relations with (some of)
his.  $\tilde{\mathcal{G}}_D(R)$ is defined in \cite[8.3.3]{Remy} as a
quotient of the free product of $\PStTits_A(R)$ with a certain torus
$\T$.  R\'emy's third relation identifies our $\htilde_i(r)$ from
\eqref{eq-defn-of-h-tilde-e} with the element of $\T$ that R\'emy calls $r^{h_i}$.
R\'emy's first relation says how $\T$ acts on each $\U_j$, and amounts
to our \eqref{eq-alternative-Steinberg-relations-h-action-on-simple-root-groups}.  R\'emy's fourth relation is our \eqref{eq-alternative-Steinberg-relations-s-action-on-everything}, saying that
each $\stilde_i$ acts as $\sstar_i$ on every $\U_\beta$.  R\'emy's
second relation says how each $\stilde_i$ acts on $\T$, and in
particular describes $\stilde_j\,r^{h_i}\,\stilde_j^{-1}$.  Together
with the known action of $\htilde_i(r)$ on $\U_j$ and fact that
$\stilde_j$ exchanges $\U_{\pm j}$, this describes how
$\htilde_i(r)$ acts on $\U_{-j}$, and recovers our
relation \eqref{eq-alternative-Steinberg-relations-h-action-on-negative-simple-root-groups}.  By theorem~\ref{thm-alternate-relations-for-Steinberg-group}, this shows that all the
relations in our $\St(R)$ hold in $\tilde{\mathcal{G}}_D(R)$.
\end{proof}

\begin{proof}[Proof of theorem~\ref{thm-injection-of-unipotent-subgroups}]
By induction on $|\Psi|$.  The base case, with $\Psi=\emptyset$, is
trivial.  So suppose $|\Psi|>0$.  Since $\Psi$ is nilpotent, there is
some chamber pairing positively with every member of $\Psi$ and
another one pairing negatively with every member.  It follows that
there is a chamber pairing positively with one member and negatively
with all the others.  In other words, after applying an element of
$\Wstar$ we may suppose that $\Psi$ contains exactly one positive
root.  We may even suppose that this root is simple, say~$\alpha_i$.
Write $\Psi_0$ for $\Psi-\{\alpha_i\}$.

Consider the adjoint representation $\U_\Psi(R)\to\St(R)\to\Aut\curlyU_R$,
in particular the action of $x\in\U_\Psi(R)$ on $f_i\in\curlyU_R$.  If
$x\in\U_{\Psi_0}(R)$ then the component of $x(f_i)$ in the subspace of
$\curlyU_R$ graded by $0\in\ZI$ is trivial, since $f_i$ and the
$\beta\in\Psi_0$ are all negative roots.  On the other hand, we can
work out the action of $\x_i(t)$ as follows.    A computation in $\curlyU$ shows
$$
(\ad e_i)(f_i)=-\barh_i,
\quad
{\textstyle\frac{1}{2}}(\ad e_i)^2(f_i)=e_i,
\quad
\hbox{and}
\quad
\textstyle{\frac{1}{n!}}(\ad e_i)^n(f_i)=0
$$
for $n>2$.  Therefore we have
$$
\Ad(\x_i(t))(f_i)
=
\sum_{n=0}^\infty t^n\frac{(\ad e_i)^n}{n!}(f_i)
=
f_i-t\barh_i+t^2 e_i.
$$ 
Recall that
$f_i$, $\barh_i$ and
$e_i$ are three members of a $\Z$-basis for
$\curlyU$.  So their images in $\curlyU_R$ are members of an $R$-basis.
If $t\neq0$ then
the component of $\Ad(\x_i(t))(f_i)$ graded by
$0\in\ZI$ is the nonzero element $-t\barh_i$ 
of $\curlyU_R$.  

Therefore only the trivial element of $\U_i(R)$ maps into the image of
$\U_{\Psi_0}(R)$ in $\Aut\curlyU_R$.
So the same is true with $\St(R)$ in place of $\Aut\curlyU_R$.
From induction and the bijectivity of the product map
$\U_i(R)\times\U_{\Psi_0}(R)\to\U_{\Psi}(R)$ it follows that
$\U_\Psi(R)$ embeds in $\St(R)$.
\end{proof}

\section{$\PSt\to\St$ is often an isomorphism}
\label{sec-PSt-to-St}

\noindent
The purpose of this section is to prove parts
\ref{item-isomorphism-for-3-spherical}--\ref{item-isomorphism-for-2-spherical} of
theorem~\ref{thm-examples-of-PSt-to-St-being-an-isomorphism},
showing that the natural map $\PSt_A(R)\to\St_A(R)$ is an isomorphism
for many choices of generalized Cartan matrix $A$ and commutative ring
$R$.  These cases includes most of part
\ref{item-isomorphism-for-irreducible-affine} of the same theorem;
see \cite{Allcock-affine-Kac-Moody-groups} for the complete result.
And part \ref{item-isomorphism-for-finite-dimensional-type} of the
theorem is the case that $A$ is spherical.  As remarked in
section~\ref{sec-pre-Steinberg-group}, in this case $\PSt_A$ and
$\St_A$ are the same group by definition.  

In the case that $R$ is a field, Abramenko and M\"uhlherr \cite{Abramenko-Muhlherr} proved
our \ref{item-isomorphism-for-2-spherical} with Kac--Moody groups in place
of Steinberg groups.  Our proof of
\ref{item-isomorphism-for-2-spherical} derives from the proof of their
theorem~A; with the following preparatory lemma the argument goes
through in our setting.
For 
\ref{item-isomorphism-for-3-spherical} we use a more elaborate form of
the idea, with lemma~\ref{lem-unipotent-subgroup-generation-rank-3} as preparation.

\begin{lemma}[Generators for unipotent groups in rank~$2$]
\label{lem-unipotent-subgroup-generation-rank-2}
Let $R$ be a commutative ring, $\Phi$ be a rank~$2$
spherical root system equipped with a choice of simple roots, and
$\Phi^+$ be the set of positive roots.  If $\Phi$ has type $A_1^2$ or
$A_2$ then $\U_{\Phi^+}(R)$ is generated by the root groups of the
simple roots.

If $\Phi$ has type $B_2$ then write $\alpha_s$ and $\alpha_l$ for the
short and long simple roots, and $\alpha_{s'}$ (resp.\ $\alpha_{l'}$)
for the image of $\alpha_s$ (resp.\ $\alpha_l$) under reflection in
$\alpha_l$ (resp.\ $\alpha_s$).  Then $\U_{\Phi^+}(R)$ is generated by
$\U_s(R)$, $\U_l(R)$ and either one of $\U_{s'}(R)$ and $\U_{l'}(R)$.  If $R$ has
no quotient $\F_2$ then $\U_s(R)$ and $\U_l(R)$ suffice.

If $\Phi$ has type $G_2$ then, using notation as for $B_2$,
$\U_{\Phi^+}(R)$ is
generated by $\U_s(R)$, $\U_l(R)$ and $\U_{s'}(R)$.  If $R$ has no
quotient $\F_2$ or $\F_3$ then $\U_s(R)$ and $\U_l(R)$ suffice.
\end{lemma}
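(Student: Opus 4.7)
My plan is to invoke theorem~\ref{thm-injection-of-unipotent-subgroups} and regard $\U_{\Phi^+}(R)$ concretely, as the scheme $\prod_{\gamma\in\Phi^+}\U_\gamma(R)$ with multiplication governed by the Chevalley relations.  Write $H\sset\U_{\Phi^+}(R)$ for the subgroup generated by the root groups specified in the lemma, and proceed case by case.  The $A_1^2$ case is vacuous, and the $A_2$ case is immediate because the single Chevalley relation $[\x_s(t),\x_l(u)]$ surjects onto $\U_{\alpha_s+\alpha_l}(R)$ as $t,u$ range over~$R$.

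For $B_2$, the Chevalley expansion of $[\x_s(t),\x_l(u)]$ is the product of an element of $\U_{\alpha_s+\alpha_l}(R)$ with coefficient $\epsilon_1 tu$ and an element of $\U_{2\alpha_s+\alpha_l}(R)$ with coefficient $\epsilon_2 t^2u$ (signs $\epsilon_i=\pm1$), and these two groups commute because $3\alpha_s+2\alpha_l\notin\Phi$.  Substituting $t\mapsto -t$ and combining shows that $H$ contains elements whose components in each of $\U_{\alpha_s+\alpha_l}$ and $\U_{2\alpha_s+\alpha_l}$ traverse all of $2R$; when $R$ has no $\F_2$ quotient then $2R=R$ and we are done.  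Otherwise, adjoining $\U_{s'}(R)=\U_{\alpha_s+\alpha_l}(R)$ furnishes that factor directly, and the Chevalley commutator $[\U_s,\U_{\alpha_s+\alpha_l}]\sset\U_{2\alpha_s+\alpha_l}$ produces $\U_{2\alpha_s+\alpha_l}(R)$; adjoining $\U_{l'}(R)=\U_{2\alpha_s+\alpha_l}(R)$ instead lets one peel the $\U_{2\alpha_s+\alpha_l}$-component off $[\x_s(t),\x_l(u)]$, leaving the $\U_{\alpha_s+\alpha_l}$-part to sweep out that root group.

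For $G_2$ the same strategy applies to the more elaborate commutator, whose expansion has four factors in $\U_{\alpha_s+\alpha_l},\U_{2\alpha_s+\alpha_l},\U_{3\alpha_s+\alpha_l},\U_{3\alpha_s+2\alpha_l}$ with coefficients (up to signs) $tu,t^2u,t^3u,t^3u^2$.  Among these four root groups only $\U_{\alpha_s+\alpha_l}$ and $\U_{2\alpha_s+\alpha_l}$ fail to commute, since all other pairwise sums leave $\Phi$, and their commutator lies in $\U_{3\alpha_s+2\alpha_l}$.  Polynomial manipulation in $t,u$ combined with pairwise multiplication of commutators yields subgroups of each $\U_\gamma(R)$ containing $nR$ for an $n\in\{2,3\}$ dictated by the exponent patterns; under the no-$\F_2$-or-$\F_3$ hypothesis these exhaust the root groups.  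Without that hypothesis, adjoining $\U_{s'}(R)=\U_{\alpha_s+\alpha_l}(R)$ enables one to peel the $\U_{\alpha_s+\alpha_l}$-component off the big commutator, and iterated Chevalley commutators of $\U_{\alpha_s+\alpha_l}$ with $\U_s$ and $\U_l$ then successively sweep out $\U_{2\alpha_s+\alpha_l}$, $\U_{3\alpha_s+\alpha_l}$ and $\U_{3\alpha_s+2\alpha_l}$.  The main obstacle I anticipate is the $G_2$ bookkeeping: tracking which divisibility obstruction ($2$ versus $3$ versus none) arises in each of the four non-simple root groups, so that the sharp hypothesis in the lemma is matched, and verifying that adjoining $\U_{s'}$ really does suffice when the hypothesis is dropped.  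All the underlying facts are standard, but the extraction must be done relation by relation, guided by the explicit formulas in section~\ref{sec-examples}.
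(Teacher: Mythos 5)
The overall strategy---work inside $\U_{\Phi^+}(R)\iso\prod_{\gamma\in\Phi^+}\U_\gamma(R)$ (via theorem~\ref{thm-injection-of-unipotent-subgroups} and lemma~\ref{lem-existence-of-unipotent-groups}) and extract the non-simple root groups from the explicit Chevalley commutators---is essentially the paper's, and your $A_1^2$ and $A_2$ cases are fine. But the key step in the two-generator $B_2$ and $G_2$ cases contains a genuine error: you assert that ``when $R$ has no $\F_2$ quotient then $2R=R$,'' and analogously for $3$. This is false: $R=\F_4$ has no quotient isomorphic to $\F_2$ (its only field quotient is $\F_4$ itself), yet $2R=0$. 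Consequently the information you extract---that $\gend{\U_s(R),\U_l(R)}$ contains $2\U_{s'}(R)$ and $2\U_{l'}(R)$---is strictly weaker than needed, and the lemma's hypothesis cannot close the gap the way you propose. The true obstruction is not $R/2R$ but the quotient of $R/2R$ by the ideal generated by all $t-t^2$ (for $B_2$; for $G_2$ one ends up with $R/6R$ modulo the ideal generated by all $2(t^3-t)+3(t^2-t)$). You must therefore use the relations with $t\neq\pm1$ in an essential way: in $B_2$, comparing $[\x_s(t),\x_l(u)]$ with its $t=1$ instance shows the subgroup contains $\x_{l'}\bigl((t^2-t)u\bigr)$ modulo $2\U_{l'}(R)$, and the ideal generated by $2$ and all $t^2-t$ is the unit ideal exactly when $R$ has no quotient $\F_2$ (a nonzero quotient ring has a field quotient, in which $t^2=t$ would hold identically, forcing $\F_2$). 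The paper organizes this by passing to the abelianization of the nilpotent group $\U_{\Phi^+}(R)$---a subset generates a nilpotent group iff its image generates the abelianization---which linearizes everything into a module computation; some such device is needed to make your ``polynomial manipulation'' in the $G_2$ case precise.

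A second, smaller error: in the three-generator $B_2$ case you propose to obtain $\U_{l'}(R)$ from $[\U_s,\U_{s'}]$, but that Chevalley relation is $[\x_s(t),\x_{s'}(u)]=\x_{l'}(\mp2tu)$ (see \eqref{eq-Chevalley-relators-m=4-orthogonal-short}), which again yields only $2\U_{l'}(R)$. The correct route is the one you use in the other subcase: with $\U_{s'}(R)$ in hand, peel the $\U_{s'}$-component off $[\x_s(t),\x_l(u)]$, leaving $\x_{l'}(\pm t^2u)$, and set $t=1$.
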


\begin{proof}
We will suppress the dependence of group functors on $R$, always
meaning groups of points over $R$.  The $A_1^2$ case is trivial
because the simple roots are the only positive roots.

In the $A_2$ case we write $\alpha_i$ and $\alpha_j$ for the simple
roots.  The only other positive root is $\alpha_i+\alpha_j$.  As in
section~\ref{sec-Steinberg-group}, we choose $e_i\in E_i$ and $e_j\in
E_j$.  Then we can use the notation $X_i(t)$, $X_j(t)$ for the
elements of $\U_i$ and $\U_j$, where $t$ varies over $R$.  The
Chevalley relation \eqref{eq-Chevalley-relators-m=3-distant-roots} is
$[X_i(t),X_j(u)]=S_i X_j(t u) S_i^{-1}$.  
Therefore every element of
$S_i \U_j(R) S_i^{-1}$ lies in $\biggend{\U_i(R),\U_j(R)}$.
Since $S_i\U_j S_i^{-1}=\U_{\alpha_i+\alpha_j}$, the proof is
complete.

In the $B_2$ and $G_2$ cases we choose $e_s\in E_s$ and $e_l\in E_l$,
so we may speak of $X_s(t)\in\U_s$ and $X_l(u)\in\U_l$.  We write
$X_{s'}(t)$ for $S_l X_s(t) S_l^{-1}$ and $X_{l'}(t)$ for $S_s X_l(t)
S_s^{-1}$.  In the $G_2$ case we also define $X_{s''}(t)=S_s S_l
X_s(t) S_l^{-1} S_s^{-1}$ and $X_{l''}(t)=S_l S_s X_l(t) S_s^{-1}
S_l^{-1}$.

Rather than mimicking the direct computation of the $A_2$ case, we
use the well-known fact that a subset of a nilpotent group
generates that group if and only if its image in the abelianization
generates the abelianization.  We will apply this to the subgroup of
$\U_{\Phi^+}$ generated by $\U_s\cup\U_l$.  Namely, we write $Q$ for
the quotient of the abelianization of $\U_{\Phi^+}$ by the image of
$\gend{\U_s,\U_l}$.  Under the hypotheses about $R$ having no tiny
fields as quotients, we will prove $Q=0$.  In this case it follows that
$\gend{\U_s,\U_l}$ maps onto the abelianization and is therefore all
of $\U_{\Phi^+}$.  We must also prove, this time with no hypotheses on
$R$, that $\U_{\Phi^+}=\gend{\U_s,\U_l,\U_{s'}}$ and (in the $B_2$
case) that $\U_{\Phi^+}=\gend{\U_s,\U_l,\U_{l'}}$.  This uses the same
argument, with calculations so much simpler that we omit them.  

First consider the $B_2$ case. Among the Chevalley relators defining
$\U_{\Phi^+}$ are \eqref{eq-Chevalley-relators-m=4-orthogonal-short} and \eqref{eq-Chevalley-relators-m=4-distant-short-and-long}, namely
\begin{align*}
[X_s(t),X_{s'}(u)]&{}\cdot X_{l'}(2t u)\\
[X_s(t),X_l(u)]&{}\cdot X_{l'}(-t^2u) X_{s'}(t u) 
\end{align*}
for all $t,u\in R$.  The remaining Chevalley relations say that
various root groups commute with various other root groups.  
Therefore the abelianization of $\U_{\Phi^+}$ is the quotient of the
abelian group $\U_s\times\U_l\times\U_{s'}\times\U_{l'}\iso R^4$ by
the images of the displayed relators.  
We obtain $Q$ by killing the image of $\U_s\times\U_l$.  

So, changing to
additive notation, $Q$ is the quotient of $\U_{s'}\oplus\U_{l'}\iso
R^2$ by the subgroup generated by $0\oplus 2R$ and all $(t u,-t^2u)$,
where $t,u$ vary over $R$.  Taking $t=1$ in the latter shows that
$2R\oplus0$ also dies in $Q$.  So $Q$ is the quotient of $(R/2R)^2$ by
the subgroup generated by all $(t u,-t^2 u)$.  That is, $Q$ is 
(the abelian group underlying) the quotient of $(R/2R)^2$ by the
submodule(!) generated by all $(t,-t^2)$.  This submodule contains
$(1,-1)$, so it is equally well generated by it and all
$(t,-t^2)-t(1,-1)=(0,t-t^2)$.  We may discard the first summand $R/2R$
from the generators and $(1,-1)$ from the relators.  So $Q$ is the
(abelian group underlying) the quotient of $R/2R$ by the ideal $I$
generated by all $t-t^2$.  To prove $Q=0$ we will suppose $Q\neq0$ and
derive a contradiction.  As a nonzero ring with identity,
$R/I$ has some field as a quotient, in which  $t=t^2$ holds identically.  The only field with this property is $\F_2$, which is a
contradiction since we supposed that $R$ has no such quotient.

For the $G_2$ case
the Chevalley relators include
\begin{align*}
[X_l(t),X_{l'}(u)]&{}\cdot X_{l''}(-t u)\\
[X_s(t),X_{s''}(u)]&{}\cdot X_{l'}(-3t u)\\
[X_s(t),X_{s'}(u)]&{}\cdot X_{l''}(3t u^2) X_{l'}(3t^2 u) X_{s''}(2t
u)\\
[X_s(t),X_l(u)]&{}\cdot
X_{l''}(t^3u^2) X_{l'}(-t^3u) X_{s'}(t u) X_{s''}(-t^2 u) \\
[X_{s'}(t),X_{s''}(u)]&{}\cdot X_{l''}(-3t u)
\end{align*}
for all $t,u\in R$.  The first four relations are from \eqref{eq-Chevalley-relators-m=6-distant-long}--\eqref{eq-Chevalley-relators-m=6-distant-short-and-long}.  The fifth is the conjugate of
\eqref{eq-Chevalley-relators-m=6-nearby-short} by $S_l$, which commutes with $\U_{s''}$ and sends 
$X_s(t)$ to $X_{s'}(t)$ and $X_{l'}(-3t u)$ to $X_{l''}(-3t u)$, by
their definitions.
All the remaining Chevalley relations say
that various root groups commute with each other.

Proceeding as in the $B_2$ case, $Q$ is the quotient of the abelian
group $\U_{l'}\oplus\U_{s''}\oplus\U_{s'}\oplus\U_{l''}$ by the
subgroup generated by the relators $(0,0,0,\discretionary{}{}{}-t u)$, $(-3t u,0,0,0)$,
$(3t^2u,\discretionary{}{}{}2t
u,\discretionary{}{}{}0,\discretionary{}{}{}3t u^2)$, $(-t^3 u,-t^2 u,
t u, t^3u^2)$ and $(0,\discretionary{}{}{}0,\discretionary{}{}{}0,\discretionary{}{}{}-3t u)$ where $t,u$ vary over $R$.  Because
of the first relator, we may discard the $\U_{l''}$ summand.  This
leads to the following description of $Q$: the quotient of $R^3$ by
the $R$-submodule spanned by the relators $(-3t,0,0)$, $(3t^2 ,2t
,0)$ and $(-t^3,-t^2,t)$, where $t$ varies over~$R$.  Using
$(-1,-1,1)$ in the same way we used $(1,-1)$ in the $B_2$ case shows
that $Q$ is the quotient of $R^2$ by the submodule generated by all
$(-3t,0)$, $(3t^2,2t)$ and $(t^3-t,t^2-t)$.  This is the same as the
quotient of $R/3R\oplus R/2R$ by the submodule generated by all
$(t^3-t,t^2-t)$.  Now, $R/3R\oplus R/2R$ is isomorphic to $R/6R$ by
$(a,b)\leftrightarrow 2a+3b$.  So $Q$ is the quotient of $R/6R$ by the ideal $I$ generated by
$2(t^3-t)+3(t^2-t)$ for all $t$.  As in the $B_2$ case, if $Q\neq0$
then it has a further quotient that is a field $F$, obviously of
characteristic $2$ or~$3$.  In $F$,  either $t^2=t$ holds identically or
$t^3=t$ holds identically, according to these two possibilities.  So
$F=\F_2$ or $\F_3$, a contradiction.
\end{proof}

\begin{lemma}[Generators for unipotent groups in rank~$3$]
\label{lem-unipotent-subgroup-generation-rank-3}
Let $R$ be a commutative ring,  $\Phi$ be a spherical root system of rank~$3$,
$\{\beta_{i\in I}\}$ be simple roots for it, and $\Phi^+$ be the
corresponding set of positive roots.  
Write $s_i$ for the reflection in $\beta_i$, and
for each ordered pair $(i,j)$ of
distinct elements of $I$ write $\gamma_{i,j}$ for $s_i(\beta_j)$. 
Then $\U_{\Phi^+}(R)$ is generated by the $\U_{\beta_i}(R)$ and the
$\U_{\gamma_{i,j}}(R)$. 
\end{lemma}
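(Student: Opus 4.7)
The proof will reduce to lemma~\ref{lem-unipotent-subgroup-generation-rank-2}. We may assume $\Phi$ is irreducible, since the reducible rank-$3$ sphericals ($A_1^3$ and $A_1A_2$) follow immediately by applying that lemma to each component. So $\Phi$ has type $A_3$, $B_3$ or $C_3$. Write $H$ for the subgroup of $\U_{\Phi^+}(R)$ generated by all $\U_{\beta_i}(R)$ and $\U_{\gamma_{i,j}}(R)$; we want $H=\U_{\Phi^+}(R)$.

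First I would apply lemma~\ref{lem-unipotent-subgroup-generation-rank-2} to each rank-$2$ parabolic subsystem $\Phi_{ij}:=\Phi\cap(\Z\beta_i+\Z\beta_j)$, regarded as a rank-$2$ spherical root system with simple roots $\beta_i,\beta_j$. Since $H$ contains $\U_{\beta_i}$, $\U_{\beta_j}$, $\U_{\gamma_{i,j}}$ and $\U_{\gamma_{j,i}}$, and since we include both $\gamma_{i,j}$ and $\gamma_{j,i}$, that lemma applies for any~$R$: the extra root groups make unnecessary the $\F_2$- and $\F_3$-hypotheses appearing in its $B_2$ and $G_2$ cases. Hence $\U_\alpha(R)\sset H$ for every $\alpha\in\Phi^+$ supported on at most two of the simple roots.

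The remaining positive roots are those supported on all three simple roots, a short finite list in each type. For each such~$\alpha$, the plan is to write $\alpha=\delta+\epsilon$ with $\delta,\epsilon\in\Phi^+$ each supported on at most two simple roots (so $\U_\delta,\U_\epsilon\sset H$ by the previous step) and to extract $\U_\alpha$ from $[X_\delta(t),X_\epsilon(u)]$. When $\delta$ and $\epsilon$ have equal length, they span an $A_2$-subsystem whose only non-simple positive root is~$\alpha$, so the Chevalley commutator collapses to a single term $X_\alpha(\pm tu)$ and $\U_\alpha(R)\sset H$ follows at once. A direct inspection shows that this $A_2$-argument handles every three-supported positive root in $A_3$, and all but one such root in each of $B_3$ and $C_3$.

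The two exceptional roots are $e_1=\beta_1+\beta_2+\beta_3$ in $B_3$ and $2e_1=2\beta_1+2\beta_2+\beta_3$ in $C_3$; in both cases no equal-length decomposition is available. For these I would instead take a $B_2/C_2$-type decomposition --- namely $e_1=\beta_1+(\beta_2+\beta_3)$ and $2e_1=(\beta_1+\beta_2)+\beta_3$ respectively --- so that the Chevalley commutator factors as $X_\alpha(c_1\cdot\text{monomial})\cdot X_{\alpha'}(c_2\cdot\text{monomial})$ with $c_1=\pm1$ and $\alpha'$ a partner three-supported root. In each case $\alpha'$ has already been placed in $H$ by the $A_2$-argument above ($\alpha'=e_1+e_2$ when $\alpha=e_1$, and $\alpha'=e_1+e_3$ when $\alpha=2e_1$), and $\U_{\alpha'}$ commutes with $\U_\alpha$ since $\alpha+\alpha'\notin\Phi$; multiplying by a suitable element of $\U_{\alpha'}$ isolates the $\U_\alpha$ factor. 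The main obstacle is precisely this ordering: the exceptional roots must be processed after their partners, which requires a careful walk through the finite list of three-supported roots in $B_3$ and $C_3$.
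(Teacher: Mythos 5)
Your argument is correct and is essentially the paper's: both reduce to lemma~\ref{lem-unipotent-subgroup-generation-rank-2} and then capture the remaining positive roots of $A_3$, $B_3$ and $C_3$ by repeated use of the rank-$2$ Chevalley relations, the paper merely organizing the $B_3$/$C_3$ step as a chain of $A_2$- and $B_2$-subsystem applications rather than by size of support. Three small slips, none fatal: (a) $(\beta_1+\beta_2)+\beta_3=e_1+e_3$, not $2e_1=2(\beta_1+\beta_2)+\beta_3$ --- what you actually use is the $C_2$-subsystem generated by $e_1-e_3$ and $2e_3$, whose commutator relation has its long-root term $X_{2e_1}(\pm t^2u)$, and that is fine; (b) two equal-length roots summing to a root need not span an $A_2$ (orthogonal short roots of $B_3$ or $C_3$ sum to a long root, and the corresponding commutator produces only $2tu$ in the long root group), so your ``direct inspection'' must also check the angle --- the decompositions it would select for $e_1+e_2$ and $e_1+e_3$ do span $A_2$'s; (c) the reducible rank-$3$ sphericals also include $A_1B_2$ and $A_1G_2$, which are covered because the $\gamma_{i,j}$ supply exactly the extra root groups that let lemma~\ref{lem-unipotent-subgroup-generation-rank-2} apply without hypotheses on~$R$.
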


\begin{proof}
As in the previous proof, we suppress the dependence of group functors
on $R$.  If $\Phi$ is reducible then we apply  the
previous lemma.  So it suffices to treat the cases $\Phi=A_3$, $B_3$
and~$C_3$.  We write $U$ for the subgroup of $\U_{\Phi^+}$ generated
by the $\U_{\beta_i}$ and $\U_{\gamma_{i,j}}$.  We must show that it
is all of $\U_{\Phi^+}$.

For type $A_3$ we describe $\Phi$ by using four coordinates summing to
zero, and take the simple roots $\beta_i$ to be $({+}{-}00)$, $(0{+}{-}0)$ and
$(00{+}{-})$, where $\pm$ are short for $\pm1$.  The  $\gamma_{i,j}$
are the roots $({+}0{-}0)$ and $(0{+}0{-})$.  The only
remaining positive root is $({+}00{-})$.  This is the sum of
$({+}0{-}0)$ and $(00{+}{-})$.  So the $A_2$ case of lemma~\ref{lem-unipotent-subgroup-generation-rank-2} shows
that its root group lies in the $U$.

For type $B_3$ we take the simple roots $\beta_i$ to be $({+}{-}0)$, $(0{+}{-})$
and $(00{+})$.  The  $\gamma_{i,j}$ are
$({+}0{-})$ and $(0{+}0)$.  
The remaining positive roots are
$({+}00)$, $({+}{+}0)$, $({+}0{+})$ and $(0{+}{+})$.  First, 
$(00{+})$, $(0{+}0)$ and $(0{+}{-})$
are three of the four positive roots of a $B_2$ root system
in $\Phi$, including a pair of simple roots for it.  Since  $U$
contains $\U_{00{+}}$, $\U_{0{+}0}$ and $\U_{0{+}{-}}$, lemma~\ref{lem-unipotent-subgroup-generation-rank-2}
shows that $U$ also
contains 
root group corresponding to the fourth positive root, namely $(0{+}{+})$.  
Second, applying the $A_2$ case of that
lemma to $\U_{0{+}{+}},\U_{{+}{-}0}\sset U$ shows that $U$ also
contains $\U_{{+}0{+}}$.  Third, repeating this using
$\U_{{+}0{+}},\U_{0{+}{-}}\sset U$ shows that $U$ contains
$\U_{{+}{+}0}$.  Finally, using the $B_2$ case again, the fact that $U$
contains $\U_{00{+}}$, $\U_{{+}0{-}}$ and $\U_{{+}0{+}}$ shows that
$U$ contains $\U_{{+}00}$.  We have shown that $U$ contains all the
positive root groups, so $U=\U_{\Phi^+}$ as desired.

The $C_3$ case is the same: replacing the short roots $(1,0,0)$, $(0,1,0)$
and $(0,0,1)$ by $(2,0,0)$, $(0,2,0)$ and $(0,0,2)$ does not affect
the proof.
\end{proof}

The next proof uses the geometric language of the Tits cone (or
Coxeter complex), its subdivision into chambers, and the combinatorial
distance between chambers.  Here is minimal background; see
\cite[ch.\ 5]{Remy} for more.  The root system $\Phi$ lies in
$\Z^I\sset\R^I$.  The fundamental (open) chamber is the set of
elements in $\Hom(\R^I,\R)$ having positive pairing with all simple
roots.  We defined an action of the Weyl group $W$ on $\Z^I$ in
section~\ref{sec-Kac-Moody-algebra}, so $W$ also acts on this dual
space.  A chamber means a $W$-translate of the fundamental chamber,
and the Tits cone means the union of the closures of the chambers.
It is tiled by them. 
$W$'s action is properly discontinuous on the interior of this cone.
A gallery of length~$n$ means a sequence of chambers
$C_0,\dots,C_n$, each $C_i$ sharing a facet with $C_{i-1}$ for
$i=1,\dots,n$.  The gallery is called minimal if there is no shorter
gallery from $C_0$ to $C_n$.

To each root $\alpha\in\Phi$ corresponds a halfspace in the Tits cone,
namely those points in it having positive pairing with $\alpha$.  We
write the boundary of this halfspace as $\alpha^\perp$.  
We will identify each root with its halfspace, so we may speak of
roots containing chambers.  In this language, a set of roots is
prenilpotent if there is some chamber lying in all of them, and some
chamber lying in none of them.  

\begin{proof}[Proof of theorem~\ref{thm-examples-of-PSt-to-St-being-an-isomorphism}\ref{item-isomorphism-for-3-spherical}--\ref{item-isomorphism-for-2-spherical}]
We suppress the dependence of group functors on $R$, always meaning
groups of points over~$R$.
Recall that $\St$ is obtained from $\PSt$ by adjoining the Chevalley
relations for the prenilpotent pairs of roots that are not classically
prenilpotent.  So we must show that these relations already hold in
$\PSt$.  For $\Psi$ any nilpotent set of roots we will write $G_\Psi$
for the subgroup of $\PSt$ generated by the $\U_{\alpha\in\Psi}$.
Theorem~\ref{thm-injection-of-unipotent-subgroups} shows that the subgroup of $\St$ generated by these
$\U_\alpha$ is a copy of $\U_\Psi$, so we will just write $\U_\Psi$ for it.

We will prove by induction the following assertion $({\bf N}_{n\geq1})$: Suppose
$C_0,\dots,C_n$ is a minimal gallery, for each $k=1,\dots,n$ let $\alpha_k$ be
the root which contains $C_k$ but not $C_{k-1}$, and define
$\Psi=\{\alpha_1,\dots,\alpha_n\}$
and $\Psi_0=\Psi-\{\alpha_n\}$.  Then $\U_{\alpha_n}$ normalizes
$G_{\Psi_0}$ in $G_\Psi$.  (The $\bf N$ stands for ``normalizes''.  Also,
it is
easy to see that $\Psi$ is the set of all roots containing $C_n$ but
not $C_0$, so it is nilpotent, and similarly for $\Psi_0$.  So
$G_\Psi$ and $G_{\Psi_0}$ are defined.)  

Assuming $({\bf N}_n)$ for all $n\geq1$, it follows that for $\Psi$ of this form,
the multiplication map $\U_{\alpha_1}\times\dots\times\U_{\alpha_n}\to
G_\Psi$ in $\PSt$ is surjective.  We know from lemma~\ref{lem-existence-of-unipotent-groups} and
theorem~\ref{thm-injection-of-unipotent-subgroups} that the corresponding multiplication map in $\St$,
namely $\U_{\alpha_1}\times\dots\times\U_{\alpha_n}\to \U_\Psi$, is
bijective.  Since $G_\Psi\to\U_{\Psi}$ is surjective, it must also be
bijective, hence an isomorphism.  Now, if $\alpha$ and $\beta$ are a
prenilpotent pair of roots then we may choose a chamber in neither of
them and a chamber in both of them.  We join these chambers by a
minimal gallery $(C_0,\dots,C_n)$.  As mentioned above, the
corresponding nilpotent set $\Psi$ of roots consists of all roots
which contain $C_n$ but not $C_0$.  In particular, $\Psi$ contains
$\alpha$ and $\beta$.  We have shown that $G_\Psi\to\U_\Psi$ is an
isomorphism. Since the Chevalley relation of $\alpha$ and $\beta$
holds in $\U_\Psi$ (by the definition of $\U_{\Psi}$), it holds in
$G_\Psi$ too.  This shows that the Chevalley relations of all
prenilpotent pairs hold in $\PSt$, so $\PSt\to\St$ is an isomorphism,
finishing the proof.

It remains to prove $({\bf N}_n)$.  First we treat a special case that does
not require induction.  By hypothesis, $A$ is $S$-spherical, where $S$
is $2$ resp.\ $3$ for part \ref{item-isomorphism-for-2-spherical} resp.\ \ref{item-isomorphism-for-3-spherical} of the theorem.
To avoid degeneracies we suppose $\rk A>S$; the case $\rk A\leq S$
is trivial because then $A$ is spherical and the isomorphism
$\PSt\to\St$ is tautological.
Suppose that all the chambers in some minimal gallery $(C_0,\dots,C_n)$ have
a codimension${}\leq S$ face $F$ in common.  
By $S$-sphericity, the mirrors $\alpha^\perp$ of only finitely many
$\alpha\in\Phi$ contain $F$.  Therefore any pair from
$\alpha_1,\dots,\alpha_n$ is classically prenilpotent.  Their
Chevalley relations hold in $\PSt$ by definition.  The fact that
$\U_{\alpha_n}$  normalizes $G_{\Psi_0}$ in $G_\Psi$ follows from these
relations.  

Now, for any minimal gallery of length $n\leq S$, its chambers have a
face of codimension $n\leq S$ in common.  (It is a subset of
$\alpha_1^\perp\cap\dots\cap\alpha_n^\perp$.)  So the previous
paragraph applies.  This proves $({\bf N}_n)$ for $n\leq S$, which we
take as the base case of our induction.  For the inductive step we
take $n>S$, assume $({\bf N}_1),\dots,({\bf N}_{n-1})$, and suppose
$(C_0,\dots,C_n)$ is a minimal gallery. 
For $1\leq k\leq l\leq n$ we write $G_{k,l}$ for
$\biggend{\U_{\alpha_k},\dots,\U_{\alpha_l}}\sset\PSt$.  
We must show that
$\U_{\alpha_n}$ normalizes $G_{1,n-1}$.

Consider the subgallery $(C_{n-S},\dots,C_n)$ of length~$S$.   These chambers have a codimension-$S$ face $F$ in common. Write
$W_F$ for its $W$-stabilizer, which is finite by $S$-sphericity.  Among
all chambers having $F$ as a face, let $D$ be the one closest to
$C_0$.  By \cite[Prop.\ 5.34]{Abramenko-Brown} it is unique and there
is a minimal gallery from $C_0$ to $C_{n-1}$ having $D$ as one of its
terms, such that every chamber from $D$ to $C_{n-1}$ contains $F$.  By
replacing the subgallery $(C_0,\dots,C_{n-1})$ of our original minimal
gallery with this one, we may suppose without loss of generality that
$D=C_m$ for some $0\leq m\leq n-S$ and that $C_m,\dots,C_n$ all
contain~$F$.  (This replacement may change the ordering on
$\Psi_0=\{\alpha_1,\dots,\alpha_{n-1}\}$, which is harmless.)  The
special case shows that $\U_{\alpha_n}$ normalizes $G_{m+1,n-1}$.  So
it suffices to show that $\U_{\alpha_n}$ also normalizes $G_{1,m}$.

At this point we specialize to proving part \ref{item-isomorphism-for-2-spherical} of the theorem.  In
this case
$F$ has codimension~$2$.  There are two chambers adjacent to $C_m$
that contain $F$.  One is $C_{m+1}$ and we call the other one
$C_{m+1}'$.  We write $\alpha_{m+1}'$ for the root that contains
$C_{m+1}'$ but not $C_m$.  Recall that $C_m$ was the unique chamber
closest to $C_0$, of all those containing $F$.  It follows that
$(C_0,\dots,C_m,C_{m+1}')$ is a minimal gallery.  By a double
application of $({\bf N}_{m+1})$, which we may use because $m\leq
n-S=n-2$, both $\U_{\alpha_{m+1}}$ and $\U_{\alpha_{m+1}'}$ normalize
$G_{1,m}$.  Since
$\alpha_{m+1}$ and $\alpha_{m+1}'$ are simple roots for $W_F$, and
$\alpha_n$ is positive with respect to them, 
lemma~\ref{lem-unipotent-subgroup-generation-rank-2} shows
that $\U_{\alpha_n}$ lies in
$\gend{\U_{\alpha_{m+1}},\U_{\alpha_{m+1}'}}$.  This uses the
hypotheses on $R$ to deal with the possibility that $W_F$ has type
$B_2$ or $G_2$.  Therefore $\U_{\alpha_n}$ normalizes $G_{1,m}$,
completing the proof of part \ref{item-isomorphism-for-2-spherical}.

Now we prove part \ref{item-isomorphism-for-3-spherical}.  $F$ has codimension~$3$.  So there are
three chambers adjacent to $C_m$ that contain $F$.  Write $C_{m+1}'$
for any one of them (possibly $C_{m+1}$) and define
$\beta$ as the root containing $C_{m+1}'$ but not $C_m$.  The three
possibilities for $\beta$ form a system $\Sigma$ of simple roots for $W_F$.
With respect to $\Sigma$, the positive roots of $W_F$ are exactly the ones
that do not contain $C_m$.  For example, $\alpha_n$.

There are two chambers adjacent to $C_{m+1}'$ that contain $F$,
besides $C_m$.  Write $C_{m+2}'$ for either of them
and $\gamma$
for the root containing $C_{m+2}'$ but not $C_{m+1}'$.    Because
$C_m$ is the unique chamber containing $F$ that is closest to $C_0$,
$(C_0,\dots,C_m,C_{m+1}',C_{m+2}')$ is a minimal gallery.  In
particular, $\gamma$ is a positive root with respect to $\Sigma$.

We claim that $\U_\beta$ and $\U_\gamma$ normalize $G_{1,m}$.  For
$\beta$ this is just induction using $({\bf N}_{m+1})$.  For $\gamma$,
we appeal to $({\bf N}_{m+2})$, but all this tells us is that
$\U_\gamma$ normalizes $\gend{\U_\beta,G_{1,m}}$.  In particular, it
conjugates $G_{1,m}$ into this larger group.  To show that $\U_\gamma$
normalizes $G_{1,m}$ it suffices to show for every
$k=1,\dots,m$ that the Chevalley relation
for $\gamma$ and $\alpha_k$ has no $\U_\beta$ term.
That is, it suffices to show that
$\beta\notin\theta(\alpha_k,\gamma)$.  Suppose to the contrary.  Then
$\beta$ is an $\N$-linear combination of $\alpha_k$ and $\gamma$.  So
$\alpha_k$ is a $\Q$-linear combination of $\beta$ and $\gamma$, and
in particular its mirror contains $F$.  Of the Weyl chambers for
$W_F$, the one containing $C_0$ is the same as the one containing
$C_m$, since $C_m$ is closest possible to $C_0$.  Since $\alpha_k$
does not contain $C_0$, it does not contain $C_m$ either.  So, as a
root of $W_F$, it is positive with respect to $\Sigma$.  Now we have
the contradiction that the simple root $\beta$ of $W_F$ is an
$\N$-linear combination of the positive roots $\alpha_k$ and $\gamma$.  This proves $\beta\notin\theta(\alpha_k,\gamma)$, so
$\U_\gamma$ normalizes $G_{1,m}$.

We have proven that $\U_\beta$ and $\U_\gamma$ normalize $G_{1,m}$.
Letting $\beta$ and $\gamma$ vary over all possibilities gives all the
roots called $\beta_i$ and $\gamma_{i,j}$ in lemma~\ref{lem-unipotent-subgroup-generation-rank-3}.  By that lemma, the group
generated by these root
groups contains the root groups of all positive roots of $W_F$.  In
particular, $\U_{\alpha_n}$ normalizes $G_{1,m}$, as desired.  This
completes the proof of \ref{item-isomorphism-for-3-spherical}.
\end{proof}

\section{Finite presentations}
\label{sec-finite-presentations}

\noindent
In this section we prove theorems \ref{thm-finite-presentation-of-pre-Steinberg-groups} and~\ref{thm-finite-presentation-of-Kac-Moody-groups}:
pre-Steinberg groups, Steinberg groups and Kac--Moody groups are
finitely presented under various hypotheses.  Our strategy is to
first prove parts \ref{item-P-S-t-is-f-p-rank-2}--\ref{item-P-S-t-is-f-p-rank-3} of theorem~\ref{thm-finite-presentation-of-pre-Steinberg-groups}, and then prove
part \ref{item-finitely-generated-abelian-group-makes-St-finitely-presented} together with theorem~\ref{thm-finite-presentation-of-Kac-Moody-groups}.

For use in the proof of
theorem~\ref{thm-finite-presentation-of-pre-Steinberg-groups}\ref{item-P-S-t-is-f-p-rank-2}--\ref{item-P-S-t-is-f-p-rank-3},
we recall the following result of Splitthoff, which
grew from earlier work of Rehmann-Soul\'e \cite{Rehmann-Soule}.  Then
we prove theorem~\ref{thm-finite-generation-of-Steinberg-groups}, addressing finite generation rather than
finite presentation, using his methods.  Then we will prove
theorem~\ref{thm-finite-presentation-of-pre-Steinberg-groups}\ref{item-P-S-t-is-f-p-rank-2}--\ref{item-P-S-t-is-f-p-rank-3}. 

\begin{theorem}[{\cite[Theorem I]{Splitthoff}}]
\label{thm-Splitthoff}
Suppose $R$ is a commutative ring and $A$ is one of the ABCDEFG
Dynkin diagrams.  If either
\begin{enumerate}
\item
\label{item-fg-if-finitely-generated}
$\rk A\geq3$ and $R$ is finitely generated as a ring, or
\item
\label{item-fg-if-module-finite-over-finitely-many-units}
$\rk A\geq2$ and $R$ is finitely generated as a module over a subring
generated by finitely many units,
\end{enumerate}
then $\St_A(R)$ is finitely presented.
\qed
\end{theorem}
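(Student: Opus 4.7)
The plan is to invoke Splitthoff's theorem directly, since the statement is a quotation of \cite[Theorem~I]{Splitthoff}. If I were to reprove it from scratch, I would follow the Rehmann--Soul\'e/Splitthoff strategy: begin with the standard presentation of $\St_A(R)$ from section~\ref{sec-Steinberg-group} with generators the $X_\alpha(t)$ and $\stilde_i$, together with the additive, Chevalley, and Morita--Rehmann relations. The aim is to cut this down to a finite presentation by exploiting two sources of finiteness: a finite generating set of $R$ (as a ring under~\ref{item-fg-if-finitely-generated}, as a module over a subring of finitely many units under~\ref{item-fg-if-module-finite-over-finitely-many-units}) and the ``multiplication'' built into the Chevalley commutator formulas.

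First I would use Chevalley commutator relations of the schematic form $[X_i(t),X_j(u)] \equiv X_{i+j}(c\cdot tu)\cdots$ to transfer ring multiplication into the group. Once a finite generating set $\{r_1,\dots,r_N\}$ of $R$ is fixed, it suffices to impose additivity $X_i(r_k)X_i(r_l) = X_i(r_k+r_l)$ and the Chevalley commutators $[X_i(r_k),X_j(r_l)]$ for this finite set of parameters; the remaining additive and commutator relations then follow from these together with the relations already imposed. In case~\ref{item-fg-if-finitely-generated}, where $\rk A \geq 3$, every pair of simple roots lies in an irreducible rank-$3$ spherical subdiagram of type $A_3$, $B_3$ or $C_3$. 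The Chevalley relations in such a subdiagram produce nontrivial commutators between every pair of simple root groups (as in lemma~\ref{lem-unipotent-subgroup-generation-rank-3}), which is what makes the multiplicative bootstrap go through simultaneously across all simple root subscripts using only finitely many relators.

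In case~\ref{item-fg-if-module-finite-over-finitely-many-units}, where only $\rk A \geq 2$ is assumed, the commutator bootstrap alone is too weak because some simple root pairs are unjoined or joined by only a single bond. Here one supplements it with the torus relations $\htilde_i(r) X_j(t) \htilde_i(r)^{-1} = X_j(r^{A_{ij}}t)$ from theorem~\ref{thm-alternate-relations-for-Steinberg-group}: finitely many units $r$ together with a finite module-generating set for $R$ suffice to produce all $X_j(t)$ via scalar multiplication and addition. The main obstacle, and the technical heart of Splitthoff's paper, is the careful combinatorial accounting to show that the infinitely many instances of the Steinberg, additive, Chevalley and torus relations are all consequences of finitely many designated ones. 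This bookkeeping is exactly what Splitthoff carried out in \cite{Splitthoff}, and since a reproof here would duplicate his work with no simplification, the sensible plan is to quote his result as stated.
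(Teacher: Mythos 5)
Your proposal matches the paper exactly: theorem~\ref{thm-Splitthoff} is stated as a quotation of \cite[Theorem I]{Splitthoff} and the paper gives no proof beyond the citation, which is precisely what you conclude is the sensible course. Your accompanying sketch of the Rehmann--Soul\'e/Splitthoff strategy is a reasonable summary but is not needed, since the result is simply being imported.
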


\begin{theorem}
\label{thm-finite-generation-of-Steinberg-groups}
Suppose $R$ is a commutative ring and $A$ is one of the ABCDEFG
Dynkin diagrams.  If either
\begin{enumerate}
\item
\label{item-Splitthoff-finitely-generated}
$\rk A\geq2$ and $R$ is finitely generated as a ring, or
\item
\label{item-Splitthoff-module-finite-over-finitely-many-units}
$\rk A\geq1$ and $R$ is finitely generated as a module over a subring
generated by finitely many units,
\end{enumerate}
then $\St_A(R)$ is finitely generated.
\end{theorem}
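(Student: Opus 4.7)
The plan is to follow Splitthoff's approach in \cite{Splitthoff}, adapted to finite generation (rather than presentation), which will let us relax his rank hypotheses by one. Since $A$ is spherical, $\St_A(R) = \PSt_A(R)$ by theorem~\ref{thm-examples-of-PSt-to-St-being-an-isomorphism}\ref{item-isomorphism-for-finite-dimensional-type}, so theorem~\ref{thm-definition-of-PSt-as-presentation} provides generators $\{S_i\}_{i \in I}$ (a finite set) together with $\{X_i(t) : i \in I,\, t \in R\}$. The goal is to express every $X_i(t)$ using the $S_i$'s and finitely many chosen $X_i(t_\ell)$.

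For case \ref{item-Splitthoff-module-finite-over-finitely-many-units}, fix units $u_1,\dots,u_k$ generating the subring $R_0 \sset R$ over which $R$ is module-finite, with module generators $a_1,\dots,a_n$. In the $A_1$ case the relation $\htilde(r) X(t) \htilde(r)^{-1} = X(r^2 t)$ from \eqref{eq-action-of-h-on-X-A1-example} shows that conjugation by products of $\htilde(u_l)^{\pm 1}$ multiplies the $X$-argument by an arbitrary square of a unit. Every monomial in the $u_l$'s factors as a square of a unit times an element of the finite set $V = \{u_1^{\epsilon_1} \cdots u_k^{\epsilon_k} : \epsilon_l \in \{0,1\}\}$, so $R$ is additively spanned by squares of units applied to the finite set $\{v a_i : v \in V,\, i \le n\}$. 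Hence $\{S\} \cup \{\htilde(u_l)\} \cup \{X(v a_i) : v \in V,\, i \le n\}$, with each $\htilde(u_l)$ expanded into words in $S$ and $X$ via \eqref{eq-definition-of-h-tilde}, generates $\St_{A_1}(R)$. For general $A$ in this case, include analogous generators for each simple root group $\U_i(R)$; since $|I|$ is finite, this is still a finite set, and together with the $S_j$'s it produces every $X_i(t)$.

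For case \ref{item-Splitthoff-finitely-generated}, the connected rank-${}\ge 2$ diagram $A$ has a joined pair $(i,j)$ of simple roots. When $m_{ij}=3$, relation \eqref{eq-Chevalley-relators-m=3-distant-roots} yields the key identity $X_j(tu) = S_i^{-1}[X_i(t), X_j(u)] S_i$; starting from $X_i(t_\ell), X_j(t_\ell), X_i(1), X_j(1)$ for a finite ring-generating set $t_1,\dots,t_n$ of $R$, induction on the length of a monomial in the $t_\ell$, combined with the additive relation \eqref{eq-additive-relators-in-root-groups}, produces $X_i(t)$ and $X_j(t)$ for every $t \in R$. Conjugation by words in the $S$'s then spreads this to every simple root group in the same component of the odd Dynkin diagram $\odddiagram$. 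To bridge components (for $B_n, C_n, F_4$, or the $B_2, G_2$ cases themselves) one uses the multi-bond Chevalley relations of section~\ref{sec-examples}, extracting single-root-group components from their product-form right-hand sides via the scheme isomorphism $\U_{\Phi^+} \iso \prod_\alpha \U_\alpha$ of lemma~\ref{lem-existence-of-unipotent-groups} and inductive bootstrapping of the higher root groups from relations like \eqref{eq-orthogonal-short-roots-for-B2-in-example-section}.

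The main obstacle is the rank-$2$ multi-bond case of \ref{item-Splitthoff-finitely-generated}: the $B_2$ and $G_2$ Chevalley commutators involve integer coefficients $\pm 2, \pm 3$ and higher powers of $t$, so the naive extraction of a single root-group component is circular. Splitthoff resolves this by carefully ordering which root-group elements to produce first, and his argument transports to our setting because the presentations of $\PSt_{B_2}$ and $\PSt_{G_2}$ given in section~\ref{sec-examples} contain (up to sign and convention) precisely the relations he uses.
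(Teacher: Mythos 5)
Your reduction to the generators $S_i$, $X_i(t)$, your $A_1$ argument via conjugation by $\htilde(u_l)$ (multiplying arguments by squares of units and absorbing the parity into a finite set $V$), and your $A_2$/simply-laced argument via $X_j(tu)=S_i^{-1}[X_i(t),X_j(u)]S_i$ all match the paper's proof in substance. The genuine gap is exactly where you flag "the main obstacle" and then defer to Splitthoff: the $B_2$ and $G_2$ cases of \ref{item-Splitthoff-finitely-generated}. Splitthoff's Theorem I, as quoted in theorem~\ref{thm-Splitthoff}, covers rank~$2$ only under the units hypothesis; over a ring that is merely finitely generated, his result starts at rank~$3$. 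So the rank-$2$ multi-bond case over a finitely generated ring is precisely the new content of the theorem and cannot be outsourced to him--it must be argued. Your gesture toward relation \eqref{eq-orthogonal-short-roots-for-B2-in-example-section} is also a wrong turn: its right-hand side is $SX'(-2tu)S^{-1}$, so it only produces elements of $2R$ in the long root group and cannot generate that group.

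What is actually needed (and what the paper does) is the following. First reduce to $R=\Z[z_1,\dots,z_n]$ using surjectivity of $\St_A(R)\to\St_A(R/I)$, so that polynomial degree is available; filter each root group as $\U_{\alpha,p}$ (arguments of degree${}\leq p$) and set $\U_{S,p}$, $\U_{L,p}$ for the short and long families. For $B_2$ the single Chevalley relation $[X_\sigma(t),X_\lambda(u)]=X_{\lambda+\sigma}(-tu)\,X_{\lambda+2\sigma}(t^2u)$ is used twice, with an interleaved degree induction: taking $\deg t=1$, $\deg u=p$ shows $\langle\U_{S,p},\U_{L,p+2}\rangle\supseteq\U_{S,p+1}$ (the long error term $t^2u$ has degree $p+2$, already available); taking $\deg t=2$, $\deg u=p-1$ shows $\langle\U_{S,p+1},\U_{L,p+2}\rangle\supseteq\U_{L,p+3}$ for $p\geq n$, where $p+3\geq n+3$ guarantees that every monomial of that degree is divisible by the square of a degree-$2$ monomial. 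The conclusion is $\St_{B_2}(R)=\langle\U_{S,n},\U_{L,n+2}\rangle$; the offsets $n$ and $n+2$ are the whole point and do not appear in your sketch. The $G_2$ case is similar, first generating all of $\U_{L,\infty}$ from $\U_{L,1}$ via the $A_2$ subsystem of long roots and then bootstrapping $\U_{S,p+1}$ from $\U_{S,p}$ and $\U_{L,\infty}$ using the $t^2u$ term of \eqref{eq-Chevalley-relators-m=6-distant-short-and-long}. Without this bookkeeping the circularity you correctly identify is not resolved. (Your extension of case \ref{item-Splitthoff-module-finite-over-finitely-many-units} to all ranks by treating each simple root group separately is fine, though unnecessary since Splitthoff covers rank${}\geq2$ there.)
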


\begin{proof}
In light of Splitthoff's theorem, it suffices to treat the cases
$A=A_2,B_2,G_2$ in \ref{item-fg-if-finitely-generated} and the case
$A=A_1$ in \ref{item-fg-if-module-finite-over-finitely-many-units}.
For \ref{item-fg-if-finitely-generated} it suffices to treat the case
$R=\Z[z_1,\dots,z_n]$, since $\St_A(R)\to\St_A(R/I)$ is surjective for
any ideal~$I$.  In the rest of the proof we abbreviate $\St_A(R)$ to
$\St$.  Keeping our standard notation, $\Phi$ is the root system, and
$\St$ is generated by groups $\U_\alpha\iso R$ with $\alpha$ varying
over $\Phi$.  As discussed in section~\ref{sec-Steinberg-group}, writing down elements
$X_\alpha(t)$ of $\U_\alpha$ requires choosing one of the two
elements of $E_\alpha$, but the sign issues coming from
this choice will not affect the proof.
For each $p\geq1$ we write
$\U_{\alpha,p}$ for the subgroup of $\U_\alpha$ consisting of all $X_\alpha(t)$
where $t\in R$ is a polynomial of degree${}\leq p$.

$A_2$ case: if $\alpha,\beta\in\Phi$ make angle $2\pi/3$ then their
Chevalley relation reads
$[X_\alpha(t),X_\beta(u)]=X_{\alpha+\beta}(\pm t u)$, where the
unimportant sign depends on the choices of elements of $E_\alpha$,
$E_\beta$ and $E_{\alpha+\beta}$.  It
follows that $[\U_{\alpha,p},\U_{\beta,q}]$ contains
$\U_{\alpha+\beta,p+q}$.  An easy induction shows that $\St$ is
generated by the $\U_{\alpha,1}\iso\Z^{n+1}$, with $\alpha$ varying
over $\Phi$.

$B_2$ case: we write $\U_{S,p}$ resp.\ $\U_{L,p}$ for the subgroup of
$\St$ generated by all $\U_{\alpha,p}$ with $\alpha$ a short
resp.\ long root.  
If $\sigma,\lambda$ are short and long roots with
angle $3\pi/4$, then we recall their Chevalley relation 
from \eqref{eq-Chevalley-relators-m=4-distant-short-and-long} as
\begin{equation}
\label{eq-B-2-Chevalley-relation-for-finite-generation-lemma}
[X_\sigma(t),X_\lambda(u)]=
X_{\lambda+\sigma}(-t u) X_{\lambda+2\sigma}(t^2u)
\end{equation}
Here we have implicitly chosen some elements of $E_\sigma$,
$E_\lambda$, $E_{\lambda+\sigma}$ and $E_{\lambda+2\sigma}$ so that
one can write down the relation explicitly. 
Note that the first term on the right lies in a short root group and
the second lies in a long root group.  Recall that $n$ is the number
of variables in the polynomial ring~$R$.  We claim that $\St$ equals
$\langle\U_{S,n},\U_{L,n+2}\rangle$ and is therefore finitely
generated.  The case $n=0$ is trivial, so suppose $n>0$.  Our claim
follows from induction using the following two ingredients.

First, for any $p\geq 1$,  $\langle\U_{S,p},\U_{L,p+2}\rangle$ contains $\U_{S,p+1}$.  To
see this let $g\in R$ be any monomial of degree $p+1$ and write it as
$t u$ for monomials $t,u\in R$ of degrees $1$ and~$p$.  Then 
\eqref{eq-B-2-Chevalley-relation-for-finite-generation-lemma}
yields
$$
X_{\lambda+\sigma}(g)=X_{\lambda+2\sigma}(t^2u)[X_\lambda(u),X_\sigma(t)]
\in\U_{L,p+2}\cdot[\U_{L,p},\U_{S,1}].
$$
Letting $g$ vary shows that
$\U_{\lambda+\sigma,p+1}\sset\langle\U_{S,p},\U_{L,p+2}\rangle$.  Then
letting $\sigma,\lambda$ vary over all pairs of roots making angle
$3\pi/4$, so that $\lambda+\sigma$ varies over all short roots, shows that
$\U_{S,p+1}\sset\langle\U_{S,p},\U_{L,p+2}\rangle$, as desired.

Second, for any $p\geq n$, 
$\langle\U_{S,p+1},\U_{L,p+2}\rangle$ contains $\U_{L,p+3}$.   To see
this let $g\in R$ be any monomial of degree $p+3$ and write it as
$t^2u$ for monomials $t,u\in R$ of degrees $2$ and $p-1$.  This is
possible because $p+3$ is at least~$3$ more than the number of
variables in the polynomial ring~$R$.
Then
\eqref{eq-B-2-Chevalley-relation-for-finite-generation-lemma} can be
written
$$
X_{\lambda+2\sigma}(g)=
X_{\lambda+\sigma}(t u) [X_\sigma(t),X_\lambda(u)]
\in
\U_{S,p+1}\cdot[\U_{S,2},\U_{L,p-1}].
$$ 
Varying $g$ and the pair $(\sigma,\lambda)$ as in the previous
paragraph  establishes
$\U_{L,p+3}\sset\langle\U_{S,p+1},\U_{L,p+2}\rangle$.

$G_2$ case: defining $\U_{S,p}$ and $\U_{L,p}$ as in the $B_2$ case,
it suffices to show that $\St$ equals
$\langle\U_{L,1},\U_{S,n}\rangle$.   The $A_2$ case shows that
$\U_{L,1}$ equals the union $\U_{L,\infty}$ of all the $\U_{L,p}$.  
So it suffices to prove: if $p\geq n$ then $\langle
\U_{L,\infty},\U_{S,p}\rangle$ contains $\U_{S,p+1}$.  
If $\sigma,\lambda\in\Phi$ are short and long simple roots
then their Chevalley relation 
\eqref{eq-Chevalley-relators-m=6-distant-short-and-long} can be written
\begin{equation*}
[X_\sigma(t),X_\lambda(u)]
=
X_{\sigma''}(t^2u)
X_{\sigma'}(-t u)
\cdot(\hbox{long-root-group elements})
\end{equation*}
where $\sigma',\sigma''$ are the short roots $\sigma+\lambda$ and
$2\sigma+\lambda$.  As before, we have implicitly chosen elements of $E_\sigma$,
$E_\lambda$, $E_{\sigma'}$ and $E_{\sigma''}$.
Given any monomial $g\in R$ of degree~$p+1$, by using $p+1>n$ we may
write it as $t^2u$ where $t$ has degree~$1$ and $u$ has degree~$p-1$.
So every term in the Chevalley relation except $X_{\sigma''}(t^2u)$ lies
in $\U_{S,p}$ or $\U_{L,\infty}$.  Therefore
$\langle\U_{S,p},\U_{L,\infty}\rangle$ contains
$X_{\sigma''}(g)$, 
hence $\U_{\sigma''\!,p+1}$ (by varying~$g$), hence
$\U_{S,p+1}$ (by varying $\sigma$ and $\lambda$ so that $\sigma''$ varies
over the short roots).

$A_1$ case: in this case we are assuming there exist units
$x_1,\dots,x_n$ of $R$ and a finite set $Y$ of generators for $R$ as a
module over $\Z[x_1^{\pm1},\dots,\discretionary{}{}{}x_n^{\pm}]$.  We
suppose without loss that $Y$ contains~$1$.  We use the description of
$\St_{A_1}$ from section~\ref{sec-examples},  and write $G$
for the subgroup generated by $S$ and the
$X\bigl(x_1^{m_1}\cdots x_n^{m_n}y\bigr)$ with
$m_1,\dots,m_n\in\{0,\pm1\}$ and $y\in Y$.  
By construction, $G$ contains the $\stilde(x_k^{\pm1})$, and it
contains $\stilde(-1)$ 
since $Y$ contains~$1$.  Therefore  $G$
contains every
$\htilde(x_k^{\pm1})$.   
Relation \eqref{eq-action-of-h-on-X-A1-example} shows
that if $G$ contains $X(u)$ for some $u$, then it also contains
every $X(x_k^{\pm2}u)$.  It follows that $G$ contains every 
$X\bigl(x_1^{m_1}\cdots x_n^{m_n}y\bigr)$ with
$m_1,\dots,m_n\in\Z$.  Therefore $G=\St$.
\end{proof}

\begin{proof}[Proof of theorem~\ref{thm-finite-presentation-of-pre-Steinberg-groups}\ref{item-P-S-t-is-f-p-rank-2}--\ref{item-P-S-t-is-f-p-rank-3}]
We abbreviate $\PSt_A(R)$ to $\PSt_A$.  We begin with \ref{item-P-S-t-is-f-p-rank-2}, so $A$
is assumed $2$-spherical without $A_1$ components, and $R$ is finitely
generated as a module over a subring generated by finitely many units.
We must show that $\PSt_A$ is finitely presented.
Let $G$ be the direct limit of the groups $\PSt_B$ with $B$ varying
over the singletons and {\it irreducible\/} rank~$2$ subdiagrams.  
By $2$-sphericity,  each $\PSt_B$ is isomorphic to the corresponding
$\St_B$.  $G$ is generated by the images of the $\St_B$'s with $|B|=2$,
because every singleton lies in some irreducible rank~$2$ diagram.  
By Splitthoff's theorem, each of these $\St_B$'s is finitely
presented.   And theorem~\ref{thm-finite-generation-of-Steinberg-groups} shows that each $\St_B$ with $|B|=1$
is finitely generated.  Therefore the direct limit $G$ is finitely
presented.  

Now we consider all $A_1A_1$ subdiagrams $\{i,j\}$ of $A$.  For each
of them we impose on $G$ the relations that (the images in $G$ of)
$\St_{\{i\}}$ and $\St_{\{j\}}$ commute.  Because these two groups are
finitely generated (theorem~\ref{thm-finite-generation-of-Steinberg-groups} again), this can be done with
finitely many relations.  This finitely presented quotient of $G$ is
then the direct limit of the groups $\St_B$ with $B$ varying over {\it
  all\/} subdiagrams of $A$ of rank${}\leq2$.  Again using
$2$-sphericity, we can replace the $\St_B$'s by $\PSt_B$'s.  Then
corollary~\ref{cor-pre-Steinberg-as-direct-limit} says that the direct limit is $\PSt_A$.  This finishes
the proof of \ref{item-P-S-t-is-f-p-rank-2}.

Now we prove \ref{item-P-S-t-is-f-p-rank-3}, in which we are assuming
$R$ is a finitely generated ring.  Consider the direct limit of the
groups $\PSt_B$ with $B$ varying over the irreducible rank${}\geq2$
spherical subdiagrams.  Because every node and every
pair of nodes lies in such a subdiagram, this direct limit is the same
as $\PSt_A$.  Because every $B$ is spherical, we may
replace the groups $\PSt_B$ by $\St_B$.  By hypothesis on $A$, $G$ is
generated by the $\St_B$ with $|B|>2$, which are finitely presented by
Splitthoff's theorem.  And
theorem~\ref{thm-finite-generation-of-Steinberg-groups} shows that
those with $|B|=2$ are finitely generated.  So the direct limit is
finitely presented.
\end{proof}

Now we turn to Kac--Moody groups.  For our purposes, Tits' Kac--Moody
group $\G_{\!A}(R)$ may be defined as the quotient of $\St_A(R)$ by
the subgroup normally generated by the relators
\begin{equation}
\label{eq-torus-relators-that-gives-Kac-Moody-group}
\MultInTorus
\end{equation} 
with $i\in I$ and $u,v\in\Runits$.  See \cite[8.3.3]{Remy} or
\cite[\S3.6]{Tits} for the more general construction of $\G_D(R)$ from a root
datum $D$.  In the rest of this section, $\Runits$ will be finitely
generated, and under this hypothesis the choice of root datum has no
effect on whether $\G_{\!D}(R)$ is finitely presented.  (We are using
the root datum which R\'emy calls simply connected \cite[\S7.1.2]{Remy} and
Tits calls ``simply-connected in the strong sense'' \cite[remark\ 3.7(c)]{Tits}.)

The following technical lemma shows that when $\Runits$ is finitely
generated, killing a  finite set of relators \eqref{eq-torus-relators-that-gives-Kac-Moody-group} kills all
the rest too.  The reason it assumes only some of the relations present in
$\PSt_A(R)$ is so we can use it in the proof of
theorem~\ref{thm-finite-presentation-of-pre-Steinberg-groups}\ref{item-finitely-generated-abelian-group-makes-St-finitely-presented}.
There, the goal is to deduce the full presentation of $\PSt_A(R)$ 
from just some of its relations.

\begin{lemma}
\label{lem-only-finitely-many-relations-in-maximal-torus}
Suppose $R$ is a commutative ring and  $r_1,\dots,r_m$ are generators
for $\Runits$, closed under inversion.  Suppose $G$ is the group with generators $S$ and
$X(t)$ with $t\in R$, subject to the relations
\begin{align}
\label{eq-torus-action-on-simple-root-groups-for-finitely-many-units}
\htilde(r)\,X(t)\,\htilde(r)^{-1}
&{}=X(r^2 t)
\\
\label{eq-torus-action-on-negative-simple-root-groups-for-finitely-many-units}
\htilde(r)\,S X(t)S^{-1}\,\htilde(r)^{-1}
&{}=S X\bigl(t/r^2\bigr) S^{-1}
\end{align}
for all $r=r_1,\dots,r_m$ and all $t\in R$, where
$\htilde(r):=\stilde(r)\stilde(1)^{-1}$ and $\stilde(r):=X(r)S
X(1/r)S^{-1}X(r)$.  Then all 
$\calP_{u,v}:=\htilde(u v)\htilde(u)^{-1}\htilde(v)^{-1}$ with
$u,v\in\Runits$ lie in the subgroup of $G$ normally generated by some finite
set of them.
\end{lemma}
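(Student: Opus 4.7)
The plan is to exhibit a finite set $F$ of pairs in $\Runits\times\Runits$ whose corresponding $\calP_{u,v}$'s normally generate every $\calP_{u,v}$ in $G$. Letting $N$ denote this normal closure, this is equivalent to asking that the composite $u\mapsto\htilde(u)\in G\twoheadrightarrow G/N$ be a group homomorphism $\Runits\to G/N$. Since $\Runits$ is finitely generated abelian, hence finitely presented as an abstract group, I exploit this presentation to bound the number of relations needed.

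Concretely, write $\Runits=\Z^m/K$, where the standard basis of $\Z^m$ is identified with the generators $\{r_1,\dots,r_m\}$ and $K$ is a finitely generated subgroup of $\Z^m$. I would take $F$ to consist of $\{(r_i,r_j):1\le i,j\le m\}$ together with finitely many pairs $(u_l,v_l)$ -- one for each of a fixed set of generators of $K$ -- chosen so that the corresponding $\calP_{u_l,v_l}$ encodes the ``torsion'' relation $\prod_i r_i^{e_{l,i}}=1$ of $\Runits$. The argument then proceeds in three stages. \emph{First}, $\calP_{r_i,r_j}\in N$ forces $\htilde(r_i)$ and $\htilde(r_j)$ to commute in $G/N$: since $r_ir_j=r_jr_i$ in $\Runits$, one gets $\htilde(r_i)\htilde(r_j)\equiv\htilde(r_ir_j)=\htilde(r_jr_i)\equiv\htilde(r_j)\htilde(r_i)\pmod N$. \emph{Second}, commutation produces a homomorphism $\phi\colon\Z^m\to G/N$ sending basis vectors to the classes of the $\htilde(r_i)$'s; the torsion $\calP$'s in $F$ force $\phi$ to vanish on the generators of $K$, hence on all of $K$, yielding a homomorphism $\bar\phi\colon\Runits\to G/N$. \emph{Third}, one must identify $\bar\phi(u)$ with the class of $\htilde(u)$ for every $u\in\Runits$ -- not just for the generators -- after which the lemma follows from $\bar\phi$ being a homomorphism, since $\calP_{u,v}$ is represented in $G/N$ by $\bar\phi(uv)\bar\phi(u)^{-1}\bar\phi(v)^{-1}=1$.

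The main obstacle will be this third stage. The natural tool is the non-abelian $2$-cocycle identity
$$
\calP_{uv,w}\cdot\calP_{u,v}=\calP_{u,vw}\cdot\htilde(u)\,\calP_{v,w}\,\htilde(u)^{-1},
$$
which lets one trade one $\calP$ for another; however, this identity does not strictly reduce word length in its arguments, so a naive induction on word length in the generators $r_i$ does not close. The delicate point is to sequence the induction so that the finitely many torsion relations in $F$ suffice to terminate the propagation, rewriting $\htilde(u)$ as a product of $\htilde(r_i)^{a_i}$'s modulo $N$. This is precisely where finite presentability of $\Runits$ enters: the torsion relators in $F$ play the role of stopping the induction after boundedly many iterations of the cocycle identity, so that only finitely many $\calP$'s appear among the ``errors'' that must be absorbed into $N$.
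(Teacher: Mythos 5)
There is a genuine gap, and it sits exactly where you locate it: your third stage is not an argument but a description of the difficulty. More importantly, the toolkit you propose for closing it cannot succeed, because nothing in your three stages ever uses the defining relations \eqref{eq-torus-action-on-simple-root-groups-for-finitely-many-units}--\eqref{eq-torus-action-on-negative-simple-root-groups-for-finitely-many-units} of $G$. The cocycle identity is a formal consequence of the definition of $\calP_{u,v}$ and holds for an arbitrary set map $\htilde\colon\Runits\to G$ into an arbitrary group; and for an arbitrary such map the lemma is simply false (take $G$ free on symbols $\htilde(u)$, $u\in\Runits$ infinite: the quotient by all the $\calP_{u,v}$ is $\Runits$, while the quotient by finitely many of them retains an infinite free factor in which the remaining $\calP_{u,v}$ survive). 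So finite presentability of $\Runits$ plus the cocycle identity cannot terminate the propagation; some input from the group relations is indispensable, and your proposal has no place where it enters.

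The missing ingredient is the identity, valid in $G$ itself, $[\htilde(r_k),\htilde(u)]=\calP_{r_k^2,u}$ for every $u\in\Runits$ and every generator $r_k$. It is derived from \eqref{eq-torus-action-on-simple-root-groups-for-finitely-many-units}--\eqref{eq-torus-action-on-negative-simple-root-groups-for-finitely-many-units}: those relations give $\htilde(r_k)\,\stilde(u)\,\htilde(r_k)^{-1}=\stilde(r_k^2u)$ directly from the word $\stilde(u)=X(u)SX(1/u)S^{-1}X(u)$, hence $\htilde(r_k)\htilde(u)\htilde(r_k)^{-1}=\htilde(r_k^2u)\htilde(r_k^2)^{-1}$, and right-multiplying by $\htilde(u)^{-1}$ gives the identity. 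This is what closes the induction: taking $N$ to be normally generated by the $\calP$'s of the form $\htilde(r_kr_1^{p_1}\cdots r_m^{p_m})\htilde(r_1^{p_1}\cdots r_m^{p_m})^{-1}\htilde(r_k)^{-1}$ with $p_i\in\{0,1\}$, one gets that the $\htilde(r_k)$ commute modulo $N$ and generate an abelian subgroup $Y$ containing every $\htilde(r_1^{p_1}\cdots r_m^{p_m})$ with square-free exponents; then, whenever $\htilde(u)\in Y$, the identity forces $\calP_{r_k^2,u}\equiv[\htilde(r_k),\htilde(u)]\equiv1$ and hence $\htilde(r_k^2u)\equiv\htilde(u)\htilde(r_k^2)\in Y$. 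Induction ``by squares of generators'' from the square-free base cases puts every $\htilde(u)$, hence every $\calP_{u,v}$, into the finitely generated abelian group $Y$, and finitely many further relators kill them all. If you want to salvage your outline, stage three must be replaced by this computation (or something equivalent); as written the proposal does not prove the lemma.
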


\begin{proof}
Define $N$ as the  subgroup of $G$ normally generated by 
the following finite set of $\calP_{u,v}$'s:
\begin{equation*}
\htilde\bigl(r_k r_1^{p_1}\cdots r_m^{p_m}\bigr)
\cdot
\htilde\bigl(r_1^{p_1}\cdots r_m^{p_m}\bigr)^{-1}
\,
\htilde(r_k)^{-1}
\end{equation*} 
with $k=1,\dots,m$ and $p_1,\dots,p_m\in\{0,1\}$.  We write $\equiv$
to indicate equality modulo~$N$.  As special cases we have
$[\htilde(r_k),\htilde(r_l)]\equiv1$,
$\htilde(r_k^2)\equiv\htilde(r_k)^2$, and that if
$p_1,\dots,p_m\in\{0,1\}$ then $\htilde(r_1^{p_1}\cdots r_m^{p_m})$
lies in the abelian subgroup $Y$ of $G/N$ generated by
$\htilde(r_1),\dots,\htilde(r_m)$.  

We claim that every $\calP_{u,v}$ lies in $Y$.  Since $Y$ is finitely
generated abelian, we may therefore kill all the $\calP_{u,v}$'s by killing some
finite set of them, proving the theorem.  To prove the claim it
suffices to show that every $\htilde(u)$ lies in $Y$, which we do by
induction.  That is, supposing $\htilde(u)\in Y$ we will prove
$\htilde(r_k^2u)\in Y$ for each $k=1,\dots,m$.   
The following calculations in $G$  mimic the proof of \eqref{eq-commutators-of-h-tildes}, paying close attention
to which relations are used.  First, \eqref{eq-torus-action-on-simple-root-groups-for-finitely-many-units}--\eqref{eq-torus-action-on-negative-simple-root-groups-for-finitely-many-units} imply
$\htilde(r_k)\stilde(u)\htilde(r_k)^{-1}=\stilde(r_k^2u)$.  From the
definition of $\htilde(u)$ we get
$\htilde(r_k)\htilde(u)\htilde(r_k)^{-1}=\htilde(r_k^2u)\htilde(r_k^2)^{-1}$.
Right multiplying by $\htilde(u)^{-1}$ yields
$[\htilde(r_k),\htilde(u)]=\calP_{r_k^2,u}$.  Now, $\htilde(u)\in Y$
implies $[\htilde(r_k),\htilde(u)]\equiv1$, so
$\calP_{r_k^2,u}\equiv1$, so
$\htilde(r_k^2u)\equiv\htilde(u)\htilde(r_k^2)\in Y$ as desired.
\end{proof}

\begin{corollary}
\label{cor-finitely-many-relations-in-maximal-torus}
Suppose $R$ is a commutative ring with finitely generated unit group
$\Runits$, and $A$ is any generalized Cartan matrix.  Then the
subgroup of $\PSt_A(R)$ normally generated by all relators
\eqref{eq-torus-relators-that-gives-Kac-Moody-group}
is normally generated by finitely many of them.
\qed
\end{corollary}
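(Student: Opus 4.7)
The plan is to apply lemma~\ref{lem-only-finitely-many-relations-in-maximal-torus} once for each simple root $i\in I$, and then take the union of the resulting finite sets of relators. This requires that $I$ be finite, which is the implicit standing assumption whenever one speaks of finite generation of $\Runits$ giving finite presentability.

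For each fixed $i\in I$, I would construct a homomorphism $\phi_i\colon G\to\PSt_A(R)$, where $G$ is the group appearing in lemma~\ref{lem-only-finitely-many-relations-in-maximal-torus} with respect to a chosen finite symmetric generating set $r_1,\dots,r_m$ of $\Runits$. Under $\phi_i$, the formal symbols $S$ and $X(t)$ map to $S_i$ and $X_i(t)$. The elements $\stilde(r)$ and $\htilde(r)$ of $G$ then map to $\stilde_i(r)$ and $\htilde_i(r)$ of $\PSt_A(R)$, by the matching definitions \eqref{eq-definition-of-s-tilde}--\eqref{eq-definition-of-h-tilde}. To see that $\phi_i$ is well-defined, it suffices to check that the two families of relations \eqref{eq-torus-action-on-simple-root-groups-for-finitely-many-units}--\eqref{eq-torus-action-on-negative-simple-root-groups-for-finitely-many-units} defining $G$ hold among the images. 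These are exactly the $j=i$ instances of the pre-Steinberg relations \eqref{eq-torus-action-1}--\eqref{eq-torus-action-2}, since $A_{ii}=2$ makes $r^{A_{ii}}=r^2$ and $r^{-A_{ii}}=r^{-2}$.

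By lemma~\ref{lem-only-finitely-many-relations-in-maximal-torus}, every $\calP_{u,v}=\htilde(uv)\htilde(u)^{-1}\htilde(v)^{-1}\in G$ lies in the normal closure in $G$ of a certain finite set $F$ of such elements. Since a homomorphism sends normal closures into normal closures, applying $\phi_i$ shows that every relator of the form $\htilde_i(uv)\htilde_i(u)^{-1}\htilde_i(v)^{-1}\in\PSt_A(R)$ lies in the normal closure (in $\PSt_A(R)$) of the finite set $\phi_i(F)$. Taking the union over $i\in I$ yields a finite set of relators \eqref{eq-torus-relators-that-gives-Kac-Moody-group} whose normal closure contains all of \eqref{eq-torus-relators-that-gives-Kac-Moody-group}.

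There is essentially no obstacle here beyond the verification that $\phi_i$ exists, which is immediate by inspection of the relations. All of the work has already been done in lemma~\ref{lem-only-finitely-many-relations-in-maximal-torus}; the corollary is just the observation that this $A_1$-level fact transplants into $\PSt_A(R)$ one simple root at a time.
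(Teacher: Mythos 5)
Your proposal is correct and is exactly the argument the paper intends (the corollary is stated with \qed precisely because it follows by applying lemma~\ref{lem-only-finitely-many-relations-in-maximal-torus} to each simple root $i$ and using that homomorphisms carry normal closures into normal closures). The verification that $\phi_i$ respects the relations, via the $j=i$ cases of \eqref{eq-torus-action-1}--\eqref{eq-torus-action-2} and $A_{ii}=2$, is the right (and only) point to check.
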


\begin{proof}[Proof of theorem~\ref{thm-finite-presentation-of-Kac-Moody-groups}]
We must show that $\G_D(R)$ is finitely
presented, assuming that $\St_A(R)$ is and  that $\Runits$ is finitely generated. 
For $\G_{\!A}(R)$ this is immediate from corollary~\ref{cor-finitely-many-relations-in-maximal-torus}.  Also, its
subgroup $H$ generated by the images of the $\htilde_i(r)$ with $i\in
I$ and $r\in\Runits$ is finitely generated abelian.  For a general
root datum $D$, one obtains $\G_D(R)$ by the following construction.
First one quotients $\G_{\!A}(R)$ by a subgroup of $H$.  Then one takes the
semidirect product of this by a torus $\T$ (a copy of $(\Runits)^n$).  Then
one identifies the generators of $H$ with certain elements of $\T$.  Since
$\Runits$ is finitely generated, none of these steps affects finite presentability.
\end{proof}

\begin{proof}[Proof of theorem~\ref{thm-finite-presentation-of-pre-Steinberg-groups}\ref{item-finitely-generated-abelian-group-makes-St-finitely-presented}]
We must show that if $R$ is finitely generated as an abelian group,
then $\PSt_A(R)$ is finitely presented for any generalized Cartan
matrix~$A$.  Suppose $R$ is generated as an abelian group by
$t_1,\dots,t_n$.  Then $\PSt_A(R)$ is generated by the $S_i$ and
$X_i(t_k)$, so it is finitely generated.  
Because $R$ is finitely generated as an abelian group, its
multiplicative group $\Runits$ is also.  At its heart, this is the
Dirichlet unit theorem.  See \cite[Cor.~7.5]{Lang} for the full
result.  Let $r_1,\dots,r_m$ be a set of generators for $\Runits$,
closed under inversion.

Let $N$ be the central subgroup of $\PSt_A(R)$ normally generated by all relators
\eqref{eq-torus-relators-that-gives-Kac-Moody-group}.  It is elementary and well-known that if a
group is finitely generated, and a central quotient of it is finitely
presented, then it is itself finitely presented.
(See \cite[\S10.2]{Johnson} for the required background.)
Therefore the finite
presentability of $\PSt_A(R)$ will follow from that of $\PSt_A(R)/N$.
The relators defining the latter group are
\eqref{eq-def-of-What-Artin-relators}--\eqref{eq-collapse-to-PSt} and
\eqref{eq-torus-relators-that-gives-Kac-Moody-group}.  We will show that finitely many of them
imply all the others.

In the definition of $\What$, there are only finitely many relations
\eqref{eq-def-of-What-Artin-relators}--\eqref{eq-def-of-What-commutators-of-squares-odd-case}.
The addition rules \eqref{eq-additive-relators-in-root-groups} in
$\U_i\iso R$ can be got by imposing finitely many relations on the
$X_i(t_k)$.  Relations
\eqref{eq-S-i-squared-action-on-X-j}--\eqref{eq-S-i-X-j-relator-when-m-is-6}
describe how certain words in the $S_i$ conjugate arbitrary
$X_j(t)$. By the additivity of $X_j(t)$ in $t$, it suffices to impose
only those with $t$ among $t_1,\dots,t_n$.  The Chevalley relations
\eqref{eq-Chevalley-relator-m=2}--\eqref{eq-Chevalley-relators-m=6-distant-short-and-long}
may be imposed using only finitely many relations, because the Borel
subgroup of any rank~$2$ Chevalley group over $R$ is polycyclic (since
$R$ is).  

Now for the  tricky step:  we impose relations
\eqref{eq-torus-action-1}--\eqref{eq-torus-action-2} for
$r=r_1,\dots,r_m$ and $t=t_1,\dots,t_n$.  The additivity of $X_j(t)$
in $t$ implies these relations for $r=r_1,\dots,r_m$ and arbitrary
$t\in R$.  These are exactly the relations \eqref{eq-torus-action-on-simple-root-groups-for-finitely-many-units}--\eqref{eq-torus-action-on-negative-simple-root-groups-for-finitely-many-units} assumed
in the statement of lemma~\ref{lem-only-finitely-many-relations-in-maximal-torus}.  That lemma  shows that we may
impose all the relations \eqref{eq-torus-relators-that-gives-Kac-Moody-group} by imposing some finite number of
them.  
Working modulo these, $\htilde_i(r)$ is multiplicative in $r$, for
each $i$.  Therefore our relations
\eqref{eq-torus-action-1}--\eqref{eq-torus-action-2} for
$r=r_1,\dots,r_m$ imply the same relations for arbitrary~$r$.  

Starting with the generators $S_i$, $X_i(t)$ with $i\in I$
and $t=t_1,\dots,t_n$, we have found finitely many relations
from \eqref{eq-def-of-What-Artin-relators}--\eqref{eq-collapse-to-PSt} and \eqref{eq-torus-relators-that-gives-Kac-Moody-group} that imply all
the others.  Therefore $\PSt_A(R)/N$ is finitely presented, so the
same holds for $\PSt_A(R)$ itself.
\end{proof}

\end{document}